\newcommand{\appsection}[1]{\let\oldthesection\thesection
  \renewcommand{\thesection}{Appendix \oldthesection}
  \section{#1}\let\thesection\oldthesection}
\newtheorem{defin}{Definition}[section]
\newtheorem{prop-def}[defin]{Proposition-Definition}
\newtheorem{lem}[defin]{Lemma}
\newtheorem{thm}[defin]{Theorem}
\newtheorem{remark}[defin]{Remark}
\newtheorem{cor}[defin]{Corollary}
\newtheorem{conv}[defin]{Convention}
\newtheorem{ex}[defin]{Example}
\newtheorem{claim}{Claim}
\newcommand{\e}{ \hfill $\diamond$}
\begin{document}

\title[Algorithmic Problems]{Algorithmic Problems in Amalgams of Finite Groups: Conjugacy and Intersection Properties}%
\author{L.Markus-Epstein}\footnote{Supported in part at the Technion by a fellowship of the Israel Council for Higher Education}%
\address{Department of Mathematics \\
Technion \\
Haifa 32000, Israel}%
\email{epstin@math.biu.ac.il}%
\maketitle

\begin{abstract}

Geometric methods proposed by Stallings \cite{stal} for treating
finitely generated subgroups of free groups  were successfully
used to solve a wide collection of decision problems for free
groups and their subgroups \cite{b-m-m-w, kap-m, mar_meak, m-s-w,
mvw, rvw, ventura}.

In the present paper we employ the generalized Stallings' methods,
developed by the author in \cite{m-foldings}, to solve various
algorithmic problems concerning finitely generated subgroups of
amalgams of finite groups.

\end{abstract}

\pagestyle{headings}
%


\section{Introduction}

This paper continues the line of \cite{m-algI} and
\cite{m-kurosh}. The primary goal of the sequence of these three
papers is to solve effectively (by finding an algorithm) various
decision problems concerning finitely generated subgroups of
amalgams of finite groups.

Decision (or \emph{algorithmic}) problems is one of the classical
subjects of combinatorial group theory originating in the three
fundamental decision problems posed by Dehn \cite{dehn} in 1911:
the word problem, the conjugacy problem and the isomorphism
problem. As is well known (the reader is referred to
\cite{miller71, miller92} for a survey on decision problems for
groups),  these problems are theoretically undecidable in general.
Thus the celebrated Novikov-Boone theorem asserts that the word
problem is undecidable (p.88 in \cite{l_s}). However restrictions
to some particular classes of groups may yield surprisingly good
results. Remarkable examples include the solvability of the word
problem in one-relator groups (Magnus, see II.5.4 in \cite{l_s})
and in hyperbolic groups (Gromov, see 2.3.B in \cite{gro}).

In free groups a big success in this direction is due to the
geometrical methods proposed by Stallings \cite{stal} in the early
80's. Recall that Stallings  showed that every finitely generated
subgroup of a free group is canonically represented by a minimal
immersion of a bouquet of circles.  Using the graph theoretic
language, the results of \cite{stal} can be restated as follows. A
finitely generated subgroup of a free group is canonically
represented by a finite labelled graph which can be constructed
algorithmically by a so called process of \emph{Stallings'
foldings} (\emph{Stallings' folding algorithm}). Moreover, this
algorithm is quadratic in the size of the input \cite{kap-m,
m-s-w}. See \cite{tuikan} for a faster implementation of this
algorithm.

This approach reviled as extremely useful to solve algorithmic
problems in free groups. See \cite{b-m-m-w, mar_meak, m-s-w, mvw,
rvw, ventura} for  examples of the applications of the Stallings'
approach in free groups, and \cite{kap-w-m, kmrs, m_w, schupp} for
the applications in some other classes of groups. Note that
Stallings' ideas were recast in a combinatorial graph theoretic
way in the remarkable survey paper of Kapovich and Myasnikov
\cite{kap-m}, where these methods were applied systematically to
study the subgroup structure of free groups.

Our recent results \cite{m-foldings} show that Stallings' methods
can be completely generalized to the class of  amalgams of finite
groups. Along the current paper we refer to this generalization of
Stallings' folding algorithm as the \emph{generalized Stallings'
folding algorithm}. Its description is included in the Appendix.
Let us emphasize that the generalized Stallings' algorithm is
quadratic in the size of the input, which yields  a quadratic time
solution of the membership problem in amalgams of finite groups
(see \cite{m-foldings}).

We employ these generalized Stallings' methods  to answer a
collection of algorithmic questions concerning finitely generated
subgroups of amalgams of finite groups, which extends the results
presented in \cite{kap-m}. Our results include polynomial
solutions for the following algorithmic problems (which are known
to be unsolvable in general \cite{miller71, miller92}) in amalgams
of finite groups:
\begin{itemize}
\item computing subgroup presentations,
%
\item detecting triviality of a given subgroup,
\item the freeness problem,
\item  the finite index problem,
\item the separability problem,
\item the conjugacy problem,
\item the normality,
\item the intersection problem,
\item the malnormality problem,
\item the power problem,
\item reading off Kurosh decomposition for finitely generated
subgroups of free products of finite groups.
\end{itemize}

These results are spread out between three papers: \cite{m-algI,
m-kurosh} and the current one. In \cite{m-kurosh} free products of
finite groups are considered, and an efficient procedure to read
off a Kurosh decomposition is presented.

The splitting between \cite{m-algI} and the current paper was done
with the following idea in mind. It turn out that some subgroup
properties, such as computing of a subgroup presentation and
index, as well as detecting of freeness and normality, can be
obtained directly by an analysis of the corresponding subgroup
graph.
 Solutions of others require some additional
constructions. Thus, for example, intersection properties can be
examined via product graphs, and separability needs constructions
of a pushout of graphs.

In \cite{m-algI}  algorithmic problems of the first type are
presented: the computing of subgroup presentations, the freeness
problem and the finite index problem. The separability problem is
also included there, because it is closely related with the other
problems presented in \cite{m-algI}. The rest of the algorithmic
problems are introduced in the current paper.


The paper is organized as follows. The Preliminary Section
includes the description of the basic notions  used along the
present paper. Readers familiar with  amalgams, normal words in
amalgams and labelled graphs can skip it. The next section
presents a summary of the results from \cite{m-foldings} which are
essential for our algorithmic purposes. It describes the nature
and the properties of the subgroup graphs constructed by the
generalized Stallings' folding  algorithm in \cite{m-foldings}.
The rest of the sections are titled by the names of various
algorithmic problems and present definitions (descriptions) and
solutions of the corresponding algorithmic problems. The relevant
references to other papers considering similar problems and a
rough analysis of the complexity of the presented solutions
(algorithms) are provided. In contrast with papers that establish
the exploration of the complexity of decision problems as their
main goal (for instance, \cite{generic-case, average-case,
tuikan}), we do it rapidly (sketchy) viewing in its analysis a way
to emphasize the effectiveness  of our methods.

\subsection*{Other Methods}  \
There have been a number of papers, where methods, not based on
Stallings' foldings, have been presented. One can use these
methods to treat finitely generated subgroups of amalgams of
finite groups. A topological approach can be found in works of
Bogopolskii \cite{b1, b2}. For the automata theoretic approach,
see papers of Holt and Hurt \cite{holt-decision, holt-hurt},
papers of Cremanns, Kuhn, Madlener and Otto \cite{c-otto,
k-m-otto}, as well as the recent paper of Lohrey and Senizergues
\cite{l-s}.

However the methods for treating finitely generated subgroups
presented in the above papers were applied to some particular
subgroup property. No one of these papers has as its goal a
solution of various algorithmic problems, which we consider as our
primary aim.  Moreover, similarly to the case of free groups (see
\cite{kap-m}), our combinatorial approach seems to be the most
natural one for this purpose.


\section{Acknowledgments}

I wish to deeply thank to my PhD advisor Prof. Stuart W. Margolis
for introducing me to this subject, for his help  and
encouragement throughout  my work on the thesis. I owe gratitude
to Prof. Arye Juhasz for his suggestions and many useful comments
during the writing of this paper. I gratefully acknowledge a
partial support at the Technion by a fellowship of the Israel
Council for Higher Education.

%
\section{Preliminaries} \label{section:Preliminaries}
\subsection*{Amalgams}

Let $G=G_1 \ast_{A} G_2$ be a free product of $G_1$ and $G_2$ with
amalgamation, customary, an \emph{amalgam} of $G_1$ and $G_2$.
We assume that the (free) factors are given by the  finite group
presentations
\begin{align} G_1=gp\langle X_1|R_1\rangle, \ \ G_2=gp\langle
X_2|R_2\rangle \ \ {\rm such \ that} \ \ X_1^{\pm} \cap
X_2^{\pm}=\emptyset. \tag{\text{$1.a$}}
\end{align}
 $A= \langle Y   \rangle$ is a group such that there exist two
monomorphisms
\begin{align}
\phi_1:A \rightarrow G_1 \ {\rm and } \ \phi_2:A \rightarrow G_2.
\tag{\text{$1.b$}}
\end{align}
Thus $G$ has a finite group presentation
\begin{align}
G=gp\langle X_1,X_2 | R_1, R_2, \phi_1(a)=\phi_2(a), \; a \in Y
\rangle. \tag{\text{$1.c$}}
\end{align}

We  put $X=X_1 \cup X_2$,  $R=R_1 \cup R_2 \cup
\{\phi_1(a)=\phi_2(a) \; | \; a \in Y \} $. Thus $G=gp\langle
X|R\rangle$.

As is well known \cite{l_s, m-k-s, serre}, the free factors embed
in $G$. It enables us to identify $A$ with its monomorphic image
in each one of the free factors. Sometimes in order to make the
context clear we use \fbox{$G_i \cap A$}
\footnote{Boxes are used for emphasizing purposes only.}
to denote the monomorphic image of $A$ in $G_i$ ($i \in \{1,2\}$).

Elements of $G=gp \langle X |R \rangle$ are equivalence classes of
words. However it is customary to blur the distinction between a
word $u$ and the equivalence class containing $u$. We will
distinguish between  them by using different equality signs:
\fbox{``$\equiv$''} for the equality of two words and
\fbox{``$=_G$''} to denote the equality of two elements of $G$,
that is the equality of two equivalence classes. Thus in
$G=gp\langle x \; | \; x^4 \rangle$, for example, $x \equiv x$ but
$x \not\equiv x^{-3}$, while $x=_G x^{-3}$.



\subsection*{Normal Forms}
Let $G=G_1 \ast_A G_2$. A word $g_1g_2 \cdots g_n \in G$ is
\emph{in normal form} (or, simply, it is a \emph{normal word}) if:
\begin{enumerate}
    \item [(1)] $g_i \neq_G 1$ lies in one of the  factors, $G_1$ or $G_2$,
    \item [(2)] $g_i$ and $g_{i+1}$ are in different factors,
    \item [(3)] if $n \neq 1$, then $g_i \not\in A$.
\end{enumerate}
We call the sequence $(g_1, g_2, \ldots, g_n)$ a
 \emph{normal decomposition} of the element $g \in  G $, where $g=_G g_1g_2 \cdots g_n$.

Any $g \in G$ has a representative in a normal form (see, for
instance, p.187 in \cite{l_s}).  If $g \equiv g_1g_2 \cdots g_n $
is in normal form and $n>1$, then the Normal Form Theorem (IV.2.6
in \cite{l_s}) implies that $g \neq_G 1$. The number $n$ is unique
for a given element $g$ of $G$ and it is called the \emph{syllable
length} of $g$. We denote it $l(g)$. We use $|g|$ to denote the
length of $g$ as a word in $X^*$.


\subsection*{Labelled graphs}
Below we follow the notation of \cite{gi_sep, stal}.

A graph $\Gamma$ consists of two sets $E(\Gamma)$ and $V(\Gamma)$,
and two functions $E(\Gamma)\rightarrow E(\Gamma)$  and
$E(\Gamma)\rightarrow V(\Gamma)$: for each $e \in E$ there is an
element $\overline{e} \in E(\Gamma)$ and an element $\iota(e) \in
V(\Gamma)$, such that $\overline{\overline{e}}=e$ and
$\overline{e} \neq e$.

The elements of $E(\Gamma)$ are called \textit{edges}, and an $e
\in E(\Gamma)$ is a \emph{direct edge} of $\Gamma$, $\overline{e}$
is the \emph{reverse (inverse) edge} of $e$.

The elements of $V(\Gamma)$ are called \textit{vertices},
$\iota(e)$ is the \emph{initial vertex} of $e$, and
$\tau(e)=\iota(\overline{e})$ is the \emph{terminal vertex} of
$e$. We call them the \emph{endpoints} of the edge $e$.

A  \emph{path of length $n$} is  a sequence of $n$ edges $p=e_1
\cdots  e_n $ such that $v_i=\tau(e_i)=\iota(e_{i+1})$ ($1 \leq
i<n$).  We call $p$ a \emph{path from $v_0=\iota(e_1)$ to
$v_n=\tau(e_n)$}. The \emph{inverse} of the path $p$ is
$\overline{p}=\overline{e_n} \cdots \overline{e_1}$. A path of
length 0 is the \emph{empty path}.

We say that the graph $\Gamma$ is \emph{connected} if $V(\Gamma)
\neq \emptyset$ and any two vertices  are joined by a path. The
path $p$ is \emph{closed} if $\iota(p)=\tau(p)$, and it is
\emph{freely reduced} if $e_{i+1} \neq \overline{e_i}$ ($1 \leq i
<n$). $\Gamma$ is a \emph{tree} if it is a connected graph and
every closed freely reduced path in $\Gamma$ is empty.

A \emph{subgraph} of $\Gamma$ is a graph $C$  such that $V(C)
\subseteq V(\Gamma)$ and $E(C) \subseteq E(\Gamma)$. In this case,
by abuse of language, we write $C\subseteq \Gamma$.
Similarly, whenever we write $\Gamma_1 \cup \Gamma_2$ or $\Gamma_1
\cap \Gamma_2$,  we always mean that the set operations are, in
fact,  applied to the vertex sets and the edge sets of the
corresponding graphs.


A \emph{labelling} of $\Gamma$ by the set $X^{\pm}$ is a function
$$lab: \: E(\Gamma)\rightarrow X^{\pm}$$ such that for each $e \in
E(\Gamma)$, $lab(\overline{e}) \equiv (lab(e))^{-1}$.

The last equality enables one, when representing the labelled
graph $\Gamma$ as a directed diagram,  to represent only
$X$-labelled edges, because $X^{-1}$-labelled edges can be deduced
immediately from them.

A graph with a labelling function is called a \emph{labelled (with
$X^{\pm}$) graph}.  The only graphs considered in the present
paper are labelled graphs.

A labelled graph is called \emph{well-labelled} if
$$\iota(e_1)=\iota(e_2), \; lab(e_1) \equiv lab(e_2)\ \Rightarrow \
e_1=e_2,$$ for each pair of edges $e_1, e_2 \in E(\Gamma)$. See
Figure \ref{fig: labelled, well-labelled graphs}.

\begin{figure}[!h]
\psfrag{a }[][]{$a$} \psfrag{b }[][]{$b$} \psfrag{c }[][]{$c$}
\psfrag{e }[][]{$e_1$}
\psfrag{f }[][]{$e_2$}
\psfragscanon \psfrag{G }[][]{{\Large $\Gamma_1$}}
\psfragscanon \psfrag{H }[][]{{\Large $\Gamma_2$}}
\psfragscanon \psfrag{K }[][]{{\Large $\Gamma_3$}}
\includegraphics[width=\textwidth]{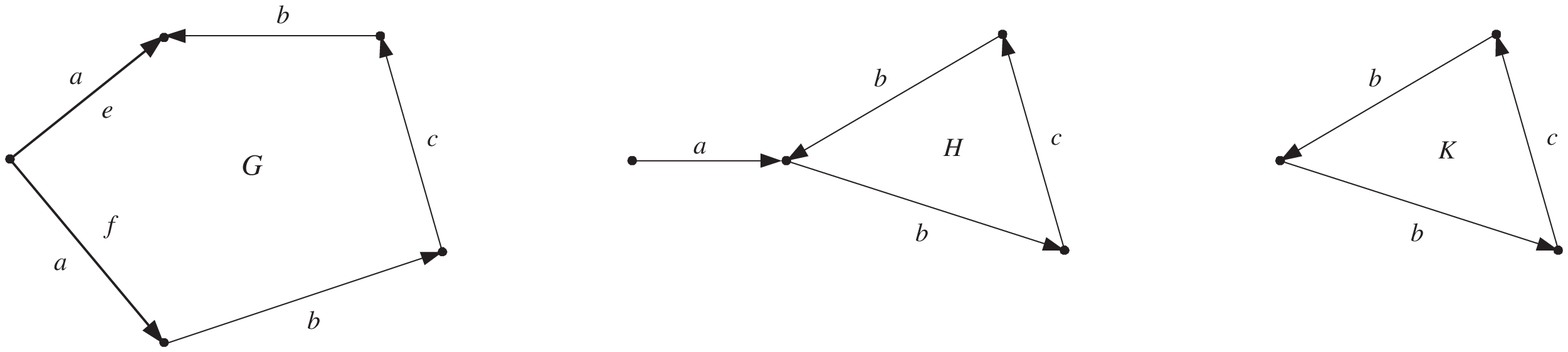}
\caption[The construction of $\Gamma(H_1)$]{ \footnotesize {The
graph $\Gamma_1$ is labelled with $\{a,b,c\} ^{\pm}$, but it is
not well-labelled. The graphs $\Gamma_2$ and $\Gamma_3$ are
well-labelled with $\{a,b,c\} ^{\pm}$.}
 \label{fig: labelled, well-labelled graphs}}
\end{figure}

If a finite graph $\Gamma$ is not well-labelled then a process of
iterative identifications of each pair  $\{e_1,e_2\}$ of distinct
edges with the same initial vertex and the same label to a single
edge yields a well-labelled graph. Such identifications are called
\emph{foldings}, and the whole process is known as the process of
\emph{Stallings' foldings} \cite{b-m-m-w, kap-m, mar_meak, m-s-w}.

 Thus the graph $\Gamma_2$ on Figure
\ref{fig: labelled, well-labelled graphs}  is obtained from the
graph $\Gamma_1$ by folding the edges $e_1$ and $e_2$ to a single
edge labelled by $a$.

Notice that the graph $\Gamma_3$ is obtained from the graph
$\Gamma_2$ by removing the edge labelled by $a$ whose initial
vertex has degree 1. Such an edge is called a \emph{hair}, and the
above procedure is used to be called \emph{``cutting hairs''}.


The label of a path $p=e_1e_2 \cdots e_n$ in $\Gamma$, where $e_i
\in E(\Gamma)$, is the word $$lab(p) \equiv lab(e_1)\cdots
lab(e_n) \in (X^{\pm})^*.$$ Notice that the label of the empty
path is the empty word. As usual, we identify the word $lab(p)$
with the corresponding element in $G=gp\langle X | R \rangle$. We
say that $p$ is   a \emph{normal path} (or $p$ is a path in
\emph{normal form}) if $lab(p)$ is a normal word.

If $\Gamma$ is a well-labelled graph then a path $p$ in $\Gamma$
is freely reduced if and only if $lab(p)$ is a freely reduced
word.
Otherwise $p$  can be converted into a freely reduced path $p'$ by
iterative  removals of the subpaths   $e\overline{e}$
(\emph{backtrackings}) (\cite{mar_meak, kap-m}).  Thus
$$\iota(p')=\iota(p), \ \tau(p')=\tau(p) \ \; {\rm and } \ \; lab(p)=_{FG(X)} lab(p'),$$ where \fbox{$FG(X)$} is a free group
with a free basis $X$. We say that $p'$ is obtained from $p$ by
\emph{free reductions}.


If $v_1,v_2 \in V(\Gamma)$ and $p$ is a path in $\Gamma$ such that
$$\iota(p)=v_1, \ \tau(p)=v_2 \ {\rm and } \ lab(p)\equiv u,$$
then, following the automata theoretic notation, we simply write
\fbox{$v_1 \cdot u=v_2$} to summarize this situation, and  say
that the word $u$ is \emph{readable} at $v_1$ in $\Gamma$.

A pair \fbox{$(\Gamma, v_0)$}  consisting  of the graph $\Gamma$
and the \emph{basepoint} $v_0$ (a distinguished vertex of the
graph $\Gamma$) is called  a \emph{pointed graph}.

Following the notation of Gitik (\cite{gi_sep}) we denote the set
of all closed paths in $\Gamma$ starting at $v_0$  by
\fbox{$Loop(\Gamma, v_0)$},  and the image of $lab(Loop(\Gamma,
v_0))$ in $G=gp\langle X | R \rangle$  by \fbox{$Lab(\Gamma,
v_0)$}. More precisely,
$$Loop(\Gamma, v_0)=\{ p \; | \; p {\rm  \ is \ a \ path \ in \ \Gamma \
with} \ \iota(p)=\tau(p)=v_0\}, $$
$$Lab(\Gamma,v_0)=\{g \in G \; | \; \exists p \in Loop(\Gamma,
v_0) \; : \; lab(p)=_G g \}.$$


It is easy to see that $Lab(\Gamma, v_0)$ is a subgroup of $G$
(\cite{gi_sep}). Moreover, $Lab(\Gamma,v)=gLab(\Gamma,u)g^{-1}$,
where $g=_G lab(p)$, and $p$ is a path in $\Gamma$ from $v$ to $u$
(\cite{kap-m}).
%
%
%
If $V(\Gamma)=\{v_0\}$ and $E(\Gamma)=\emptyset$ then we assume
that $H=\{1\}$.



We say that $H=Lab(\Gamma, v_0)$ is \emph{the subgroup of $G$
determined by the graph $(\Gamma,v_0)$}. Thus any pointed graph
labelled by $X^{\pm}$, where $X$ is a generating set of a group
$G$, determines a subgroup of $G$. This argues the use of the name
\emph{subgroup graphs} for such graphs.

\subsection*{Morphisms of Labelled Graphs} \label{sec:Morphisms
Of Well-Labelled Graphs}

Let $\Gamma$ and $\Delta$ be graphs labelled with $X^{\pm}$. The
map $\pi:\Gamma \rightarrow \Delta$ is called a \emph{morphism of
labelled graphs}, if $\pi$ takes vertices to vertices, edges to
edges, preserves labels of direct edges and has the property that
$$ \iota(\pi(e))=\pi(\iota(e)) \ {\rm and } \
\tau(\pi(e))=\pi(\tau(e)), \ \forall e\in E(\Gamma).$$
An injective morphism of labelled graphs is called an
\emph{embedding}. If $\pi$ is an embedding then we say that the
graph $\Gamma$ \emph{embeds} in the graph $\Delta$.


A \emph{morphism of pointed labelled graphs} $\pi:(\Gamma_1,v_1)
\rightarrow (\Gamma_2,v_2)$  is a morphism of underlying labelled
graphs $ \pi: \Gamma_1\rightarrow \Gamma_2$ which preserves the
basepoint $\pi(v_1)=v_2$. If $\Gamma_2$ is well-labelled then
there exists at most one such morphism (\cite{kap-m}).


\begin{remark}[\cite{kap-m}] \label{unique isomorphism}
{\rm  If two pointed well-labelled (with $X^{\pm}$) graphs
$(\Gamma_1,v_1)$ and $(\Gamma_2,v_2)$  are isomorphic, then there
exists a unique isomorphism $\pi:(\Gamma_1,v_1) \rightarrow
(\Gamma_2,v_2)$. Therefore $(\Gamma_1,v_1)$ and $(\Gamma_2,v_2)$
can be identified via $\pi$. In this case we sometimes write
$(\Gamma_1,v_1)=(\Gamma_2,v_2)$.} \e
\end{remark}

The notation $\Gamma_1=\Gamma_2$ means that there exists an
isomorphism between these two graphs. More precisely, one can find
$v_i \in V(\Gamma_i)$ ($i \in \{1,2\}$) such that
$(\Gamma_1,v_1)=(\Gamma_2,v_2)$ in the sense of Remark~\ref{unique
isomorphism}.


\begin{lem}[\cite{kap-m}] \label{morphism of graphs}
Let $(\Gamma_1,v_1)$ and $(\Gamma_2,v_2)$ be pointed graphs
well-labelled with $X^{\pm}$ such that  $degree(v) >1$
\footnote{Recall $degree(v)=|\{e \in E(\Gamma_i) \; | \;
\iota(e)=v \ {\rm or } \ \tau(e)=v\}|$.}
, for all $v \in V(\Gamma_i) \setminus \{v_i\}$ ($ i \leq
\{1,2\}$).

Then $Lab(\Gamma_1,v_1) \leq Lab(\Gamma_2,v_2)$ if and only if
there exists a unique morphism $\pi:(\Gamma_1,v_1) \rightarrow
(\Gamma_2,v_2)$. \e
\end{lem}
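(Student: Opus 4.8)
The plan is to prove both directions by the usual path-lifting/path-pushing argument familiar from the theory of covering spaces, adapted to well-labelled graphs. Note first that the degree hypothesis on the non-basepoint vertices is exactly what makes a morphism, if it exists, unique: this is already recorded for basepoint-preserving morphisms into a well-labelled target in the remark preceding Lemma~\ref{morphism of graphs} (cf.\ \cite{kap-m}), so the word ``unique'' in the statement requires no extra work once existence is settled. Hence the real content is the equivalence $Lab(\Gamma_1,v_1)\le Lab(\Gamma_2,v_2)\iff \exists\,\pi:(\Gamma_1,v_1)\to(\Gamma_2,v_2)$.

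For the ``$\Leftarrow$'' direction, suppose a morphism $\pi$ exists. Given $g\in Lab(\Gamma_1,v_1)$, pick $p\in Loop(\Gamma_1,v_1)$ with $lab(p)=_G g$. Since a morphism of labelled graphs sends an edge $e$ to an edge with the same label and respects $\iota,\tau$, the image $\pi(p)$ is a path in $\Gamma_2$ with $lab(\pi(p))\equiv lab(p)$ and, because $\pi(v_1)=v_2$, it is a loop at $v_2$. Thus $g=_G lab(\pi(p))\in Lab(\Gamma_2,v_2)$, giving the inclusion of subgroups.

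For the ``$\Rightarrow$'' direction, assume $Lab(\Gamma_1,v_1)\le Lab(\Gamma_2,v_2)$ and construct $\pi$ by reading words. For a vertex $u\in V(\Gamma_1)$, choose a path $q$ in $\Gamma_1$ from $v_1$ to $u$ and set $\pi(u):=v_2\cdot lab(q)$, i.e.\ the endpoint in $\Gamma_2$ of the path starting at $v_2$ and spelling $lab(q)$ — which exists and is unique because $\Gamma_2$ is well-labelled and reading a word in a well-labelled graph is deterministic. Define $\pi$ on edges analogously by reading one more letter. The two things to check are (i) well-definedness: if $q'$ is another path from $v_1$ to $u$, then $q\overline{q'}$ is a loop at $v_1$, so $lab(q\overline{q'})\in Lab(\Gamma_1,v_1)\le Lab(\Gamma_2,v_2)$, and one must deduce that $v_2\cdot lab(q)=v_2\cdot lab(q')$; (ii) that $\pi$ is then genuinely a morphism (it preserves labels, commutes with $\iota,\tau$, and is total on $E(\Gamma_1)$). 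Point (i) is the main obstacle: membership $lab(q\overline{q'})\in Lab(\Gamma_2,v_2)$ only tells us there is \emph{some} loop $r$ at $v_2$ with $lab(r)=_G lab(q\overline{q'})$, not that the specific word $lab(q\overline{q'})$ is itself readable as a \emph{closed} path at $v_2$. To bridge this gap I would invoke the structural properties of the subgroup graphs produced by the generalized Stallings' folding algorithm summarized in the section following the Preliminaries (the analogue, for amalgams of finite groups, of the fact that folded/saturated graphs are deterministic and ``$G$-based'' so that $Lab$ is detected by actual readable loops): concretely, that in such a graph a word $w$ with $w\in Lab(\Gamma_2,v_2)$ can be read as a closed path at $v_2$ after performing only trivial modifications (free reductions / $A$-insertions) that do not change endpoints. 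Granting that, reading $lab(q)$ and $lab(q')$ from $v_2$ lands at the same vertex, so $\pi(u)$ is well defined; the degree condition on $\Gamma_1\setminus\{v_1\}$ is what guarantees every edge of $\Gamma_1$ lies on some such path $q$, so $\pi$ is defined everywhere, and determinism of reading in $\Gamma_2$ makes the morphism axioms immediate. Uniqueness then follows from the remark cited above.
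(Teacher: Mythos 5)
The paper does not actually prove this lemma---it quotes it from \cite{kap-m}---so your attempt has to be measured against the standard Kapovich--Myasnikov argument. Your ``$\Leftarrow$'' direction and your reduction of uniqueness to the preceding remark are fine. The problem is the step you yourself flag as ``the main obstacle'': you need that a word $w\in Lab(\Gamma_2,v_2)$ is actually readable as a \emph{closed} path at $v_2$ (and, before that, that $lab(q)$ is readable from $v_2$ at all---determinism of reading in a well-labelled graph gives uniqueness of a continuation, not its existence). You propose to bridge this by invoking structural properties of the graphs produced by the generalized Stallings' algorithm, but the lemma assumes only that $\Gamma_1,\Gamma_2$ are well-labelled and satisfy the degree condition; nothing forces $\Gamma_2$ to be a precover, let alone a reduced one, so Theorem~\ref{thm: properties of subgroup graphs}(4) is not available. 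The gap is not cosmetic: if $Lab$ is read in $G=G_1\ast_A G_2$, as this paper defines it, the statement is false at this level of generality. For $G=gp\langle x,y\mid x^4,y^6,x^2=y^3\rangle$ take $\Gamma_1$ a $2$-cycle spelling $x^2$ and $\Gamma_2$ a $6$-cycle spelling $y^6$; both satisfy all the hypotheses, and $Lab(\Gamma_1,v_1)=\langle x^2\rangle\leq\langle y\rangle=Lab(\Gamma_2,v_2)$ because $x^2=_G y^3$, yet no morphism exists since $\Gamma_2$ has no $x$-edges.

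What actually closes the argument is the free-group reading of the lemma, which is the setting of \cite{kap-m}: there $Lab$ lives in $FG(X)$, and the bridge is the elementary pair of facts that (a) free reduction of a path does not change its endpoints, and (b) in a well-labelled graph a freely reduced word is read deterministically; together they show that the freely reduced form of any element of $Lab(\Gamma_2,v_2)$ labels a reduced path closed at $v_2$. Combined with the observation that the degree condition lets you extend any reduced path issuing from $v_1$ to a reduced loop at $v_1$ (so its label, being freely reduced and lying in $Lab(\Gamma_1,v_1)\leq Lab(\Gamma_2,v_2)$, is readable from $v_2$), this makes your construction of $\pi$ go through, including well-definedness and totality on edges. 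So the skeleton is right, but the one step you leave as an appeal to unstated ``structural properties'' is exactly the step that requires either interpreting $Lab$ in the free group, as in \cite{kap-m}, or adding the hypothesis that $\Gamma_2$ is a ($G$-based, suitably saturated) precover; as written, the proof does not establish the statement.
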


\section{Subgroup Graphs}

The current section is devoted to the discussion on subgroup
graphs constructed by the generalized Stallings' folding
algorithm. The main results of \cite{m-foldings} concerning these
graphs (more precisely, Theorem 7.1, Lemma 8.6, Lemma 8.7, Theorem
8.9 and Corollary 8.11 in \cite{m-foldings}), which are essential
for the present paper, are summarized in Theorem~\ref{thm:
properties of subgroup graphs} below. All the missing notations
are explained along the rest of the present section.



\begin{thm} \label{thm: properties of subgroup graphs}
Let $H=\langle h_1, \cdots, h_k \rangle$ be a finitely generated
subgroup of an amalgam of finite groups $ G=G_1 \ast_A G_2$.

Then there is an algorithm (\underline{the generalized Stallings'
folding  algorithm}) which  constructs a finite labelled graph
$(\Gamma(H),v_0)$ with the following properties:
\begin{itemize}
\item[(1)] $ {Lab(\Gamma(H),v_0)}= {H}. $

\item[(2)] Up to isomorphism, $(\Gamma(H),v_0)$ is  a unique
\underline{reduced precover} of $G$ determining $H$.

\item[(3)] $(\Gamma(H),v_0)$ is the {\underline{normal core}} of
$(Cayley(G,H), H \cdot 1)$.

\item[(4)] A {\underline{normal word}} $g \in G$ is in $H$ if and
only if it labels a closed path in $\Gamma(H)$ starting at $v_0$,
that is $v_0 \cdot g=v_0$.

\item[(5)] Let $m$  be the sum of the lengths of words $h_1,
\ldots h_n$. Then the algorithm computes $(\Gamma(H),v_0)$ in time
$O(m^2)$.
Moreover, $|V(\Gamma(H))|$ and  $|E(\Gamma(H))|$ are proportional
to $m$.

\end{itemize}
\end{thm}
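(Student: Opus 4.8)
The plan is to assemble the five assertions from the constituent results of \cite{m-foldings} cited above (Theorem~7.1, Lemma~8.6, Lemma~8.7, Theorem~8.9 and Corollary~8.11), organized around a single construction: a \emph{flower}--then--\emph{fold} procedure. First I would build an auxiliary pointed labelled graph $(\Gamma_0,v_0)$ as follows. Each generator $h_i$ is rewritten as a normal word $g_{i,1}\cdots g_{i,n_i}$ with every syllable $g_{i,j}$ a nontrivial element of $G_1$ or $G_2$; since the factors are finite, this rewriting, and the length of its output, is linear in $|h_i|$. For each $h_i$ one attaches at $v_0$ a closed path reading $h_i$, in which each syllable $g_{i,j}$ is spelled by a short path labelled by a word of $X_1^{\pm}$ or $X_2^{\pm}$ representing it. By construction every loop at $v_0$ has label representing an element of $H$, and conversely every element of $H$ is read by some loop, so $Lab(\Gamma_0,v_0)=H$ and $|V(\Gamma_0)|,|E(\Gamma_0)|$ are $O(m)$. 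Then I would run the generalized Stallings folding algorithm of \cite{m-foldings}: iteratively apply ordinary foldings (identifying two edges with the same initial vertex and label), the amalgamation foldings that identify paths whose labels become equal in $G$ because they lie in the amalgamated subgroup $A$, and hair removals, until none applies.

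For parts (1) and (4): each elementary step $\Gamma\to\Gamma'$ of the algorithm is a morphism of pointed labelled graphs, and the core invariant, established in \cite{m-foldings} (Lemma~8.6, Lemma~8.7), is that it leaves the determined subgroup unchanged, $Lab(\Gamma,v_0)=Lab(\Gamma',v_0)$: for ordinary foldings this is the classical argument, while for the amalgamation foldings it uses that the identified labels are equal in $G$ (ultimately the relators $\phi_1(a)=\phi_2(a)$), so the loop group cannot grow, and it cannot shrink because a folding is surjective. Chaining these equalities back to $\Gamma_0$ gives $Lab(\Gamma(H),v_0)=H$, which is (1). Assertion (4) then follows from the \emph{precover} property of the output (Theorem~8.9 and Corollary~8.11 of \cite{m-foldings}): in a reduced precover every normal word representing an element of $H$ is in fact readable as a closed path at $v_0$, because at each vertex the portion of the star lying in a given factor $G_i$ is either empty or a full coset copy of the Cayley graph of $G_i$, so no normal syllable can get ``stuck''.

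For parts (2) and (3): termination of the folding process — which I would track by a complexity measure on the pair $(|E|,\text{number of foldable configurations})$, no step ever creating edges beyond those of $\Gamma_0$ and the finitely many edges forced by completing factor-cosets — yields a finite \emph{reduced precover} $(\Gamma(H),v_0)$ determining $H$ (Theorem~7.1 of \cite{m-foldings}). Uniqueness up to isomorphism follows from the morphism lemma, Lemma~\ref{morphism of graphs}, which applies since hair-cutting guarantees $degree(v)>1$ at all non-basepoint vertices: if $(\Gamma,v_0)$ and $(\Gamma',v_0')$ are two reduced precovers determining $H$, then $Lab(\Gamma,v_0)\le Lab(\Gamma',v_0')$ and vice versa give mutually inverse morphisms, hence an isomorphism, unique by Remark~\ref{unique isomorphism}. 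Finally (3): one identifies $(\Gamma(H),v_0)$ with the canonical subobject of $(Cayley(G,H),H\cdot 1)$ — its \emph{normal core} — spanned by the vertices and edges lying on freely reduced normal paths issuing from the basepoint, after cutting hairs; this core is itself a reduced precover determining $H$, so by the uniqueness just proved it coincides with $\Gamma(H)$.

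For part (5): the initial flower has size $O(m)$, and since $G_1,G_2$ are finite of fixed size, every coset-completion adds only $O(1)$ edges per affected vertex, so at every stage $|V|,|E|=O(m)$; in particular the final $\Gamma(H)$ satisfies the claimed proportionality. The number of folding steps is $O(m)$ because each strictly decreases the edge count or removes a hair, and each step, implemented with a union--find structure as in \cite{kap-m, m-s-w}, costs nearly linear time, giving the $O(m^2)$ bound (a crude but sufficient estimate). The step I expect to be the real obstacle is the amalgamation part: proving that the extra, amalgam-specific foldings still terminate and that the terminal graph genuinely satisfies the precover condition — that all ``non-covering'' behaviour is confined to controlled vertices — which is precisely the content of Theorem~8.9 and Corollary~8.11 of \cite{m-foldings} and rests on the Normal Form Theorem for amalgams; everything else is bookkeeping layered on top of the classical Stallings machinery.
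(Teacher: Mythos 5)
You should first be aware that the paper does not actually prove this theorem: it is presented explicitly as a summary of Theorem~7.1, Lemmas~8.6--8.7, Theorem~8.9 and Corollary~8.11 of \cite{m-foldings}, with the algorithm itself reproduced in the Appendix. Your sketch is consistent with that algorithm (flower; fold and cut hairs; complete each monochromatic component to a cover of the relevant factor; perform the $A$-identifications at bichromatic vertices; delete redundant components) and with the division of labour among the cited results, so as a reconstruction of the external proof it is on the right track and correctly isolates the genuinely hard point, namely termination of the amalgam-specific steps and the precover property of the terminal graph.

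Two places deserve more care. First, your uniqueness argument for (2) leans on Lemma~\ref{morphism of graphs}, which is imported from \cite{kap-m} and is really a statement about free groups: containment of the subgroups \emph{of $G$} determined by two well-labelled graphs does not by itself produce a morphism between them (a single vertex and a copy of $Cayley(G_1)$ both determine $\{1\}$, yet there is no morphism from the latter to the former), which is exactly why ``reduced precover determining $H$'' must carry the extra conditions of Definition~\ref{def:ReducedPrecover}. The clean route --- which you in fact gesture at for (3) --- is to prove first that the output equals the normal core of $(Cayley(G,H),H\cdot 1)$, an object depending only on $H$, and to deduce uniqueness from that together with Remark~\ref{unique isomorphism}. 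Second, for (4) your ``no syllable gets stuck'' argument handles normal words of syllable length greater than $1$ and length-one words outside $A$, but a length-one element of $H\cap A$ read at a monochromatic basepoint may only be readable as a $G$-equal representative in the other factor (cf.\ Lemma~\ref{lem: normal elements in precovers}); it is precisely condition (ii) of Definition~\ref{def:ReducedPrecover} (and Step~6 of the Appendix algorithm) that forces $v_0$ to be bichromatic in that case, so that the word itself labels a closed path. Neither point invalidates your plan, but both are where the cited lemmas of \cite{m-foldings} are doing real work rather than bookkeeping.
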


\begin{cor}
Theorem~\ref{thm: properties of subgroup graphs} (4) provides a
solution of the \underline{membership problem} for finitely
generated subgroups of amalgams of finite groups.
\end{cor}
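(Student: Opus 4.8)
The plan is to combine the graph $(\Gamma(H),v_0)$ produced by the generalized Stallings' folding algorithm with an effective procedure for putting an arbitrary element of $G$ into normal form, and then to reduce the membership question ``is $g\in H$?'' to a path-tracing problem in $(\Gamma(H),v_0)$. First, given the input generators $h_1,\dots,h_k$, run the generalized Stallings' folding algorithm to build $(\Gamma(H),v_0)$; by Theorem~\ref{thm: properties of subgroup graphs}(5) this terminates (in time $O(m^2)$, with $|V(\Gamma(H))|,|E(\Gamma(H))|=O(m)$), and by Theorem~\ref{thm: properties of subgroup graphs}(1) we have $Lab(\Gamma(H),v_0)=H$.

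Second, one must observe that a normal form can be computed for any $g\in G$. Since $G_1$ and $G_2$ are \emph{finite} groups given by finite presentations, their multiplication tables — and in particular the solutions of their word problems and of the membership problems ``$x\in G_i\cap A$'' — are computable. Hence, given a word $w\in X^{*}$ representing $g$, one repeatedly (i) multiplies out maximal subwords lying in a single factor, (ii) deletes syllables equal to $1$, and (iii) replaces a syllable lying in $G_i\cap A$ by its image in the other factor and merges it with a neighbouring syllable; this process terminates and outputs a normal decomposition $(g_1,\dots,g_n)$ of $g$.

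Third, apply Theorem~\ref{thm: properties of subgroup graphs}(4). Because $(\Gamma(H),v_0)$ is a reduced precover of $G$ (Theorem~\ref{thm: properties of subgroup graphs}(2)), the normal word $g_1g_2\cdots g_n$ can be traced from $v_0$ deterministically, one syllable at a time, either reaching a well-defined vertex $v_0\cdot g$ of $\Gamma(H)$ or failing because a required edge is absent. If the trace fails, or if it succeeds but $v_0\cdot g\neq v_0$, then $g\notin H$; if $v_0\cdot g=v_0$, then $g\in H$. Correctness is precisely the equivalence in Theorem~\ref{thm: properties of subgroup graphs}(4) combined with $Lab(\Gamma(H),v_0)=H$: a normal word representing $g$ labels a closed path at $v_0$ if and only if $g\in H$.

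Finally, the whole procedure is effective and in fact polynomial: $O(m^{2})$ to construct the graph, and time polynomial in $|g|$ and in $|G_1|+|G_2|$ to normalize $g$ and to trace the resulting path in $\Gamma(H)$. The only point that requires any care — and the one place where the finiteness of the free factors is genuinely used — is step two, the computation of normal forms; everything else is immediate from Theorem~\ref{thm: properties of subgroup graphs}, so I do not expect a substantial obstacle here.
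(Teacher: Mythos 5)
Your proposal is correct and is exactly the argument the paper intends: the corollary is stated without proof because it is regarded as immediate from Theorem~\ref{thm: properties of subgroup graphs} (construct $\Gamma(H)$ effectively by (2) and (5), reduce the query element to normal form using the finiteness of the factors, and trace the resulting normal word from $v_0$ in the well-labelled graph, invoking (1) and (4)). Your explicit treatment of the normal-form computation is a useful detail the paper leaves implicit, but it is not a different route.
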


Throughout the present paper the notation \fbox{$(\Gamma(H),v_0)$}
is always used for the finite labelled graph  constructed by the
generalized Stallings' folding  algorithm for a finitely generated
subgroup $H$ of an amalgam of finite groups $G=G_1 \ast_A G_2$.
%



\subsection*{Definition of Precovers:} The notion of
\emph{precovers} was defined by Gitik in \cite{gi_sep} for
subgroup graphs of amalgams. Below we present its definition and
list some  basic properties. In doing so, we rely on the notation
and results obtained in \cite{gi_sep}. The discussion of precovers
which are \emph{reduced}  come later in
Section~\ref{sec:ConjugacyProblem}.

Let $\Gamma$ be a graph labelled with $X^{\pm}$, where $X=X_1 \cup
X_2$ is the generating set of $G=G_1 \ast_A G_2$  given by
(1.a)-(1.c).
We view $\Gamma$ as a two colored graph: one color for each one of
the generating sets $X_1$ and $X_2$ of the factors $G_1$ and
$G_2$, respectively.

The vertex $v \in V(\Gamma)$ is called \emph{$X_i$-monochromatic}
if all the edges of $\Gamma$ incident with $v$ are labelled with
$X_i^{\pm}$, for some $i \in \{1,2\}$. We denote the set of
$X_i$-monochromatic vertices of $\Gamma$ by $VM_i(\Gamma)$ and put
$VM(\Gamma)= VM_1(\Gamma) \cup VM_2(\Gamma)$.

We say that a vertex $v \in V(\Gamma)$ is \emph{bichromatic} if
there exist edges $e_1$ and $e_2$ in $\Gamma$ with
$$\iota(e_1)=\iota(e_2)=v \ {\rm and} \ lab(e_i) \in X_i^{\pm}, \ i \in \{1,2\}.$$
The  set of bichromatic vertices of $\Gamma$ is denoted by
$VB(\Gamma)$.

A subgraph of $\Gamma$ is called \emph{monochromatic} if it is
labelled only with $X_1^{\pm}$ or only with $X_2^{\pm}$. An
\emph{$X_i$-monochromatic component} of $\Gamma$ ($i \in \{1,2\}$)
is a maximal connected subgraph of $\Gamma$ labelled with
$X_i^{\pm}$, which contains at least one edge.
Thus monochromatic components of $\Gamma$ are graphs determining
subgroups of the factors, $G_1$ or $G_2$.

We say that a graph $\Gamma$ is \emph{$ G$-based} if any path $p
\subseteq \Gamma$ with $lab(p)=_G 1$ is closed. Thus if $\Gamma$
is $G$-based then, obviously, it is well-labelled with $X^{\pm}$.

\begin{defin}[Definition of Precover] A $G$-based  graph $\Gamma$
is a \emph{precover} of $G$ if each $X_i$-monochromatic
component of $\Gamma$ is a \emph{cover} of $G_i$  ($i \in
\{1,2\}$).
\end{defin}

Following the terminology of Gitik (\cite{gi_sep}), we use the
term \emph{``covers of $G$''} for \emph{relative (coset) Cayley
graphs} of $G$ and denote by \fbox{$Cayley(G,S)$} the coset Cayley
graph of $G$ relative to the subgroup $S$ of
$G$.\footnote{Whenever the notation $Cayley(G,S)$ is used, it
always means that $S$ is a subgroup of the group $G$ and the
presentation of $G$ is fixed and clear from the context. }
If $S=\{1\}$, then $Cayley(G,S)$ is the \emph{Cayley graph} of $G$
and the notation \fbox{$Cayley(G)$} is used.

Note that the use of the term ``covers'' is adjusted by the  well
known fact that a geometric realization of a coset Cayley graph of
$G$ relative to some $S \leq G$ is a 1-skeleton of a topological
cover corresponding to $S$ of the standard 2-complex representing
the group $G$ (see \cite{stil}, pp.162-163).

\begin{conv}
By the above definition, a precover doesn't have to be a connected
graph. However along this paper we restrict our attention only to
connected precovers. Thus any time this term
 is used, we always mean that the corresponding graph
is connected unless it is stated otherwise.

We follow the convention that a graph $\Gamma$ with
$V(\Gamma)=\{v\}$ and $E(\Gamma)=\emptyset$ determining the
trivial subgroup (that is $Lab(\Gamma,v)=\{1\}$) is a (an empty)
precover of $G$.  \e
\end{conv}

\begin{ex}
{\rm
 Let $G=gp\langle x,y | x^4, y^6, x^2=y^3 \rangle=\mathbb{Z}_4 \ast_{\mathbb{Z}_2} \mathbb{Z}_6$.

Recall that $G$ is isomorphic to $SL(2,\mathbb{Z})$ under the
homomorphism
$$x\mapsto \left(
\begin{array}{cc}
0 & 1 \\
-1 & 0
\end{array}
\right), \ y \mapsto \left(
\begin{array} {cc}
0 & -1\\
1 & 1
\end{array}
 \right).$$
The graphs $\Gamma_1$ and $\Gamma_3$ on Figure \ref{fig:Precovers}
are examples of precovers of $G$ with one monochromatic component
and two monochromatic components, respectively.

Though the $\{x\}$-monochromatic component of the graph $\Gamma_2$
is a cover of $\mathbb{Z}_4 $ and the $\{y\}$-monochromatic
component is a cover of $\mathbb{Z}_6$, $\Gamma_2$ is not a
precover of $G$, because it is not a $G$-based graph. Indeed, $v
\cdot (x^2y^{-3})=u$, while $x^2y^{-3}=_G 1$.

The graph $\Gamma_4$ is not a precover of $G$ because its
$\{x\}$-monochromatic components are not covers of  $\mathbb{Z}_4
$. }\e
\end{ex}
\begin{figure}[!h]
\psfrag{x }[][]{$x$} \psfrag{y }[][]{$y$} \psfrag{v }[][]{$v$}
\psfrag{u }[][]{$u$}
\psfrag{w }[][]{$w$}
\psfrag{x1 - monochromatic vertex }[][]{{\footnotesize
$\{x\}$-monochromatic vertex}}
\psfrag{y1 - monochromatic vertex }[][]{\footnotesize
{$\{y\}$-monochromatic vertex}}
\psfrag{ bichromatic vertex }[][]{\footnotesize {bichromatic
vertex}}
\psfragscanon \psfrag{G }[][]{{\Large $\Gamma_1$}}
\psfragscanon \psfrag{K }[][]{{\Large $\Gamma_2$}}
\psfragscanon \psfrag{H }[][]{{\Large $\Gamma_3$}}
\psfragscanon \psfrag{L }[][]{{\Large $\Gamma_4$}}
\includegraphics[width=\textwidth]{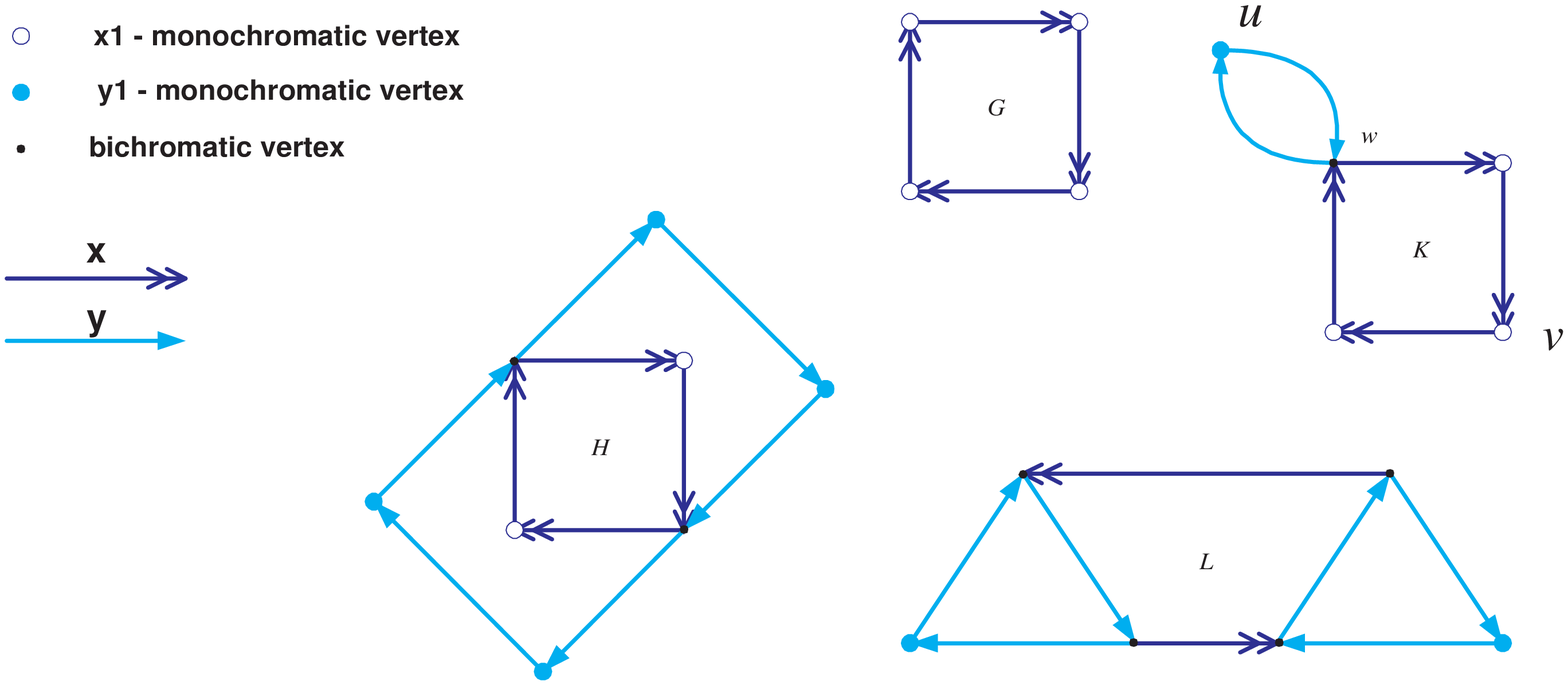}
\caption{ \label{fig:Precovers}}
\end{figure}


A graph $\Gamma$ is \emph{$x$-saturated} at $v \in V(\Gamma)$, if
there exists $e \in E(\Gamma)$ with $\iota(e)=v$ and $lab(e)=x$
($x \in X$). $\Gamma$ is \emph{$X^{\pm}$-saturated} if it is
$x$-saturated for each $x \in X^{\pm}$ at each  $v \in V(\Gamma)$.

\begin{lem}[Lemma 1.5 in \cite{gi_sep}] \label{lemma1.5}
Let $G=gp\langle X|R \rangle$ be a group and let $(\Gamma,v_0)$ be
a graph well-labelled with $X^{\pm}$. Denote $Lab(\Gamma,v_0)=S$.
Then
\begin{itemize}
    \item $\Gamma$ is $G$-based if and only if it can be embedded in $(Cayley(G,S), S~\cdot~1)$,
    \item $\Gamma$ is $G$-based and $X^{\pm}$-saturated if and only if it is isomorphic to  \linebreak[4] $(Cayley(G,S), S
    \cdot~1)$.~
    \footnote{We write $S \cdot 1$ instead of the usual $S1=S$ to distinguish this vertex of $Cayley(G,S)$ as the basepoint of the
    graph.}
\end{itemize}
\end{lem}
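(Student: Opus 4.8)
The plan is to build the canonical label-preserving map $\pi\colon(\Gamma,v_0)\to(C,S\cdot 1)$, where $C:=Cayley(G,S)$, and to show that $\Gamma$ being $G$-based is exactly what makes $\pi$ an embedding, while $\Gamma$ being in addition $X^{\pm}$-saturated is exactly what makes $\pi$ onto. First I would record the two routine facts about $C$ itself: it is $G$-based (a path starting at $S\cdot g$ with label $=_G 1$ ends at $S\cdot g\cdot 1=S\cdot g$), it is $X^{\pm}$-saturated (from $S\cdot g$ the edge labelled $x$ runs to $S\cdot gx$, for every $x\in X^{\pm}$), and $Lab(C,S\cdot 1)=\{w\in G\mid S\cdot w=S\}=S$. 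Since both ``$G$-based'' and ``$X^{\pm}$-saturated'' are invariant under isomorphism of labelled graphs, and ``$G$-based'' is inherited by subgraphs, the two ``if'' implications follow at once from these facts; the work is in the two ``only if'' implications, and both are read off from $\pi$.

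To construct $\pi$ --- here, as is standing in this setting, $\Gamma$ is connected --- send a vertex $v$ to $S\cdot lab(p)$ for any path $p$ from $v_0$ to $v$. This is well defined: for two such paths $p,q$ the loop $p\overline q$ lies in $Loop(\Gamma,v_0)$, so $lab(p)\,lab(q)^{-1}\in Lab(\Gamma,v_0)=S$ and hence $S\cdot lab(p)=S\cdot lab(q)$; note this uses only the definition of $S$, not $G$-basedness. Given an edge $e$ with $lab(e)=x$, $\iota(e)=u$, $\tau(e)=v$, and a path $p$ from $v_0$ to $u$, the path $pe$ witnesses $\pi(v)=S\cdot lab(p)\cdot x=\pi(u)\cdot x$, so one may send $e$ to the unique edge of $C$ from $\pi(u)$ to $\pi(v)$ labelled $x$ ($C$ is well-labelled). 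The resulting $\pi$ is a morphism of pointed labelled graphs --- in fact the only one, since morphisms into a well-labelled graph are unique.

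Next, assuming $\Gamma$ is $G$-based, I would prove $\pi$ injective. If $\pi(u)=\pi(v)$, pick paths $p$ from $v_0$ to $u$ and $q$ from $v_0$ to $v$; then $lab(p)\,lab(q)^{-1}\in S$, so there is $r\in Loop(\Gamma,v_0)$ with $lab(r)=_G lab(p)\,lab(q)^{-1}$. The path $\overline p\,r\,q$ runs from $u$ to $v$ and has label $=_G lab(p)^{-1}\cdot lab(p)\,lab(q)^{-1}\cdot lab(q)=_G 1$; since $\Gamma$ is $G$-based this path is closed, forcing $u=v$. Injectivity on edges is then immediate from injectivity on vertices and well-labelledness of $\Gamma$. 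Hence $\pi$ is an embedding, which is the first bullet.

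Finally, assuming $\Gamma$ is also $X^{\pm}$-saturated, I would show $\pi$ is onto, hence an isomorphism. The image $\pi(\Gamma)$ is a connected subgraph of $C$ containing $S\cdot 1$, and it is closed under $X^{\pm}$-edges: if $\pi(v)=S\cdot g$ is in the image, saturation gives an edge of $\Gamma$ at $v$ labelled $x$, whose $\pi$-image is the $C$-edge from $S\cdot g$ to $S\cdot gx$. As $C$ is connected, $\pi$ is therefore onto vertices, and then onto edges by well-labelledness of $C$; so $(\Gamma,v_0)\cong(C,S\cdot 1)$. I expect the one genuinely delicate step to be this injectivity argument --- the surgery $\overline p\,r\,q$ that turns ``$\pi(u)=\pi(v)$'' into a $G$-trivially labelled path and then feeds it to the $G$-based hypothesis; everything else (well-definedness, the morphism property, surjectivity, and the transfer of the two properties across an embedding or isomorphism) is routine, the only minor caveat being that $\Gamma$ is taken connected.
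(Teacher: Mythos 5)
Your proof is correct and follows the canonical route: the paper itself does not prove this lemma but imports it from Gitik's paper \cite{gi_sep}, and the map you build ($v\mapsto S\cdot lab(p)$ for a path $p$ from $v_0$ to $v$) is exactly the morphism $\mu$ from Gitik's proof that this paper later invokes in its complexity analysis of the conjugacy algorithm. Your handling of the two directions (well-definedness from $Lab(\Gamma,v_0)=S$ alone, injectivity from $G$-basedness via the path $\overline{p}\,r\,q$, surjectivity from saturation) is sound, with the appropriate caveat that $\Gamma$ is taken to be connected.
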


\begin{cor}
If $\Gamma$ is a precover of $G$ with $Lab(\Gamma,v_0)=H \leq G$
then  $\Gamma$ is a subgraph of $Cayley(G,H)$.
\end{cor}

Thus a precover of $G$ can be viewed as a part of the
corresponding cover  of $G$, which explains the use of the term
``precovers''.

\begin{remark}[\cite{m-foldings}] \label{remark: morphism of precovers}
{\rm Let $\phi: \Gamma \rightarrow \Delta$ be a morphism of
labelled graphs. If $\Gamma$ is a precover of $G$, then
$\phi(\Gamma)$ is a precover of $G$ as well. }\e
\end{remark}


\subsection*{Precovers are Compatible:}

A graph $\Gamma$  is called \emph{compatible at a bichromatic
vertex} $v$ if for any monochromatic path $p$ in $\Gamma$ such
that $\iota(p)=v$ and $lab(p) \in A$ there exists a monochromatic
path $t$ of a different color in $\Gamma$ such that $\iota(t)=v$,
$\tau(t)=\tau(p)$ and $lab(t)=_G lab(p)$. We say that $\Gamma$ is
\emph{compatible} if it is compatible at all bichromatic vertices.

\begin{ex}
{\rm The graphs $\Gamma_1$ and $\Gamma_3$ on Figure
\ref{fig:Precovers} are compatible. The graph $\Gamma_2$ does not
possess this property because $w \cdot x^{2}=v$, while $w \cdot
y^3=u$. $\Gamma_4$ is not compatible as well.} \e
\end{ex}

\begin{lem} [Lemma 2.12 in \cite{gi_sep}] \label{lemma2.12}
If $\Gamma$ is a compatible graph, then for any  path $p$ in
$\Gamma$ there exists a path $t$ in normal form  such that
$\iota(t)=\iota(p), \ \tau(t)=\tau(p) \ {\rm and} \ lab(t)=_G
lab(p).$
\end{lem}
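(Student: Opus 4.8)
The plan is to argue by induction on the syllable length $l(lab(p))$ of the label of $p$, keeping the endpoints fixed. If $lab(p)$ is already a normal word there is nothing to do, so I may take $t = p$. For the inductive step, suppose $lab(p)$ is not normal. The idea is to locate a place in $p$ where normality fails and use compatibility to shorten the syllable length there, while preserving the endpoints and the $G$-value of the label. First I would break $p$ into maximal monochromatic subpaths $p = q_1 q_2 \cdots q_r$, where consecutive $q_j$ have different colors; then $lab(p) =_G lab(q_1) \cdots lab(q_r)$, and this is a normal decomposition precisely when each $lab(q_j) \neq_G 1$ and, if $r > 1$, each $lab(q_j) \notin A$. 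So failure of normality means some $q_j$ has $lab(q_j) =_G 1$ or $lab(q_j) \in A$ (with $r>1$).

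Next I would handle these two cases. If some interior $q_j$ has $lab(q_j) \in A$: the vertex $v = \iota(q_j)$ is bichromatic (it meets both a $q_{j-1}$-edge and a $q_j$-edge of different colors — one subtlety is when $q_j$ is the first or last segment, which I treat separately, but then $lab(q_j)\in A$ is not itself a violation of normality), so compatibility at $v$ gives a monochromatic path $t_j$ of the \emph{other} color with $\iota(t_j) = v$, $\tau(t_j) = \tau(q_j)$ and $lab(t_j) =_G lab(q_j)$. Replacing $q_j$ by $t_j$ makes the segment $q_{j-1} t_j q_{j+1}$ (after the one at the other end too, if applicable) monochromatic, so it fuses with its neighbours into a single monochromatic subpath; the syllable length of the resulting label drops by at least $2$. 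If instead some $q_j$ has $lab(q_j) =_G 1$: since a monochromatic component is a cover of $G_i$ and hence well-labelled and $G_i$-based, $q_j$ is a closed path, so I can simply delete it; again the two neighbouring segments (if they exist) fuse and the syllable length drops. In either case the new path has the same endpoints, the same label in $G$, and strictly smaller syllable length, so induction finishes the proof. (The base case $l=0$ or $l=1$ is immediate: a word of syllable length $0$ is empty and of length $1$ lies in a factor and is already normal once we delete it if it equals $1$.)

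I would write the induction on the pair (syllable length, then word length) to be safe, since deleting a trivial closed monochromatic subpath shortens the word and fusing segments could in principle leave a monochromatic segment whose label is trivial, which one simply deletes as well; these bookkeeping moves strictly decrease the well-founded measure. The main obstacle I expect is the boundary bookkeeping: making sure that the vertex at which I invoke compatibility really is bichromatic (this uses that $q_{j-1}$ and $q_j$ genuinely have different colors at their common vertex, which is exactly how the maximal monochromatic decomposition was chosen), and carefully tracking that after a replacement or deletion the new decomposition into maximal monochromatic pieces has strictly fewer pieces or a strictly shorter total word — the geometry is easy, but the indices require care. Everything else (well-labelledness of $\Gamma$, the cover property of monochromatic components, the fact that $lab$ respects the group multiplication) is supplied by the hypothesis that $\Gamma$ is a compatible graph together with the definitions and lemmas already established.
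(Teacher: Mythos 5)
This lemma is imported verbatim from Gitik's paper (Lemma 2.12 of \cite{gi_sep}); the present paper gives no proof of it, so there is nothing internal to compare against. Your argument is the natural one and is essentially the standard proof: decompose $p$ into maximal monochromatic subpaths $q_1\cdots q_r$, observe that $lab(q_1)\cdots lab(q_r)$ fails to be a normal decomposition only because some $lab(q_j)$ is trivial or lies in $A$ (for $r>1$), use compatibility at the bichromatic vertex $\iota(q_j)$ (or $\tau(q_1)$ for the initial segment) to replace $q_j$ by a path of the other color with the same endpoints and the same image in $G$, fuse with the neighbours, and induct on the number of monochromatic segments. That part is correct, and your attention to the boundary segments and to the choice of well-founded measure is appropriate.

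The one genuine flaw is your appeal to ``a monochromatic component is a cover of $G_i$ and hence $G_i$-based'' to dispose of segments with $lab(q_j)=_G 1$. The hypothesis of the lemma is only that $\Gamma$ is \emph{compatible}; nothing in that definition forces monochromatic components to be covers or even $G_i$-based, so you may not conclude that a trivially-labelled monochromatic subpath is closed. Fortunately this crutch is unnecessary for interior (and, with the reversal trick, terminal) segments when $r>1$: since $1\in A$, the case $lab(q_j)=_G 1$ is subsumed by the case $lab(q_j)\in A$, and the compatibility swap at the bichromatic junction handles it without any deletion. The place where something beyond compatibility really is needed is the degenerate case $r=1$ with $lab(p)=_G 1$ and $p$ not closed: there the only normal word representing $1$ is the empty word, so no path $t$ as in the conclusion can exist, and the lemma as literally stated fails for such a graph (e.g.\ a well-labelled two-edge segment reading $x^2$ in $\mathbb{Z}_4\ast_{\mathbb{Z}_2}\mathbb{Z}_6$). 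This indicates that Gitik's original hypotheses (and every application in this paper, which is always to precovers) implicitly include $G_i$-basedness of the monochromatic components; you should either add that hypothesis explicitly or restrict the claim to paths whose label is nontrivial in $G$, rather than silently assuming the cover property mid-proof.
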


\begin{remark} [Remark 2.11 in \cite{gi_sep}] \label{remark:
precovers are compatible} {\rm Precovers are compatible. \hfill
$\diamond$}
\end{remark}

The following can be taken as another definition of precovers.

\begin{lem} [Corollary2.13 in \cite{gi_sep}]  \label{corol2.13}
Let $\Gamma$ be a compatible graph. If all  $X_i$-components of
$\Gamma$ are $G_i$-based, $i \in \{1,2\}$, then $\Gamma$ is
$G$-based. In particular, if each $X_i$-component of $\Gamma$ is a
cover of $G_i$, $i \in \{1,2\}$, and $\Gamma$ is compatible, then
$\Gamma$ is a precover of $G$.
\end{lem}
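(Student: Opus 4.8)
The plan is to prove the first assertion — that a compatible graph all of whose $X_i$-components are $G_i$-based is itself $G$-based — and then to deduce the second assertion immediately, since a cover of $G_i$ is in particular a $G_i$-based graph, so the hypotheses of the first assertion are met and $\Gamma$ is $G$-based; being $G$-based with each $X_i$-component a cover of $G_i$ is exactly the definition of a precover of $G$.

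For the main assertion, let $p$ be a path in $\Gamma$ with $lab(p) =_G 1$; I must show $p$ is closed. First I would invoke Lemma~\ref{lemma2.12}: since $\Gamma$ is compatible, there is a path $t$ in normal form with $\iota(t)=\iota(p)$, $\tau(t)=\tau(p)$ and $lab(t) =_G lab(p) =_G 1$. Write $lab(t) \equiv g_1 g_2 \cdots g_n$ in normal form. By the Normal Form Theorem (IV.2.6 in \cite{l_s}), the only way a normal word can equal $1$ in $G$ is if $n \le 1$; moreover if $n = 1$ then $g_1 =_G 1$ forces, again by the normal form conditions, that in fact the word is empty, i.e. $t$ is the empty path (or a path with trivial label lying in a single factor). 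So I would argue that $t$ can be taken so that $lab(t)$ is a single syllable $g_1$ lying in some factor $G_i$ with $g_1 =_G 1$, hence $t$ is a monochromatic path in a single $X_i$-component $\Delta$ with $lab_{G_i}(t) =_{G_i} 1$. Since $\Delta$ is $G_i$-based, $t$ is closed in $\Delta$, hence $\iota(t) = \tau(t)$, and therefore $\iota(p) = \tau(p)$, i.e. $p$ is closed. This shows $\Gamma$ is $G$-based.

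The step I expect to require the most care is the reduction of $t$ to a single-factor (monochromatic) path. Lemma~\ref{lemma2.12} only guarantees a normal path with the same endpoints and equal label; I need to combine this with the Normal Form Theorem to kill all but one syllable, and then handle the degenerate case $n=1$ correctly — a word $g_1$ with $g_1 \in G_i$ and $g_1 =_G 1$ need not be the empty word as a word in $X^*$, but the path it labels still lies entirely inside one $X_i$-monochromatic subgraph, which is contained in an $X_i$-component (or is a single vertex, in which case $p$ is trivially closed). Once $t$ is seen to be monochromatic with $G_i$-trivial label, the $G_i$-basedness of that component finishes the argument. For the ``in particular'' clause no extra work is needed beyond observing that covers of $G_i$ are $G_i$-based (Lemma~\ref{lemma1.5}, or directly from the definition of a coset Cayley graph), so the first part applies and delivers a precover.
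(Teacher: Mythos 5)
The paper does not prove this statement; it is imported verbatim as Corollary~2.13 of Gitik's paper \cite{gi_sep}, so there is no internal proof to compare yours against. That said, your argument is correct and is the natural deduction from the ingredients the paper does record: given $p$ with $lab(p)=_G 1$, Lemma~\ref{lemma2.12} supplies a path $t$ in normal form with the same endpoints and $lab(t)=_G lab(p)=_G 1$; the Normal Form Theorem rules out more than one syllable; the remaining degenerate possibility --- a nonempty monochromatic path whose label represents $1$ --- is closed precisely because the $X_i$-component containing it is $G_i$-based (and $G_i$ embeds in $G$, so $lab(t)=_G 1$ with $lab(t)\in G_i$ gives $lab(t)=_{G_i}1$); and the ``in particular'' clause is immediate since a cover of $G_i$ is $G_i$-based by Lemma~\ref{lemma1.5}.

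One remark on the step you rightly flagged as delicate. If one reads the paper's definition of normal word literally (every syllable $\neq_G 1$), then a normal word equal to $1$ in $G$ must be the empty word, the lemma would follow from Lemma~\ref{lemma2.12} alone, and the $G_i$-basedness hypothesis would never be used. That cannot be right: a single monochromatic component with no bichromatic vertices is vacuously compatible, yet if it is not $G_i$-based it contains a non-closed path labelled by a relator of $G_i$ which admits no empty normal representative --- so Lemma~\ref{lemma2.12}, as restated in this paper with compatibility as the only hypothesis, is slightly too strong. Your version, which permits $t$ to degenerate to a monochromatic path with $G_i$-trivial label and only then invokes $G_i$-basedness of its component, is exactly where that hypothesis earns its keep and is surely what the original argument in \cite{gi_sep} does. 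So this is not a gap in your proof but a (correctly diagnosed and correctly repaired) imprecision in the quoted lemma.
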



\subsection*{Normal Core and Canonicity:}

\begin{defin} \label{def: normal core}
A vertex of $Cayley(G,H)$ is called \underline{essential} if there
exists a normal path closed at $H \cdot 1$ that goes through it.

The \underline{normal core} $(\Delta, H \cdot 1)$ of $Cayley(G,H)$
is the restriction of $Cayley(G,H)$ to the set of all
\underline{essential vertices}.
\end{defin}

\begin{remark} \label{rm core=union of closed paths_finite grps}
{\rm
    Note that the normal core $(\Delta, H \cdot 1)$ can be viewed as the union
    of all normal paths closed at $H \cdot 1$  in $(Cayley(G,H), H \cdot 1)$.
    Thus $(\Delta, H \cdot 1)$ is a connected graph with  basepoint $H \cdot
    1$.

    Moreover, $V(\Delta)=\{H \cdot 1\}$ and  $E(\Delta)=\emptyset$ if and only if $H$ is the trivial
    subgroup. Indeed, $H$ is not trivial iff there exists $1
    \neq g \in H$ in  normal form iff $g$ labels a normal path in $Cayley(G,H)$
    closed at $H \cdot 1$, iff  $E(\Delta) \neq \emptyset$.}

    \e
\end{remark}

Therefore the normal core of $Cayley(G,H)$ depends on $H$ itself
and not on the set of subgroup generators, which, by
Theorem~\ref{thm: properties of subgroup graphs} (3), implies the
canonicity of the construction of $(\Gamma(H),v_0)$ by the
generalized Stallings' folding  algorithm. This provides a
solution of the Membership Problem for finitely generated
subgroups of amalgams of finite groups given by Theorem~\ref{thm:
properties of subgroup graphs} (4).


\subsection*{Complexity Issues:}
As were noted in \cite{m-foldings}, the complexity of the
generalized Stallings' algorithm is quadratic in the size of the
input, when we assume that all the information concerning the
finite groups $G_1$, $G_2$, $A$ and the amalgam $G=G_1 \ast_{A}
G_2$ given via $(1.a)$, $(1.b)$ and $(1.c)$ (see
Section~\ref{section:Preliminaries}) is not a part of the input.
We also assume that the Cayley graphs and all the relative Cayley
graphs of the free factors are given for ``free'' as well.

Otherwise, if the group presentations of the free factors $G_1$
and $G_2$, as well as the monomorphisms between the amalgamated
subgroup $A$ and the free factors are a part of the input (the
\emph{uniform version} of the algorithm) then we have to build the
groups $G_1$ and $G_2$, that is to construct their Cayley graphs
and relative Cayley graphs.

Since we assume that the groups $G_1$ and $G_2$ are finite,  the
Todd-Coxeter algorithm and the Knuth Bendix algorithm  are
suitable \cite{l_s, sims, stil} for these purposes. Then the
complexity of the construction depends on the group presentation
of $G_1$ and $G_2$ we have: it could be even exponential in the
size of the presentation \cite{cdhw73}. Therefore the generalized
Stallings algorithm, presented in \cite{m-foldings}, with these
additional constructions could take time exponential in the size
of the input.

Thus each uniform algorithmic problem for $H$ whose solution
involves the construction of the subgroup graph $\Gamma(H)$ may
have an exponential complexity in the size of the input.

The primary goal of the complexity analysis introduced along the
current paper is to estimate our graph theoretical methods. To
this end,  we assume that all the algorithms along the present
paper have the following ``given data''.
\begin{description}
    \item[GIVEN] : Finite groups $G_1$, $G_2$, $A$ and the amalgam
$G=G_1 \ast_{A} G_2$ given via $(1.a)$, $(1.b)$ and $(1.c)$.\\
We assume that the Cayley graphs and all the relative Cayley
graphs of the free factors are given.
\end{description}


\section{The Conjugacy Problem}
\label{sec:ConjugacyProblem}

The \emph{conjugacy problem} for subgroups of a group $G$ asks to
answer whether or not given subgroups of $G$ are conjugate. Below
we solve this problem for finitely generated subgroups of amalgams
of finite groups, using subgroup graphs constructed by the
generalized Stallings' algorithm.

Our results extend the analogous ones  obtained for finitely
generated subgroups of free groups by Kapovich and Myasnikov in
\cite{kap-m}. We start by discussing of this analogy. Throughout
the present section we assume that $G=G_1 \ast_A G_2$ is an
amalgam of finite groups.

The  solution of the \emph{conjugacy problem} for finitely
generated subgroups of free groups, presented in \cite{kap-m},
involve a construction of a special graph $Type$ which is a
\emph{core graph} with respect to each of its vertices. Thus it
posses the property that $H, K \leq_{f.g.} FG(X)$ conjugate if and
only if $Type(\Gamma_H)=Type(\Gamma_K)$.

The extended  definition of $Type$ in the case of amalgams of
finite groups as well as a discussion of its properties are
introduced in Section~\ref{subsection:Type}.
Theorem~\ref{thm:Conjugates=>EqType} gives a connection between
$Types$ of conjugate subgroups, which provides a solution of the
conjugacy problem for subgroups in amalgams of finite groups. The
algorithm is presented along with the proof of
Corollary~\ref{algorithm : conjugate subgroups}. The complexity
analysis shows that this algorithm is quadratic in the size of the
input.

\medskip

In \cite{stal} Stallings  defined a \emph{core-graph} to be a
connected graph which has at least one edge, and each of whose
edges belongs to at least one cyclically reduced circuit. He noted
that every connected graph with a non-trivial fundamental group
contains a \emph{core} where the fundamental group is
concentrated, and the original graph consists of this core with
various trees \emph{hangings} on.  Thus given a connected graph
$\Gamma$ which has at least one edge, one can obtain its core by
the process of ``\emph{shaving off trees}''.

In \cite{kap-m} the Stallings' notion of a \emph{core-graph} were
split  into two aspects: a core with respect to some vertex
(\emph{the basepoint})  and a core with respect to any of its
vertices. The first notion corresponds to the subgroup graph
$(\Gamma_S,v_0)$ of $S \leq_{f.g.} F(X)$ constructed by Stallings'
algorithm \cite{stal}, while the second one defines
$Type(\Gamma_S)$.
Thus   $\Gamma_S$  can be obtained from $Cayley(FG(X),S)$  by a
``partial shaving procedure'', which preserves the basepoint $S
\cdot 1$. The ``full shaving procedure'' yields $Type(\Gamma_S)$.
Moreover, $Type(\Gamma_S)$ can be obtained from the subgroup
$\Gamma_S$ by the iterative  erasure of the unique sequence of
\emph{spurs} (\emph{spur} is an edge one of whose endpoints has
degree 1) starting from the basepoint $v_0$ of $\Gamma_S$.

An analog of $(\Gamma_S,v_0)$ in amalgams of finite groups is the
subgroup graph $(\Gamma(H), v_0)$  constructed by the generalized
Stallings' algorithm, where  $H \leq_{f.g.}
 G_1 \ast_A G_2$. By Theorem~\ref{thm: properties of subgroup
graphs} (3), $(\Gamma(H),v_0)$ is the \emph{normal core} of
$(Cayley(G,H), H \cdot 1)$, that is  the union of all normal paths
in $(Cayley(G,H), H \cdot 1)$ closed at $H \cdot 1$. That is, it
is a sort of a core graph with respect to the basepoint $H \cdot
1$.


An analog of a \emph{spur} in subgroup graphs of finitely
generated subgroups of amalgams of finite groups is a
\emph{redundant component}.
The notion of \emph{redundant component} were defined in
\cite{m-foldings}. However in the present context its more
convenient to use the name \emph{redundant component w.r.t.} the
basepoint $v_0$  for that notion defined in \cite{m-foldings}, and
to keep the name \emph{redundant component} for the following.

\begin{defin} \label{def: redundant component}
Let $\Gamma$ be a precover of $G$.
Let $C$ be a $X_i$-monochromatic component of $\Gamma$ ($i \in
\{1,2\}$).  $C$ is  \underline{redundant}  if one of the following
holds.
\begin{enumerate}
\item [(1)] $C$ is the unique monochromatic component of $\Gamma$
(that is $\Gamma=C$) and $Lab(C,v)=\{1\}$ (equivalently, by Lemma
\ref{lemma1.5}, $C$ is isomorphic to $Cayley(G_i)$), where $v \in
V(C)$.
\item [(2)] $\Gamma$ has at least two distinct monochromatic
components and the following holds.

Let $\vartheta \in VB(C)$. Let $K=Lab(C,\vartheta)$ (equivalently,
by Lemma \ref{lemma1.5}, $(C,\vartheta)=(Cayley(G_i,K), K \cdot
1)$).

Then $K \leq A$ and $VB(C)=A(\vartheta)$.
\footnote{Recall that $A(\vartheta)=\{\vartheta \cdot a \; | \; a
\in A\}$ is the \emph{$A$-orbit} of $\vartheta$ in $V(C)$ by the
right action of $A$ on $V(C)$. Since $A_{\vartheta}=K$, the
condition $VB(C) = A(\vartheta)$ can be replaced by its
computational analogue $|VB(C)|=[A:K]$.}
\end{enumerate}

C is  \underline{redundant w.r.t.} the vertex $u\in V(\Gamma)$  if
$C$ is redundant and $u \in V(C)$ implies $u \in VB(C)$ and $K =
\{1\}$.

\end{defin}

\begin{remark} \label{remark: RemovingRedundantCom}
{\rm Similarly to the removing of spurs from graphs representing
finitely generated subgroups  of free group, in the case of
amalgams of finite groups the erasing of redundant components
w.r.t. $v_0$ from $(\Gamma, v_0)$ doesn't change the subgroup
defined by this pointed graph (see Lemma 6.17 in
\cite{m-foldings}).

Namely, if $\Gamma'$ is the graph obtained from $\Gamma $, by
erasing of a monochromatic component  which is redundant w.r.t.
$v_0$, then $Lab(\Gamma',v_0')=Lab(\Gamma , v_0)$, where $v_0'$ is
the image of $v_0$ in $\Gamma'$.

} \e
\end{remark}

The following example attempts to give an intuition of  what
happens in the covering space corresponding to the subgroup $H
\leq G$ of the standard 2-complex representing $G$, when we remove
redundant monochromatic components from a subgraph of
$Cayley(G,H)$, which is the 1-skeleton of this covering space.

\begin{ex}
{\rm

Let $G=gp\langle x,y | x^4, y^6, x^2=y^3 \rangle=G_1 \ast_A G_2$,
where $G_1=gp\langle x | x^4 \rangle$, $G_2=gp\langle y | y^6
\rangle$ and $A=\langle x^2 \rangle=\langle y^3 \rangle$.

Assume that all the redundant monochromatic components are
isomorphic to either $Cayley(G_1)$ or $Cayley(G_2)$.
Hence a removing of  a redundant component from $Cayley(G,H)$ is
expressed in the covering space by removing  a 2-cell with the
boundary path $x^4$ (or $y^6$) and two 2-cells with the boundary
path $x^2y^{-3}$. One can imagine this process as  ``smashing of
bubbles'', see Figure \ref{fig: Bubbles}.

However even if a redundant component is isomorphic to
$Cayley(G_i,S)$, where $\{1\} \neq S \leq G_i$, $i \in \{1,2\}$,
the ``bubbles intuition'' fails as well as in the cases when the
factor groups are not cyclic. That is now the parts removed from
the covering space hardly resemble bubbles, while the motivation
for their removing remains the same.

Here the  common intuition with Stallings' construction: we
``smash bubbles'' instead of ``shaving off trees'', which can be
thought of as  an iterative erasure  of spurs. } \e
\end{ex}
\begin{figure}[!htb]
\begin{center}
\psfrag{x }[][]{\footnotesize $x$} \psfrag{y }[][]{\footnotesize
$y$}
\includegraphics[width=0.5\textwidth]{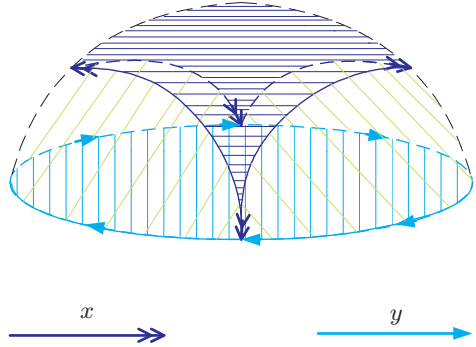}
\caption[A bubble]{{\footnotesize A bubble.}
 \label{fig: Bubbles}}
\end{center}
\end{figure}

The subgroup graph  $(\Gamma(H), v_0)$ is a unique finite
\emph{reduced precover} of $G$, by Theorem~\ref{thm: properties of
subgroup graphs} (2). Now we are ready to recall the precise
definition of this term.

\begin{defin}[Definition~6.18  in \cite{m-foldings}]
\label{def:ReducedPrecover}
A precover $(\Gamma,v_0)$ of $G$ is  \underline{reduced}  if the
following holds.
\begin{itemize}
 \item[(i)]
$(\Gamma,v_0)$ has no redundant components w.r.t. $v_0$.
 \item[(ii)]
$Lab(C_0,v_0) \cap A \neq \{1\}$ implies $v_0 \in VB(\Gamma)$,
where $C_0$ is a monochromatic component of $\Gamma$ such that
$v_0 \in V(C_0)$.
\end{itemize}
\end{defin}

Roughly speaking,
the reduced precover $(\Gamma(H), v_0)$ can be obtained from
$(Cayley(G,H), H \cdot 1)$ by removing of all redundant components
w.r.t. the basepoint $H \cdot 1$. Intuitively, in analogy with
\cite{kap-m}, the graph obtained from $(Cayley(G,H), H \cdot 1)$
by erasing of all redundant components is  $Type(\Gamma(H))$.
Moreover, $Type(\Gamma(H))$ can be obtained from the graph
$\Gamma(H)$ by the iterative  erasure of the unique sequence of
redundant components starting from $C_0$ such that $v_0 \in
V(C_0)$. Some special cases occur  when $H$ is a subgroup of a
factor, $G_1$ or $G_2$, of $G$.



\subsection{Type} \label{subsection:Type}

Consider $(\Gamma(H), v_0)$, where $H$ is a finitely generated
subgroup of an amalgam of finite groups $G=G_1 \ast_A G_2$.

As is mentioned in the introductory  part, a definition of
$Type(\Gamma(H))$  largely relies  on the definition of
$\Gamma(H)$. To this end we start by presenting some properties of
reduced precovers based on the results obtained in
\cite{m-foldings}.

\begin{lem}[Lemma 6.21 in \cite{m-foldings}] \label{lem:PropertiesRedPrecovers}
Let $(\Gamma,v)$ be a precover of $G$ with no redundant components
w.r.t. $v$. Let $H=Lab(\Gamma,v)$.

If $(\Gamma,v)$ is not a reduced precover of $G$, then $Lab(C ,v )
\cap A=S \neq \{1\}$ and $v \in VM_i(\Gamma)$, where $C$ is a
$X_i$-monochromatic component of $\Gamma$ such that $v \in V(C)$
($i \in \{1,2\}$).

Moreover,  $ (\Gamma(H),v_0)=(\Gamma \ast_{\{v \cdot a=Sa \; | \;
a\in A\}} Cayley(G_j, S),\vartheta)$,where $1\leq i \neq j \leq 2$
and $\vartheta$ is the image of $v$ (equivalently, of $S \cdot 1$)
in the amalgam graph.
\end{lem}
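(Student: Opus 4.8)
The plan is to establish the two assertions of the Lemma separately, using the basic structure theory of reduced precovers and the fact (Theorem~\ref{thm: properties of subgroup graphs}(2)) that $(\Gamma(H),v_0)$ is the \emph{unique} reduced precover determining $H$. First I would prove the ``if'' direction contrapositively: assuming $(\Gamma,v)$ is not a reduced precover but has no redundant components w.r.t.\ $v$, I want to locate exactly where condition~(ii) of Definition~\ref{def:ReducedPrecover} fails. Since (i) holds by hypothesis, the only way for $(\Gamma,v)$ to fail to be reduced is that (ii) fails, i.e.\ $Lab(C_0,v)\cap A=S\neq\{1\}$ yet $v\notin VB(\Gamma)$. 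Because $v\notin VB(\Gamma)$, the vertex $v$ is monochromatic; say $v\in VM_i(\Gamma)$, and $C_0=C$ is then the unique $X_i$-monochromatic component containing $v$. This gives the first conclusion. One subtle point to check here is that $S$ is genuinely a subgroup of $A$ (as a subgroup of $G_i$ identified with its image), which follows since $S=Lab(C,v)\cap A$ by definition; and that $C$ is a cover of $G_i$, hence $(C,v)=(Cayley(G_i,S),S\cdot 1)$ by Lemma~\ref{lemma1.5}, so the $A$-orbit language makes sense.

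For the ``moreover'' part, the strategy is to show that the graph obtained by amalgamating a copy of $Cayley(G_j,S)$ onto $\Gamma$ along the $A$-orbit of $v$ (identifying $v\cdot a$ with $Sa$ for $a\in A$) is a \emph{reduced precover} determining $H$, and then invoke uniqueness. I would verify the defining properties of the pushout graph $\Gamma'=\Gamma\ast_{\{v\cdot a=Sa\}}Cayley(G_j,S)$ in turn: (a) $\Gamma'$ is $G$-based and its $X_k$-monochromatic components are covers of $G_k$ for $k\in\{1,2\}$ — the newly attached $Cayley(G_j,S)$ is a cover of $G_j$ by construction, the identification is along a common $A$-coset structure so compatibility at the new bichromatic vertices holds, and then Lemma~\ref{corol2.13} gives that $\Gamma'$ is a precover; (b) $Lab(\Gamma',\vartheta)=H$ — attaching $Cayley(G_j,S)$ along the $A$-orbit of $v$ adds only loops whose labels already lie in $A\subseteq\langle\text{loops at }v\rangle$ up to the $G$-relations, so the subgroup is unchanged (this is the pushout-preserves-subgroup type of argument, analogous to Remark~\ref{remark: RemovingRedundantCom} run backwards); (c) $\Gamma'$ is reduced: it has no redundant components w.r.t.\ $\vartheta$ (the components of $\Gamma$ that survive were non-redundant w.r.t.\ $v$, the new component $Cayley(G_j,S)$ contains the basepoint and now $\vartheta$ is bichromatic, so Definition~\ref{def: redundant component}'s condition on $u\in VB(C)$ with $K=\{1\}$ must be examined — and since $\vartheta$ is bichromatic in $\Gamma'$, the redundancy-w.r.t.-$\vartheta$ clause is handled); and condition~(ii) now holds vacuously because $\vartheta\in VB(\Gamma')$. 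Then $(\Gamma',\vartheta)$ and $(\Gamma(H),v_0)$ are both reduced precovers determining $H$, so by Theorem~\ref{thm: properties of subgroup graphs}(2) they are isomorphic as pointed graphs, which is the claimed equality.

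The main obstacle I anticipate is part~(b)/(c) of the second assertion: proving that the amalgamation along the full $A$-orbit adds \emph{exactly} the right structure — no more, no less. On one side, I must ensure the subgroup does not grow, which requires that every loop at $\vartheta$ through the new component $Cayley(G_j,S)$ has label equal in $G$ to a loop already realized in $\Gamma$; this rests on $S\leq A$ and on the coset Cayley graph $Cayley(G_j,S)$ having its loops at $S\cdot 1$ label precisely $S$, combined with the identification $A\cap G_i\leftrightarrow A\cap G_j$ inside $G$. On the other side, I must ensure $\Gamma'$ is genuinely \emph{reduced} and not merely a precover — the delicate case is when $S$ is a proper nontrivial subgroup of $A$, where one has to check carefully that the attached component is not itself redundant (its bichromatic vertex set being the whole $A$-orbit could, a priori, trigger clause~(2) of Definition~\ref{def: redundant component}, but the presence of the basepoint $\vartheta$ and the non-redundancy of the rest of $\Gamma$ rule this out). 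Once these verifications are in place, the uniqueness statement does the rest with no further work.
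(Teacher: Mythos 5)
This lemma is quoted in the paper as Lemma~6.21 of \cite{m-foldings} and is given \emph{without proof} here, so there is no in-paper argument to compare yours against; I can only judge the reconstruction on its own terms, and on those terms it is sound and is surely close to the intended one. The first assertion is exactly the process of elimination you describe: hypothesis kills clause~(i) of Definition~\ref{def:ReducedPrecover}, so clause~(ii) must fail, which says $S=Lab(C,v)\cap A\neq\{1\}$ while $v\notin VB(\Gamma)$, i.e.\ $v\in VM_i(\Gamma)$. For the ``moreover'' part, the route through uniqueness of the reduced precover (Theorem~\ref{thm: properties of subgroup graphs}(2)) is the right one, and your step~(b) is in fact already packaged in the paper as Corollary~\ref{cor:AmalgamGraphEmbedding}, which asserts $Lab(\Delta,\vartheta)=Lab(\Gamma,v)$ for precisely this amalgamated graph, so you need not reprove it. Your handling of the delicate point in~(c) is also correct: the attached component $D=Cayley(G_j,S)$ does satisfy clause~(2) of Definition~\ref{def: redundant component} (it is redundant), but it is \emph{not} redundant w.r.t.\ $\vartheta$ because $\vartheta\in V(D)\cap VB(D)$ with $K=S\neq\{1\}$; and the old component $C$ cannot become redundant w.r.t.\ $\vartheta$ for the same reason, while components not containing $v$ are untouched. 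One small inaccuracy: you write $(C,v)=(Cayley(G_i,S),S\cdot 1)$, but Lemma~\ref{lemma1.5} gives $(C,v)=(Cayley(G_i,Lab(C,v)),Lab(C,v)\cdot 1)$, and $Lab(C,v)$ need not be contained in $A$; what actually legitimizes the identification $v\cdot a=Sa$ is that the $A$-stabilizer of $v$ under the right action on $V(C)$ is $Lab(C,v)\cap A=S$, so $v\cdot a_1=v\cdot a_2$ iff $Sa_1=Sa_2$. This does not affect the structure of the argument.
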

\begin{cor} \label{cor:AmalgamGraphEmbedding}
Let $ \Gamma $ be a precover of $G$. Let $C$ be a
$X_i$-monochromatic component of $\Gamma$ and let $v \in VM_i(C)$
($i \in \{1,2\}$).

Then   the graph $\Delta=\Gamma \ast_{\{v \cdot a=Sa \; | \; a\in
A\}} Cayley(G_j, S)$, where $S=Lab(C ,v ) \cap A $, satisfies
\begin{itemize}
 \item $  \Gamma $ and $Cayley(G_j, S)$ embeds in $\Delta$ ($1 \leq i \neq j \leq 2$),
 \item $Lab(\Delta,\vartheta)=Lab(\Gamma,v)$, where $\vartheta$ is the image of
 $v$ in $\Delta$.
\end{itemize}
\end{cor}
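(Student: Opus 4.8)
The plan is to deduce Corollary~\ref{cor:AmalgamGraphEmbedding} from Lemma~\ref{lem:PropertiesRedPrecovers} by a reduction to the situation covered there. First I would dispose of the trivial case $S=\{1\}$: then $Cayley(G_j,S)=Cayley(G_j)$ and the amalgamation $\Gamma \ast_{\{v\cdot a = Sa\}} Cayley(G_j)$ glues a full copy of $Cayley(G_j)$ to $\Gamma$ along the single vertex $v$ (since $A$ acts trivially on the one-point identification set when $S=A$... more precisely the identification set is $\{v\cdot a = a \mid a \in A\}$, identifying the $A$-orbit of $v$ in $C$ with the $A$-orbit of $S\cdot 1 = 1$ in $Cayley(G_j)$). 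In this case the new monochromatic $X_j$-component is redundant, its removal recovers $\Gamma$ by Remark~\ref{remark: RemovingRedundantCom}, and both bullet points are immediate: $\Gamma$ embeds in $\Delta$ tautologically, $Cayley(G_j)$ embeds as the attached component, and $Lab(\Delta,\vartheta)=Lab(\Gamma,v)$ because erasing a redundant component does not change the subgroup. I should double-check whether the definition of the amalgam (pushout) of graphs used in \cite{m-foldings} indeed produces an embedding of both factors — this is presumably established there, and I would cite it.

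For the main case $S\neq\{1\}$, set $H=Lab(\Gamma,v)$. The key observation is that the hypothesis of Corollary~\ref{cor:AmalgamGraphEmbedding} — namely $v\in VM_i(C)$ and $S=Lab(C,v)\cap A \neq \{1\}$ — is exactly the conclusion that Lemma~\ref{lem:PropertiesRedPrecovers} draws when $(\Gamma,v)$ fails to be reduced, \emph{provided} $(\Gamma,v)$ has no redundant components w.r.t.\ $v$. So the first step is to arrange this: if $\Gamma$ has redundant components w.r.t.\ $v$, erase them iteratively to obtain a precover $(\Gamma_0,v)$ with no such components and with $Lab(\Gamma_0,v)=Lab(\Gamma,v)=H$ (Remark~\ref{remark: RemovingRedundantCom}); moreover the component $C$ survives this erasure since $v\in V(C)$ and $v\in VM_i(C)$ means $C$ is not redundant w.r.t.\ $v$ (the definition of ``redundant w.r.t.\ $v$'' forces $v\in VB(C)$, contradicting $v\in VM_i(C)$). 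Hence $C$ is still a monochromatic component of $\Gamma_0$ with $v\in VM_i(\Gamma_0)$ and $Lab(C,v)\cap A = S \neq\{1\}$, so $(\Gamma_0,v)$ is not reduced, and Lemma~\ref{lem:PropertiesRedPrecovers} applies verbatim: $(\Gamma(H),v_0)=(\Gamma_0 \ast_{\{v\cdot a = Sa\}} Cayley(G_j,S),\vartheta)$.

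It remains to transfer the two conclusions from $\Gamma_0$ back to $\Gamma$. For the subgroup equality this is automatic: $Lab(\Gamma(H),v_0)=H=Lab(\Gamma,v)$ by Theorem~\ref{thm: properties of subgroup graphs}(1), and if we write $\Delta = \Gamma \ast_{\{v\cdot a=Sa\}} Cayley(G_j,S)$ (the pushout formed from the \emph{original} $\Gamma$), then $\Delta$ is obtained from $\Gamma(H)=\Gamma_0 \ast Cayley(G_j,S)$ by re-attaching the redundant components that were removed, all of which are still redundant w.r.t.\ $\vartheta$ inside $\Delta$, so $Lab(\Delta,\vartheta)=Lab(\Gamma(H),v_0)=Lab(\Gamma,v)$ again by Remark~\ref{remark: RemovingRedundantCom}. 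For the embedding statement: $Cayley(G_j,S)$ embeds in $\Gamma(H)$ by Lemma~\ref{lem:PropertiesRedPrecovers} (the pushout of well-labelled, $G$-based graphs along a subgroup-compatible identification embeds both factors — this is the content of the amalgam-of-graphs construction in \cite{m-foldings}), hence in $\Delta \supseteq \Gamma(H)$; and $\Gamma$ embeds in $\Delta$ because $\Gamma_0$ embeds in $\Gamma(H)$ and the removed redundant components are re-attached without identification, so their union $\Gamma$ sits inside $\Delta$. The main obstacle I anticipate is bookkeeping rather than conceptual: I must make sure that (a) the pushout construction of \cite{m-foldings} genuinely yields embeddings of both factors — I will simply invoke the relevant lemma there — and (b) the components of $\Gamma$ that are redundant w.r.t.\ $v$ remain redundant w.r.t.\ $\vartheta$ after the amalgamation, which requires checking that attaching $Cayley(G_j,S)$ at the $A$-orbit of $v$ does not create new bichromatic vertices in those far-away components (it does not, since the gluing happens only at vertices of $C$, a different component). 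Once these two points are in place, the corollary follows by assembling the pieces.
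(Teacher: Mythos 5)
The paper states this corollary without proof, so there is no official argument to compare yours against line by line; it is clearly meant to be read off from Lemma~\ref{lem:PropertiesRedPrecovers} together with the amalgam-of-graphs construction of \cite{m-foldings}, and your derivation is a legitimate (and carefully bookkept) way of doing that. Two comments, though. First, your route is more roundabout than the statement requires: by channelling everything through $\Gamma(H)$ and the uniqueness of reduced precovers you silently import the hypotheses that $\Gamma$ is finite and that $Lab(\Gamma,v)$ is finitely generated (otherwise $\Gamma(H)$ is undefined and your iterative removal need not terminate), neither of which appears in the statement; you also pay for the detour with the separate $S=\{1\}$ case and with the re-attachment step, where your phrase ``all of which are still redundant w.r.t.\ $\vartheta$ inside $\Delta$'' is not literally correct --- redundancy of each re-attached component is relative to the intermediate graph at that stage, so the argument must be run one component at a time in reverse order of removal. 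Second, a direct verification avoids all of this. Since $v\in VM_i(C)$, compatibility of the precover forces the whole orbit $A(v)$ to be $X_i$-monochromatic (an $X_j$-edge at $v\cdot a$ would, via compatibility at that bichromatic vertex applied to the $X_i$-path labelled $a^{-1}$, produce an $X_j$-path ending at $v$), so the identification $v\cdot a=Sa$ is a bijection between two $[A:S]$-element vertex sets that attaches $X_j$-edges only at $X_i$-monochromatic vertices; no folding occurs and both factors embed. For the label equality, $Lab(\Gamma,v)\leq Lab(\Delta,\vartheta)$ is trivial, and conversely any maximal subpath of a loop at $\vartheta$ that runs inside $Cayley(G_j,S)$ from $Sa$ to $Sb$ has label $g\in G_j$ with $Sag=Sb$, i.e.\ $g=_G a^{-1}sb$ for some $s\in S=Lab(C,v)\cap A$, and hence can be replaced by a path in $C$ from $v\cdot a$ to $v\cdot b$ with the same label in $G$; thus every loop at $\vartheta$ in $\Delta$ has label in $Lab(\Gamma,v)$. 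This treats $S=\{1\}$ and $S\neq\{1\}$ uniformly and uses no finiteness. Your proof is not wrong for the finite case the paper actually uses, but you should either add the finiteness/finite-generation hypotheses you are relying on or switch to the direct argument.
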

%
%
%
%
%

\begin{lem} \label{lem:FormOfRedPrecover}
Each of the following holds.
\begin{itemize}
 \item[(i)] \underline{$H=\{1\}$} if and only if  $V(\Gamma(H))=\{v_0\}$,
 $E(\Gamma(H))=\emptyset$.
 \item[(ii)]  \underline{$H \leq G_i$ and $H \cap A=\{1\}$}
 if and only if $\Gamma(H)$ consists of a unique $X_i$-monochromatic
 component:
 $(\Gamma(H),v_0)=(Cayley(G_i,H),H \cdot 1)$ ($i \in \{1,2\}$).
 \item[(iii)] \underline{$H \leq A$} if and only if
$(\Gamma(H), v_0)=(\Delta, \vartheta)$, where \\
$\Delta=Cayley(G_1,H) \ast_{\{Ha \; | \; a \in A\}} Cayley(G_2,H)$
and $\vartheta$ is the image of $H \cdot 1$ in $\Delta$.

 \item[(iv)] If \underline{$H \nleq G_i$ for all $i \in \{1,2\}$}
 then  $C_0 \subseteq \Gamma(H)$ is a redundant
 component if and only if $v_0 \in V(C_0)$ and
there exists $u_0 \in VB(C_0)$ such that $VB(C_0)=A(u_0)$ and
$Lab(C_0,u_0) \leq A$. Moreover, $C_0$ is a unique redundant
component of $\Gamma(H)$.

\end{itemize}

\end{lem}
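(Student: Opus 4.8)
The lemma has four parts, each an "if and only if" describing the shape of the reduced precover $\Gamma(H)$ in a special case. The unifying tool is Theorem~\ref{thm: properties of subgroup graphs}(2): $(\Gamma(H),v_0)$ is \emph{the} unique reduced precover of $G$ determining $H$. So for each part my strategy is the same: for the ``only if'' direction, exhibit a concrete candidate graph, verify it is a reduced precover (it is $G$-based, its monochromatic components are covers of the factors by Lemma~\ref{corol2.13} together with Lemma~\ref{lemma1.5}, and it satisfies the two conditions of Definition~\ref{def:ReducedPrecover}), check that it determines $H$ via $Lab$, and invoke uniqueness. For the ``if'' direction, read off $H=Lab(\Gamma(H),v_0)$ directly from the described graph.

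\textbf{Parts (i)--(iii).} Part (i) is essentially the convention that the one-vertex graph determines the trivial subgroup, together with Remark~\ref{rm core=union of closed paths_finite grps}: $H=\{1\}$ iff $E(\Delta)=\emptyset$ for the normal core, and by Theorem~\ref{thm: properties of subgroup graphs}(3) this normal core is $\Gamma(H)$. For (ii), if $H\le G_i$ and $H\cap A=\{1\}$, then $Cayley(G_i,H)$ is already a cover of $G_i$, is $G$-based (a monochromatic graph is automatically $G$-based since normal words in a single factor are just elements of that factor), has no redundant component w.r.t.\ $v_0$ because $Lab(C_0,v_0)=H$ has trivial intersection with $A$ (killing clause (1) of Definition~\ref{def: redundant component}, as $H\ne\{1\}$ in general — the degenerate case $H=\{1\}$ being (i)), and condition (ii) of Definition~\ref{def:ReducedPrecover} is vacuous since $Lab(C_0,v_0)\cap A=\{1\}$; uniqueness finishes it, and the converse is immediate from $Lab((Cayley(G_i,H),H\cdot1))=H$. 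For (iii), when $H\le A$ the natural candidate is the amalgam $\Delta=Cayley(G_1,H)\ast_{\{Ha\,|\,a\in A\}}Cayley(G_2,H)$: by Corollary~\ref{cor:AmalgamGraphEmbedding} (applied with $S=H$, which is legitimate precisely because $H\le A$) both factor covers embed, $Lab(\Delta,\vartheta)=H$, $\Delta$ is a precover since each monochromatic component is a full cover of a factor and $\Delta$ is compatible by construction (Lemma~\ref{corol2.13}); one then checks it has no redundant component w.r.t.\ $\vartheta$ — here clause (2) of Definition~\ref{def: redundant component} must be ruled out, which is where one uses that $\vartheta$ is bichromatic with $Lab(C_0,\vartheta)=H\le A$ but $\vartheta$ being in \emph{both} components means removing either would disconnect or change the subgroup — and condition (ii) of Definition~\ref{def:ReducedPrecover} holds since $v_0=\vartheta\in VB(\Delta)$.

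\textbf{Part (iv).} This is the main case and the one I expect to fight with. Assume $H\nleq G_i$ for $i=1,2$, so $\Gamma(H)$ has at least two monochromatic components and $v_0$ lies in some component $C_0$. The ``if'' direction is easy: if $v_0\in V(C_0)$, $VB(C_0)=A(u_0)$ for some $u_0$ with $Lab(C_0,u_0)\le A$, then $C_0$ meets the definition of a redundant component (clause (2)). The substance is: (a) \emph{existence of at most one} redundant component, and (b) that \emph{any} redundant component must contain $v_0$. For (b): by Remark~\ref{remark: RemovingRedundantCom}, a redundant component \emph{w.r.t.\ $v_0$} could be erased without changing $H$, but $\Gamma(H)$ being reduced (Definition~\ref{def:ReducedPrecover}(i)) has none; so any redundant component $C$ that does \emph{not} contain $v_0$ would automatically be redundant w.r.t.\ $v_0$ (the extra conditions in ``redundant w.r.t.\ $v_0$'' are conditions on $v_0$ when $v_0\in V(C)$, hence vacuous when $v_0\notin V(C)$) — contradiction. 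Hence any redundant component contains $v_0$, which forces uniqueness as well since components are disjoint. The delicate point I anticipate is pinning down precisely \emph{why} $C_0$ (the component containing $v_0$), when it fails to be redundant w.r.t.\ $v_0$ yet is redundant, forces the stated description $VB(C_0)=A(u_0)$, $Lab(C_0,u_0)\le A$: this should come from unpacking clause (2) of Definition~\ref{def: redundant component} at the bichromatic vertex $u_0$ and using Lemma~\ref{lemma1.5} to identify $(C_0,u_0)$ with $(Cayley(G_i,K),K\cdot1)$, but I will need to be careful that the bichromatic vertex witnessing redundancy and the basepoint $v_0$ interact correctly — in particular handling the sub-case $v_0\in VB(C_0)$ versus $v_0\in VM_i(C_0)$, where Lemma~\ref{lem:PropertiesRedPrecovers} may be needed to rule out the latter or to relate $\Gamma(H)$ to an amalgam of a smaller precover with a factor cover.
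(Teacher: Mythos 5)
Your treatment of (i)--(iii) follows essentially the same route as the paper: exhibit the candidate graph, check it is a reduced precover determining $H$, and invoke the uniqueness from Theorem~\ref{thm: properties of subgroup graphs}(2). Your argument that every redundant component of $\Gamma(H)$ must contain $v_0$ (because a redundant component not containing $v_0$ is vacuously redundant w.r.t.\ $v_0$, contradicting Definition~\ref{def:ReducedPrecover}(i)) is also correct and matches the paper.

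The genuine gap is in the uniqueness claim of (iv). You conclude uniqueness from ``any redundant component contains $v_0$, which forces uniqueness as well since components are disjoint.'' But monochromatic components of \emph{different} colors are not disjoint: they intersect in bichromatic vertices. If $v_0$ is bichromatic, it lies simultaneously in an $X_1$-monochromatic component $C_0$ and an $X_2$-monochromatic component $D$, and a priori both could satisfy the redundancy conditions. This is exactly the case the paper's proof spends its effort on: assuming both $C_0$ and $D$ are redundant, one gets $v_0\in VB(C_0)\cap VB(D)$, hence $A(u_0)=A(v_0)=A(u)$ and $VB(C_0)=VB(D)$, which (by well-labelledness) forces $\Gamma(H)=C_0\cup D$ with $Lab(C_0,v_0)=Lab(D,v_0)=A_{v_0}\leq A$; part (iii) then yields $H=A_{v_0}\leq A$, contradicting the standing hypothesis $H\nleq G_i$. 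Without an argument of this kind your proof does not rule out two redundant components of opposite colors through $v_0$, so the ``Moreover, $C_0$ is a unique redundant component'' clause is unproven as written.
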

\begin{proof}
By Theorem~\ref{thm: properties of subgroup graphs} (2), the
reduced precover $(\Gamma(H),v_0)$  is unique up to isomorphism.
By the Definition~\ref{def:ReducedPrecover}, a graph $(\Delta,u)$
such that $V(\Delta)=\{u\}$ and $E(\Delta)=\emptyset$ is a reduced
precover of $G$, which satisfies $Lab(\Delta,u)=\{1\}$. Therefore
$(\Delta,u)=(\Gamma(H),v_0)$. This gives the ``if'' direction of
(i). Similar arguments prove the ``if'' direction of (ii) and
(iii).

The converse of (i) is trivial.  The opposite direction of (ii) is
true, because, by Lemma~\ref{lemma1.5},
$(\Gamma,v)=(Cayley(G_i,H),H \cdot 1)$ ($i \in \{1,2\}$) implies
$Lab(\Gamma,v)=H$. Moreover, by
Definition~\ref{def:ReducedPrecover}, $(\Gamma,v)$ is a reduced
precover of $G$ with $Lab(\Gamma,v)=H$ whenever $H \leq G_i$ such
that $H \cap A=\{1\}$.

To prove the  converse   of (iii), let $\Gamma=C_1 \ast_{\{v_1
\cdot a=v_2 \cdot a \; | \; a \in A\}} C_2$, where
$(C_i,v_i)=(Cayley(G_i,H),H \cdot 1)$  and $H \leq A$ $(i \in
\{1,2\})$. Let $v$ be the image of $v_i$  in $\Gamma$. Hence
$Lab(C_i,v_i) \leq Lab(\Gamma,v)$.
Now we need the following result from \cite{m-foldings}.
\begin{claim} \label{claim: red precover}
Let $(\Gamma,v)$ be a precover of $G$. Let $C$ be a
$X_i$-monochromatic component of $\Gamma$. Then the followings are
equivalent.
\begin{itemize}
 \item $u_1 \cdot a=u_2$ implies $a \in A$, for all $u_1,u_2 \in
 VB(C)$.
 \item $VB(C)=A(\vartheta)$ and $Lab(C,\vartheta)  \leq
A$, for all $\vartheta \in VB(C)$.
\end{itemize}
\end{claim}
Thus $u_1 \cdot a=u_2$ implies $a \in A$, for all $u_1,u_2 \in
 VB(C_1)=VB(C_2)$. Therefore no normal words of syllable length
 greater than 1 label normal paths in $\Gamma$ closed at $v$.
Hence if $g \in Lab(\Gamma,v)$ and $p$ is a normal path in
$\Gamma$ closed at $v$ such that $lab(p) \equiv g$ then either
$p\subseteq C_1$ or $p \subseteq C_2$. Thus $g \equiv lab(p) \in
H$. Therefore $Lab(\Gamma,v)=H \leq A$. By Theorem~\ref{thm:
properties of subgroup graphs} (2), $(\Gamma,v)=(\Gamma(H),v_0)$.

The statement of (iv) is an immediate consequence of
Definition~\ref{def:ReducedPrecover} and Definition~\ref{def:
redundant component}.

To prove the uniqueness of $C_0$ assume  that there exists another
redundant component $D$ in $\Gamma(H)$ such that $v_0 \in V(D)$
and there exists $u \in VB(D)$ such that $VB(D)=A(u)$ and
$Lab(D,u) \leq A$. Thus, without loss of generality, one can
assume that $C_0$ is a $X_1$-monochromatic component and $D$ is a
$X_2$-monochromatic component. Hence $v_0 \in VB(C_0) \cap VB(D)$.
Therefore $A(u_0)=A(v_0)=A(u)$.
Hence $VB(C_0)=VB(D)$. Since the graph $\Gamma(H)$ is
well-labelled, this implies that $C_0$ and $D$ are the only
monochromatic components of $\Gamma(H)$.

Moreover, since $v_0 \in A(u_0)$, there is $a \in A$ such that
$v_0=u_0 \cdot a$. Hence $Lab(C_0,v_0)=aLab(C_0,u_0)a^{-1} \leq
A$. Similarly, $Lab(D,v_0) \leq A$. Thus
$Lab(C_0,v_0)=A_{v_0}=Lab(D,v_0)$.
\footnote{$A_{v_0}=Lab(C_0,v_0)\cap A$ is the
\emph{$A$-stabilizer} of $v_0$ by the right action of $A$ on
$V(C_0)$.}
Therefore, by (iii), $H=A_{v_0} \leq A$, which contradicts the
assumption of (iv).

\end{proof}

\begin{remark}
{\rm In (iii), the graphs $Cayley(G_1,H)$ and $Cayley(G_2,H)$
embeds
 in $\Delta$, by
Corollary~\ref{cor:AmalgamGraphEmbedding}, }\e
\end{remark}

\begin{cor}[The Triviality Problem] \label{cor:TrivialityProblem}
Let $h_1, \cdots, h_n \in G$.

Then there is an algorithm which decides whether or not the
subgroup $H=\langle h_1, \cdots, h_n \rangle$ is trivial.
\end{cor}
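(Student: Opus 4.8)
The plan is to decide triviality of $H = \langle h_1, \dots, h_n \rangle$ by running the generalized Stallings' folding algorithm to build the reduced precover $(\Gamma(H), v_0)$, and then invoking Lemma~\ref{lem:FormOfRedPrecover}(i): $H = \{1\}$ if and only if $V(\Gamma(H)) = \{v_0\}$ and $E(\Gamma(H)) = \emptyset$. So the algorithm is simply: construct $(\Gamma(H), v_0)$ by Theorem~\ref{thm: properties of subgroup graphs}, then test whether its edge set is empty (equivalently, whether it is the one-vertex graph).

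First I would invoke Theorem~\ref{thm: properties of subgroup graphs}, which guarantees that the generalized Stallings' folding algorithm terminates and produces the finite labelled graph $(\Gamma(H), v_0)$ with $Lab(\Gamma(H), v_0) = H$, and moreover computes it in time $O(m^2)$ where $m = \sum |h_i|$. Next I would apply Lemma~\ref{lem:FormOfRedPrecover}(i), which is an ``if and only if'' statement: $H$ is trivial precisely when $(\Gamma(H), v_0)$ is the one-vertex edgeless graph. Since $\Gamma(H)$ is finite and explicitly constructed, checking whether $E(\Gamma(H)) = \emptyset$ is an immediate inspection. This settles both correctness and termination of the decision procedure.

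For the complexity remark, I would note that once the ``GIVEN'' data (Cayley graphs and relative Cayley graphs of the factors) is assumed available, constructing $\Gamma(H)$ takes $O(m^2)$ time by Theorem~\ref{thm: properties of subgroup graphs}(5), and the final emptiness check on the edge set costs $O(|E(\Gamma(H))|)$, which is $O(m)$ since $|E(\Gamma(H))|$ is proportional to $m$. Hence the algorithm runs in quadratic time in the size of the input. I would also mention the uniform version (when the presentations of the factors are part of the input), where the preliminary construction of the factor Cayley graphs may dominate and make the total cost exponential, consistent with the discussion in the Complexity Issues subsection.

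There is essentially no mathematical obstacle here: the entire content is packaged in Lemma~\ref{lem:FormOfRedPrecover}(i) and Theorem~\ref{thm: properties of subgroup graphs}, so the ``proof'' is a short citation-and-assembly argument. The only point requiring a word of care is to make explicit that we are using the \emph{effective} part of Theorem~\ref{thm: properties of subgroup graphs} --- namely that $(\Gamma(H), v_0)$ is not merely known to exist but is algorithmically constructible from the generators $h_1, \dots, h_n$ --- so that the criterion in Lemma~\ref{lem:FormOfRedPrecover}(i) can actually be tested; this is exactly what the theorem provides.
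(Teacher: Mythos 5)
Your proposal is correct and follows exactly the paper's own argument: construct $(\Gamma(H),v_0)$ by the generalized Stallings' folding algorithm and apply Lemma~\ref{lem:FormOfRedPrecover}(i) to test whether the graph is the one-vertex edgeless graph. The complexity observation you add matches the remark the paper places immediately after its proof.
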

\begin{proof}
We first construct the pointed graph $(\Gamma(H),v_0)$, using the
generalized Stallings' folding  algorithm.

By Theorem~\ref{thm: properties of subgroup graphs} (2),
$(\Gamma(H),v_0)$ is a reduced precover of $G$. Therefore, by
Lemma~\ref{lem:FormOfRedPrecover}(i), $H=Lab(\Gamma(H),v_0)=\{1\}$
if and only if $V(\Gamma(H))=v_0$ and $E(\Gamma(H))=\emptyset$.
\end{proof}

\begin{remark}[Complexity]
{\rm To detect the triviality of a subgroup $H$ given by a set of
generators it takes the same time as to construct the subgroup
graph $\Gamma(H)$. By Theorem~\ref{thm: properties of subgroup
graphs} (5), it is $O(m^2)$,  where $m$ is the sum of the lengths
of words $h_1, \ldots h_n$. } \e
\end{remark}

\begin{lem} \label{lem:DefGammaM}
Let $H$ be a finitely generated subgroup of $G$ such that $H \nleq
G_i$ ($i \in \{1,2\}$).

If $\Gamma(H)$ has a redundant component, then there exists a
unique sequence of alternating monochromatic components $C_0,
\cdots C_{m-1}$ of $\Gamma(H)$ such that the graph $\Gamma_m$,
obtained from $\Gamma(H)$ by the iterative erasure of the above
sequence, has no redundant components.
\end{lem}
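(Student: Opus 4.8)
The plan is to prove this by an inductive ``peeling'' argument, starting from the basepoint component and repeatedly applying Lemma~\ref{lem:FormOfRedPrecover}(iv) to the residual graph at each stage.

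First I would set up the induction. Let $C_0$ be the monochromatic component of $\Gamma(H)$ containing $v_0$. By Lemma~\ref{lem:FormOfRedPrecover}(iv), since $H \nleq G_i$ for each $i$, the graph $\Gamma(H)$ has a redundant component if and only if $C_0$ itself is redundant, and in that case $C_0$ is the \emph{unique} redundant component; moreover there is a unique $u_0 \in VB(C_0)$ with $VB(C_0) = A(u_0)$ and $Lab(C_0, u_0) \le A$. If $C_0$ is not redundant there is nothing to prove (take $m = 0$), so assume it is. Erasing $C_0$ from $\Gamma(H)$ yields a graph $\Gamma_1$; I would take as new basepoint $v_1$ the image in $\Gamma_1$ of a vertex of $C_0$ adjacent to the rest of the graph — concretely, since $VB(C_0) = A(u_0)$ and $Lab(C_0,u_0) \le A$, every bichromatic vertex of $C_0$ is a legitimate ``attaching'' vertex to a component of the other color, so I would pick $v_1$ to be the image of such a vertex (one needs to check $\Gamma_1$ is still connected and nonempty — this uses that $H \nleq G_i$, so $\Gamma(H)$ genuinely has at least two monochromatic components). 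The key structural point, which I expect requires the most care, is that $(\Gamma_1, v_1)$ is again a precover of $G$ (Remark~\ref{remark: morphism of precovers} or a direct check — removing a monochromatic component all of whose bichromatic vertices lie in a single $A$-orbit keeps the remaining monochromatic components covers of the factors and keeps compatibility, hence by Lemma~\ref{corol2.13} it stays $G$-based and a precover), and that it has no redundant components \emph{with respect to} $v_1$. Then I can reapply Lemma~\ref{lem:FormOfRedPrecover}-type reasoning (or its underlying Definitions~\ref{def:ReducedPrecover} and~\ref{def: redundant component}) to $(\Gamma_1, v_1)$: either it is already reduced, or its unique redundant component is exactly the one containing $v_1$, which has the opposite color to $C_0$ — giving the ``alternating'' conclusion.

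Iterating, I get a sequence $C_0, C_1, \dots$ of successively erased components, of alternating colors, producing graphs $\Gamma(H) = \Gamma_0 \supseteq \Gamma_1 \supseteq \cdots$. Since $\Gamma(H)$ is finite and each step removes at least one edge (a monochromatic component contains at least one edge by definition), the process terminates after finitely many steps, say $m$, at a graph $\Gamma_m$ with no redundant component. Uniqueness of the sequence follows from the uniqueness clauses already in hand: at each stage the component to be removed is forced (it must contain the current basepoint and satisfy the $A$-orbit/$Lab \le A$ condition), so there is no choice, and hence the whole sequence $C_0, \dots, C_{m-1}$ is uniquely determined by $\Gamma(H)$.

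The main obstacle, as I see it, is the verification at each inductive step that $(\Gamma_j, v_j)$ is a precover with no redundant components \emph{with respect to its basepoint} — i.e.\ that erasing a redundant component really does land us back in the setting where Lemma~\ref{lem:FormOfRedPrecover}(iv) applies, with the basepoint correctly relocated. This is where one must be careful about the definition of ``redundant w.r.t.\ a vertex'' (Definition~\ref{def: redundant component}, last line) versus plain ``redundant'', and about the case analysis of whether $\Gamma_m$ might end up being a single monochromatic component or a subgroup-of-a-factor situation — but the hypothesis $H \nleq G_i$ and Lemma~\ref{lem:FormOfRedPrecover}(ii),(iii) rule those degenerate endpoints out, since $Lab(\Gamma_j, v_j) = Lab(\Gamma(H), v_0) = H$ is preserved throughout by Remark~\ref{remark: RemovingRedundantCom}. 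Once that bookkeeping is in place, the termination and uniqueness are immediate from finiteness and from the uniqueness already established in Lemma~\ref{lem:FormOfRedPrecover}(iv).
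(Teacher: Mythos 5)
Your outline follows the paper's own argument: identify $C_0$ as the unique redundant component via Lemma~\ref{lem:FormOfRedPrecover}(iv), erase it, observe that the residual graph is again a finite precover, argue that the only component that can now be redundant is the one of the opposite colour that met $C_0$ along $VB(C_0)$, and iterate until finiteness forces termination; uniqueness of the sequence is exactly the uniqueness at each stage. Two points need repair, though. First, the identity $Lab(\Gamma_j,v_j)=Lab(\Gamma(H),v_0)=H$ is false and is not what Remark~\ref{remark: RemovingRedundantCom} gives: that remark concerns components that are redundant \emph{w.r.t.}\ the basepoint, whereas $C_0$ is redundant but, precisely because $(\Gamma(H),v_0)$ is a reduced precover, it is \emph{not} redundant w.r.t.\ $v_0$ (indeed $v_0\in V(C_0)$, and $v_0$ need not even survive into $\Gamma_1$). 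What is true is only that $Lab(\Gamma_j,v_j)$ is a conjugate of $H$ (cf.\ Lemma~\ref{lem:PropertiesType}(iii)). Consequently your attempt to rule out the ``degenerate endpoint'' where $\Gamma_m$ is a single monochromatic component fails --- and that case genuinely occurs (see Figure~\ref{fig: type construction Gamma=C} and the proof of Lemma~\ref{lem:PropertiesType}(i)). Fortunately nothing in the statement of the lemma requires excluding it, so this error is not load-bearing here.

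Second, the step you correctly flag as the crux --- that after erasing $C_{j-1}$ the only candidate for redundancy is the component $C_j$ containing the relocated basepoint --- is justified by ``reapplying Lemma~\ref{lem:FormOfRedPrecover}(iv)'', but that lemma is stated only for the canonical graph $\Gamma(H)$ of a subgroup, not for an arbitrary pointed precover, so it cannot literally be reapplied; as you set things up, the claim that $(\Gamma_j,v_j)$ has no redundant components w.r.t.\ $v_j$ is essentially what you are trying to prove. The direct argument (implicit in the paper's proof) is elementary: every monochromatic component of $\Gamma_j$ other than $C_j$ is untouched by the erasure (same edges, same bichromatic vertex set), so by the uniqueness of the redundant component at the previous stage it remains non-redundant; only $C_j$, whose set $VB$ shrinks by the $A$-orbit $VB(C_{j-1})$, can newly satisfy Definition~\ref{def: redundant component}. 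With that substitution your induction closes and coincides with the paper's proof.
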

\begin{proof}
By Lemma~\ref{lem:FormOfRedPrecover} (iv), $\Gamma(H)$ has the
unique redundant component $C_0$ which satisfies $v_0 \in V(C_0)$
and there exists $u_0 \in VB(C_0)$ such that $VB(C_0)=A(u_0)$ and
$Lab(C_0,u_0) \leq A$.

Let $\Gamma_1$ be the graph   obtained from  $\Gamma(H)$ by
removing of the component $C_0$. That is
$$VM_i(\Gamma_1)=VM_i(\Gamma(H)) \setminus VM_i(C_0), \; \
VM_j(\Gamma_1)=VM_j(\Gamma(H)),$$ $$VB(\Gamma_1)=VB(\Gamma(H))
\setminus VB(C_0)  \ {\rm and} \ E(\Gamma_1)=E(\Gamma(H))
\setminus E(C_0).$$ The resulting graph $\Gamma_1$ is, obviously,
a finite precover of $G$. If $\Gamma_1$ has no redundant
components then $m=1$.

Otherwise there exists a unique $X_j$-monochromatic component of
$\Gamma_{1}$ ($1 \leq i \neq j \leq 2$) which is redundant.
Indeed, $\Gamma(H)$ has a unique $X_j$-monochromatic component
$C_1$  such that $C_0 \cap C_1=VB(C_0)$ ($1 \leq i \neq j \leq
2$). By abuse of notation, we identify the component $C_1$ of
$\Gamma(H)$ with its image in $\Gamma_1$. Thus
$$VB_{\Gamma_1}(C_1)=VB_{\Gamma(H)}(C_1) \setminus VB(C_0).$$
Therefore, $C_1$ is a $X_j$-monochromatic redundant component of
$\Gamma_1$ if and only if there exists a vertex $u_1 \in
VB_{\Gamma(H)}(C_1) \setminus VB(C_0)$ such that $Lab(C_1,u_1)
\leq A$ and $VB_{\Gamma(H)}(C_1)=A(u_1) \cup VB(C_0)=A(u_1) \cup
A(u_0)$.


Since the graph $\Gamma(H)$ is finite, continuing in such manner
one can find the unique sequence
\begin{align}
C_0,C_1, \ldots, C_{m-1} \tag{\text{$\ast$}}\label{sequence of
alternating components}
\end{align}

 of $X_{j_i}$-monochromatic components of $\Gamma(H)$
(see Figure \ref{fig: type construction}) such that the following
holds.


\begin{figure}[!b]
\begin{center}
\psfragscanon \psfrag{A }[][]{{\Large $\Gamma(H)$}}
\psfrag{x }[][]{\footnotesize $x$} \psfrag{y }[][]{\footnotesize
$y$} \psfrag{v }[][]{\small $v_0$}
\psfrag{A3 }[][]{\Large $\Gamma_3$} \psfrag{c0 }[][]{\large $C_0$}
\psfrag{c1 }[][]{ $C_1$} \psfrag{c2 }[][]{ $C_2$} \psfrag{c3
}[][]{ $C_3$}
\psfrag{u1 }[][]{\small $u_1$} \psfrag{u2 }[][]{\small $ u_2$}
\psfrag{u0 }[][]{\small $u_0$}
\psfrag{u3 }[][]{\small $u_3$} \psfrag{u4 }[][]{\small $ u_4$}
\includegraphics[width=\textwidth]{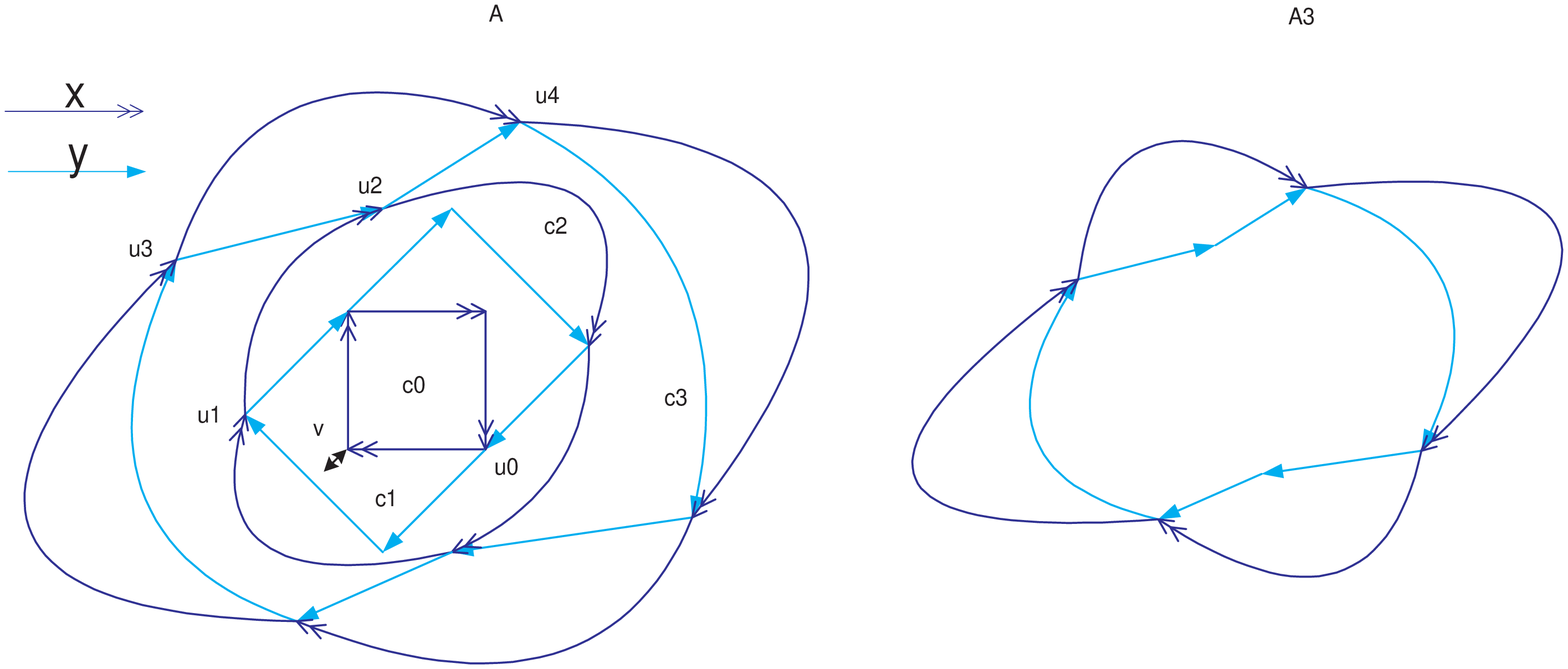}
\caption[The intuition for $Type(\Gamma(H))$, case
1]{{\footnotesize  Let $H \leq_{f.g} G_1 \ast_A G_2 \simeq Z_4
\ast_{Z_2} Z_6$, where $G_1= gp\langle x | x^4 \rangle$, $G_2=
gp\langle y | y^6 \rangle$ and  $A=\langle x^2\rangle=\langle
y^3\rangle$. \\ Thus $C_0,C_1,C_2 $ is the unique sequence of
alternating monochromatic components in the graph $\Gamma(H)$ such
that $\Gamma_3$ has no redundant components. \\ \underline{In
$\Gamma(H)$}:
$(C_0,u_0)=Cayley(G_1)$ and $ VB(C_0) =A(u_0)$;
$(C_1,u_1)=Cayley(G_2)$ and $ VB(C_1) =A(u_0) \cup A(u_1)$;
$(C_2,u_2)=Cayley(G_1)$ and $ VB(C_2) =A(u_1) \cup A(u_2)$;
$(C_3,u_3)=Cayley(G_2)$,   but $VB(C_3)=A(u_2) \cup A(u_3) \cup
A(u_4)$. Thus $Type(\Gamma(H))=\Gamma_3$.}
 \label{fig: type construction}}
\end{center}
\end{figure}


\begin{enumerate}
    \item [(1)] $1 \leq j_i \neq j_{i+1} \leq 2$ for all $0 \leq i \leq
    m-1$.
    \item [(2)] $v_0 \in V(C_0)$ and there exists $u_0
    \in VB(C_0)$ such that $Lab(C_0,u_0) \leq A$ and
    $VB(C_0)=A(u_0)$.
    \item [(3)] For all $1 \leq i \leq m-1$, there exists $u_i
    \in VB(C_i) \setminus VB(C_{i-1})$ such that $Lab(C_i,u_i) \leq A$ and
    $VB(C_i)=A(u_{i-1}) \cup A(u_{i})$.
    \item[(4)] The graph $\Gamma_m$, obtained from  $\Gamma(H)$ by
    the
iterative  removal of sequence ($\ast$), has no redundant
components.
\end{enumerate}
\end{proof}

Following the   notation of Lemma~\ref{lem:DefGammaM} we define.

\begin{defin}[Definition of Type] \label{def:Type}
Let $H$ be a finitely generated subgroup of $G=G_1 \ast_A G_2$.

If $H \leq G_i$ or $\Gamma(H)$ has no redundant components then
$Type(\Gamma(H))=\Gamma(H)$.
Otherwise  $Type(\Gamma(H))=\Gamma_m$.
\end{defin}


\begin{lem}[Properties of $Type(\Gamma(H))$]
\label{lem:PropertiesType} { \ }
\begin{itemize}
 \item[(i)] $Type(\Gamma(H))$ is a finite nonempty precover of
 $G$. \\

Let $v \in V(Type(\Gamma(H)))$. Let $K=Lab(Type(\Gamma(H)),v)$.

 \item[(ii)] $K \neq \{1\}$.
 \item[(iii)] $Lab(\Gamma(H),v)=K$.
 \item[(iv)]
If  $H, K  \nleq A$  then $Type(\Gamma(K))=Type(\Gamma(H))$.\\
If  $H  \leq A$  then  $Type(\Gamma(H))=C_1 \ast_{\{v_1 \cdot
a=v_2 \cdot a \; | \; a \in A\}} C_2$ and
$$Type(\Gamma(K))=\left\{%
\begin{array}{ll}
   C_l, & \hbox{$  K \nleq A$;} \\
    C_l \ast_{\{v \cdot a=K \cdot a \; | \; a \in A\}}
Cayley(G_j,K), & \hbox{ $K \leq A$,} \\
\end{array}%
\right.    $$  where $(C_i,v_i)=(Cayley(G_i,H),H \cdot 1)$, for
all $i \in \{1,2\}$, $Lab(C_l,v)=K$ and  $1\leq l \neq j \leq 2$.
\end{itemize}
\end{lem}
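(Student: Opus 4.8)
\textbf{Proof proposal for Lemma~\ref{lem:PropertiesType}.}

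The plan is to treat the four items in order, pulling all structural facts about $Type(\Gamma(H))$ from the construction in Definition~\ref{def:Type} and Lemma~\ref{lem:DefGammaM}. For (i), I would split into the cases of Definition~\ref{def:Type}. If $H\leq G_i$ or $\Gamma(H)$ has no redundant components, then $Type(\Gamma(H))=\Gamma(H)$, which is a finite nonempty precover by Theorem~\ref{thm: properties of subgroup graphs} (and nonempty since $H\nleq G_i$ forces $H\neq\{1\}$; the degenerate case $H=\{1\}$ is handled separately via Lemma~\ref{lem:FormOfRedPrecover}(i), where $Type$ is the one-vertex graph, still a precover by convention). Otherwise $Type(\Gamma(H))=\Gamma_m$, and I would observe that each $\Gamma_k$ in the iterative erasure of Lemma~\ref{lem:DefGammaM} is a finite precover (stated inside that proof), and $\Gamma_m$ is nonempty because erasing all the $C_i$ cannot exhaust $\Gamma(H)$: since $H\nleq G_i$ for $i\in\{1,2\}$, the graph $\Gamma(H)$ has at least two monochromatic components whose bichromatic vertices are not all in a single $A$-orbit, so the terminating condition (4) of Lemma~\ref{lem:DefGammaM} is reached while edges remain.

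For (ii), let $v\in V(Type(\Gamma(H)))$ and $K=Lab(Type(\Gamma(H)),v)$. If $Type(\Gamma(H))$ had $K=\{1\}$, then by Lemma~\ref{lemma1.5} the component $C$ of $Type(\Gamma(H))$ containing $v$ would, at $v$, be a copy of $Cayley(G_i)$; if this were the only monochromatic component we would be in case (1) of Definition~\ref{def: redundant component} (redundant), and if not, one still checks using Claim~\ref{claim: red precover} that it would be redundant w.r.t.\ the basepoint — contradicting condition (4) of Lemma~\ref{lem:DefGammaM} (no redundant components in $\Gamma_m$) in the nontrivial cases, or contradicting $Type(\Gamma(H))=\Gamma(H)$ being a \emph{reduced} precover in the cases where $\Gamma(H)$ had no redundant components. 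The sole exception is $H=\{1\}$, where the statement $K\neq\{1\}$ fails for the empty graph; I expect the intended reading of (ii)--(iv) is under the standing hypothesis $H\neq\{1\}$, or with this degenerate case excluded, and I would say so explicitly.

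For (iii), I would use Remark~\ref{remark: RemovingRedundantCom}: erasing a redundant component w.r.t.\ the basepoint does not change the subgroup determined by the pointed graph, and iterating this along the sequence $(\ast)$ gives $Lab(\Gamma_m,v_0')=Lab(\Gamma(H),v_0)$ where $v_0'$ is the image of $v_0$; but then for an arbitrary vertex $v$ of $Type(\Gamma(H))$, the change-of-basepoint formula $Lab(\Delta,v)=g\,Lab(\Delta,v_0')\,g^{-1}$ (with $g$ the label of a path from $v$ to $v_0'$) applied simultaneously inside $Type(\Gamma(H))$ and inside $\Gamma(H)$ — using that every edge of $Type(\Gamma(H))$ survives as an edge of $\Gamma(H)$, so such a path exists in both and carries the same label — yields $Lab(\Gamma(H),v)=Lab(Type(\Gamma(H)),v)=K$. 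For (iv), the key point is that $Type$ depends only on the subgroup and not on its generating set: since $K=Lab(Type(\Gamma(H)),v)$ and, by (iii), $K=Lab(\Gamma(H),v)$, Theorem~\ref{thm: properties of subgroup graphs}(2)--(3) forces $(\Gamma(K),v_0)$ to be (isomorphic to) the reduced precover determined by $K$, i.e.\ the normal core of $Cayley(G,K)$ based at the appropriate vertex; when $H,K\nleq A$ one checks that running the erasure procedure of Lemma~\ref{lem:DefGammaM} on both $\Gamma(H)$ and $\Gamma(K)$ lands on the same graph $Type(\Gamma(H))$, essentially because $Type(\Gamma(H))$ has no redundant components and contains a vertex ($v$) whose subgroup is $K$, so it \emph{is} the $Type$ of $K$ by uniqueness. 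When $H\leq A$, I would invoke Lemma~\ref{lem:FormOfRedPrecover}(iii) to get the explicit amalgam form $C_1\ast_{\{v_1\cdot a=v_2\cdot a\}}C_2$ for $\Gamma(H)=Type(\Gamma(H))$, and then compute $\Gamma(K)$ for $v\in V(C_l)$ by applying Lemma~\ref{lem:PropertiesRedPrecovers} / Corollary~\ref{cor:AmalgamGraphEmbedding}: if $K\nleq A$ then $(C_l,v)$ is already a reduced precover so $Type(\Gamma(K))=C_l$, and if $K\leq A$ one must adjoin $Cayley(G_j,K)$ along the $A$-orbit of $v$, giving the second line. The main obstacle I anticipate is (iv) in the case $H\leq A$: carefully tracking which vertex of which $C_l$ one starts from, verifying the condition "$K\leq A$ vs.\ $K\nleq A$" via Claim~\ref{claim: red precover} applied to $C_l$, and confirming that the amalgamation in Corollary~\ref{cor:AmalgamGraphEmbedding} reproduces exactly the claimed graph; the $H,K\nleq A$ case is comparatively routine once uniqueness of $Type$ (from Theorem~\ref{thm: properties of subgroup graphs}) is in hand.
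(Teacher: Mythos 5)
Your plan follows essentially the same route as the paper: the same case split via Definition~\ref{def:Type}, the erasure sequence of Lemma~\ref{lem:DefGammaM} for (i)--(ii), Remark~\ref{remark: RemovingRedundantCom} plus change of basepoint for (iii), and Lemma~\ref{lem:PropertiesRedPrecovers} together with uniqueness of the reduced precover (Theorem~\ref{thm: properties of subgroup graphs}(2)) and Lemma~\ref{lem:FormOfRedPrecover}(iii) for (iv). Your observation that (ii) fails for $H=\{1\}$ and must be excluded is correct and worth stating.

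One justification as written would not survive scrutiny: in (i) you argue nonemptiness of $\Gamma_m$ by claiming that $H\nleq G_i$ forces $\Gamma(H)$ to have at least two monochromatic components whose bichromatic vertices are not all in a single $A$-orbit. That structural claim is false in general --- the paper's own Figure for the case $\Gamma_m=C_m$ exhibits a $\Gamma(H)$ with exactly two components satisfying $VB(C_0)=VB(C_1)=A(u_0)$. The correct (and easy) argument is built into the erasure itself: each redundant $C_i$ in the sequence has a vertex $u_i\in VB(C_i)\setminus VB(C_{i-1})$, which, being bichromatic, lies in a monochromatic component of the other color distinct from all previously removed ones; so every $\Gamma_{i+1}$ still contains a component, and the terminal $\Gamma_m$ is nonempty. (The paper then uses reducedness of $(\Gamma(H),v_0)$ to show the surviving component satisfies $\{1\}\neq Lab(C_m,u_{m-1})\nleq A$, which is what gives (ii).) Relatedly, in the $H,K\nleq A$ half of (iv), "it is the Type of $K$ by uniqueness" needs the extra step the paper supplies: $(Type(\Gamma(H)),v)$ may fail condition (ii) of Definition~\ref{def:ReducedPrecover}, in which case $\Gamma(K)$ is $Type(\Gamma(H))$ with one $Cayley(G_j,S)$ adjoined, and one must check that this adjoined component is redundant in $\Gamma(K)$ (using $K\nleq A$) before concluding $Type(\Gamma(K))=Type(\Gamma(H))$. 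You indicate this mechanism in the $H\leq A$ case, so the fix is within your framework, but it should be made explicit in both cases.
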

\begin{proof} If $Type(\Gamma(H))=\Gamma(H)$ then the statement of (i)-(iii) is
trivial. Therefore, without loss of generality, we can assume that
$Type(\Gamma(H))=\Gamma_m$, where $\Gamma_m$ is obtained from
$\Gamma(H)$ by the iterative  removal of the unique sequence
($\ast$) of alternating monochromatic components
$$C_0,C_1, \ldots, C_{m-1}.$$

By the construction, $Type(\Gamma(H))=\Gamma_m$ is a finite
precover of $G$. Assume that $\Gamma_m$ consists of a unique
monochromatic component $C_m$, that is $\Gamma_m=C_m$, then $C_m$
is not redundant. Indeed, $|VB_{\Gamma_m}(C_m)|=0$ (see Figure
\ref{fig: type construction Gamma=C}), hence $VB_{\Gamma(H)}(C_m)
=A(u_{m-1})$. Since $v_0 \not\in V(C_m)$ and $(\Gamma(H),v_0)$ is
a reduced precover of $G$, that is has no redundant components
w.r.t. $v_0$, this is possible if and only if $\{1\} \neq
Lab(C_{m},u_{m-1}) \nleq A.$ This completes the proof of (i) and
(ii).


%
\begin{figure}[!h]
\begin{center}
\psfragscanon \psfrag{A }[][]{{\Large $\Gamma(H)$}}
\psfrag{x }[][]{\footnotesize $x$} \psfrag{y }[][]{\footnotesize
$y$} \psfrag{v0 }[][]{\small $v_0$}
\psfrag{a }[][]{\footnotesize $a$} \psfrag{b }[][]{\footnotesize
$b$}
\psfrag{B }[][]{\Large $\Gamma_1$} \psfrag{c0 }[][]{\large $C_0$}
\psfrag{c1 }[][]{\large $C_1$}
\psfrag{u1 }[][]{\small $u_1$} \psfrag{u0 }[][]{\small $u_0$}
\includegraphics[width=0.95\textwidth]{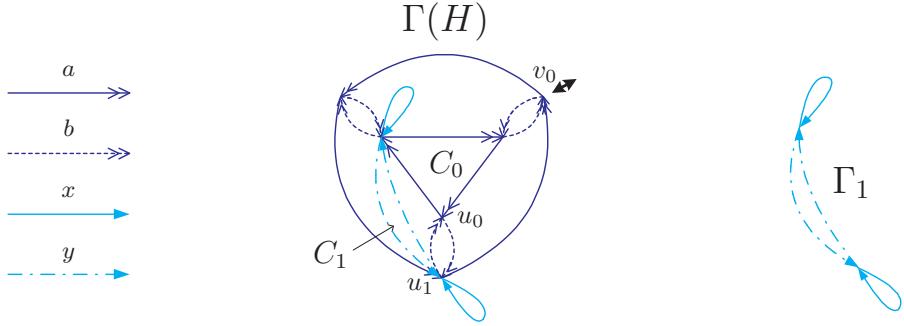}
\caption[The intuition for $Type(\Gamma(H))$, case
2]{{\footnotesize Example of the sequence $C_0,C_1$ of alternating
monochromatic components in the graph $\Gamma(H)$, where $H \leq
{f.g.} G_1 \ast_A G_2 \simeq S_3 \ast_{Z_2} S_3$, where $G_1 =
gp\langle a,b \; | \; a^3, \; b^2, \; ab=ba^2 \rangle$, $G_2
=gp\langle x,y \; | \; x^3, \; y^2, \; xy=yx^2 \rangle$,
 and  $A =\langle ab \rangle =\langle yx  \rangle$. \underline{In
$\Gamma(H)$}:
$(C_0,u_0)=Cayley(G_1)$ and $ VB(C_0) =A(u_0)$;
$ VB(C_1) = VB(C_0) $, $Lab(C_1, v)=\langle x \rangle \nleq A$.
Thus $Type(\Gamma(H))=\Gamma_2=C_1$.}
 \label{fig: type construction Gamma=C}}
\end{center}
\end{figure}

  Since each monochromatic component $C_i$ ($1
\leq i \leq m-1$) is redundant in $\Gamma_i$ (which is the graph
obtained from $\Gamma(H)$ by the iterative  removal of $C_0,
\cdots, C_{i-1}$) w.r.t. some $v \in V( \Gamma_m) \subseteq
V(\Gamma(H))$, we conclude, by Remark~\ref{remark:
RemovingRedundantCom}, that $Lab(\Gamma(H),v)=Lab(\Gamma_m,v)$. We
get (iii).

To prove (iv), assume first that $H  \leq A$.  Therefore, by
Definition~\ref{def:Type} and by Lemma~\ref{lem:FormOfRedPrecover}
(iii), $Type(\Gamma(H))=\Gamma(H)=C_1 \ast_{\{v_1 \cdot a=v_2
\cdot a \; | \; a \in A\}} C_2$, where $(C_i, v_i)=(Cayley(G_i,H
), H \cdot 1)$ ($i \in \{1,2\}$). Without loss of generality,
assume that $v_0 \neq v \in V(C_1)$. Therefore $C_2$ is redundant
w.r.t. $v$. Hence, by Lemma~\ref{lem:PropertiesRedPrecovers},
$$ \Gamma(K) =\left\{%
\begin{array}{ll}
    C_1, & \hbox{$K \cap A=\{1\}$;} \\
    C_1 \ast_{\{v \cdot a=S \cdot a \; | \; a \in A\}} Cayley(G_2,S), & \hbox{$K \cap A=S \neq \{1\}$.} \\
\end{array}%
\right.$$ Therefore
$$Type(\Gamma(K))=\left\{%
\begin{array}{ll}
    C_1, & \hbox{$K \nleq A$;} \\
    C_1 \ast_{\{v \cdot a=K \cdot a \; | \; a \in A\}} Cayley(G_2,K), & \hbox{$K \leq A$.} \\
\end{array}%
\right.$$
Assume now that $H,K \nleq A$. Thus combining
Definition~\ref{def:Type} and Lemma~\ref{lem:FormOfRedPrecover},
we conclude that $Type(\Gamma(H)) $  has no redundant components.
If $(Type(\Gamma(H)), v)$ is a finite reduced precover of $G$
then, by Theorem~\ref{thm: properties of subgroup graphs} (2),
$(Type(\Gamma(H)),v)=(\Gamma(K),u_0)$.

Otherwise, by Lemma~\ref{lem:PropertiesRedPrecovers},
$$(\Gamma(K), u_0)=Type(\Gamma(H)) \ast_{\{v \cdot a=S \cdot a \;
| \; a \in A\}} Cayley(G_j,S),$$
where $S=Lab(C,v)\cap A \neq \{1\}$ and $C$ is a
$X_i$-monochromatic component of $Type(\Gamma(H))$ such that $v
\in V(C)$ ($1 \leq i \neq j \leq 2$).
Since $K \nleq A$,   the component $D= Cayley(G_j,S)$   is
redundant in $\Gamma(K)$. Therefore  $ Type(\Gamma(H) )=
Type(\Gamma(K) )$.
This completes the proof.

\end{proof}


\begin{ex}
{\rm Concerning the subgroups $H_1$ and $H_2$ from Example
\ref{example: graphconstruction} and their subgroup graphs
$\Gamma(H_1)$ and $\Gamma(H_1)$ presented on Figures \ref{fig:
example of H=xy}
 and  \ref{fig: example of H=xy^2x, yxyx}, we compute
that $Type(\Gamma(H_1))=\Gamma(H_1)$ and
$Type(\Gamma(H_2))=\Gamma(H_2)$.} \e
\end{ex}

\medskip


\subsection{Conjugate Subgroups}
\label{subsec:ConjugateSubgroups}

\begin{lem} \label{lem:EqType=>Conjugates}
Let $H$ and $K$ be nontrivial subgroups of $G$  such that
$Type(\Gamma(H))=Type(\Gamma(K))$. Then $H$ is conjugate to $K$ in
$G$.
\end{lem}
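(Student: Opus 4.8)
The plan is to exploit the fact, recorded in Lemma~\ref{lem:PropertiesType}, that $Type(\Gamma(H))$ is a finite nonempty precover of $G$ whose associated subgroup (relative to any chosen vertex) is a conjugate of $H$. More precisely, by Lemma~\ref{lem:PropertiesType}(iii), for every vertex $v$ of $Type(\Gamma(H))$ one has $Lab(\Gamma(H),v) = Lab(Type(\Gamma(H)),v)$, and by the conjugacy formula recalled in the Preliminaries, $Lab(\Gamma(H),v) = g\,Lab(\Gamma(H),v_0)\,g^{-1} = g H g^{-1}$, where $g =_G lab(p)$ for any path $p$ in $\Gamma(H)$ from $v_0$ to $v$. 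Thus every subgroup of $G$ of the form $Lab(Type(\Gamma(H)),v)$, as $v$ ranges over the vertices of $Type(\Gamma(H))$, is conjugate to $H$.

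First I would fix a common graph $T := Type(\Gamma(H)) = Type(\Gamma(K))$ (they are literally equal, or at least identified via the unique isomorphism of pointed well-labelled graphs guaranteed by Remark~\ref{unique isomorphism}, once a compatible choice of basepoints is made). Pick a vertex $v_H \in V(T)$ with $Lab(T, v_H) = H$; such a vertex exists because $H$ is nontrivial, so by Lemma~\ref{lem:FormOfRedPrecover}(i) the graph $\Gamma(H)$ has an edge, hence so does $T$, and $\Gamma(H)$ embeds appropriately — more carefully, one uses the relationship between $\Gamma(H)$ and $T$ from the construction in Lemma~\ref{lem:DefGammaM}, where $T = \Gamma_m$ is obtained from $\Gamma(H)$ by erasing redundant components, so $v_0 \in V(T)$ unless $v_0$ lay on an erased component; in the latter case one tracks which vertex of $T$ inherits the subgroup $H$ up to conjugacy via Lemma~\ref{lem:PropertiesType}(iii). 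Similarly pick $v_K \in V(T)$ with $Lab(T,v_K)$ a conjugate of $K$ (in fact equal to $K$ for a suitable basepoint choice). Then choose a path $p$ in the connected graph $T$ from $v_H$ to $v_K$, set $g =_G lab(p)$, and conclude $Lab(T,v_K) = g\,Lab(T,v_H)\,g^{-1}$, i.e. a conjugate of $K$ equals a conjugate of $H$, hence $H$ and $K$ are conjugate in $G$.

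The main subtlety — and the step I expect to require the most care — is bookkeeping the relationship between the basepoint $v_0$ of $\Gamma(H)$ and the vertices of $T = Type(\Gamma(H))$, and likewise for $K$. When $\Gamma(H)$ has redundant components the basepoint may sit on the (unique, by Lemma~\ref{lem:FormOfRedPrecover}(iv)) redundant component $C_0$ that gets erased, so one cannot simply say $v_0 \in V(T)$ and $Lab(T,v_0) = H$. Instead one must invoke Lemma~\ref{lem:PropertiesType}(iii), which says precisely that $Lab(\Gamma(H),v) = Lab(T,v)$ for the vertices $v$ surviving in $T$, combined with the conjugacy-of-loop-groups relation to move the basepoint within $\Gamma(H)$ from $v_0$ to such a surviving vertex $v$; this shows $Lab(T,v)$ is conjugate to $H$. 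The special cases where $H \leq A$ or $H \leq G_i$ need separate attention because then $Type(\Gamma(H))$ has a different shape (see Lemma~\ref{lem:PropertiesType}(iv) and Lemma~\ref{lem:FormOfRedPrecover}(ii),(iii)), but in each such case $Lab(T,v)$ for an appropriate $v$ is still visibly conjugate to $H$, so the connectedness-plus-conjugacy-formula argument goes through verbatim. Once both $H$ and $K$ are exhibited as conjugates of subgroups of the form $Lab(T, \cdot)$ inside the single connected graph $T$, transitivity of conjugacy finishes the proof.
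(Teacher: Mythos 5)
Your argument is correct and is essentially the paper's own proof: both rest on Lemma~\ref{lem:PropertiesType}(iii) to identify $Lab(Type(\Gamma(H)),v)$ with $Lab(\Gamma(H),v)$, the basepoint-change conjugacy formula inside the connected graph $\Gamma(H)$, and then the symmetric statement for $K$ at the same (or a path-connected) vertex of the common type graph. The paper simply takes a single vertex $v$ and observes $Lab(\Gamma,v)$ is conjugate to both $H$ and $K$, which collapses your two-vertex bookkeeping into one step.
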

\begin{proof}
Suppose that $Type(\Gamma(H))=Type(\Gamma(K))=\Gamma$. Let $v \in
V(\Gamma) \subseteq V(\Gamma(H))$. Hence the subgroup
$Lab(\Gamma(H),v)$ is conjugate to the subgroup
$Lab(\Gamma(H),v_0)$.
By Lemma~\ref{lem:PropertiesType} (iii),
$Lab(\Gamma(H),v)=Lab(\Gamma,v)$. Therefore the subgroup
$Lab(\Gamma,v)$ is conjugate to the subgroup
$Lab(\Gamma(H),v_0)=H$. By symmetric arguments, the subgroup
$Lab(\Gamma,v)$ is also conjugate to the subgroup $K$.
 Hence $H$ is conjugate to $K$. See
 Figure~\ref{fig:CommonType=>Conjugates}.
\end{proof}

\begin{figure}[!h]
 \begin{center}
\psfrag{H }[][]{$\Gamma(H)$} \psfrag{K }[][]{$\Gamma(K)$}
\psfrag{tk }[][]{$Type(\Gamma(H))=Type(\Gamma(K))$}

\psfrag{u }[][]{$u$} \psfrag{v }[][]{$v$}
\psfrag{v0 }[][]{$v_0$}  \psfrag{u0 }[][]{$u_0$}
\psfrag{g1 }[][]{$g_1$}  \psfrag{g2 }[][]{$g_2$} \psfrag{g3
}[][]{$g_3$}
\includegraphics[width=0.8\textwidth]{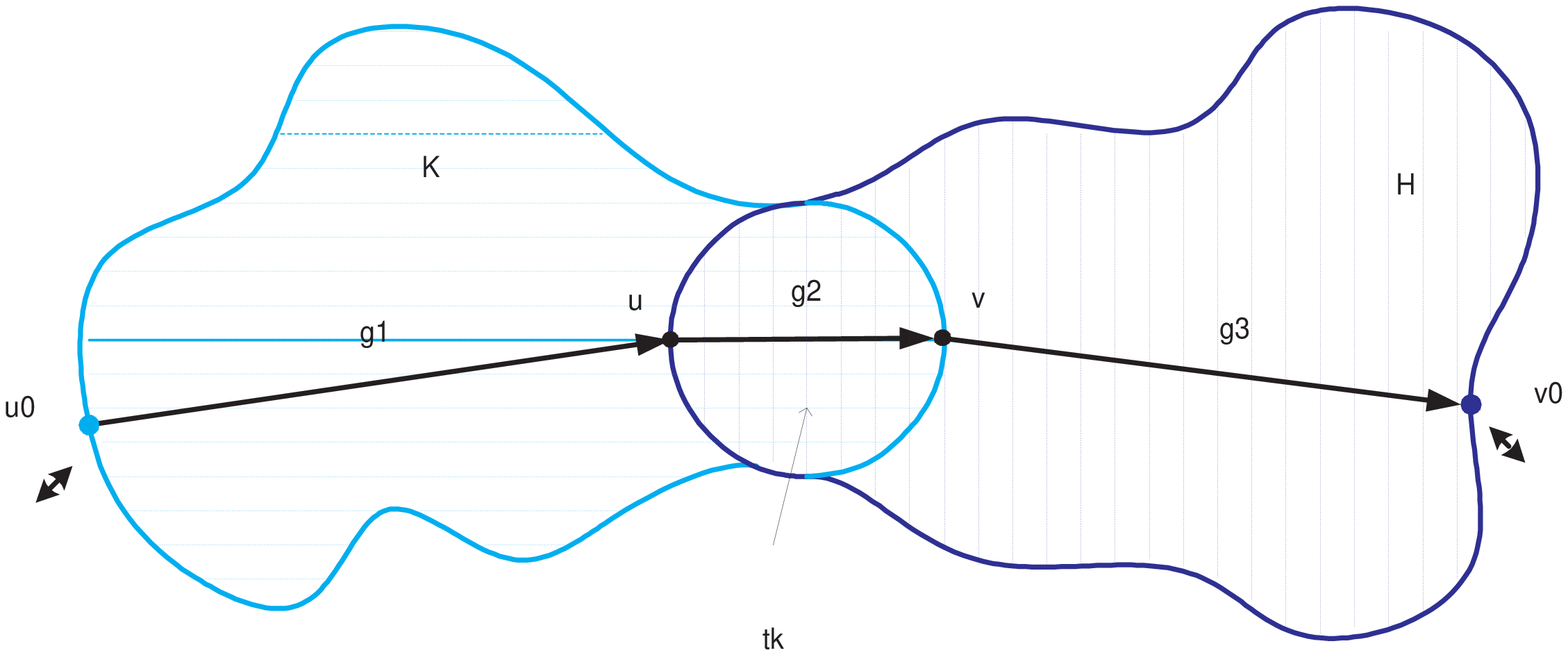}
\caption { {\footnotesize $K=gHg^{-1}$, where $g \equiv
g_1g_2g_3$.}  \label{fig:CommonType=>Conjugates}}
\end{center}
\end{figure}

\begin{thm} \label{thm:Conjugates=>EqType}
Let $H$ and $K$ be finitely generated subgroup of an amalgam of finite groups  $G=G_1 \ast_A G_2$.\\
Then $H$ is conjugate to $K$ in $G$ if and only if one of the
following holds
\begin{itemize}
 \item[(1)] $H, K  \nleq A$  and $Type(\Gamma(K))=Type(\Gamma(H))$.
 \item[(2)] $H  \leq A$, $Type(\Gamma(H))=C_1 \ast_{\{v_1 \cdot a=v_2
\cdot a \; | \; a \in A\}} C_2$ and
$$Type(\Gamma(K))=\left\{%
\begin{array}{ll}
   C_l, & \hbox{$  K \nleq A$;} \\
    C_l \ast_{\{v \cdot a=K \cdot a \; | \; a \in A\}}
Cayley(G_j,K), & \hbox{ $K \leq A$,} \\
\end{array}%
\right.    $$  where $(C_i,v_i)=(Cayley(G_i,H),H \cdot 1)$, for
all $i \in \{1,2\}$, $Lab(C_l,v)=K$, $v \in V(C_l)$ and  $1\leq l
\neq j \leq 2$.
\end{itemize}
\end{thm}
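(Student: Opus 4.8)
The plan is to deduce Theorem~\ref{thm:Conjugates=>EqType} from the two lemmas already proved in Section~\ref{subsec:ConjugateSubgroups}, namely Lemma~\ref{lem:EqType=>Conjugates} (common $Type$ $\Rightarrow$ conjugate) and, for the converse, from Lemma~\ref{lem:PropertiesType}(iv), which already records exactly how $Type(\Gamma(K))$ behaves when $K$ is conjugate to $H$. So the theorem is essentially a matter of organizing these ingredients into the clean ``if and only if'' form, after first disposing of the degenerate case where one of the subgroups lies in a factor.

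First I would handle the $(\Leftarrow)$ direction. If (1) holds, then $Type(\Gamma(H))=Type(\Gamma(K))$, and since $H,K$ are nontrivial (they are not inside $A$, hence in particular not trivial), Lemma~\ref{lem:EqType=>Conjugates} gives immediately that $H$ and $K$ are conjugate. If (2) holds, I would argue directly in the spirit of the proof of Lemma~\ref{lem:EqType=>Conjugates}: pick $v\in V(C_l)\subseteq V(Type(\Gamma(H)))=V(\Gamma(H))$; by Lemma~\ref{lem:PropertiesType}(iii), $Lab(\Gamma(H),v)=Lab(Type(\Gamma(H)),v)$, and by the conjugacy relation $Lab(\Gamma,v)=gLab(\Gamma,v_0)g^{-1}$ along a path in $\Gamma(H)$ from $v_0$ to $v$, this subgroup is conjugate to $H$. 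On the other side, in the case $K\nleq A$ one has $Type(\Gamma(K))=C_l$, which is a subgraph of $\Gamma(K)$ (after possibly one extra amalgamation step in the $K\leq A$ subcase), so again by Lemma~\ref{lem:PropertiesType}(iii) applied to $K$ the subgroup $Lab(C_l,v)=K$ equals $Lab(\Gamma(K),v)$, which is conjugate to $Lab(\Gamma(K),u_0)=K$ — no new content, just tracking that $K$ really is realized as $Lab(C_l,v)$ as the statement asserts. Chaining the two conjugacies yields $H$ conjugate to $K$. The $K\leq A$ subcase of (2) is symmetric to the situation already analyzed inside the proof of Lemma~\ref{lem:PropertiesType}(iv), so I would simply invoke it.

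For the $(\Rightarrow)$ direction, suppose $H$ is conjugate to $K$ in $G$. I would split on whether $H\leq A$. If $H\nleq A$, then conjugacy forces $K\nleq A$ as well (an element conjugate into $A$ would, together with the Normal Form Theorem, force $H$ itself to be conjugate into a factor and in fact into $A$ — I would spell this out via syllable length, using Theorem~\ref{thm: properties of subgroup graphs}(4) that $\Gamma(H)$ reads exactly the normal words in $H$). Then Lemma~\ref{lem:PropertiesType}(iv), first clause, gives $Type(\Gamma(K))=Type(\Gamma(H))$, which is (1). If instead $H\leq A$, then by Lemma~\ref{lem:FormOfRedPrecover}(iii) and Definition~\ref{def:Type} we have $Type(\Gamma(H))=C_1\ast_{\{v_1\cdot a=v_2\cdot a\,|\,a\in A\}}C_2$ with $(C_i,v_i)=(Cayley(G_i,H),H\cdot 1)$, and the second clause of Lemma~\ref{lem:PropertiesType}(iv) gives precisely the two-case description of $Type(\Gamma(K))$ in (2); the only thing to add is the bookkeeping assertion that the vertex $v$ appearing there can be chosen with $Lab(C_l,v)=K$, which is exactly what the conjugacy relation $Lab(\Gamma,v)=gLab(\Gamma,u_0)g^{-1}$ lets us arrange.

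The main obstacle — really the only non-bookkeeping point — is the implication ``$H$ conjugate to $K$ and $H\nleq A$ $\Rightarrow$ $K\nleq A$'', i.e.\ that ``lying in the amalgamated subgroup'' is a conjugacy invariant among these subgroups. I expect this to follow from the Normal Form Theorem: if $gHg^{-1}\leq A$ with $g=_G g_1\cdots g_n$ in normal form and some $h\in H$ has syllable length $\geq 2$, then $ghg^{-1}$ has syllable length $\geq 2$ unless massive cancellation occurs, and a careful syllable-length count (the standard conjugacy-in-amalgams argument, cf. the Normal Form Theorem IV.2.6 in \cite{l_s}) shows this is impossible, so $H\leq A$ after all — contradiction. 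I would phrase this as a short preliminary observation before entering the case split. Everything else is a direct reshuffling of Lemma~\ref{lem:EqType=>Conjugates} and Lemma~\ref{lem:PropertiesType}(iv).
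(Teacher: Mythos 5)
Your backward direction is fine and matches the paper. The forward direction, however, has a genuine gap: you reduce it to Lemma~\ref{lem:PropertiesType}(iv), but that lemma only concerns subgroups of the special form $K=Lab(Type(\Gamma(H)),v)$ for a vertex $v\in V(Type(\Gamma(H)))$. An arbitrary conjugate $K=g^{-1}Hg$ is generally \emph{not} of this form: $\Gamma(H)$ is finite and so realizes only finitely many conjugates at its vertices, while $H$ typically has infinitely many conjugates; the word $g$ may not be readable in $\Gamma(H)$ from $v_0$ at all, or its readable prefix may terminate at a vertex of a redundant component that is erased in passing to $Type(\Gamma(H))$. This is exactly where the paper spends almost all of its effort: it writes $g\equiv g_1g_2$ with $g_1$ the maximal prefix readable from $v_0$, and when $g_2$ is nonempty it attaches a ``stem'' labelled $g_2$, embeds the result into a finite precover $\Gamma''$ via Claim~\ref{claim:PrecoverStem->Precover}, identifies $\Gamma(K)$ from $\Gamma''$ using Lemma~\ref{lem:PropertiesRedPrecovers}, and then checks that the $Type$ construction erases precisely the attached components $D_k,\ldots,D_1$ (and, separately, handles the case where $v=\tau(p)$ lies in one of the erased components $C_i$ of $\Gamma(H)$). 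Your proposal offers no substitute for any of this, so the implication ``$H$ conjugate to $K$ $\Rightarrow$ (1) or (2)'' is not established.

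A secondary problem: the auxiliary claim you single out as the ``only non-bookkeeping point'' --- that $H\nleq A$ and $H$ conjugate to $K$ force $K\nleq A$ --- is false. Conjugation can carry a subgroup of $A$ to a subgroup of a factor that meets $A$ trivially: in $S_3\ast_{\mathbb{Z}_2}S_3$ with $A$ generated by a transposition, conjugating $A$ by an element of the first factor yields a different order-two subgroup $H\leq G_1$ with $H\cap A=\{1\}$, so $H\nleq A$ is conjugate to $K=A\leq A$. (This is also why case (2) of the theorem explicitly allows $K\nleq A$ while $H\leq A$; the statement must be read up to interchanging $H$ and $K$.) So the syllable-length argument you sketch cannot succeed as stated, and the case split ``$H\nleq A\Rightarrow K\nleq A\Rightarrow$ apply (iv)'' collapses. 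To repair the proof you would need to reproduce, in some form, the paper's analysis of an arbitrary conjugating element via the stem construction.
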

\begin{proof}
If (1) holds then, by Lemma~\ref{lem:EqType=>Conjugates}, $H$ is
conjugate to $K$ in $G$.

Assume that (2) holds and, without loss of generality, assume that
$l=1$.  Thus, by Lemma~\ref{lem:PropertiesType} (iii),
$Lab(\Gamma(K),v)=Lab(Type(\Gamma(K)),v)$. Therefore, by
Lemma~\ref{lem:PropertiesRedPrecovers},
$Lab(Type(\Gamma(K)),v)=Lab(C_1,v)$.
Therefore the subgroup $Lab(C_1,v)$ is conjugate to the subgroup
$Lab(\Gamma(K),u_0)=K$.

On the other hand, $Lab(\Gamma(H),v)=Lab(C_1,v)$, by
Remark~\ref{remark: RemovingRedundantCom}, because $C_2$ is
redundant w.r.t. $v$. Therefore the subgroup $Lab(C_1,v)$ is
conjugate to the subgroup $Lab(\Gamma(H),v_0)=H$. Thus $H$ and $K$
are conjugate subgroups of $G$.

\medskip

Let $\underline{K=g^{-1}Hg} $. Without loss of generality, assume
that $g \in G$ is a normal word. Let $g \equiv g_1g_2$, where
$g_1$ is the maximal prefix of the word $g$ such that there is a
path $p$ in $\Gamma(H)$ with $\iota(p)=v_0$ and $lab(p) \equiv
g_1$, where $v_0$ is the basepoint of $\Gamma(H)$. Let $v=\tau(p)
\in V(\Gamma(H))$. See Figure \ref{fig: TypeConjugacy1}.

If $g_2$ is the empty word then $g \equiv g_1$, and
$Lab(\Gamma(H),v)=g^{-1}Hg=K$.

If $v \in Type(\Gamma(H))$ then, by Lemma~\ref{lem:PropertiesType}
(iv), we are done.


%
 \begin{figure}[!h]
 \begin{center}
\psfrag{A }[][]{{\Large $\Gamma(H)$}}
\psfrag{v }[][]{\small $v$} \psfrag{p }[][]{$p$}
\psfrag{c0 }[][]{$C_0$} \psfrag{c1 }[][]{$C_1$}
\psfrag{c2 }[][]{$C_2$} \psfrag{c3 }[][]{$C_3$}
\psfrag{ci }[][]{$C_i$}
\includegraphics[width=0.5\textwidth]{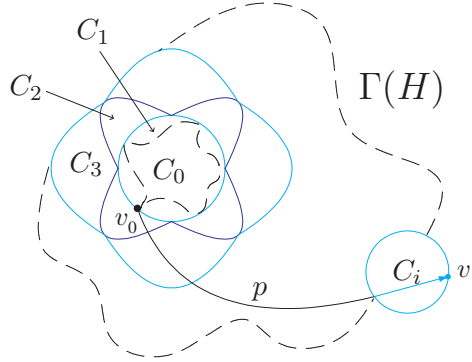}
\caption[The first auxiliary figure for the proof of
Theorem~\ref{conjugate subgroups finite grps}]{{\footnotesize The
closed connected curves represent monochromatic components of
different colors.  The broken curves denote the rest of the
graph.} \label{fig: TypeConjugacy1}}
\end{center}
\end{figure}
%
%

Assume now that $v \not\in V(Type(\Gamma(H))$. Therefore
$Type(\Gamma(H)) \neq \Gamma(H)$. Thus, by
Definition~\ref{def:Type}, $H \nleq G_i$  ($i \in \{1,2\}$).
Without loss of generality, we can assume that
$Type(\Gamma(H))=\Gamma_m$, where $\Gamma_m$ is obtained from
$\Gamma(H)$ by the iterative removal of the unique sequence
($\ast$) of alternating monochromatic components
$$C_0,C_1, \ldots, C_{m-1}.$$

Hence there exists $1 \leq i \leq m-1$ such that $v \in V(C_i)$.
Without loss of generality, we can assume that $C_i$ is a
$X_1$-monochromatic component.
Let $\Gamma_i$ be the graph  obtained from $\Gamma(H)$ by the
iterative  removal of the unique sequence $C_{0}, \ldots,
C_{i-1}.$
By Lemma~\ref{lem:PropertiesRedPrecovers}, we have either $
\Gamma(K) = \Gamma_{i} $ or $$ \Gamma(K) = \Gamma_{i} \ast_{\{v
\cdot a=S \cdot a \; | \; a \in A\}} Cayley(G_2,S),$$ where $S=K
\cap A$.

In the first case, since each component $C_j$ is redundant in
$\Gamma_j$ ($i  \leq j \leq m-1$), $Type(\Gamma(K))$ is obtained
from $\Gamma(K)$ by the iterative  erasure of the unique sequence
of alternating monochromatic components $ C_{i}, \ldots, C_{m-1}$.

In the second case, the component $D=Cayley(G_2,S)$ of $\Gamma(K)$
is redundant. Therefore $Type(\Gamma(K))$ is obtained from
$\Gamma(K)$ by the iterative  erasure of the unique sequence of
alternating monochromatic components $ D, C_{i}, \ldots, C_{m-1}$.
Therefore $K \nleq G_i$ ($i \in \{1,2\}$) and
$$Type(\Gamma(K))=\Gamma_m=Type(\Gamma(H)).$$

\medskip

Assume now that $g_2$ is a nonempty word.

We suppose first that $v \in V(Type(\Gamma(H)))$. Let $\Gamma'$ be
the graph obtained from $Type(\Gamma(H))$ by attaching to this
graph a ``stem'' $q$ at the vertex $v$, such that $lab(q) \equiv
g_2$. Thus $\iota(q)=v$ and we let $\tau(q)=v'$, see Figure
\ref{fig: TypeConjugacy2}.

Obviously, $Lab(\Gamma',v)=Lab(Type(\Gamma(H)),v)$. By
Lemma~\ref{lem:PropertiesType},
$$Lab(\Gamma',v)=Lab(Type(\Gamma(H)),v)=Lab(\Gamma(H),v)=g_1^{-1}Hg_1.$$ Therefore
$Lab(\Gamma', v')=g_2^{-1}Lab(\Gamma',v)g_2=g^{-1}Hg=K$.

 \begin{figure}[!h]
 \begin{center}
\psfrag{A }[][]{{\Large $\Gamma(H)$}}
\psfrag{v }[][]{$v$} \psfrag{p }[][]{$p$}
\psfrag{c0 }[][]{$C_0$} \psfrag{c1 }[][]{$C_1$}
\psfrag{c2 }[][]{$C_2$} \psfrag{c3 }[][]{$C_3$}
\psfrag{ci }[][]{$C_i$}
\psfrag{d1 }[][]{$D_1$} \psfrag{d2 }[][]{$D_2$}
\psfrag{q1 }[][]{$q_1$}

\psfrag{q2 }[][]{$q_2$}
\psfrag{qk }[][]{$q_k$}
\psfrag{v1 }[][]{$v_1$}

\psfrag{v2 }[][]{$v_2$}
\psfrag{vk }[][]{$v_k$}
\psfrag{v3 }[][]{$v_3$}

\psfrag{v' }[][]{$v'$}

\includegraphics[width=\textwidth]{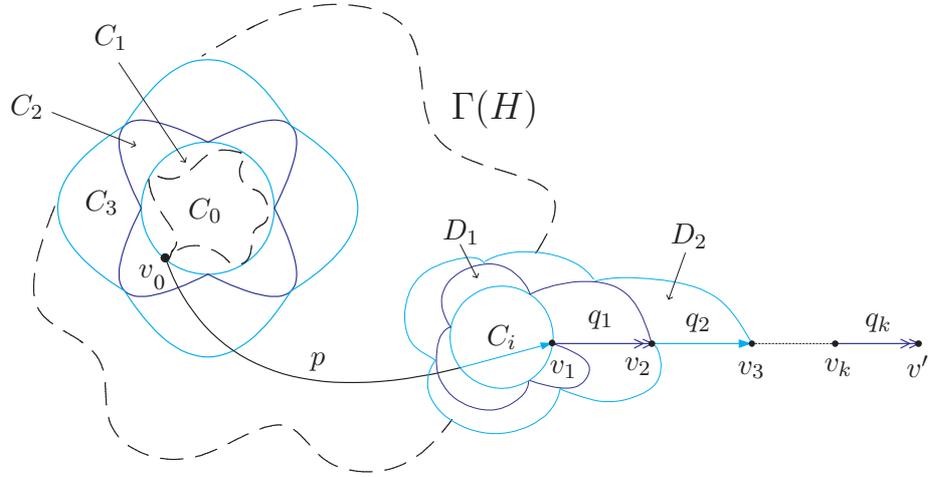}
\caption[The second auxiliary figure for the proof of
Theorem~\ref{conjugate subgroups finite grps}]{{\footnotesize The
closed connected curves represent monochromatic components of
different colors.  The broken curves denote the rest of the
graph.} \label{fig: TypeConjugacy2}}
\end{center}
\end{figure}
%
%

Let $q=q_1 \cdots g_k$ be a decomposition of $q$ into maximal
monochromatic paths. Let $v_i=\iota(q_i)$, $1 \leq i \leq k$. Thus
$v  =v_1$.

Now we need the following result from \cite{m-algI} (given along
with the proof of Claim 2 in \cite{m-algI}).
\begin{claim} \label{claim:PrecoverStem->Precover}
The graph $(\Gamma',v')$ can be embedded into a finite precover
$(\Gamma'',v'')$ of $G$ such that
$$\Gamma''=\left(\left(\left(\Gamma' \ast_{\{v_1 \cdot a | a \in A  \}}
D_1 \right) \ast_{\{v_2 \cdot a | a \in A  \}} D_2 \right) \cdots
\right) \ast_{\{v_{k} \cdot a | a \in A  \}} D_k,$$ where
\begin{itemize}
\item $(D_j,v_j)=Cayley(G_{i_j},S_j)$ ($1 \leq j \leq k$, $1 \leq
i_j \neq i_{j+1} \leq 2$),
\item $S_1=Lab(C,v) \cap A$, where $C$ is a $X_{i_1}$
monochromatic component of $Type(\Gamma(H))$ such that $v \in
V(C)$,
\item $S_{j+1}=Lab(Cayley(G_{i_j},S_j),v_{j+1}) \cap A$ ($1 \leq j
\leq k-1$),
\item the image of $q_j$ in $\Gamma''$ is a path in $D_j$,
\item $v''$ is the image of $v'$ in $\Gamma''$.
\end{itemize}
\end{claim}
Let $\Gamma'_j=\left(\left(\Gamma' \ast_{\{v_1 \cdot a | a \in
A\}} D_1\right)  \cdots \right) \ast_{\{v_{j } \cdot a | a \in A
\}} D_j$, for all $1 \leq j \leq k$. Thus $\Gamma''=\Gamma'_k$.

By Corollary~\ref{cor:AmalgamGraphEmbedding},
$Lab(\Gamma',v_1)=Lab(\Gamma'_1,v_1)$ and
$Lab(\Gamma'_j,v_j)=Lab(\Gamma'_{j-1},v_j )$.\footnote{By abuse of
notation, we identify the vertices $v_j \in V(\Gamma')$ with their
images in the graphs $\Gamma'_j$ ($1 \leq j \leq k$).}
Therefore
\begin{eqnarray}
 Lab(\Gamma'_j,v_{j+1}) & = &
(lab(q_j))^{-1}Lab(\Gamma'_{j},v_j)lab(q_j)=
(lab(q_j))^{-1}Lab(\Gamma'_{j-1},v_j )lab(q_j)   \nonumber \\
& = & (lab(q_1 \cdots q_j))^{-1}Lab(\Gamma'_{1},v_1 )lab(q_1
\cdots q_j) \nonumber \\
& = & (lab(q_1 \cdots q_j))^{-1} Lab(\Gamma',v_1)lab(q_1 \cdots
q_j). \nonumber
\end{eqnarray}
Thus
$$Lab(\Gamma'',v'')=Lab(\Gamma'_k,v'')=g_2^{-1}Lab(\Gamma',v_1)g_2=(g_1g_2)^{-1}H(g_1g_2)=K.$$
Moreover, by the construction, $\Gamma''$ is a precover of $G$
which has no redundant components w.r.t. $v''$. Hence, by
Lemma~\ref{lem:PropertiesRedPrecovers},  either $ \Gamma(K) =
\Gamma'' $ or $$ \Gamma(K) = \Gamma'' \ast_{\{v'' \cdot a=S \cdot
a \; | \; a \in A\}}  Cayley(G_l,S),$$ where $S=K \cap A$ and $ l
=i_{k-1}$.

By the  construction of $\Gamma''$ (see the proof of
Claim~\ref{claim:PrecoverStem->Precover}), $D_k, \ldots, D_1 $ is
the unique sequence of redundant components in $\Gamma''$ which
satisfies the conditions (1)-(3) from the description of sequence
($\ast$) (see the proof of Lemma~\ref{lem:DefGammaM}). Therefore,
in the first case, it should be erased from $\Gamma(K)$ along the
construction of $Type(\Gamma(K))$.

In the second case, the component $D=Cayley(G_2,S)$ of $\Gamma(K)$
is redundant. Therefore the sequence   $D, D_k, \ldots, D_1$
should be erased from $\Gamma(K)$ along the construction of
$Type(\Gamma(K))$.
Therefore, by Definition~\ref{def:Type}, $K \neq G_i$ ($i \in
\{1,2\}$). Moreover, if $Type(\Gamma(H))$ has no redundant
components, that is $H \nleq A$ then
$Type(\Gamma(K))=Type(\Gamma(H))$ if $Type(\Gamma(H))$. Otherwise
$$Type(\Gamma(H))=\Gamma(H)=C_1 \ast_{\{v_1 \cdot a=v_2 \cdot a \;
| \; a \in A\}} C_2,$$ where $(C_i,v_i)=(Cayley(G_i,H),H \cdot
1)$, for all $i \in \{1,2\}$. Without loss of generality assume
that $v \in V(C_1)$. Hence $Type(\Gamma(K))=C_1$.

If $v \not\in V(Type(\Gamma(H)))$ then $H \nleq G_i$ ($i \in
\{1,2\}$). We take $\Gamma'$ to be the graph obtained by gluing a
stem $q$ labelled by $g_2$ at $v \in V(C_i)$ to the graph
$\Gamma_i$, which is obtained from $\Gamma(H)$ by the  iterative
removal of the sequence $C_{0}, \ldots, C_{i-1} $ of redundant
components in $\Gamma(H)$. Combining the proofs of two previous
cases, namely $v \not\in V(Type(\Gamma(H)))$, $g_2=_G 1$ and $v
\in V(Type(\Gamma(H)))$, $g_2 \neq_G 1$, we conclude that $K \nleq
G_i$ ($i \in \{1,2\}$), and $Type(\Gamma(K))=Type(\Gamma(H))$.

 \end{proof}


\begin{cor} \label{algorithm : conjugate subgroups}
Let $h_1, \ldots ,h_s,k_1, \ldots, k_t \in G$. Then there is an
algorithm, which decides whether or not the subgroups
$$H=\langle h_1, \ldots ,h_s \rangle   \ \ {\rm and}  \ \ K=\langle k_1, \ldots,
k_t \rangle    \ \ (i \in \{1,2\})$$ are conjugate in $G$.
\end{cor}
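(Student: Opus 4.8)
The plan is to turn the equivalence established in Theorem~\ref{thm:Conjugates=>EqType} into a decision procedure by making every ingredient appearing there effectively computable. First I would run the generalized Stallings' folding algorithm (Theorem~\ref{thm: properties of subgroup graphs}) on the two given generating tuples to produce the finite pointed graphs $(\Gamma(H),v_0)$ and $(\Gamma(K),u_0)$; by part~(5) of that theorem this costs $O(m^2)$ time and yields graphs of size $O(m)$, where $m$ is the total length of the input words. Next I would compute $Type(\Gamma(H))$ and $Type(\Gamma(K))$ following Definition~\ref{def:Type}: using Lemma~\ref{lem:FormOfRedPrecover} one first tests whether $H\leq G_i$ for some $i$ (equivalently, whether $\Gamma(H)$ is a single monochromatic component, or has the two-component $A$-shape of part~(iii)), and, if not, one identifies by Lemma~\ref{lem:FormOfRedPrecover}(iv) the unique redundant component $C_0$ containing $v_0$ and then iteratively peels off the unique alternating sequence $C_0,C_1,\ldots,C_{m-1}$ guaranteed by Lemma~\ref{lem:DefGammaM}. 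The redundancy test for each component is the condition in Definition~\ref{def: redundant component}, which, via the footnote there, is the computational check $|VB(C)|=[A:K]$ together with $K=Lab(C,\cdot)\le A$; membership of a subgroup of a \emph{finite} factor in $A$ is decidable since the (relative) Cayley graphs of $G_1,G_2$ are given.

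Once both $Type$ graphs are in hand, the algorithm branches exactly as in Theorem~\ref{thm:Conjugates=>EqType}. If neither $H$ nor $K$ lies in $A$ (detected above), I would test whether $Type(\Gamma(H))$ and $Type(\Gamma(K))$ are isomorphic as labelled graphs; since these are connected well-labelled (indeed $G$-based) graphs, an isomorphism, if it exists, is essentially forced once the image of one vertex is fixed, so one can check it in time polynomial in the graph sizes by attempting to propagate a vertex-matching along edges (cf.\ Remark~\ref{unique isomorphism} and Lemma~\ref{morphism of graphs}). If $H\leq A$ (so $Type(\Gamma(H))$ has the form $C_1\ast C_2$ of case~(2)), I would instead check whether $Type(\Gamma(K))$ equals one of the two explicitly described graphs $C_l$ or $C_l\ast_{\{v\cdot a=K\cdot a\}}Cayley(G_j,K)$; each of these is built from the given data and $\Gamma(K)$ in polynomial time, and again equality is an effective labelled-graph isomorphism test. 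The symmetric case $K\leq A$, $H\nleq A$ is handled by swapping the roles of $H$ and $K$. In every branch, correctness is immediate from Theorem~\ref{thm:Conjugates=>EqType}, and termination is clear because all graphs involved are finite.

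The main obstacle I anticipate is not conceptual but bookkeeping: making the ``$Type$'' construction genuinely algorithmic requires one to locate monochromatic components, compute their bichromatic vertex sets, compute the subgroups $Lab(C,v)\cap A$ and test their containment in $A$, and verify the $A$-orbit conditions $VB(C)=A(v)$ — all of which rest on having the finite factor groups and their relative Cayley graphs available (which is exactly our \textbf{GIVEN} data), but which must be assembled carefully so that the peeling of the sequence $C_0,\ldots,C_{m-1}$ terminates with a component that is provably not redundant, as in the proof of Lemma~\ref{lem:PropertiesType}. A secondary point to be careful about is the degenerate cases where $H$ or $K$ is trivial or contained in a factor: these are dispatched by Corollary~\ref{cor:TrivialityProblem} and Lemma~\ref{lem:FormOfRedPrecover}, and one must check that the $Type$ graphs produced in those cases still match up correctly with the branch structure of Theorem~\ref{thm:Conjugates=>EqType}. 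For the complexity remark I would note that each step — folding, component analysis, peeling, and the final isomorphism/equality test — is polynomial (indeed quadratic) in $m$ given the fixed factor data, so the whole algorithm runs in time $O(m^2)$.
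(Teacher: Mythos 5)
Your proposal is correct and follows essentially the same route as the paper: build $(\Gamma(H),v_0)$ and $(\Gamma(K),u_0)$ with the generalized Stallings' algorithm, peel off the redundant components to obtain the $Type$ graphs, and then test the cases of Theorem~\ref{thm:Conjugates=>EqType}, using the uniqueness of morphisms of pointed well-labelled graphs (Remark~\ref{unique isomorphism}) to reduce the graph-isomorphism check to a propagation of a single vertex-matching. The paper's only refinement you do not spell out is that the basepoint comparison can be restricted to bichromatic vertices, but that affects only the constant in the quadratic bound, not correctness.
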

\begin{proof} First we construct the graphs $(\Gamma(H),v_0)$ and
$(\Gamma(K),u_0)$, using the generalized Stallings' algorithm.
Then we compute $Type(\Gamma(H))$ and $Type(\Gamma(K))$ according
to the definition of $Type$. Now we verify if any of the
conditions from Theorem~\ref{thm:Conjugates=>EqType} are
satisfied.

Note that the verification of $\Delta_1=\Delta_2$ actually means
to check whether or not $\Gamma_1$ and $\Gamma_2$ are isomorphic.
This can be done by fixing a vertex $v \in V( \Delta_1)$ and
comparing for each vertex $w \in V(\Delta_2)$ the pointed graphs
$(\Delta_1,v)$ and $(\Delta_2, w)$, because by Remark \ref{unique
isomorphism}, such an isomorphism if it exists is unique.

Since morphisms of well-labelled graphs preserves endpoints and
labels, we can specify the above verification by fixing a
bichromatic vertex $v \in VB(\Delta_1)$ and comparing  the pointed
graphs $(\Delta_1,v)$ and $(\Delta, w)$, for each bichromatic
vertex $w \in VB(\Gamma_2)$.

\end{proof}

\begin{ex} {\rm
The subgroups $H_1$ and $H_2$ from Example \ref{example:
graphconstruction} (see Figures \ref{fig: example of H=xy}
 and  \ref{fig: example of H=xy^2x, yxyx}) are not
conjugate to each other, because their $Type$ graphs are not
isomorphic. Indeed, $Type(\Gamma(H_i))=\Gamma(H_i)$ for $i \in
\{1,2\}$,  but $|V(\Gamma(H_1))| \neq |V(\Gamma(H_2))|$. Hence
these graphs can not be isomorphic.} \e
\end{ex}


\subsection*{Complexity}

Let $m$ be the sum of the lengths of the words $h_1, \ldots h_s$,
and let $l$ be the sum of the lengths of the words $k_1, \ldots,
k_t$. By Theorem~\ref{thm: properties of subgroup graphs} (5), the
complexity of the construction of the graphs $\Gamma(H)$ and
$\Gamma(K)$ is $O(m^2)$ and $O(l^2)$, respectively.

The detecting of monochromatic components in the constructed
graphs takes $ \: O(|E(\Gamma(H))|) \: $ and $ \:
O(|E(\Gamma(K))|) \: $, that is  $O(m)$ and $O(l)$, respectively.
Since all the essential information about $A$, $G_1$ and  $G_2$ is
given and it is not a part of the input, verifications concerning
a particular monochromatic component of $\Gamma(H)$ or of
$\Gamma(K)$ takes $O(1)$. Therefore,   the complexity of the
construction of $Type(\Gamma(H))$ from $\Gamma(H)$ is
$O(|E(\Gamma(H))|)$, that is $O(m )$. Similarly, the complexity of
the construction of $Type(\Gamma(K))$ from $\Gamma(K)$ is
$O(|E(\Gamma(K))| )$, that is $O(l)$.

Now we are ready to verify an isomorphism of the obtained type
graphs. We can start by comparing the sizes of $V(\Gamma_1)$ and
$V(\Gamma_2)$ and of $E(\Gamma_1)$ and $E(\Gamma_2)$. If
$|V(\Gamma_1)|=|V(\Gamma_2)|$ and $|E(\Gamma_1)|=|E(\Gamma_2)|$
then we continue. Otherwise the graphs are not isomorphic.


Let $\Gamma_1=Type(\Gamma(H))$ and $\Gamma_2=Type(\Gamma(K))$. Let
$v \in VB(\Gamma_1)$ and $w \in VB(\Gamma_2)$. Thus, by
Definition~\ref{def:ReducedPrecover}, $(\Gamma_1,v)$ and
$(\Gamma_2,w)$ are finite reduced precovers of $G$.

Theorem~\ref{thm: properties of subgroup graphs} (2) implies that
the finite reduced precovers $(\Gamma_1,v)$ and $(\Gamma_2,w)$ are
isomorphic if and only if they are isomorphic via the morphism
$\mu$ of well-labelled pointed graphs, defined in the proof of
Lemma~\ref{lemma1.5} in \cite{gi_sep}. That is to check the
isomorphism between $(\Gamma_1,v)$ and $(\Gamma_2,w)$, we simply
have to check if $\mu$ is defined. Recall that $\mu:(\Gamma_1,v)
\rightarrow (\Gamma_2,w)$  satisfies
$$
\mu(\vartheta)=w \cdot x  \  \big( \forall \; \vartheta=v \cdot x
\in V(\Gamma_1) \big) \  \ {\rm and} \ \
\mu(e)=(\mu(\iota(e)),lab(e)) \ \big( \forall \; e \in
 E(\Gamma) \big).$$
Thus for all $\; \vartheta=v \cdot x \in V(\Gamma_1) \;$ we have
to check if $Star(v \cdot x, \Gamma_1)=Star(w \cdot x, \Gamma_2)$,
where the \emph{star} of the vertex $\sigma$ (see \cite{stal}) in
the graph $\Delta$ is the set $$Star(\sigma,\Delta)=\{e \in
E(\Delta) \; | \; \iota(e)=\sigma \}.$$
This procedure takes time proportional to $|E(\Gamma_1)|$, that is
proportional to $m$. Since in the worst case we have to repeat the
above procedure for all pointed graphs $(\Gamma_2,\omega)$, where
$\omega \in VB(\Gamma_2)$, the verification  of an isomorphism
between the graphs $\Gamma_1$ and $\Gamma_2$ takes $O
\big(|VB(\Gamma_2)| \cdot |E(\Gamma_1)| \big).$

Since $|VB(\Gamma_2)| \leq |V(\Gamma_2)|$ and, by
Theorem~\ref{thm: properties of subgroup graphs} (5),
$|V(\Gamma_2)|$ is proportional to $l$ and $|E(\Gamma_1)|$ is
proportional to $m$, the complexity of the algorithm given along
with the proof of Corollary \ref{algorithm : conjugate subgroups}
is $$O \big(m^2+l^2+ml \big) \; = \; O \big((m+l)^2 \big).$$ Thus
the above algorithm is quadratic in the size of the input.

Note that if the subgroups $H$ and $K$ are given by the  graphs
$\Gamma(H)$ and $\Gamma(K)$,  the complexity of the algorithm that
decides whether or not the subgroup $H$ and $K$ are conjugate in
$G$ is
$$O \big( |E(\Gamma(H))|^2+|E(\Gamma(K))|^2+ |VB(\Gamma_2)| \cdot
|E(\Gamma_1)| \big).$$

Note that, since our graphs are connected,  $|V(\Gamma_2)| \leq
|E(\Gamma_2)|$. Thus $|VB(\Gamma_2)|  \leq |V(\Gamma_2)| \leq
|E(\Gamma_2)| \leq |E(\Gamma(K))|$. Since $|E(\Gamma_1)| \leq
|E(\Gamma(H))|$, the complexity is
$$O \big( |E(\Gamma(H))|^2+|E(\Gamma(K))|^2+ |E(\Gamma(H))| \cdot
|E(\Gamma(K))| \big).$$ That is it is quadratic in the size of the
input $O \big( \big( |E(\Gamma(H))|+|E(\Gamma(K))| \big)^2 \big).$

%
%
\bigskip

\subsection{More Conjugacy Results}

\begin{thm} \label{K conjugate to a subgroup of H finite grps}
Let $H$ and $K$ be finitely generated subgroup of an amalgam of finite groups  $G=G_1 \ast_A G_2$.\\
Then there exists $g \in G$ such that $gKg^{-1} \leq H$ if and
only if one of the following holds.
\begin{itemize}
 \item[(1)] If $K  \nleq A$ then there exists a morphism of well-labelled graphs $$\pi:
Type(\Gamma(K)) \rightarrow Type(\Gamma(H)).$$
 \item[(2)] If $K  \leq A$ and $Type(\Gamma(K))=C_1 \ast_{\{v_1 \cdot a=v_2
\cdot a \; | \; a \in A\}} C_2$, where $(C_i,v_i)=(Cayley(G_i,K),K
\cdot 1)$ ($i \in \{1,2\}$), then there exists  a morphism of
well-labelled graphs $\pi: C_l \rightarrow Type(\Gamma(H)),$ for
some $l \in \{1,2\}$.
\end{itemize}

\end{thm}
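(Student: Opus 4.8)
The statement is the ``one subgroup contains a conjugate of the other'' analogue of Theorem~\ref{thm:Conjugates=>EqType}, so the plan is to mimic that proof but replace ``isomorphism of $Type$ graphs'' by ``morphism of $Type$ graphs'', using Lemma~\ref{morphism of graphs} as the translation between the algebraic statement $Lab(\Gamma_1,v_1)\leq Lab(\Gamma_2,v_2)$ and the existence of a (unique) morphism $\pi:(\Gamma_1,v_1)\to(\Gamma_2,v_2)$. Note first that $Type(\Gamma(K))$ and $Type(\Gamma(H))$ are finite nonempty precovers of $G$ by Lemma~\ref{lem:PropertiesType}(i), hence well-labelled, and by Definition~\ref{def:ReducedPrecover} (after pointing them at a bichromatic vertex) they are reduced precovers with $degree(w)>1$ at all vertices except possibly the basepoint; so Lemma~\ref{morphism of graphs} applies once we are careful about basepoints.

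For the ``if'' direction: suppose $K\nleq A$ and a morphism $\pi:Type(\Gamma(K))\to Type(\Gamma(H))$ exists. Pick a bichromatic vertex $v\in VB(Type(\Gamma(K)))$ and let $w=\pi(v)$; then $Lab(Type(\Gamma(K)),v)\leq Lab(Type(\Gamma(H)),w)$ by the morphism direction of Lemma~\ref{morphism of graphs}. By Lemma~\ref{lem:PropertiesType}(iii) (applied to both $K$ and $H$), $Lab(\Gamma(K),v)=Lab(Type(\Gamma(K)),v)$ and $Lab(\Gamma(H),w)=Lab(Type(\Gamma(H)),w)$; and by the conjugacy relation $Lab(\Gamma(\cdot),v)=g\,Lab(\Gamma(\cdot),v_0)\,g^{-1}$ for a path label $g$, both of these are conjugates of $K$ and $H$ respectively. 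Chaining the conjugations gives $g_1Kg_1^{-1}\leq g_2Hg_2^{-1}$ for suitable $g_1,g_2$, hence $(g_2^{-1}g_1)K(g_2^{-1}g_1)^{-1}\leq H$. The case $K\leq A$ is handled the same way: by Lemma~\ref{lem:FormOfRedPrecover}(iii), $Type(\Gamma(K))=C_1\ast C_2$ with $(C_i,v_i)=(Cayley(G_i,K),K\cdot1)$, and $Lab(C_l,v_l)=K$; a morphism $C_l\to Type(\Gamma(H))$ then yields $K=Lab(C_l,v_l)\leq Lab(Type(\Gamma(H)),\pi(v_l))$, which is a conjugate of $H$ by the same argument, giving the conclusion.

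For the ``only if'' direction: assume $gKg^{-1}\leq H$, equivalently $K\leq g^{-1}Hg$; without loss of generality take $g$ a normal word. The idea is to build, inside $(\Gamma(H),v_0)$ suitably modified, a pointed well-labelled graph $(\Gamma^\ast,w)$ with $Lab(\Gamma^\ast,w)=g^{-1}Hg$ and $w$ reachable so that $Type(\Gamma^\ast)=Type(\Gamma(H))$; this is exactly the stem-attaching construction of Theorem~\ref{thm:Conjugates=>EqType} (read a maximal prefix $g_1$ of $g$ along $\Gamma(H)$ ending at a vertex $v$, then glue a stem $q$ with $lab(q)\equiv g_2$ where $g\equiv g_1g_2$, then fill in with the copies of relative Cayley graphs $D_1,\dots,D_k$ of the factors as in Claim~\ref{claim:PrecoverStem->Precover}), together with the book-keeping showing that the newly created components $D_k,\dots,D_1$ (and possibly one extra $Cayley(G_l,S)$ when passing to $\Gamma(g^{-1}Hg)$) are exactly the redundant components stripped off when forming $Type$, so that $Type(\Gamma(g^{-1}Hg))=Type(\Gamma(H))$. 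Now $g^{-1}Hg\supseteq K$: since $\Gamma(g^{-1}Hg)$ is a reduced precover with $Lab=g^{-1}Hg$, and $\Gamma(K)$ is a reduced precover with $Lab=K$, the inclusion $Lab(\Gamma(K),u_0)\leq Lab(\Gamma(g^{-1}Hg),v_0')$ gives a morphism $(\Gamma(K),u_0)\to(\Gamma(g^{-1}Hg),v_0')$ by Lemma~\ref{morphism of graphs}; passing to $Type$ (which only deletes redundant components, and a morphism of precovers sends precovers to precovers by Remark~\ref{remark: morphism of precovers}, while the deleted part of the source maps into the deleted part of the target by uniqueness of the redundant sequence in Lemma~\ref{lem:DefGammaM}) yields the desired morphism $Type(\Gamma(K))\to Type(\Gamma(H))$ when $K\nleq A$. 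When $K\leq A$, $Type(\Gamma(K))=C_1\ast C_2$ and the morphism $\Gamma(K)\to\Gamma(g^{-1}Hg)$ restricted to the monochromatic component $C_l$ containing the basepoint survives to the required morphism $C_l\to Type(\Gamma(H))$.

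\textbf{Main obstacle.} The delicate point is the ``passing to $Type$'' step in the ``only if'' direction: one must verify that a morphism $\pi:\Gamma\to\Gamma'$ of reduced precovers carries the unique alternating sequence of redundant components being removed from $\Gamma$ (as in Lemma~\ref{lem:DefGammaM}) into the corresponding sequence removed from $\Gamma'$, and hence descends to a well-defined morphism of the $Type$ graphs — and that no vertex of $Type(\Gamma(K))$ is accidentally collapsed into a deleted component of $Type(\Gamma(H))$. This requires carefully tracking basepoints and the conditions $Lab(C_i,u_i)\leq A$, $VB(C_i)=A(u_{i-1})\cup A(u_i)$ through $\pi$, using Claim~\ref{claim: red precover} and the characterization of redundant components in Definition~\ref{def: redundant component}; the alternating-color structure and the uniqueness assertions in Lemma~\ref{lem:FormOfRedPrecover}(iv) and Lemma~\ref{lem:DefGammaM} are what make it work. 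Handling the $K\leq A$ cases separately (on both sides) is routine bookkeeping given Lemma~\ref{lem:FormOfRedPrecover}(iii) and Lemma~\ref{lem:PropertiesType}(iv), but must not be forgotten.
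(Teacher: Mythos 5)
Your ``if'' direction (the graph conditions imply a conjugate containment) is essentially the paper's argument: Lemma~\ref{morphism of graphs} combined with Lemma~\ref{lem:PropertiesType}(iii) and conjugation by path labels. The problem is in the ``only if'' direction, and the obstacle you flag at the end is not something you can ``carefully track'' away --- in the form you set it up, the step fails. You propose to take the basepoint-preserving morphism $\pi:(\Gamma(K),u_0)\to(\Gamma(g^{-1}Hg),v_0')$ supplied by Lemma~\ref{morphism of graphs} and restrict it to $Type(\Gamma(K))$, arguing that ``the deleted part of the source maps into the deleted part of the target.'' But the danger is the opposite one: the \emph{non-deleted} part of the source can map into the deleted part of the target. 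Concretely, if $\Gamma(K)$ has no redundant components then $u_0\in V(Type(\Gamma(K)))$, while $\pi(u_0)=v_0'$ sits (whenever the stem labelled $g_2$ is nonempty) inside the chain $D,D_k,\ldots,D_1$ of redundant components of $\Gamma(g^{-1}Hg)$ that is stripped off when forming $Type(\Gamma(g^{-1}Hg))$ --- the first component of the unique alternating sequence of Lemma~\ref{lem:DefGammaM} is precisely the one containing the basepoint. So $\pi(Type(\Gamma(K)))\not\subseteq Type(\Gamma(H))$, the restriction is not the desired morphism, and no bookkeeping with the conditions $VB(C_i)=A(u_{i-1})\cup A(u_i)$ rescues this particular map.

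The paper avoids the issue by two moves you omit. First, a preliminary Claim produces a vertex $v$ of $\Gamma(H)$ such that $L=Lab(Type(\Gamma(H)),v)$ satisfies $Type(\Gamma(L))=\Gamma(L)$; replacing $H$ by its conjugate $L$ (which has the same $Type$ graph), one may assume $\Gamma(H)=Type(\Gamma(H))$, so the \emph{target} has no deleted part at all. Second, the morphism taken from Lemma~\ref{morphism of graphs} is $\varphi:\Gamma(gKg^{-1})\to\Gamma(H)$ --- the graph of the conjugate $gKg^{-1}$, not of $K$ --- so that passing to $Type$ is merely a restriction of the \emph{domain} to the subgraph $Type(\Gamma(gKg^{-1}))$, which is automatically a morphism into $\Gamma(H)=Type(\Gamma(H))$. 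The already-proved Theorem~\ref{thm:Conjugates=>EqType} then identifies $Type(\Gamma(K))$ with $Type(\Gamma(gKg^{-1}))$ (or embeds $C_l$ into it when $gKg^{-1}\leq A$), and composing gives $\pi$. This also makes your re-derivation of the stem-and-fill construction of Claim~\ref{claim:PrecoverStem->Precover} unnecessary. To repair your proof, either prove a genuine descent statement for a \emph{differently based} morphism, or make the same basepoint change; the latter is the missing idea.
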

\begin{proof} We begin with the following claim which allows to
assume that $\Gamma(H)=Type(\Gamma(H))$.
\begin{claim}
There exist $v \in V(\Gamma(H))$ such that
$Type(\Gamma(L))=\Gamma(L)$, where $L=Lab(Type(\Gamma(H)),v)$.
\end{claim}
\begin{proof}[Proof of the Claim] \
By Definition~\ref{def:Type}, either $Type(\Gamma(H))=\Gamma(H)$
or \linebreak[4] $Type(\Gamma(H))~=~\Gamma_m.$

If $\Gamma_m=C_m$ then we take $v \in
VB_{\Gamma_{m-1}}(C_{m-1})=VB_{\Gamma_{m-1}}(C_m)$. Thus
$L=Lab(C_m,v) \leq G_i$ ($i\in\{1,2\}$), and, by the proof of
Lemma~\ref{lem:DefGammaM},  $L  \nleq A$. Hence, by
Lemma~\ref{lem:FormOfRedPrecover}, $\Gamma(L)=C_m$. Thus, by
Definition~\ref{def:Type}, $\Gamma(L)=Type(\Gamma(L))$.

Assume now that $\Gamma_m$ has at least two distinct monochromatic
components. Let $v \in VB(\Gamma_m)$. By
Lemma~\ref{lem:PropertiesType}(i), $\Gamma_m$ is a finite nonempty
precover of $G$. Thus, since $\Gamma_m$ has no redundant
components and $v \in VB(\Gamma_m)$, we conclude that $\Gamma_m$
is a finite reduced precover. That is $(\Gamma_m,v)=(\Gamma(L),
u_0)$, where $u_0$ is the basepoint of $\Gamma(L)$. Therefore
$Type(\Gamma(L))=\Gamma(L)$.

\end{proof}

Let $y \in G$ such that $v=v_0 \cdot y$. Therefore $L=y^{-1}Hy$.
Thus $y^{-1}gK g ^{-1}y \leq L$. Hence, without loss of
generality,  we can assume that $\Gamma(H)=Type(\Gamma(H))$.

Assume first that there exists $g \in G$ such that $gKg^{-1} \leq
H$. By Lemma~\ref{morphism of graphs}, there exists a morphism
$\varphi: \Gamma(gKg^{-1}) \rightarrow \Gamma(H).$ Let $\varphi'$
be the restriction of $\varphi$ to $Type(\Gamma(gKg^{-1}))$, that
is $\varphi': Type(\Gamma(gKg^{-1})) \rightarrow Type(\Gamma(H)).$

Let $K  \nleq A$. Thus either $gKg^{-1}  \nleq A$ or $gKg^{-1}
\leq A$. Hence, by Theorem~\ref{thm:Conjugates=>EqType}, either
$Type(\Gamma(K))=Type(\Gamma(gKg^{-1}))$  or,
$$Type(\Gamma(gKg^{-1}))=C_1 \ast_{\{v_1 \cdot a=v_2 \cdot a \; |
\; a \in A\}} C_2, \  \ \ Type(\Gamma(K))=C_l  \      (l \in
\{1,2\}),$$  where $(C_i,v_i)=(Cayley(G_i,gKg^{-1}),gKg^{-1} \cdot
1)$, $i \in \{1,2\}$. In the first case, we take $\pi=\varphi'$.
In the second one,  there exists an embedding
$\phi:Type(\Gamma(K)) \rightarrow Type(\Gamma(gKg^{-1}))$.
Therefore $\pi=\varphi' \circ \phi$ gives the desired morphism.

%
%

Assume now that  $K  \leq A$. Thus, by Definition~\ref{def:Type}
and Lemma~\ref{lem:FormOfRedPrecover} (iii), $Type(\Gamma(K))=C_1
\ast_{\{v_1 \cdot a=v_2 \cdot a \; | \; a \in A\}} C_2$, where
$(C_i,v_i)=(Cayley(G_i,K),K \cdot 1)$ ($i \in \{1,2\}$).

If $gKg^{-1}  \nleq A$ then, by
Theorem~\ref{thm:Conjugates=>EqType},
$Type(\Gamma(gKg^{-1}))=C_l$, for some $l \in \{1,2\}$. Thus
$\pi=\varphi'$ produces the desired morphism $\pi: C_l \rightarrow
Type(\Gamma(H)).$
If $gKg^{-1}  \leq A$ then, by
Theorem~\ref{thm:Conjugates=>EqType}, $$Type(\Gamma(gKg^{-1}))=C_l
\ast_{\{v \cdot a=(gKg^{-1}) \cdot a \; | \; a \in A\}}
Cayley(G_j,gKg^{-1}),$$ where $1\leq l \neq j \leq 2$. Thus there
exists an embedding  $\phi:C_l \rightarrow
Type(\Gamma(gKg^{-1}))$. Therefore $\pi=\varphi' \circ \phi: C_l
\rightarrow Type(\Gamma(H))$ gives the desired morphism.


Suppose now that $ K \nleq A$ and the morphism $\pi:
Type(\Gamma(K)) \rightarrow Type(\Gamma(H))$ exists. Let $p$ be a
path in $\Gamma(K)$ with $\iota(p)=u_0$, where $u_0$ is the
basepoint of the graph $\Gamma(K)$, such that $ \tau(p) \in
V(Type(\Gamma(K))) $. Let $u=\tau(p)$, $lab(p)\equiv f$ and let
$\vartheta=\pi(u) \in V(Type(\Gamma(H)))$.

Since $Type(\Gamma(H)) \subseteq \Gamma(H)$, we have $\vartheta
\in V(\Gamma(H))$. By Lemma~\ref{lemma2.12}, there exists a normal
path $q$ in $\Gamma(H)$ with $\iota(q)=v_0$ (the basepoint of
$\Gamma(H)$) and $\tau(q)=\vartheta$. Let $lab(q) \equiv c$. By
Lemma~\ref{lem:PropertiesType} (iii), $Lab(Type(\Gamma(K)),u)=Lab(
\Gamma(K) ,u)=f^{-1}Kf$ and $Lab(Type(\Gamma(H)),\vartheta)=Lab(
\Gamma(H) ,\vartheta)=c^{-1}Hc$.

Since $\pi$ can also be considered as a morphism of pointed graphs
$$\pi: (Type(\Gamma(K)),u) \rightarrow (Type(\Gamma(H)),\vartheta),$$
by Lemma~\ref{morphism of graphs}, we have $Lab
(Type(\Gamma(K)),u) \leq Lab(Type(\Gamma(H)),\vartheta)$. Thus
$f^{-1}Kf \leq c^{-1}Hc$. Therefore $g=c \cdot f^{-1}$ and
$gKg^{-1} \leq  H $, as required.


Let $K \leq A$. Thus, by Lemma~\ref{lem:FormOfRedPrecover} (iii),
$ \Gamma(K) =C_1 \ast_{\{v_1 \cdot a=v_2 \cdot a \; | \; a \in
A\}} C_2$, where $(C_i,v_i)=(Cayley(G_i,K),K \cdot 1)$ ($i \in
\{1,2\}$) and the basepoint $u_0$ of $\Gamma(K)$ is the image of
$K \cdot 1$. Thus $Lab(C_1,u_0 \cdot a)=Lab(C_2,u_0 \cdot
a)=Lab(\Gamma(K),u_0 \cdot a)$, for all $a \in A$.

Assume that  there is $l \in \{1,2\}$ such that the morphism $\pi:
C_l \rightarrow Type(\Gamma(H))$ exists. Let $p$ be a path in
$\Gamma(K)$ with $\iota(p)=u_0$, where $u_0$ is the basepoint of
the graph $\Gamma(K)$, such that $ \tau(p) \in V(C_l) $. Let
$u=\tau(p)$, $lab(p)\equiv f$ and let $\vartheta=\pi(u) \in
V(Type(\Gamma(H)))$.

Since $Type(\Gamma(H)) \subseteq \Gamma(H)$, we have $\vartheta
\in V(\Gamma(H))$. By Lemma~\ref{lemma2.12}, there exists a normal
path $q$ in $\Gamma(H)$ with $\iota(q)=v_0$ (the basepoint of
$\Gamma(H)$) and $\tau(q)=\vartheta$. Let $lab(q) \equiv c$. Thus
$Lab(C_l,u)=f^{-1}Lab(C_l,u_0)f=f^{-1}Lab(\Gamma(K),u_0)f=f^{-1}Kf$
and $Lab(Type(\Gamma(H)),\vartheta)=Lab( \Gamma(H)
,\vartheta)=c^{-1}Hc$.

Since $\pi$ can also be considered as a morphism of pointed graphs
$$\pi: (C_l,u) \rightarrow (Type(\Gamma(H)),\vartheta),$$
by Lemma~\ref{morphism of graphs}, we have $Lab (C_l,u) \leq
Lab(Type(\Gamma(H)),\vartheta)$. Thus $f^{-1}Kf \leq c^{-1}Hc$.
Therefore $g=c \cdot f^{-1}$ and $gKg^{-1} \leq  H $, as required.

 \end{proof}



\begin{cor} \label{cor: alg K is conj subgroup of H}
Let $ h_1, \ldots, h_s, k_1, \ldots, k_t  \in G.$ Then there
exists an algorithm which decides whether or not there exists $g
\in G$ such that $gKg^{-1} \leq  H $, where $$H=\langle h_1,
\ldots ,h_s \rangle   \ {\rm and}  \ \ K=\langle k_1, \ldots, k_t
\rangle .$$ Moreover, the algorithm produces one such $g$ if it
exists.
\end{cor}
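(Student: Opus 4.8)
The plan is to turn the criterion of Theorem~\ref{K conjugate to a subgroup of H finite grps} into an effective procedure. First I would run the generalized Stallings' folding algorithm of Theorem~\ref{thm: properties of subgroup graphs} on the two generating tuples to produce the finite pointed graphs $(\Gamma(H),v_0)$ and $(\Gamma(K),u_0)$, and then, following Definition~\ref{def:Type}, compute the finite precovers $Type(\Gamma(H))$ and $Type(\Gamma(K))$: locate the monochromatic components of each graph, test the redundancy conditions of Definition~\ref{def: redundant component} (each such test is local to a single monochromatic component and, since the Cayley and coset Cayley graphs of the factors are given, takes constant time), and iteratively erase the unique alternating sequence of redundant components furnished by Lemma~\ref{lem:DefGammaM}. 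Along the way one reads off from $\Gamma(K)$, via Lemma~\ref{lem:FormOfRedPrecover}, whether $K\le A$, and if so its decomposition $Type(\Gamma(K))=C_1\ast_{\{v_1 \cdot a=v_2 \cdot a \; | \; a \in A\}}C_2$ with $(C_i,v_i)=(Cayley(G_i,K),K\cdot 1)$.

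Next I would implement the search for the morphism of well-labelled graphs demanded by Theorem~\ref{K conjugate to a subgroup of H finite grps}. The key observation is that, by Remark~\ref{unique isomorphism} together with the uniqueness statement for morphisms into well-labelled graphs, a morphism of \emph{connected} well-labelled graphs is completely determined by the image of a single vertex. So I fix a vertex $\sigma$ in the source graph $\Sigma$ (where $\Sigma=Type(\Gamma(K))$ when $K\nleq A$, and $\Sigma=C_l$ for $l\in\{1,2\}$ when $K\le A$) and, for each candidate image vertex $w\in V(Type(\Gamma(H)))$, attempt to build $\pi$ by a breadth-first traversal of $\Sigma$: every edge out of an already-mapped vertex has a forced image (the edge of $Type(\Gamma(H))$ with the same label leaving the image vertex), which is either present — in which case the traversal continues — or absent, in which case this $w$ fails. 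If the traversal completes without contradiction for some $w$ (and, in the amalgamated case, some $l$), the required morphism exists; otherwise it does not. The decision procedure is therefore to answer ``yes'' exactly when such a $w$ (and $l$) is found, which is correct by Theorem~\ref{K conjugate to a subgroup of H finite grps}. Since the graph constructions are polynomial by Theorem~\ref{thm: properties of subgroup graphs}(5) and the morphism search costs $O\big(|V(Type(\Gamma(H)))|\cdot|E(Type(\Gamma(K)))|\big)$, the whole algorithm is polynomial — in fact quadratic — in the total input length.

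To exhibit an explicit conjugator, I would unwind the reductions in the proof of Theorem~\ref{K conjugate to a subgroup of H finite grps}. Given the morphism $\pi$ found above, choose a path in $\Gamma(K)$ from $u_0$ to the distinguished source vertex $u$ of the chosen component, with label $f$, a normal path in $\Gamma(H)$ from $v_0$ to $\pi(u)$ with label $c$ (such a path exists by Lemma~\ref{lemma2.12}, as $\Gamma(H)$ is a compatible graph), and — if the opening Claim in that proof replaced $H$ by $L=y^{-1}Hy$ — the label $y$ of a path in $\Gamma(H)$ from $v_0$ to the vertex realizing $L$; then the product of these conjugators (i.e.\ $g=y\,c\,f^{-1}$, with $y$ absent when the reduction is trivial) satisfies $gKg^{-1}\le H$, exactly as the chain of subgroup inclusions in the proof shows.

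I expect the principal difficulty to be bookkeeping rather than mathematical: ensuring that the morphism-search faithfully covers \emph{both} alternatives of Theorem~\ref{K conjugate to a subgroup of H finite grps}, in particular the case $K\le A$, where one must try both candidate components $C_1,C_2$ of $Type(\Gamma(K))$, and correctly composing the auxiliary conjugators $y$, $c$, $f$ so that the returned $g$ conjugates the \emph{original} $K$ into the \emph{original} $H$ rather than into one of the intermediate subgroups that appear in the reductions. The correctness of the bare yes/no answer is immediate from the theorem; it is the output of an explicit $g$ that requires care.
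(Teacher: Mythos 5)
Your proposal is correct and follows essentially the same route as the paper: build $(\Gamma(H),v_0)$ and $(\Gamma(K),u_0)$ by the generalized Stallings' algorithm, compute the $Type$ graphs, and for a fixed source vertex test each target vertex $v\in V(Type(\Gamma(H)))$ for the existence of the (necessarily unique) morphism required by Theorem~\ref{K conjugate to a subgroup of H finite grps}, trying both components $C_1,C_2$ when $K\le A$, and finally read off $g=c\cdot f^{-1}$ from the access paths labelled $f$ (in $\Gamma(K)$) and $c$ (in $\Gamma(H)$). Your extra detail on the forced breadth-first construction of the morphism, and your care in composing the auxiliary conjugator from the opening Claim, only make explicit what the paper leaves implicit.
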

\begin{proof}
First we construct the graphs $(\Gamma(K),u_0)$ and
$(\Gamma(H),v_0)$, using the generalized Stallings' folding
algorithm. Then we construct $Type(\Gamma(K))$ and
$Type(\Gamma(H))$, according to Definition~\ref{def:Type}.

If $K \nleq A$ then we proceed as follows. Let $u \in
V(Type(\Gamma(K)))$. For each vertex $v \in V(Type(\Gamma(H)))$ we
iteratively check if there exists a morphism $\pi:
(Type(\Gamma(K)), u) \rightarrow (Type(\Gamma(H)),v)$. If no such
morphism can be found then $K$ is not conjugate to any subgroup of
$H$, by Theorem~\ref{K conjugate to a subgroup of H finite grps}.
Otherwise, by the proof of Theorem~\ref{K conjugate to a subgroup
of H finite grps}, $gKg^{-1} \leq H$, where $g=c \cdot f^{-1}$ and
$v=v_0 \cdot f$ and $u=u_0 \cdot c$.

Assume now that  $K  \leq A$. Thus $Type(\Gamma(K))=C_1
\ast_{\{v_1 \cdot a=v_2 \cdot a \; | \; a \in A\}} C_2$, where
$(C_i,v_i)=(Cayley(G_i,K),K \cdot 1)$ ($i \in \{1,2\}$).

For each $i \in \{1,2\}$, let $u_i \in V(C_i)$. For each vertex $v
\in V(Type(\Gamma(H)))$ we iteratively check if there exists a
morphism $\pi: (C_i, u_i) \rightarrow (Type(\Gamma(H)),v)$. If no
such morphism can be found then $K$ is not conjugate to any
subgroup of $H$, by Theorem~\ref{K conjugate to a subgroup of H
finite grps}. Otherwise, by the proof of Theorem~\ref{K conjugate
to a subgroup of H finite grps}, $gKg^{-1} \leq H$, where $g=c
\cdot f^{-1}$ and $v=v_0 \cdot f$ and $u=u_0 \cdot c$.

\end{proof}

%

\subsection*{Complexity.} Similarly to the complexity analysis of
the algorithm presented along with the proof of Corollary \ref
{cor: alg K is conj subgroup of H},  the complexity of the above
algorithm is $O((m+l) ^2)$, where $m$ is the sum of the lengths of
the words $h_1, \ldots h_s$, and  $l$ is the sum of the lengths of
the words $k_1, \ldots, k_t$.
Similarly, when  the subgroup $H$ and $K$ are given by the graphs
$\Gamma(H)$ and $\Gamma(K)$,  the  complexity  is $O \big( \big(
|E(\Gamma(H))|+|E(\Gamma(K))| \big)^2 \big).$


\begin{cor}[The Conjugacy Problem]
The conjugacy problem is solvable in amalgams of finite groups.

Namely, let $G=G_1 \ast_A G_2$ be an amalgam of finite groups.
Given elements $k, h \in G$ one can decide whether exists $g \in
G$ such that $gkg^{-1}=_G h$.
\end{cor}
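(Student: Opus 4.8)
The plan is to reduce the conjugacy problem for \emph{elements} of $G=G_1\ast_A G_2$ to the computation of cyclically reduced forms together with the classical Conjugacy Theorem for amalgamated products (IV.2.8 in \cite{l_s}), all of whose ingredients are effective here because $G_1,G_2,A$ are finite and the word problem in $G$ is solvable — for instance $a=_G b$ can be tested by applying Corollary~\ref{cor:TrivialityProblem} to $\langle ab^{-1}\rangle$. First I would dispose of the degenerate cases: using Corollary~\ref{cor:TrivialityProblem} decide whether $k=_G 1$ and whether $h=_G 1$; if exactly one of them is trivial the answer is negative, and if both are trivial it is affirmative. So from now on assume $h,k\neq_G 1$.

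Next I would compute cyclically reduced conjugates. If an element in normal form $g_1\cdots g_n$ with $n\ge 2$ has its first and last syllables in the same factor, it is conjugate to $(g_ng_1)g_2\cdots g_{n-1}$, whose syllable length is strictly smaller; renormalising (each step effective by solvability of the word problem and finiteness of the factors) and iterating, after finitely many steps one obtains a cyclically reduced conjugate $\widetilde k=u^{-1}ku$ of $k$ and $\widetilde h=w^{-1}hw$ of $h$, while recording the conjugators $u,w$. Then $k$ is conjugate to $h$ in $G$ iff $\widetilde k$ is conjugate to $\widetilde h$; and since the syllable length of a cyclically reduced element is a conjugacy invariant (Conjugacy Theorem), if $l(\widetilde k)\neq l(\widetilde h)$ the answer is negative. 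Put $n:=l(\widetilde k)=l(\widetilde h)$; note $n\ge 1$ since $k,h\neq_G 1$.

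If $n\ge 2$, the Conjugacy Theorem says that $\widetilde k$ is conjugate to $\widetilde h$ iff $\widetilde h$ arises from $\widetilde k$ by a cyclic permutation of its syllables followed by conjugation by an element of the finite group $A$. There are only $n$ cyclic permutations and $|A|$ such elements, and each candidate equality is decided by the word problem, so this is a finite check; a success produces, after composing with $u$ and $w$, an explicit conjugator in $G$. (This is essentially the data encoded by the $Type$-graphs of Theorem~\ref{thm:Conjugates=>EqType}: one could instead run Corollary~\ref{algorithm : conjugate subgroups} on $\langle k\rangle$ and $\langle h\rangle$ and then test the two possible orientations on the common $Type$-graph.)

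The case $n=1$ is the main obstacle, since there $\widetilde k$ and $\widetilde h$ are non-trivial elements of single factors, say $\widetilde k\in G_i$ and $\widetilde h\in G_j$, and conjugacy of elements of the factors in $G$ is governed by the ``chain through $A$'' phenomenon. I would first decide, by a finite computation in the given finite groups, whether $\widetilde k$ is conjugate in $G_i$ to some element of $A$, and likewise for $\widetilde h$; if $\widetilde k$ is not conjugate into $A$ then it is conjugate in $G$ only to elements conjugate to it inside $G_i$, which settles the question. Otherwise I would build the finite graph $\Lambda$ with vertex set $A$ in which $a$ and $a'$ are joined whenever they are conjugate in $G_1$ or conjugate in $G_2$ (again a finite computation), and use the Conjugacy Theorem: $\widetilde k$ and $\widetilde h$ are conjugate in $G$ iff some $A$-conjugate of $\widetilde k$ and some $A$-conjugate of $\widetilde h$ lie in the same connected component of $\Lambda$ — a reachability test, decidable, with a witnessing path giving an explicit conjugator. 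Assembling these steps yields a decision procedure that, moreover, exhibits a $g$ with $gkg^{-1}=_G h$ whenever one exists. The points demanding care are the invariance of syllable length under conjugacy, the bookkeeping of conjugators across the reductions, and — in the case $n=1$ — the faithful transcription of the classical chain argument into the finite reachability problem above, including the correct treatment of the elements that are not conjugate into $A$.
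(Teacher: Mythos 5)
Your argument is correct, but it takes a genuinely different route from the paper. You work entirely at the level of elements, using the classical Conjugacy Theorem for amalgamated products (IV.2.8 in \cite{l_s}): cyclically reduce $k$ and $h$, compare syllable lengths, and then either enumerate the finitely many cyclic permutations composed with conjugation by elements of the finite group $A$ (length $\geq 2$), or solve a finite reachability problem in the graph on $A$ whose edges record conjugacy inside $G_1$ or $G_2$ (length $\leq 1$); all of this is effective because $G_1$, $G_2$, $A$ are finite and normal forms are computable. The paper instead sets $K=\langle k\rangle$ and $H=\langle h\rangle$, runs the subgroup algorithm of Corollary~\ref{cor: alg K is conj subgroup of H} to test whether some $g$ satisfies $gKg^{-1}\leq H$, and then checks whether the single witness $g$ it returns satisfies $gkg^{-1}=_G h$ by normal-form reduction and the Torsion Theorem. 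The paper's proof is a two-line consequence of the machinery already built, which is the point of the paper; yours is self-contained and elementary. It also sidesteps a delicacy in the paper's version: conjugacy of the cyclic subgroups, and even $gKg^{-1}\leq H$ for the particular $g$ produced, does not by itself force $gkg^{-1}=_G h$ (one could have $gkg^{-1}=_G h^m$ with $m\neq 1$), so the paper's single-witness check implicitly needs a further enumeration, whereas your exhaustive search over cyclic permutations, $A$-conjugators and chains through $A$ is complete as stated. The only points to write out carefully are the ones you already flag: the renormalisation during cyclic reduction and the correct bookkeeping in the length-one case.
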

\begin{proof}
Let $K=\langle k \rangle$ and $H=\langle h \rangle$.  We apply to
$K$ and $H$ the algorithm described along with the proof of
Corollary~\ref{cor: alg K is conj subgroup of H}. If there is no
$g \in G$ such that $gKg^{-1} \leq H$ then the elements $k$ and
$h$ are not conjugate in $G$.

Otherwise, let $g \in G$ such that $gKg^{-1} \leq H$. We have to
check whether $gkg^{-1}=_G h$. To this end we rewrite the element
$gkg^{-1}h^{-1}$ as a normal word. If the resulting word is not
empty then, by the Torsion Theorem (IV.2.7, \cite{l_s}),
$gkg^{-1}h^{-1} \neq_G 1$, that is $gkg^{-1}\neq_G h$. Otherwise,
$gkg^{-1}=_G h$.
\end{proof}


\section{The Normality Problem}
\label{subsec:Normality_finite grps}

The current section is devoted to the solution of the
\emph{normality problem}, which asks to know if a subgroup $H$ of
a group $G$ is normal in $G$, for finitely generated subgroups of
amalgams of finite groups.

The quadratic time algorithm  is presented in Corollary \ref{cor:
algm normal subgroup}. It is based on Theorem~\ref{normality} and
Lemma~\ref{normalityII}, which give  a connection between the
normality of a subgroup $H$ of an amalgam  of finite groups $G=G_1
\ast_A G_2$ and its subgroup graph  $\Gamma(H)$ constructed by the
generalized Stallings' algorithm. The complexity analysis of the
algorithm is given at the end of this section.


We start by presenting the following technical result from
\cite{m-foldings}, which is essential for the proof of
Theorem~\ref{normality}.


\begin{lem}[Lemma 6.10 and Remark 6.11 in
\cite{m-foldings}]   \label{lem: normal elements in precovers}
Let $G=G_1 \ast_A G_2$ be an amalgam of finite groups. Let
$(\Gamma,v)$ be a finite precover of $G$ such that
$Lab(\Gamma,v)=_G H \neq \{1\}$.

Let $w \in H$ be a normal word. Then $w$ labels a path in $\Gamma$
closed at $v$ if one of the following holds
\begin{itemize}
 \item $l(w)>1$,
 \item $l(w)=1$ and $w \in G_i \setminus A$ ($i \in \{1,2\}$),
 \item $l(w)=1$, $w \in G_i \cap A$ and, $v \in
 VB(\Gamma)$ or $v\in VM_i(\Gamma)$.
\end{itemize}
Otherwise, if $w \in G_i \cap A$ and $v\in VM_j(\Gamma)$ ($1 \leq
i \neq j \leq 2$), then there exists a path in $\Gamma$ closed at
$v$ and labelled with $w'$ such that $w' \in G_j \cap A$, $ w=_G
w'.$

\end{lem}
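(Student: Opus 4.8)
The plan is to replace the hypothesis $w\in H$ by the availability of a \emph{normal} closed path at $v$ whose label equals $w$ in $G$, and then to compare that path syllable by syllable with the word $w$ itself. First I record the standing facts. Since $\Gamma$ is a precover it is $G$-based, it is compatible by Remark~\ref{remark: precovers are compatible}, and each $X_i$-monochromatic component $C$ of $\Gamma$ is a cover of $G_i$, hence is $G_i$-based (Lemma~\ref{lemma1.5}). Because $w\in H=Lab(\Gamma,v)$ there is a closed path $p$ at $v$ with $lab(p)=_G w$; compatibility and Lemma~\ref{lemma2.12} then yield a normal path $t$, closed at $v$, with $lab(t)=_G w$. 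By the Normal Form Theorem (IV.2.6 in \cite{l_s}) the normal words $lab(t)$ and $w$ have the same syllable length and their corresponding syllables lie in the same free factor.

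Consider the case $l(w)=1$, say $w\in G_i$, and write $lab(t)\equiv s$, so $s=_G w$ and $s\neq_G 1$ and $t$ is a nonempty monochromatic path. If $w\in G_i\setminus A$ then $s$ cannot be a word over $X_j^{\pm}$ with $j\neq i$ (else $w\in G_1\cap G_2=A$), so $t$ is $X_i$-colored and $v$ carries an $X_i$-edge; the same holds if $w\in G_i\cap A$ and $v\in VB(\Gamma)$, and also if $v\in VM_i(\Gamma)$ (here $H\neq\{1\}$ forces $v$ to be non-isolated, so its incident edge is $X_i$-colored). In each of these situations $v$ lies in the $X_i$-component $C$, a cover of $G_i$, so the $X_i^{\pm}$-word $w$ is readable at $v$ in $C$, reaching $v'=v\cdot w$. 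Concatenating the reverse of that $w$-path with the closed path $t$ produces a path from $v'$ to $v$ with label $=_G w^{-1}w=_G 1$, so by $G$-basedness $v'=v$ and $w$ labels a closed path at $v$. Finally, if $w\in G_i\cap A$ and $v\in VM_j(\Gamma)$ with $j\neq i$, then $t$ starts at the $X_j$-monochromatic vertex $v$ and is therefore $X_j$-colored; hence $w':=lab(t)$ is an $X_j^{\pm}$-word lying in $G_j\cap A$ with $w'=_G w$, and it labels the closed path $t$ at $v$, which is exactly the asserted conclusion.

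For the case $l(w)>1$, write $w\equiv w_1\cdots w_n$ and $lab(t)\equiv s_1\cdots s_n$ in normal form, with $w_k,s_k\in G_{i_k}$ and $i_k\neq i_{k+1}$. The Normal Form Theorem gives $c_0,\ldots,c_n\in A$ with $c_0=c_n=1$ and $w_k=_{G_{i_k}}c_{k-1}^{-1}s_k c_k$, equivalently $c_{k-1}w_k=_{G_{i_k}}s_k c_k$. Decompose $t=\sigma_1\cdots\sigma_n$ into its maximal monochromatic subpaths, $lab(\sigma_k)\equiv s_k$, and set $v_0=v=\iota(\sigma_1)$ and $v_k=\tau(\sigma_k)=\iota(\sigma_{k+1})$ for $1\le k<n$, so $v_n=v$ since $t$ is closed; each $\sigma_k$ lies in an $X_{i_k}$-component $C_k$ (a cover of $G_{i_k}$), and each $v_k$ with $1\le k\le n-1$ is bichromatic, lying in both $C_k$ and $C_{k+1}$. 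I would then show, by induction on $k$, that $w_1\cdots w_k$ is readable at $v$ and $v\cdot(w_1\cdots w_k)=v_k\cdot c_k$; reading the $A$-element $c_k$ at $v_k$ inside $C_k$ or inside $C_{k+1}$ gives the same vertex, because $c_k$ is a word both over $X_{i_k}^{\pm}$ and over $X_{i_{k+1}}^{\pm}$ and any two paths out of $v_k$ whose labels are equal in $G$ reach the same vertex of the $G$-based graph $\Gamma$. The inductive step uses only that $C_k$, being a cover of $G_{i_k}$, is $G_{i_k}$-based, so the endpoint in $C_k$ of a path from $v_{k-1}$ depends only on the $G_{i_k}$-image of its label, together with $c_{k-1}w_k=_{G_{i_k}}s_k c_k$, giving $v\cdot(w_1\cdots w_k)=v_{k-1}\cdot(c_{k-1}w_k)=v_{k-1}\cdot(s_k c_k)=v_k\cdot c_k$. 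At $k=n$, since $c_n=1$, this reads $v\cdot(w_1\cdots w_n)=v_n=v$, so $w$ labels a closed path at $v$.

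The routine part is the bookkeeping with the elements $c_k$ in the last paragraph. The one genuinely delicate point, arising in both cases, is the passage between two monochromatic components at a bichromatic vertex: one must know that the vertex reached by reading an element of $A$ does not depend on which of the two adjacent covers it is read in, and this is exactly where the $G$-basedness of the precover — not merely the well-labelledness of its components — is used.
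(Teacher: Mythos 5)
Your proposal is correct. Note first that this paper does not actually prove Lemma~\ref{lem: normal elements in precovers}: it is imported verbatim from \cite{m-foldings} (Lemma 6.10 and Remark 6.11 there), so there is no in-paper argument to compare yours against. Your reconstruction is the natural one and it is complete: you take a loop at $v$ witnessing $w\in Lab(\Gamma,v)$, normalize it via compatibility (Remark~\ref{remark: precovers are compatible} and Lemma~\ref{lemma2.12}), and then transfer the word $w$ onto the graph syllable by syllable using the three structural inputs that a precover provides --- $X_{i}^{\pm}$-saturation of each monochromatic component (so each syllable of $w$ is readable where it needs to be), $G_{i}$-basedness of each component (so within a component only the $G_i$-value of a label matters), and $G$-basedness of the whole graph (so two paths from a common vertex with $G$-equal labels have the same terminus). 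You correctly isolate the only delicate point, namely that reading an $A$-element at a bichromatic junction gives the same vertex whether it is read in the incoming or the outgoing component, and you justify it by $G$-basedness rather than by well-labelledness alone, which is exactly what is needed. The length-one cases, including the exceptional conclusion when $w\in G_i\cap A$ and $v\in VM_j(\Gamma)$ with $j\neq i$ (where one only gets a $G_j\cap A$ relabelling $w'$ of $w$), are also handled correctly, using that a normal closed path at an $X_j$-monochromatic vertex representing an element of syllable length one must be spelled entirely over $X_j^{\pm}$. I see no gap.
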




\begin{thm} \label{normality}
Let $H \leq G$ be a nontrivial subgroup of $G$ such that $H
\not\leq G_i$  for all $i \in \{1,2\}$. Then $H$ is normal in $G$
if and only if the following holds.
\begin{itemize}
 \item [(i)] The graph $\Gamma(H)$ is $X^{\pm}$-saturated.
 \item [(ii)] For all vertices $v,u \in V(\Gamma(H))$, the graphs
 $(\Gamma(H),v)$ and  $(\Gamma(H),u)$ are isomorphic.
\end{itemize}
\end{thm}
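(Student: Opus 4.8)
The plan is to characterize normality via the action of $G$ on $\mathrm{Cayley}(G,H)$ and to translate this into the language of the subgroup graph $\Gamma(H)$ through Theorem~\ref{thm: properties of subgroup graphs}. First I would recall that $H \trianglelefteq G$ if and only if $gHg^{-1} = H$ for all $g \in G$, which by the standard dictionary is equivalent to saying that $Lab(\mathrm{Cayley}(G,H), H\cdot v) = H$ for \emph{every} vertex $v$, i.e. $\mathrm{Cayley}(G,H)$ ``looks the same'' from every basepoint. Since $\Gamma(H)$ is by Theorem~\ref{thm: properties of subgroup graphs}(3) the normal core of $(\mathrm{Cayley}(G,H), H\cdot 1)$ and the hypothesis $H \nleq G_i$ guarantees (via Lemma~\ref{lem:FormOfRedPrecover}) that $\Gamma(H)$ has no redundant components w.r.t.\ $v_0$ and is a genuinely ``two-colored'' precover, the passage between $\mathrm{Cayley}(G,H)$ and $\Gamma(H)$ should be tight enough to make the two conditions (i) and (ii) exactly capture this.

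For the forward direction, assume $H \trianglelefteq G$. To get (i), I would argue that $\mathrm{Cayley}(G,H)$ is $X^{\pm}$-saturated (being a full coset Cayley graph, by Lemma~\ref{lemma1.5}) and that normality forces every vertex of $\mathrm{Cayley}(G,H)$ to be essential: given any vertex $w = H\cdot x$, normality means $x^{-1}Hx = H$, so some nontrivial $h \in H$ is read as a normal word along a loop at $H\cdot 1$ that can be routed through $w$ — more carefully, one uses Lemma~\ref{lem: normal elements in precovers} together with the fact that a normal word representing a suitable conjugate of an element of $H$ threads through $w$. Hence $\Gamma(H) = \mathrm{Cayley}(G,H)$ and (i) follows. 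For (ii), once $\Gamma(H) = \mathrm{Cayley}(G,H)$, the left multiplication action of $G$ on cosets gives, for each pair $v, u$, an element $g$ with $g\cdot v = u$, and this induces a label-preserving graph automorphism of $\mathrm{Cayley}(G,H)$ carrying $v$ to $u$; restricting, $(\Gamma(H),v) \cong (\Gamma(H),u)$.

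For the converse, assume (i) and (ii). By (i) and Lemma~\ref{lemma1.5}, $\Gamma(H)$ is $G$-based and $X^{\pm}$-saturated, hence isomorphic to $(\mathrm{Cayley}(G,H), H\cdot 1)$; so $\Gamma(H)$ \emph{is} the full coset Cayley graph. Now take any $g \in G$; I want $gHg^{-1} = H$. Pick a path $p$ from $v_0$ with $lab(p) =_G g$ (such a path exists since $\Gamma(H)$ is $X^{\pm}$-saturated, so every word is readable), ending at some vertex $v$. Then $Lab(\Gamma(H), v) = g^{-1}Hg$ by the conjugation formula for $Lab$ recorded in the Preliminaries. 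By (ii), $(\Gamma(H), v) \cong (\Gamma(H), v_0)$, and an isomorphism of pointed well-labelled graphs preserves the associated subgroup, so $g^{-1}Hg = Lab(\Gamma(H), v) = Lab(\Gamma(H), v_0) = H$. As $g$ was arbitrary, $H \trianglelefteq G$.

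The main obstacle I expect is the forward direction's claim that normality makes every vertex of $\mathrm{Cayley}(G,H)$ essential, i.e. that $\Gamma(H)$ exhausts the whole Cayley graph. One must rule out the possibility that $H$ is normal yet $\Gamma(H)$ is a proper subgraph — this is exactly where the hypothesis $H \nleq G_i$ and Lemma~\ref{lem: normal elements in precovers} (ensuring nontrivial normal words of syllable length $>1$ exist in $H$ and can be made to pass through any prescribed vertex) do the real work; handling the syllable-length-one and amalgamated-subgroup cases at monochromatic vertices is the delicate bookkeeping. The reverse direction, by contrast, is essentially formal once Lemma~\ref{lemma1.5} identifies $\Gamma(H)$ with the Cayley graph.
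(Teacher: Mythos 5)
Your ``if'' direction (conditions (i)--(ii) imply normality) is correct and is essentially the paper's argument: saturation lets you read any $g$ from $v_0$ to some $v$, and (ii) gives $g^{-1}Hg=Lab(\Gamma(H),v)=Lab(\Gamma(H),v_0)=H$. The problem is the ``only if'' direction, where the crux --- that normality forces $\Gamma(H)$ to be all of $Cayley(G,H)$, i.e.\ $X^{\pm}$-saturated --- is asserted rather than proved. You write that ``a normal word representing a suitable conjugate of an element of $H$ threads through $w$,'' but this is exactly the statement that needs an argument, and it is not automatic: for a vertex $w=H\cdot x$, the candidate loop labelled $xhx^{-1}$ does pass through $w$, but $xhx^{-1}$ need not be a normal word, and after normalization the resulting normal path closed at $H\cdot 1$ may no longer visit $w$. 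You explicitly defer ``the syllable-length-one and amalgamated-subgroup cases'' as ``delicate bookkeeping,'' but that bookkeeping is the entire content of the forward implication; as written, the proposal does not rule out a normal $H$ whose normal core is a proper subgraph of $Cayley(G,H)$.

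The paper closes this gap by a contradiction argument carried out inside $\Gamma(H)$ rather than in $Cayley(G,H)$. Suppose $\Gamma(H)$ has an $X_1$-monochromatic vertex $v$, reached from $v_0$ by a path labelled $g$; normality gives $Lab(\Gamma(H),v)=gHg^{-1}=H$. Since $H\nleq G_i$, pick $h\in H$ with normal decomposition $(h_1,\ldots,h_k)$, $k>1$; by Lemma~\ref{lem: normal elements in precovers} it labels a closed normal path at $v$, which forces $h_1,h_k\in G_1\setminus A$ because $v$ is $X_1$-monochromatic. Now the cyclic conjugate $h_1^{-1}hh_1=_G h_2\cdots h_{k}h_1$ also lies in $H$ by normality, and (after rewriting $h_kh_1$ as a single syllable, or replacing it by an $A$-syllable absorbed into $h_{k-1}$ when $h_kh_1\in A$) is a normal word beginning with a syllable of $G_2\setminus A$; Lemma~\ref{lem: normal elements in precovers} says it too must label a closed normal path at $v$, which is impossible at an $X_1$-monochromatic vertex. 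This contradiction is the step your proposal is missing. Once saturation is established, your treatment of (ii) (via $Lab(\Gamma(H),v)=H$ at every vertex and Lemma~\ref{lemma1.5}, or via the left-multiplication automorphisms, which are well defined precisely because $H$ is normal) is fine.
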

\begin{proof} Suppose first that conditions (i) and (ii) are satisfied.

Let $g$ be an element of $G$. Since $\Gamma(H)$ is
$X^{\pm}$-saturated, there exists a path $p$ in $\Gamma(H)$ such
that $\iota(p)=v_0$ and $lab(p) \equiv g$. Let $v =\tau(p)$.
Condition (ii) implies $Lab(\Gamma(H),v_0)=Lab(\Gamma(H),v)$. Thus
$H=g^{-1}Hg$, for all $g \in G$. Hence $H \unlhd G$.

Assume now that $\{1\} \neq H \unlhd G$. Then $\Gamma(H)$ is
$X^{\pm}$-saturated. Otherwise, without loss of generality, we can
assume that there exists $v \in VM_1(\Gamma(H))$. Let $C$ be the
$X_1$-monochromatic component of $\Gamma(H)$ such that $v \in
V(C)$.

Let $q$ be the approach path in $\Gamma(H)$ from $v_0$ to $v$ with
$lab(q) \equiv g$. Thus
$$Lab(\Gamma(H),v)=gLab(\Gamma(H),v_0)g^{-1}=gHg^{-1}=H .$$

Since $(\Gamma(H), v)$ is a precover of $G$, each normal element
of $H$, whose syllable length is greater than 1, labels a normal
path in $(\Gamma(H), v)$ closed at $v$, by Lemma~\ref{lem: normal
elements in precovers}.

Let $h \in H$ has the normal decomposition $(h_1, \ldots, h_k)$.
Thus   $k>1$,   since $H \nleq G_i$ ($i \in \{1,2\}$).  Let $p$ be
a normal path in $\Gamma(H)$ such that
$$\iota(p)=\tau(p)=v , \ p=p_1\cdots p_k, \ {\rm where} \ lab(p_l) \equiv h_l, \ 1 \leq l \leq k.$$
Thus $h_1,h_k \in G_1 \setminus A$, because $v \in
VM_1(\Gamma(H))$. Hence $p_kp_1$ is a path in $C$ from $
\iota(p_k)$ to $ \tau(p_1)$, and we have $lab(p_kp_1) \equiv
h_kh_1 \in G_1$.

If $h_kh_1 \not\in A$ then the decomposition $(h_2, \ldots,
(h_kh_1))$ is normal. Moreover,  $h_1^{-1}hh_1=_G h_2 \cdots
(h_kh_1) \in H$, because $H\unlhd G$. Therefore,  by
Lemma~\ref{lem: normal elements in precovers}, there exists a
normal path in $\Gamma(H)$ closed at $v $ and labelled with $h_2
\cdots h_{k-2}(h_kh_1)$. However, this is impossible because $v
\in VM_1(\Gamma(H))$ and $h_2  \in G_2 \setminus A$. We get a
contradiction.

If $h_kh_1  \in G_1 \cap A$, we take $b=_G h_kh_1$ such that $b
\in G_2 \cap A$. Thus the decomposition $(h_2, \ldots, (h_{k-1}b))
$ is normal, since $h_{k-1}b \in G_2 \setminus A$. We get a
contradiction in the similar way.

Therefore the graph $\Gamma(H)$ is $X^{\pm}$-saturated. Moreover,
$$Lab(\Gamma(H),v)=gLab(\Gamma(H),v_0)g^{-1}=gHg^{-1}=H,$$
where $g \equiv lab(q)$ and $q$ is an approach path in $\Gamma(H)$
from $v_0$ to $v$.

Thus, by Lemma~\ref{lemma1.5}, $(\Gamma(H),v)$ is isomorphic to
$(Cayley(G,H),H \cdot 1)$, for all $v \in V(\Gamma(H))$. Therefore
the graphs $(\Gamma(H),v )$ and $(\Gamma(H),u)$ are isomorphic,
for all vertices $v, u \in V(\Gamma(H))$.

\end{proof}


\begin{lem} \label{normalityII}
Let $H $ be a nontrivial subgroup of $G$ such that $H  \leq G_i$
($i \in \{1,2\}$). The following holds.
\begin{itemize}
 \item[(i)]
If $H\unlhd G$ then $H \leq A$.
 \item[(ii)] If $H \leq A$ then $H\unlhd G$ if and only if each monochromatic
component $C$ of $\Gamma(H)$ is a \underline{regular graph}, that
is $(C,v)$ is isomorphic to $(C,u)$, for all $v, u \in V(C)$.
\end{itemize}
\end{lem}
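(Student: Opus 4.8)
The plan is to handle the two parts separately, each time reducing to structural facts about the subgroup graph $\Gamma(H)$ established earlier. For part (i), suppose $H \leq G_i$ and $H \unlhd G$; I want to show $H \leq A$. The natural move is to conjugate $H$ by an element of the \emph{other} factor. Pick $g \in G_j$ with $i \neq j$, $g \notin A$. Then $gHg^{-1} = H \leq G_i$ by normality. I would argue that if some $h \in H \setminus A$, then the word $g h g^{-1}$ is already in normal form of syllable length $3$ (since $g, g^{-1} \in G_j \setminus A$ and $h \in G_i \setminus A$), so by the Normal Form Theorem it cannot lie in $G_i$ — contradiction. Hence every element of $H$ lies in $G_i \cap A = A$, i.e. $H \leq A$. (One must be slightly careful: if $G_j = A$ there is no such $g$, but then $G = G_i$ and $G_i = A$, so $H \leq G_i = A$ trivially; I'd dispose of this degenerate case first.)

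For part (ii), assume $H \leq A$. By Lemma~\ref{lem:FormOfRedPrecover}(iii), $\Gamma(H) = C_1 \ast_{\{v_1\cdot a = v_2 \cdot a \,|\, a \in A\}} C_2$ with $(C_i,v_i) = (Cayley(G_i,H), H\cdot 1)$, and the bichromatic vertices of $\Gamma(H)$ form a single $A$-orbit identified across the two components. For the ``if'' direction, suppose each $C_i$ is a regular graph, i.e. $(C_i, v) \cong (C_i, u)$ for all $v,u \in V(C_i)$. Then $(Cayley(G_i,H), H\cdot 1) \cong (Cayley(G_i, H), H g \cdot 1)$ for every $g \in G_i$, which by Lemma~\ref{lemma1.5} forces $H = g^{-1} H g$ for all $g \in G_i$, so $H \unlhd G_i$. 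Since this holds for both $i$ and $G = \langle G_1, G_2\rangle$, we get $H \unlhd G$. For the ``only if'' direction, suppose $H \unlhd G$. Then in particular $H \unlhd G_i$, so every coset $H g$ in $G_i$ satisfies $H g = g H$, and $(Cayley(G_i,H), Hg\cdot 1) \cong (Cayley(G_i,H), H\cdot 1)$ via the label-preserving isomorphism induced by left translation (this is exactly the map $\mu$ of Lemma~\ref{lemma1.5}, well-defined precisely because left and right cosets coincide). Since $C_i = Cayley(G_i, H)$ and its vertices are exactly the cosets $\{Hg \,|\, g \in G_i\}$, this says $(C_i, v) \cong (C_i, u)$ for all $v, u \in V(C_i)$, i.e. $C_i$ is regular.

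The main obstacle I anticipate is the ``only if'' direction of (ii): one needs $H \unlhd G$ to imply not merely $H \unlhd G_1$ and $H \unlhd G_2$ (immediate, since $G_1, G_2 \leq G$) but the precise graph-theoretic statement that each monochromatic component is vertex-transitive under label-preserving isomorphisms of \emph{pointed} graphs. The key point to nail down is that normality of $H$ in $G_i$ is equivalent to the coset Cayley graph $Cayley(G_i, H)$ being a Cayley graph of the quotient $G_i/H$ — hence vertex-transitive — and that the relevant automorphisms preserve the $X_i$-labelling (they are left translations by elements of $G_i$, acting on right cosets). I would invoke Lemma~\ref{lemma1.5} and Remark~\ref{unique isomorphism} to convert ``vertex-transitive via label-preserving isomorphisms'' into the stated condition $(C,v) \cong (C,u)$ for all $v,u \in V(C)$, and conversely. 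A secondary subtlety is confirming that nothing is lost by passing between $\Gamma(H)$ and its monochromatic components $C_i$: since the amalgamation in Lemma~\ref{lem:FormOfRedPrecover}(iii) glues along a full $A$-orbit and $Lab(C_i, v) = Lab(\Gamma(H), v)$ along that orbit, regularity of the components is genuinely equivalent to the homogeneity needed for normality of $H$ in $G$.
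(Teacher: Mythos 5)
Your proposal is correct and follows essentially the same route as the paper: part (i) by conjugating an element of $H\setminus A$ by some $g\in G_j\setminus A$ to produce a normal word of syllable length $3$, and part (ii) by reducing $H\unlhd G$ to $H\unlhd G_i$ for $i\in\{1,2\}$ and identifying the monochromatic components of $\Gamma(H)$ with $Cayley(G_i,H)$ via Lemma~\ref{lem:FormOfRedPrecover}(iii), so that normality corresponds to regularity of these coset graphs. The only difference is that you spell out the equivalence between $H\unlhd G_i$ and vertex-transitivity of $Cayley(G_i,H)$ under label-preserving pointed isomorphisms, which the paper simply cites from Stillwell.
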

\begin{proof}
To prove (i) suppose that there exists $h \in H \setminus A$. Let
$g \in G_j \setminus A$, where $1 \leq i \neq j \leq 2$. Therefore
$ghg^{-1}$ is a normal word of syllable length 3. Hence $ghg^{-1}
\not\in H$. This contradicts with the assumption that $H\unlhd G$.

Since $H \leq A$, $\Gamma(H) = Cayley(G_1,H) \ast_{\{Ha \; | \; a
\in A\}} Cayley(G_2,H)$,  by Lemma~\ref{lem:FormOfRedPrecover}
(iii). Since $H \leq A$, $H\unlhd G$ if and only if $H \unlhd G_i$
($i \in \{1,2\}$). Therefore $H\unlhd G$ if and only if the graphs
$ Cayley(G_1,H) $ and $ Cayley(G_2,H) $ are regular (see 2.2.7 in
\cite{stil}).
\end{proof}

Recall the following result from \cite{m-algI}.
\begin{thm}[Theorem~{7.1} in \cite{m-algI}] \label{fi}
Let $H$ be a finitely generated subgroup of an amalgam of finite
groups $G=G_1 \ast G_2$.

Then $[G:H] < \infty$ if and only if $\Gamma(H)$ is
$X^{\pm}$-saturated.
\end{thm}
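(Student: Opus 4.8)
The plan is to read the statement off Lemma~\ref{lemma1.5} and Theorem~\ref{thm: properties of subgroup graphs}(3). Since $\Gamma(H)$ is a precover of $G$ it is $G$-based, so by the second bullet of Lemma~\ref{lemma1.5} the graph $\Gamma(H)$ is $X^{\pm}$-saturated if and only if $(\Gamma(H),v_0)\cong(Cayley(G,H),H\cdot 1)$. Hence one implication will be immediate: if $\Gamma(H)$ is $X^{\pm}$-saturated then $Cayley(G,H)$ is the finite graph $\Gamma(H)$ (finite by Theorem~\ref{thm: properties of subgroup graphs}(5)), so $[G:H]$, being its number of vertices, is finite. I will assume throughout that $[G_i:A]\ge 2$ for $i\in\{1,2\}$, i.e. that the amalgam is proper and $G$ is infinite; in the degenerate case $G$ is finite, $Cayley(G,H)$ is finite and equals its own normal core $\Gamma(H)$, which is then $X^{\pm}$-saturated, so the equivalence holds trivially.

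For the converse I would argue as follows. Assume $[G:H]<\infty$, so $Cayley(G,H)$ is finite; by Theorem~\ref{thm: properties of subgroup graphs}(3) it suffices to prove that every vertex of $Cayley(G,H)$ is \emph{essential}, for then $\Gamma(H)=Cayley(G,H)$, and coset Cayley graphs are always $X^{\pm}$-saturated. Let $N=\bigcap_{x\in G}xHx^{-1}$ be the normal core of $H$ in $G$; it is normal in $G$ with $[G:N]\le[G:H]!<\infty$, hence infinite, hence $N\nleq G_1$ and $N\nleq G_2$. So $N$ contains a normal word of syllable length at least $2$, and by conjugating such a word by a suitable normal word of syllable length one or two I can arrange, for each $j\in\{1,2\}$, a normal word of $N$ whose first syllable lies in $G_j\setminus A$.

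Then, given a vertex $v=H\cdot g$ with $g$ in normal form $g_1\cdots g_r$, I would produce a normal closed path at $H\cdot 1$ through $v$ by conjugating an appropriate $w\in N$ by $g$: since $gwg^{-1}\in N\le H$, it is enough to arrange that the normal form of $gwg^{-1}$ has $g_1\cdots g_r$ as a prefix. If $v=H\cdot 1$ this is clear. If $v\ne H\cdot 1$ and $g_r\notin A$ (automatic when $r\ge 2$), I take $w\in N$ normal whose first syllable lies in $G_j\setminus A$ with $G_j$ the factor opposite to $g_r$; then no cancellation occurs at the $g$--$w$ junction, so $g_1\cdots g_r$ is a prefix of the normal form of $gwg^{-1}$ and the corresponding normal loop passes through $v$. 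If $r=1$ and $g_1\in A$, I take any normal word $w=w_1\cdots w_n\in N$ with $n\ge 2$, spell $g_1$ over the factor of $w_1$ and $g_1^{-1}$ over the factor of $w_n$, so that $gwg^{-1}=(g_1w_1)w_2\cdots w_{n-1}(w_ng_1^{-1})$ is in normal form and its leading spelling of $g_1$ exhibits $v$ on a normal loop at $H\cdot 1$. This makes every vertex essential and finishes the proof.

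The step I expect to be the main obstacle is precisely this syllable bookkeeping (together with the auxiliary claim about $N$): when forming the normal form of $gwg^{-1}$ one must make sure that no syllable of $A$ is trapped in the interior of a word of syllable length $>1$ --- which condition~(3) of the normal-form definition forbids --- and that the word does not collapse below the prefix $g_1\cdots g_r$. The freedom required to do this is exactly what the hypotheses give: $N$ is an infinite normal subgroup contained in neither factor, and $A$ embeds into both $G_1$ and $G_2$ so its elements may be spelled in either. One could alternatively perform the last step inside $\Gamma(H)$ itself, reading the normal words via Lemma~\ref{lem: normal elements in precovers} and Theorem~\ref{thm: properties of subgroup graphs}(4), but the case analysis around $A$ does not go away.
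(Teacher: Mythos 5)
This theorem is only quoted in the present paper (it is Theorem~7.1 of \cite{m-algI}), so there is no in-paper proof to compare against; I can only assess your argument on its own terms. Your forward direction is fine: $\Gamma(H)$ is a precover, hence $G$-based, so Lemma~\ref{lemma1.5} together with finiteness of $\Gamma(H)$ gives $[G:H]<\infty$ from saturation. The strategy for the converse --- use the normal core $N=\bigcap_x xHx^{-1}$, which is of finite index hence infinite hence contained in neither factor, and show every vertex of $Cayley(G,H)$ is essential so that the normal core is all of $Cayley(G,H)$ --- is also sound and is the natural route given Theorem~\ref{thm: properties of subgroup graphs}(3).

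The gap is exactly where you suspect it, and it is not just bookkeeping you postponed: the inference ``no cancellation occurs at the $g$--$w$ junction, so $g_1\cdots g_r$ is a prefix of the normal form of $gwg^{-1}$'' is false for an arbitrary $w\in N$ with first syllable opposite to $g_r$. Cancellation at the $w$--$g^{-1}$ junction cascades leftward: each consolidation step that lands in $A$ consumes one syllable from the tail of $w$ and one from $g^{-1}$, and if $l(w)\le r$ it can consume all of $w$ and then eat into $g_1\cdots g_r$ itself. Concretely, with $r=3$ and $w=w_1w_2$ chosen so that $w_2g_3^{-1}\in A$, $w_1(w_2g_3^{-1})g_2^{-1}\in A$ and $g_3(\cdots)g_1^{-1}\in A$, one gets $gwg^{-1}=_G g_1(g_2c)$ with $c\in A$, whose normal path (under any spelling of its two syllables) need not pass through $H\cdot g_1g_2g_3$ at all. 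So controlling only the first syllable of $w$ does not make $v$ essential. The repair is to also control the length: choose $w\in N$ with the prescribed first syllable \emph{and} syllable length $>r$; then the cascade exhausts $g^{-1}$ before reaching $w_1$ and the prefix $g_1\cdots g_r$ survives. That such $w$ exist requires an extra (easy but necessary) argument, e.g.\ that $N$ contains a cyclically reduced element $z$ of syllable length $\ge 2$ whose powers have unbounded syllable length, conjugated to adjust the leading factor. Two smaller points: your treatment of $r=1$, $g_1\in A$ by respelling $g_1$ inside the first syllable is fine, but your degenerate-case remark is wrong --- if $A=G_i$ then $G$ is finite and $H=\{1\}$ has finite index while $\Gamma(\{1\})$ is a single vertex, so the theorem itself requires the amalgam to be proper rather than holding ``trivially''.
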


\begin{remark}
By Theorem~\ref{fi}, $H \unlhd G$ implies $[G:H] < \infty$.
\end{remark}

\begin{cor} \label{cor: algm normal subgroup} Let $h_1, \ldots h_k \in
G$. Then there exists an algorithm which decides whether or not
$H=\langle h_1, \ldots, h_k \rangle$   is a normal subgroup (of
finite index) in $G$.
\end{cor}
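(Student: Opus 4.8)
The plan is to reduce the normality question to the explicit graph-theoretic criteria already established in Theorem~\ref{normality} and Lemma~\ref{normalityII}, after first constructing the subgroup graph and deciding which regime we are in. First I would run the generalized Stallings' folding algorithm on $h_1,\dots,h_k$ to obtain $(\Gamma(H),v_0)$; by Theorem~\ref{thm: properties of subgroup graphs} this takes time $O(m^2)$ where $m=\sum|h_i|$, and $\Gamma(H)$ is a reduced precover with $Lab(\Gamma(H),v_0)=H$. Using Lemma~\ref{lem:FormOfRedPrecover}(i) we first test whether $H=\{1\}$ (i.e.\ $V(\Gamma(H))=\{v_0\}$, $E(\Gamma(H))=\emptyset$); if so, $H$ is trivially normal. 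Otherwise we determine whether $H\leq G_i$ for some $i$: by Lemma~\ref{lem:FormOfRedPrecover}(ii)--(iii) this is read off directly from the shape of $\Gamma(H)$ (a single monochromatic component, or the amalgam of two relative Cayley graphs of the factors over the cosets of $H$).

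Next I would branch. If $H\nleq G_i$ for both $i$, then by Theorem~\ref{normality}, $H\unlhd G$ if and only if (i) $\Gamma(H)$ is $X^\pm$-saturated and (ii) $(\Gamma(H),v)\cong(\Gamma(H),u)$ for all vertices $v,u$. Condition (i) is a local check at every vertex for every $x\in X^\pm$, costing $O(|E(\Gamma(H))|)=O(m)$. Condition (ii) amounts to checking that $\Gamma(H)$ is vertex-transitive (as a labelled graph); since by Remark~\ref{unique isomorphism} any isomorphism of pointed well-labelled graphs is unique, one fixes $v_0$ and, for each other vertex $u$, checks whether the unique candidate morphism $\vartheta\mapsto$ (the vertex reached by the same label) extends to an isomorphism — equivalently whether $Star(\vartheta,\Gamma(H))$ matches $Star(\mu(\vartheta),\Gamma(H))$ throughout — exactly as in the complexity discussion following Corollary~\ref{algorithm : conjugate subgroups}. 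This costs $O(|V(\Gamma(H))|\cdot|E(\Gamma(H))|)=O(m^2)$.

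If instead $H\leq G_i$, then Lemma~\ref{normalityII}(i) tells us that $H\unlhd G$ forces $H\leq A$, so if $H\leq G_i$ but $H\nleq A$ — a condition decidable from the monochromatic component since we are given all relative Cayley graphs of the factors — we immediately answer ``not normal''. If $H\leq A$, then by Lemma~\ref{normalityII}(ii), $H\unlhd G$ iff each of the two monochromatic components $Cayley(G_1,H)$ and $Cayley(G_2,H)$ of $\Gamma(H)$ is a regular (vertex-transitive) graph; again this is a finite vertex-transitivity check, and since $|G_1|,|G_2|$ are given constants it is $O(1)$ in terms of the input, or more conservatively $O(m)$. In all cases the algorithm halts with the correct answer, and by Theorem~\ref{fi} (plus the following Remark) a ``yes'' answer additionally certifies $[G:H]<\infty$, justifying the parenthetical claim.

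The main obstacle is condition (ii) of Theorem~\ref{normality}: verifying that $\Gamma(H)$ looks the same from every basepoint. Naively this is a graph-isomorphism-flavoured task, but the rigidity of well-labelled pointed graphs (Remark~\ref{unique isomorphism}, Lemma~\ref{morphism of graphs}) collapses it to at most $|V(\Gamma(H))|$ deterministic ``does the unique candidate map work'' checks, each linear in the edge set, so the whole procedure stays quadratic in $m$ — matching the complexity bound claimed in the section's preamble. I would close the proof by assembling these cases and noting the overall running time is $O(m^2)$, dominated by the construction of $\Gamma(H)$ and the vertex-transitivity test.
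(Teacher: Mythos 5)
Your proposal is correct and follows essentially the same route as the paper: construct $(\Gamma(H),v_0)$ by the generalized Stallings' algorithm, read off from Lemma~\ref{lem:FormOfRedPrecover} which regime holds, and then apply Lemma~\ref{normalityII} (for $H\leq G_i$) or Theorem~\ref{normality} (for $H\nleq G_i$), with the vertex-by-vertex isomorphism checks made effective by Remark~\ref{unique isomorphism}. Your explicit handling of the trivial subgroup and of the case $H\leq G_i$, $H\nleq A$ with $H\cap A\neq\{1\}$ is in fact slightly more careful than the paper's case split, but the underlying argument is the same.
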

\begin{proof}
We first construct the graph $\Gamma(H)$ using the generalized
Stallings' algorithm.

If the number of monochromatic components of $\Gamma(H)$ is equal
to $1$ then, by Lemma~\ref{lem:FormOfRedPrecover} (ii), $H \leq
G_i$ and $H \cap A=\{1\}$ ($i\in \{1,2\}$). Hence, by
Lemma~\ref{normalityII} (i), $H$ is not normal in $G$.

If the number of distinct monochromatic components of $\Gamma(H)$
is equal 2 and
$$(\Gamma(H), v_0)=(Cayley(G_1,H) \ast_{\{Ha \; | \; a \in A\}}
Cayley(G_2,H), \vartheta),$$  where $\vartheta$ is the image of $H
\cdot 1$ in the amalgam graph, then $H  \leq A$, by
Lemma~\ref{lem:FormOfRedPrecover} (iii). Thus, by
Lemma~\ref{normalityII} (ii), $H$ is  normal in $G$ if and only if
$Cayley(G_1,H)$ and $Cayley(G_2,H)$ are regular graphs, that is if
and only if $(Cayley(G_i,H),v)$ is isomorphic to $(Cayley(G_i,H),H
\cdot 1)$, for all $v  \in V(Cayley(G_i,H))$, $i \in \{1,2\}$.
Since an isomorphism of pointed labelled  graphs is unique, by
Remark~\ref{unique isomorphism}, we are done.

Assume now that $H \not\leq G_i$, Thus we verify if $\Gamma(H)$ is
$X^{\pm}$-saturated. If it is not then, by
Theorem~\ref{normality}, $H$ is not normal in $G$. Otherwise, if
$\Gamma(H)$ is $X^{\pm}$-saturated, then, by
Theorem~\ref{normality}, $H$ is  normal in $G$ if and only if  the
graphs
 $(\Gamma(H),v_0)$ and  $(\Gamma(H),v)$ are isomorphic, for each vertex $v \in V(\Gamma(H))$.
Since an isomorphism of pointed labelled  graphs is unique, by
Remark~\ref{unique isomorphism}, we are done.

\end{proof}

\begin{ex}
{\rm Let $H_1$ and $H_2$ be subgroups  from Example \ref{example:
graphconstruction}. One can easily verify from Figures \ref{fig:
example of H=xy}
 and  \ref{fig: example of H=xy^2x, yxyx}  that $H_1$ is not normal in $G$, because $\Gamma(H_1)$ is
not $\{x,y\}^{\pm}$-saturated, while $H_2\unlhd G$.} \e
\end{ex}


\subsection*{Complexity} By  Theorem~\ref{thm: properties of subgroup graphs} (5), the complexity of
the construction of $\Gamma(H)$  is $O(m^2)$, where $m$ is the sum
of lengths of the given subgroup generators.

The detecting of monochromatic components in the constructed graph
takes $ \: O(|E(\Gamma(H))|) \: $, that is  $O(m)$, by
Theorem~\ref{thm: properties of subgroup graphs} (5).

Since all the essential information about the amalgam $G=G_1
\ast_A G_2$, $A$, $G_1$ and $G_2$ is given and it is not a part of
the input, the verifications concerning  monochromatic components
of $\Gamma(H)$ takes $O(1)$. Therefore, to check from $\Gamma(H)$
whether $H \leq G_i$, or $H \leq A$ and the monochromatic
components of $\Gamma(H)$ are regular graphs, takes $O(1)$.

To verify that all the vertices of $\Gamma(H)$ are bichromatic
takes $O(|E(\Gamma(H))|)$. The verification of an isomorphism of
the graphs $(\Gamma(H),v_0)$ and $(\Gamma(H),v) $, for all $v \in
V(\Gamma)$, takes time proportional to $|V(\Gamma(H))| \cdot
|E(\Gamma(H))|$ (see the complexity analysis of the conjugacy
problem). Since, by the Theorem~\ref{thm: properties of subgroup
graphs} (5), $|V(\Gamma(H))|$ and $|E(\Gamma(H))|$ are
proportional to $m$, the complexity of the above algorithm  is
$O(m^2)$.

If the subgroup $H$ is given by the graph $(\Gamma(H),v_0)$ and
not by a finite set of subgroup generators, then the complexity is
equal to $|V(\Gamma(H))| \cdot |E(\Gamma(H))|$. Thus in both cases
the algorithm is quadratic in the size of the input.


\section{Intersection Properties}
\label{sec:Intersection Properties}

In this section we study properties of intersections of finitely
generated subgroups of amalgams of finite groups such as the
\emph{Howson property} (the intersection of two finitely generated
subgroups   is finitely generated), \emph{malnormality} and
\emph{almost \ malnormality}. The corresponding algorithmic
problems and their solutions are presented.

\begin{defin} \label{def: malnormality}
Let $H$ be a subgroup of a group $G$.

We say that $H$ is a \underline{malnormal} subgroup of $G$ if and
only if
$$ gHg^{-1} \cap H=\{1\}, \ \forall g \in G \setminus H.$$

$H$  is \underline{almost malnormal} if for all $g \in G \setminus
H$, the subgroup $H \cap gHg^{-1}$ is finite.

\end{defin}

Obviously, $\{1\}$ and $G$ are malnormal subgroups of $G$. If $G$
is Abelian, then $\{1\}$ and $G$ are the only malnormal subgroups
of $G$. The most natural example of a malnormal subgroup is $K$
inside any free product $K \ast L$.

Malnormal subgroups of hyperbolic groups have recently become the
object of intensive studies (see, for instance, \cite{gi_quas,
gi_sep, gi_doub, g-m-s}).  Thus malnormality plays an important
role in the Combination Theorem for hyperbolic groups \cite{b-f,
gi_comb, kharlamovich-m}. For importance of almost malnormality
see, for example, \cite{ashot, wise-RF}.

As is well known \cite{b-wise}, in general, malnormality is
undecidable in hyperbolic groups. However, the results presented
in the current section show that malnormality is \emph{decidable}
for finitely generated subgroups in the class of amalgams of
finite groups.
Below we present a polynomial time algorithm (Corollary \ref{cor:
alg malnormality}) that solves the \emph{malnormality problem},
which asks to decide whether or not a subgroup $H$ of the group
$G$ is malnormal in $G$. The complexity analysis of the presented
algorithm is given at the end of this section.

Product graphs (Definition~\ref{def: product graph})  are used to
compute intersections of subgroups via their subgroup graphs.
We start by studding products of precovers. As an immediate
consequence, a solution of \emph{the intersection problem}, which
asks to find effectively the intersection of two subgroups, is
obtained (Corollary~\ref{cor:IntersectionProblem}). This allows
one to conclude (Corollary~\ref{cor:HowsonProperty}) that amalgams
of finite groups possess \emph{Howson property}, which is known to
be true (see, for instance \cite{gi_quas, sho}).

Then we characterize malnormality of a finitely generated subgroup
$H$ of an amalgam of finite groups  by the properties of the
product graph $\Gamma(H) \times \Gamma(H)$ (Theorem~\ref{H is
malnormal iff (1)-(2)}). This provides the solution of the
malnormality problem (Corollary \ref{cor: alg malnormality}).
These results are naturally extended to detect \emph{almost
malnormality} of finitely generated subgroups of amalgams of
finite groups (Theorem~\ref{thm:AlmostMalnormal} and
Corollary~\ref{cor:AlmostMalnormal}).

\subsection*{Product Graphs}

\begin{defin} \label{def: product graph}
Let $\Gamma$ and $\Delta$ be two labelled with $X_1^{\pm} \cup
X_2^{\pm}$ graphs. The \underline{product graph} $ \Gamma \times
\Delta$ is the graph defined as follows.
\begin{enumerate}
 \item [(1)] $V(\Gamma \times \Delta)=V(\Gamma) \times V(\Delta)$.
 \item [(2)] for each pair of vertices $(v_1,u_1)$, $(v_2,u_2) \in V(\Gamma \times
 \Delta)$ (so that $v_1, v_2 \in V(\Gamma)$ and $u_1, u_2 \in
 V(\Delta)$) and the letter $x \in X$ there exists an edge $e \in E(\Gamma \times
 \Delta)$ with
  $$\iota(e)=(v_1,u_1), \ \tau(e)=(v_2,u_2) \  {\rm and} \ lab(e)
  \equiv x$$
if and only if there exist edges $e_1 \in E(\Gamma)$ and $e_2 \in
E(\Delta)$ such that
$$\iota(e_1)=v_1, \ \tau(e_1)=v_2 \  {\rm and} \ lab(e_1)
  \equiv x$$
  and
$$\iota(e_2)=u_1, \ \tau(e_2)=u_2 \  {\rm and} \ lab(e_2)
  \equiv x$$
\end{enumerate}
\end{defin}

%
%

Along this section we consider  $G=G_1 \ast_A G_2$ to be an
amalgam of finite groups $G_1$ and $G_2$.

\begin{lem} \label{product of precovers=precover}
Let $\Gamma$ and $\Delta$ be finite precovers of $G=G_1 \ast_A
G_2$.

Then nonempty connected components of $\Gamma \times \Delta$ are
finite precovers of $G$.
\end{lem}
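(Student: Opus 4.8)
The plan is to verify the two defining properties of a precover for an arbitrary nonempty connected component $\Theta$ of $\Gamma \times \Delta$: first that $\Theta$ is $G$-based, and second that each of its monochromatic components is a cover of the corresponding factor $G_i$. By Lemma~\ref{corol2.13} it suffices instead to check that $\Theta$ is compatible and that each $X_i$-monochromatic component of $\Theta$ is $G_i$-based (in fact a cover of $G_i$); this route avoids arguing directly about arbitrary paths with $G$-trivial label. Throughout I would use the evident projections $\pi_\Gamma : \Theta \to \Gamma$ and $\pi_\Delta : \Theta \to \Delta$, which by the very definition of the product graph are morphisms of labelled graphs: a path $p$ in $\Theta$ with $lab(p) \equiv w$ projects to paths $\pi_\Gamma(p)$ in $\Gamma$ and $\pi_\Delta(p)$ in $\Delta$ with the same label $w$, and conversely a pair of paths in $\Gamma$ and $\Delta$ with a common label and with initial vertices forming a vertex of $\Theta$ lifts uniquely to a path in $\Theta$.

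First I would check well-labelledness and the $G$-based property of $\Theta$. Well-labelledness is immediate: if $e_1, e_2$ are edges of $\Theta$ with the same initial vertex $(v,u)$ and the same label $x$, then $\pi_\Gamma(e_1), \pi_\Gamma(e_2)$ are edges of $\Gamma$ at $v$ with label $x$, hence equal since $\Gamma$ is well-labelled, and similarly $\pi_\Delta(e_1) = \pi_\Delta(e_2)$; so $e_1 = e_2$. For the $G$-based property, let $p$ be a path in $\Theta$ with $lab(p) =_G 1$. Then $\pi_\Gamma(p)$ is a path in $\Gamma$ with the same label, so it is closed since $\Gamma$ is $G$-based; likewise $\pi_\Delta(p)$ is closed in $\Delta$. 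Hence $\iota(p) = (\iota(\pi_\Gamma(p)), \iota(\pi_\Delta(p))) = (\tau(\pi_\Gamma(p)), \tau(\pi_\Delta(p))) = \tau(p)$, so $p$ is closed and $\Theta$ is $G$-based. Finiteness of $\Theta$ is clear since $V(\Gamma) \times V(\Delta)$ is finite.

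Next I would handle the monochromatic components. Let $D$ be an $X_i$-monochromatic component of $\Theta$ and let $(v,u) \in V(D)$. I claim $D$ is a cover of $G_i$, i.e.\ (by Lemma~\ref{lemma1.5}) that $D$ is $G_i$-based and $X_i^\pm$-saturated. The projection $\pi_\Gamma$ restricted to $D$ lands in the $X_i$-monochromatic component $C_\Gamma$ of $\Gamma$ containing $v$, and $\pi_\Delta|_D$ lands in the $X_i$-monochromatic component $C_\Delta$ of $\Delta$ containing $u$; both $C_\Gamma$ and $C_\Delta$ are covers of $G_i$ since $\Gamma$ and $\Delta$ are precovers. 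The $G_i$-based property of $D$ follows exactly as in the previous paragraph, projecting a path with $G_i$-trivial label into $C_\Gamma$ and $C_\Delta$. For $X_i^\pm$-saturation: given $x \in X_i^\pm$ and $(v,u) \in V(D)$, saturation of $C_\Gamma$ gives an edge $e_1$ at $v$ with $lab(e_1) = x$, and saturation of $C_\Delta$ gives an edge $e_2$ at $u$ with $lab(e_2) = x$; by the definition of the product graph these determine an edge of $\Theta$ at $(v,u)$ labelled $x$, which necessarily lies in $D$. Hence $D \cong (Cayley(G_i, Lab(D,(v,u))), \cdot)$ is a cover of $G_i$.

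Finally, compatibility of $\Theta$ at a bichromatic vertex $(v,u)$: given a monochromatic path $p$ in $\Theta$ with $\iota(p) = (v,u)$ and $lab(p) \in A$, project to $\pi_\Gamma(p)$ in $\Gamma$ and $\pi_\Delta(p)$ in $\Delta$. Since $\Gamma$ and $\Delta$ are precovers, hence compatible (Remark~\ref{remark: precovers are compatible}), provided $v$ and $u$ are themselves bichromatic in $\Gamma$, $\Delta$ respectively, there are monochromatic paths $t_\Gamma$ in $\Gamma$ and $t_\Delta$ in $\Delta$ of the opposite color with the same endpoints and $G$-equal labels as $\pi_\Gamma(p), \pi_\Delta(p)$; one then has to arrange that $t_\Gamma$ and $t_\Delta$ carry the same label so they lift to a single path $t$ in $\Theta$ — and here I expect the main obstacle. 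The point is that $Lab$ of a cover of $G_i$ through a vertex determines, via the coset structure of $Cayley(G_i, S)$, a canonical choice: the terminal vertices $\tau(\pi_\Gamma(p))$ and $\tau(t_\Gamma)$ coincide, and within the amalgam any two words over $X_j^\pm$ in $A$ that are $G$-equal and read from the same vertex of a $G_j$-cover reach the same vertex; so one can take $t_\Gamma, t_\Delta$ to have the identical label (e.g.\ a fixed normal-form or shortest representative in $X_j^\pm$ of the relevant element of $A \cap G_j$). I would need to argue carefully that $(v,u)$ being bichromatic in $\Theta$ forces $v$ bichromatic in $\Gamma$ and $u$ bichromatic in $\Delta$ — which is not automatic, so the cleaner route is: since all $X_i$-monochromatic components of $\Theta$ are already covers of $G_i$ (shown above), Lemma~\ref{corol2.13} applies directly and compatibility need not be checked separately. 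Thus the proof reduces to the $G$-based statement plus the monochromatic-components statement, both of which go through by the projection argument, and I would present it in that streamlined form.
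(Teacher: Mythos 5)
Your proposal is correct and follows essentially the same route as the paper: finiteness is immediate from the definition of the product, the $G$-based property is obtained by projecting a path with $G$-trivial label onto $\Gamma$ and $\Delta$ and using that each factor graph is $G$-based, and each $X_i$-monochromatic component is shown to be a cover of $G_i$ by combining $G_i$-basedness with $X_i^{\pm}$-saturation inherited from the saturation of the corresponding vertices of $\Gamma$ and $\Delta$ via Definition~\ref{def: product graph}. One correction to your closing paragraph: Lemma~\ref{corol2.13} cannot be ``applied directly'' while skipping compatibility, since compatibility is a hypothesis of that lemma; but no appeal to it is needed anyway, because ``$G$-based and every monochromatic component is a cover of the corresponding factor'' is precisely the definition of a precover, and your streamlined form establishes exactly these two facts.
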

\begin{proof} Since $\Gamma$ and $\Delta$ are finite graphs the product graph is
finite, by Definition~\ref{def: product graph}. Thus each of its
connected components is finite as well.

Let $\Phi$ be a nonempty  connected component $\Phi$ of $\Gamma
\times \Delta$, that is $E(\Phi) \neq \emptyset$.

 Let
$p$ be a path in $\Phi$ such that $lab(p)\equiv r$, where $r=_G
1$.
Let $(v,u)=\iota(p)$ and $(v',u')=\tau(p)$ then there exist paths
$p_1$ in $\Gamma$ and $p_2$ in $\Delta$ such that
$$\iota(p_1)=v, \ \tau(p_1)=v' \ {\rm and} \ lab(p_1) \equiv r$$
and
$$\iota(p_2)=u, \ \tau(p_2)=u' \ {\rm and} \ lab(p_2) \equiv r.$$
Since $\Gamma$ and $\Delta$ are $G$-based graphs, we have
$v=\iota(p_1)=\tau(p_1)=v'$ and $u=\iota(p_2)=\tau(p_2)=u'$. Thus
$(v,u)=(v',u')$. Hence $p$ is a closed path in $\Phi$. That is
$\Phi$ is $G$-based.

Finally, we have to show that each $X_i$-monochromatic component
of $\Phi$ is a cover of $G_i$ ($i \in \{1,2\}$). By
Lemma~\ref{lemma1.5},  a $X_i$-monochromatic component is a cover
of $G_i$  if and only if it is $X_i^{\pm}$-saturated and
$G_i$-based ($i \in \{1,2\}$).

Let $C$ be a  $X_i$-monochromatic component of $\Phi$. Since the
graph $\Phi$ is $G$-based, it is, in particular, $G_i$-based.
Hence $C$ is $G_i$-based.

$C$ is $X_i^{\pm}$-saturated. Indeed, let $(v,u) \in V(C)$.
Thus either $(v,u) \in VM_i(C)$ ($i \in \{1,2\}$) or $(v,u) \in
VB(C)$. Definition~\ref{def: product graph} implies that in the
first case at least one of the vertices $v$ or $u$ is
$X_i$-monochromatic in $\Gamma$ and $\Delta$, respectively, and
the other one is bichromatic or $X_i$-monochromatic. If $(v,u) \in
VB(C)$ then $v \in VB(\Gamma)$ and $u \in VB(\Delta)$. %

Since $\Gamma$ and $\Delta$ are precovers, their bichromatic
vertices are $X^{\pm}$-saturated and their $X_i$-monochromatic
vertices are $X_i^{\pm}$-saturated ($i \in \{1,2\}$).

Therefore, by Definition~\ref{def: product graph}, the vertex
$(v,u)$ is  $X_i^{\pm}$-saturated. Thus $C$ is
$X_i^{\pm}$-saturated. Hence it is a cover of $G_i$.

By definition of precover, each  nonempty  connected component
$\Phi$ of $\Gamma \times \Delta$ is a finite precover of $G$.

 \end{proof}

Let $C$ be the connected component of $\Gamma \times \Delta$
containing the vertex $\vartheta$. Therefore $Lab(\Gamma \times
\Delta, \vartheta)=Lab(C, \vartheta) $, because $Loop(\Gamma
\times \Delta, \vartheta)=Loop(C, \vartheta) $. From now on we
allow ourselves to vary between this two notations which define
the same.


\begin{lem} \label{the intersection of  precovers H and K}
Let $\Gamma$ and $\Delta$ be finite precovers of $G$ such that
$Lab(\Gamma,v)=H$ and $Lab(\Delta,u)=K$, where $v \in V(\Gamma)$
and $u \in V(\Delta)$.

 Let
$\vartheta=(v,u) \in V(\Gamma \times \Delta)$. Let $C$ be a
connected component  of $\Gamma \times \Delta$ such that
$\vartheta \in V(C)$.

If $v \in VM_i(\Gamma)$ and $u \in VM_j(\Delta)$, where $1 \leq i
\neq j \leq 2$, then
$$V(C)=\{\vartheta\},  \ E(C)=\emptyset \ {\rm and} \ \{1\} \leq H \cap K \leq
A.$$
Otherwise $E(C) \neq \emptyset$ and $Lab(\Gamma \times \Delta,
\vartheta)=H \cap K$.

\end{lem}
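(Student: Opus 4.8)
The plan is to analyze loops in the product graph $\Gamma \times \Delta$ by relating paths in a connected component $C$ containing $\vartheta=(v,u)$ to pairs of paths in $\Gamma$ and $\Delta$ with the same label, using the fact (Lemma~\ref{product of precovers=precover}) that $C$ is itself a finite precover of $G$ whenever $E(C)\neq\emptyset$. The key observation for the second assertion is that $p\in Loop(C,\vartheta)$ if and only if $lab(p)$ is readable as a closed path at $v$ in $\Gamma$ and as a closed path at $u$ in $\Delta$; the ``only if'' is immediate from Definition~\ref{def: product graph}, and the ``if'' direction uses that $\Gamma$ and $\Delta$ are well-labelled ($G$-based), so the pair of paths tracing $lab(p)$ in $\Gamma$ and $\Delta$ assembles into a unique path in $\Gamma\times\Delta$ starting at $\vartheta$. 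Since that path stays in the connected component of $\vartheta$, it lies in $C$. Passing to labels in $G$, this gives $Lab(C,\vartheta)=Lab(\Gamma,v)\cap Lab(\Delta,u)=H\cap K$, and then $Lab(\Gamma\times\Delta,\vartheta)=Lab(C,\vartheta)$ by the remark preceding the lemma.

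First I would dispose of the degenerate case: suppose $v\in VM_i(\Gamma)$ and $u\in VM_j(\Delta)$ with $i\neq j$. Any edge $e$ of $\Gamma\times\Delta$ with $\iota(e)=\vartheta$ requires, by Definition~\ref{def: product graph}, an edge at $v$ in $\Gamma$ and an edge at $u$ in $\Delta$ with a common label $x\in X$. But the only edges at $v$ carry labels in $X_i^{\pm}$ and the only edges at $u$ carry labels in $X_j^{\pm}$, and $X_i^{\pm}\cap X_j^{\pm}=\emptyset$ by (1.a). Hence no edge of $\Gamma\times\Delta$ is incident with $\vartheta$, so the component $C$ of $\vartheta$ satisfies $V(C)=\{\vartheta\}$, $E(C)=\emptyset$, and by our convention $Lab(C,\vartheta)=\{1\}$. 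On the other hand $H\cap K$ need not be trivial: any normal word $g\in H\cap K$ would have to be readable as a closed path at $v$ in $\Gamma$ and at $u$ in $\Delta$, and by Lemma~\ref{lem: normal elements in precovers} applied in each precover, if $l(g)>1$ or $l(g)=1$ with $g\notin A$ one reaches a contradiction with $v,u$ being monochromatic of opposite colors (a nontrivial normal word closed at $v$ forces either syllable length $\le 1$, or an incident edge of each color, or a one-syllable element of $G_i\cap A$). This forces $g\in A$, so $H\cap K\le A$; thus $\{1\}\le H\cap K\le A$ as claimed.

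For the main case, I would argue that if $(v,u)$ is not of the ``opposite monochromatic colors'' type, then there is at least one letter $x\in X^{\pm}$ with an $x$-edge at $v$ and an $x$-edge at $u$ (because $\Gamma$ and $\Delta$ are precovers, each of $v,u$ is incident with edges of color $i$ whenever it is $X_i$-monochromatic or bichromatic, and by saturation within monochromatic components the relevant $X$-letters are present); hence $\vartheta$ has an incident edge and $E(C)\neq\emptyset$. Then I would carry out the loop-lifting argument above to conclude $Lab(\Gamma\times\Delta,\vartheta)=Lab(C,\vartheta)=H\cap K$.

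The main obstacle I anticipate is the case analysis needed to show $E(C)\neq\emptyset$ precisely when we are not in the ``opposite monochromatic'' situation — in particular handling the subcase where one of $v,u$ is bichromatic and the other is monochromatic, or both are monochromatic of the same color, and verifying in each that some common label is available at both vertices. This relies on the precise saturation properties of precovers (each $X_i$-monochromatic component is a cover of $G_i$, hence $X_i^{\pm}$-saturated) and on Definition~\ref{def: product graph}; it is essentially the same bookkeeping already used in the proof of Lemma~\ref{product of precovers=precover}, so I expect it to go through without genuine difficulty.
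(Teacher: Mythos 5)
Your treatment of the degenerate case and of the claim $E(C)\neq\emptyset$ is correct and matches the paper: you rightly observe that a monochromatic vertex of a precover is $X_i^{\pm}$-saturated (its component being a cover of $G_i$), so two vertices not of opposite monochromatic colors share a readable letter, and in the opposite-color case you correctly use Lemma~\ref{lem: normal elements in precovers} to force any nontrivial element of $H\cap K$ into $A$.

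There is, however, a genuine gap in the main case, at the step ``passing to labels in $G$, this gives $Lab(C,\vartheta)=H\cap K$.'' Your loop-lifting observation is a statement about \emph{words}: the set of words readable as loops at $\vartheta$ in $C$ equals the intersection of the sets of words readable as loops at $v$ and at $u$. Taking images in $G$ of these three sets only yields $Lab(C,\vartheta)\subseteq H\cap K$; the image of an intersection need not equal the intersection of the images. For the reverse inclusion you must produce, for each $g\in H\cap K$, a \emph{single} word $w$ with $w=_G g$ that labels a closed path at $v$ in $\Gamma$ \emph{and} a closed path at $u$ in $\Delta$ --- a priori the loops at $v$ and at $u$ representing $g$ carry different words. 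This is exactly where Lemma~\ref{lem: normal elements in precovers} must be invoked again (as the paper does): the normal form of $g$ labels closed loops at both basepoints when $l(g)>1$ or when $l(g)=1$ with $g\notin A$; and in the remaining subcase $l(g)=1$, $g\in A$, the lemma only guarantees readability of \emph{some} representative $w'\in G_i\cap A$ with $w'=_G g$, so one must use the common $X_i^{\pm}$-saturation of $v$ and $u$ to choose the representative in the factor readable at both. Your proposal never addresses this subcase in the main argument, and without it the inclusion $H\cap K\subseteq Lab(C,\vartheta)$ is unproved. (The appeal to $C$ being a precover via Lemma~\ref{product of precovers=precover} does not repair this; what is needed is the precover property of $\Gamma$ and $\Delta$ themselves, through Lemma~\ref{lem: normal elements in precovers}.)
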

\begin{proof} If $v \in VM_i(\Gamma)$ and $u \in VM_j(\Delta)$, where $1
\leq i \neq j \leq 2$, then, by Definition~\ref{def: product
graph}, $V(C)=\{\vartheta\}$ and $E(C)=\emptyset$.

Let $H \cap K \neq \{1\}$. Then there exists $1 \neq w \in H \cap
K$.  Without loss of generality, we can assume that the word $w$
is in normal form.

Assume first that the syllable length  of the normal word $w$ is
greater than $1$. Let $(x_1,x_2, \ldots,x_{k-1},x_k)$ be a normal
decomposition of $w$. By Lemma~\ref{lem: normal elements in
precovers}, this word  labels a normal path closed at the
basepoint in both graphs $\Gamma$ and $\Delta$.

Hence $x_1,x_k \in G_i \setminus A$, because $v \in VM_i(\Gamma)$.
On the other hand, $u \in VM_j(\Delta)$ and therefore $x_1,x_k \in
G_j \setminus A$ ($1 \leq i \neq j \leq 2$). This is a
contradiction. Therefore the syllable length of the normal word $w
\in H \cap K$ is equal to $1$. By Lemma~\ref{lem: normal elements
in precovers}, similar arguments show that in this case $w \in A$.

Assume now that the vertices $v$ and $u$ are not of different
colors. Using the same ideas as in the proof of Lemma~\ref{product
of precovers=precover}, we can assume, without loss of generality,
that the vertices $v$ and $u$ are $X_i^{\pm}$-saturated ($i \in
\{1,2\}$). Therefore, by Definition~\ref{def: product graph},
$E(C) \neq \emptyset$.

Let $w \in H \cap K$ be a  normal word.

By Lemma~\ref{lem: normal elements in precovers}, if either
$l(w)>1$, or $l(w)=1$ and $w \not\in A$ then the word $w$ labels a
path in $\Gamma$ closed at $v$ and also labels a path in $\Delta$
closed at $u$. Hence, by Definition~\ref{def: product graph},
there exists a path $p$ closed at $\vartheta$ in $C \subseteq
\Gamma(H) \times \Gamma(K)$ such that $lab(p) \equiv w$. Thus $w
\in Lab(\Gamma(H) \times \Delta, \vartheta)=Lab(C,\vartheta)$.

Assume now that the syllable length of $w$ is equal to $1$ and $w
\in A$. Let $w' \in G_i \cap A$ such that $w'=_G w$. Since, by our
assumption, the vertices $v$ and $u$ are $X^{\pm}_i$ saturated,
Lemma~\ref{lem: normal elements in precovers} implies that the
normal word $w'$ labels a path closed at the basepoint in both
graphs $\Gamma$ and $\Delta$. Therefore $w \in Lab(\Gamma(H)
\times \Delta, \vartheta)$. Thus
$$H \cap K  \subseteq Lab(\Gamma \times \Delta,
\vartheta)=Lab(C, \vartheta).$$

Now let $p$ be a path in $C \subseteq \Gamma \times \Delta$ closed
at $\vartheta$. Hence, by Definition~\ref{def: product graph},
there exists a path $p_1$ in $\Gamma$ closed at $v$ and there
exists a path $p_2$ in $\Delta$ closed at $u$, such that
$$lab(p) \equiv lab(p_1) \equiv lab(p_2).$$
Since $lab(p_1) \in Lab(\Gamma,v_0)=H$ and $lab(p_2) \in
Lab(\Delta,u)=K$, we have $Lab(C, \vartheta) \subseteq
Lab(\Gamma,v) \cap Lab(\Delta,u)=H \cap K.$

Hence  $Lab(C, \vartheta)=H \cap K$.

\end{proof}


\begin{remark} \label{rem: H intersects K on A}
{\rm The above proof implies that if $H \cap K \leq G_i$  ($i \in
\{1,2\}$) then
$$H \cap K=Lab(C,v) \cap Lab(D,u),$$ where $C$ and $D$ are
$X_i$-monochromatic component of $\Gamma$ and $\Delta$,
respectively, such that $v \in V(C)$ and $u \in V(D)$.
 \e }
\end{remark}

Recall that $(\Gamma(H),v_0)$ is the subgroup graph of $H$
constructed by the generalized Stallings' algorithm.


\begin{cor} \label{cor: intersection of reduced precovers}
Let $H$ be a finitely generated subgroup of $G$.

Let $\Delta$ be a finite precover of $G$ such
 $Lab(\Delta,u)=K$.

 Let
$\vartheta=(v_0,u) \in V(\Gamma(H) \times \Delta)$. Let $C$ be a
connected component  of $\Gamma(H) \times \Delta$ such that
$\vartheta \in V(C)$.

Then $Lab(\Gamma(H) \times \Delta, \vartheta)=Lab(C,\vartheta)=H
\cap K$.

\end{cor}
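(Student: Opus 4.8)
The plan is to reduce the statement to Lemma~\ref{the intersection of  precovers H and K}, applied with $\Gamma=\Gamma(H)$ and $v=v_0$. This application is legitimate: by Theorem~\ref{thm: properties of subgroup graphs}(1)--(2) the graph $\Gamma(H)$ is a finite precover of $G$ with $Lab(\Gamma(H),v_0)=H$. Recall also that $Lab(C,\vartheta)=Lab(\Gamma(H)\times\Delta,\vartheta)$ holds automatically, since $C$ is the connected component of $\vartheta$. Now Lemma~\ref{the intersection of  precovers H and K} delivers the desired equality $Lab(C,\vartheta)=H\cap K$ at once in every case \emph{except} the one in which $v_0\in VM_i(\Gamma(H))$ and $u\in VM_j(\Delta)$ with $1\leq i\neq j\leq 2$; there the lemma only tells us $V(C)=\{\vartheta\}$ and $E(C)=\emptyset$ (so $Lab(C,\vartheta)=\{1\}$) together with $\{1\}\leq H\cap K\leq A$. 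Hence the entire content of the corollary is the claim that $H\cap K=\{1\}$ whenever $v_0\in VM_i(\Gamma(H))$ for some $i$ and $H\cap K\leq A$.

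To establish this I would invoke the fact that $\Gamma(H)$ is a \emph{reduced} precover. If $H=\{1\}$ the claim is trivial, so assume $H\neq\{1\}$; then $\Gamma(H)$ has an edge and, being connected, $v_0$ is not isolated, so $v_0$ lies in a (unique) $X_i$-monochromatic component $C_0$. Since $v_0\in VM_i(\Gamma(H))$, the vertex $v_0$ is not bichromatic, and therefore Definition~\ref{def:ReducedPrecover}(ii) forces $Lab(C_0,v_0)\cap A=\{1\}$. The key sublemma is then $H\cap A\subseteq Lab(C_0,v_0)$: granting it, $H\cap K\leq H\cap A\subseteq Lab(C_0,v_0)\cap A=\{1\}$ (using $H\cap K\leq A$), as wanted. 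To prove the sublemma, take $w\in H\cap A$ with $w\neq_G 1$; then $w$ has syllable length $1$, and since $A$ is identified with its image in each factor we may represent $w$ by a word $w'\in G_i\cap A$ with $w'=_G w$. Applying the third clause of Lemma~\ref{lem: normal elements in precovers} (here $l(w')=1$, $w'\in G_i\cap A$, and $v_0\in VM_i(\Gamma(H))$), the word $w'$ labels a path $p$ in $\Gamma(H)$ closed at $v_0$. Since $lab(p)\equiv w'$ is a word over $X_i^{\pm}$ and $\Gamma(H)$ is well-labelled, $p$ stays inside the $X_i$-monochromatic component $C_0$, whence $w=_G lab(p)\in Lab(C_0,v_0)$.

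The main obstacle is precisely this exceptional ``opposite monochromatic colours'' configuration, which Lemma~\ref{the intersection of  precovers H and K} leaves unresolved; dealing with it reduces to the small fact that $H\cap A=Lab(C_0,v_0)\cap A$ for the reduced precover $\Gamma(H)$, and that fact follows by combining the normal-form bookkeeping of Lemma~\ref{lem: normal elements in precovers} with clause (ii) of Definition~\ref{def:ReducedPrecover}. In all other configurations the corollary is a direct citation of Lemma~\ref{the intersection of  precovers H and K}.
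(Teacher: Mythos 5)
Your proposal is correct and follows essentially the same route as the paper: both reduce to Lemma~\ref{the intersection of  precovers H and K} and then dispose of the exceptional empty-component case via clause (ii) of Definition~\ref{def:ReducedPrecover}. The only difference is cosmetic — you spell out explicitly (via Lemma~\ref{lem: normal elements in precovers}) the step from $H\cap A\neq\{1\}$ to $Lab(C_0,v_0)\cap A\neq\{1\}$, which the paper's proof leaves implicit, and you phrase the argument directly rather than by contradiction.
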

\begin{proof}  If $C$ is a nonempty (i.e., $E(C) \neq \emptyset$)
connected component  of $\Gamma(H) \times \Delta$ then, by
Lemma~\ref{the intersection of  precovers H and K}, $Lab(\Gamma(H)
\times \Delta, \vartheta)=Lab(C,\vartheta)=H \cap K$.

Otherwise, Lemma~\ref{the intersection of  precovers H and K}
implies that $\{1\} \leq H \cap K \leq A$. If $H \cap K=\{1\}$
then, since $Lab(C,\vartheta)=\{1\}$, the desired equality holds.

Assume now that $\{1\} \neq H \cap K \leq A$. Hence $H \cap A \neq
\{1\}$. Therefore, since $\Gamma(H)$ is a reduced precover of $G$,
Definition~\ref{def:ReducedPrecover} (ii) implies $v \in
VB(\Gamma(H))$. Therefore, by Lemma~\ref{the intersection of
precovers H and K}, $E(C) \neq \emptyset$. That is $C$ is
nonempty. This is a contradiction. Thus $Lab(\Gamma(H) \times
\Delta, \vartheta)=Lab(C,\vartheta)=H \cap K$.

 \end{proof}




\begin{cor}[The Intersection Problem]
\label{cor:IntersectionProblem}
Let $H=\langle h_1, \cdots, h_n \rangle$ and $K=\langle k_1,
\cdots, k_m \rangle$ be finitely generated subgroups of an amalgam
of finite groups $G=G_1 \ast_A G_2$.

Then there exists an algorithm which finds the generators of $H
\cap K$, which is finitely generated.
\end{cor}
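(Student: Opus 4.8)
The plan is to compute $H \cap K$ as $Lab$ of a suitable connected component of the product graph $\Gamma(H) \times \Gamma(K)$, invoking the machinery developed above. First I would construct, using the generalized Stallings' folding algorithm (Theorem~\ref{thm: properties of subgroup graphs}), the subgroup graphs $(\Gamma(H),v_0)$ and $(\Gamma(K),u_0)$; both are finite reduced precovers of $G$ with $Lab(\Gamma(H),v_0)=H$ and $Lab(\Gamma(K),u_0)=K$. Then I form the product graph $\Gamma(H) \times \Gamma(K)$ according to Definition~\ref{def: product graph}, set $\vartheta=(v_0,u_0)$, and let $C$ be the connected component of $\Gamma(H) \times \Gamma(K)$ containing $\vartheta$.

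By Corollary~\ref{cor: intersection of reduced precovers} (applied with $\Delta=\Gamma(K)$, which is indeed a finite precover of $G$), we get $Lab(\Gamma(H) \times \Gamma(K), \vartheta)=Lab(C,\vartheta)=H \cap K$. Since $\Gamma(H)$ and $\Gamma(K)$ are finite graphs, the product graph is finite by Definition~\ref{def: product graph}, hence $C$ is a finite graph. A finite connected labelled graph has finitely generated $Lab$ at any basepoint --- one can exhibit an explicit finite generating set by fixing a spanning tree $T$ of $C$ rooted at $\vartheta$ and taking, for each edge $e \in E(C) \setminus E(T)$, the element $lab(p_e)$ where $p_e$ is the closed path at $\vartheta$ that travels through $T$ to $\iota(e)$, crosses $e$, and returns through $T$ from $\tau(e)$; these elements generate $Lab(C,\vartheta)$. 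This is precisely the classical Nielsen--Schreier-type argument already implicit in the observation (attributed to Gitik) that $Lab(\Gamma,v_0)$ is a subgroup. Thus $H \cap K$ is finitely generated and the listed generators are produced effectively. (If $C=\{\vartheta\}$ with no edges, then $H \cap K=\{1\}$ by Corollary~\ref{cor: intersection of reduced precovers}, and the empty generating set is returned.)

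The main obstacle --- really the only non-routine point --- is ensuring that the product graph construction genuinely computes the set-theoretic intersection of the two subgroups and not something smaller or larger; but this is exactly the content of Lemma~\ref{the intersection of precovers H and K} and Corollary~\ref{cor: intersection of reduced precovers}, where the delicate case is a loop labelled by a syllable-length-one element of $A$ read at a monochromatic basepoint of one factor. Here the reducedness of $\Gamma(H)$ is what rescues the argument: Definition~\ref{def:ReducedPrecover}(ii) forces $v_0 \in VB(\Gamma(H))$ whenever $H \cap A \neq \{1\}$, so the problematic configuration cannot arise, and the component $C$ is nonempty precisely when $H \cap K \neq \{1\}$. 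Since all of this is already established above, the proof of the corollary reduces to assembling these ingredients plus the spanning-tree observation for finite graphs. A remark on complexity (the product graph has at most $|V(\Gamma(H))| \cdot |V(\Gamma(K))|$ vertices, hence the whole procedure is polynomial in $m+n$ once the subgroup graphs are built) can be appended but is not needed for correctness.
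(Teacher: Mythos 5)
Your proposal is correct and follows essentially the same route as the paper: construct $\Gamma(H)$ and $\Gamma(K)$, take the connected component $C$ of the product graph containing $(v_0,u_0)$, identify $Lab(C,(v_0,u_0))=H\cap K$ via Corollary~\ref{cor: intersection of reduced precovers}, and read off a finite generating set from a spanning tree of the finite graph $C$. The only cosmetic difference is that the paper restricts the tree-complement edges to positively oriented ones, which is immaterial.
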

\begin{proof}
We first use the generalized Stallings' folding  algorithm to
construct the subgroup graphs $(\Gamma(H),v_0)$ and
$(\Gamma(K),u_0)$.

Since, by Theorem~\ref{thm: properties of subgroup graphs}, these
graphs are finite, the product graph $\Gamma(H) \times \Gamma(K)$
can be effectively constructed, and it is finite.

By Corollary~\ref{cor: intersection of reduced precovers}, $H \cap
K=Lab(C,(v_0,u_0))$, where $C$ is a connected component of
$\Gamma(H) \times \Gamma(K)$ such that $(v_0,u_0) \in V(C)$.
Therefore it is sufficient to construct only the component $C$.

Let $\vartheta=(v_0,u_0)$. Recall that $G=G_1 \ast_A G_2=gp\langle
X \; | \; R \rangle$.

 By Lemma~\ref{product of
precovers=precover}, $(C,\vartheta)$ is a precover of $G$, which
is finite by the construction. Therefore, in particular, it is a
finite well-labelled graph.
Hence, the subgroup $\widetilde{L}$ of $FG(X)$ determined by
$(C,\vartheta)$ is finitely generated  (\cite{kap-m, mar_meak,
stal}). Since
$Lab(C,\vartheta)=\widetilde{L}/N=\widetilde{L}/\widetilde{L} \cap
N$, where $N$ is the normal closure of $R$ in $FG(X)$, we conclude
that $Lab(C,\vartheta)=H \cap K$ is finitely generated.

To find the generating set we proceed as follows.
Let $T$ be a fixed spanning tree  of $C$. For all $v \in V(C)$, we
consider $t_v$ to be the unique freely reduced path in $T$ from
the basepoint $\vartheta$ to the vertex $v$.

For each $e \in E(C)$ we consider $t(e)=t_{\iota(e)}e
\overline{t_{\tau(e)}}$. Thus if $e \in E(T)$ then $t(e)$ can be
freely reduced to an empty path, that is $lab(t(e))=_{FG(X)} 1$.

Let $E^+$ be the set of positively oriented edges of $C$. Let
$$X_L=\{lab(t(e)) \; | \; e \in E^+ \setminus E(T) \},$$
As is well known \cite{kap-m, mar_meak, stal},
$\widetilde{L}=FG(X_H)$. Therefore $L=\langle X_L \rangle$.

\end{proof}

\begin{remark}
{\rm In order to compute a finite group presentation of the
subgroup $H \cap K$ one can apply to $(C,\vartheta)$ the
restricted version of the Reidemeister-Schreier algorithm
presented in \cite{m-algI}. This is possible, because $C$ is a
finite precover of $G$, by Lemma~\ref{product of
precovers=precover}.} \e

\end{remark}


\begin{cor}[Howson  Property] \label{cor:HowsonProperty}
Let $G=G_1 \ast_A G_2$ be an amalgam of finite groups.

The intersection of two finitely generated subgroups of $G$ is
finitely generated in $G$. That is $G$ possesses the
\underline{Howson property}.
\end{cor}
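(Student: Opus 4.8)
The plan is to observe that the Howson property is an immediate consequence of the constructive solution of the intersection problem established in Corollary~\ref{cor:IntersectionProblem}, so the work has essentially already been done. Given finitely generated subgroups $H$ and $K$ of $G=G_1 \ast_A G_2$, I would first apply the generalized Stallings' folding algorithm to produce the pointed graphs $(\Gamma(H),v_0)$ and $(\Gamma(K),u_0)$; by Theorem~\ref{thm: properties of subgroup graphs} these are \emph{finite} reduced precovers of $G$ with $Lab(\Gamma(H),v_0)=H$ and $Lab(\Gamma(K),u_0)=K$.

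Next I would form the product graph $\Gamma(H) \times \Gamma(K)$ of Definition~\ref{def: product graph}. Since both factors are finite, the product graph is finite, and hence so is each of its connected components. Let $C$ be the connected component containing the vertex $\vartheta=(v_0,u_0)$. By Corollary~\ref{cor: intersection of reduced precovers} one has $Lab(C,\vartheta)=H \cap K$, and by Lemma~\ref{product of precovers=precover} the graph $C$ is a finite precover of $G$ — in particular a finite well-labelled graph.

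The final step is the standard Stallings-type argument already used inside the proof of Corollary~\ref{cor:IntersectionProblem}: a finite well-labelled graph $C$ determines a finitely generated subgroup $\widetilde{L}$ of the free group $FG(X)$, with a free basis read off from the positively oriented edges lying outside a fixed spanning tree $T$ of $C$ (namely $X_L=\{lab(t(e)) \mid e\in E^+\setminus E(T)\}$). Since $Lab(C,\vartheta)$ is the image of $\widetilde{L}$ under the canonical epimorphism $FG(X)\to G=gp\langle X\mid R\rangle$, the subgroup $H\cap K=Lab(C,\vartheta)$ is finitely generated, generated by the images of $X_L$. Thus the proof of Corollary~\ref{cor:HowsonProperty} can simply invoke Corollary~\ref{cor:IntersectionProblem}.

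Because everything reduces to results already in hand, there is no genuine obstacle here; the only points that need care are the finiteness of the product graph and the identification $Lab(C,\vartheta)=H\cap K$, and both are supplied by Lemma~\ref{product of precovers=precover} and Corollary~\ref{cor: intersection of reduced precovers}, respectively. If desired, one could emphasize that the argument is effective, producing an explicit finite generating set of $H\cap K$ rather than merely asserting its existence.
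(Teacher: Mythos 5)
Your proposal is correct and follows exactly the route the paper takes: the Howson property is stated as an immediate consequence of Corollary~\ref{cor:IntersectionProblem}, whose proof already contains the product-graph construction, the identification $Lab(C,\vartheta)=H\cap K$ via Corollary~\ref{cor: intersection of reduced precovers}, and the spanning-tree argument showing $H\cap K$ is the image of a finitely generated subgroup of $FG(X)$. Nothing further is needed.
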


\medskip


\subsection*{Malnormality}

\begin{lem} \label{for any element g in G there exist a component in the product graph}
Let $H$ and $K$ be finitely generated subgroups of the group $G$.
Let  $g$ be an element of $G$.

Then  $H \cap gKg^{-1}$ conjugates to a subgroup of $A$ or it
conjugates to the subgroup $Lab(C,\vartheta)$, where $C$ is a
nonempty  connected component of  $\Gamma(H) \times \Gamma(K)$
such that $\vartheta=(v,u) \in V(C)$, and $v \in V(\Gamma(H))$, $u
\in V(\Gamma(K))$.
\end{lem}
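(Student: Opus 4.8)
The plan is to reduce the study of $H \cap gKg^{-1}$ for an arbitrary $g \in G$ to the study of the intersection of $H$ with a suitable precover obtained by modifying $\Gamma(K)$, and then invoke Corollary~\ref{cor: intersection of reduced precovers}. First I would write $g$ in normal form, say $g \equiv g_1 g_2 \cdots g_n$, and build from $\Gamma(K)$ a new pointed graph $(\Gamma', u')$ by attaching a ``stem'' $q$ labelled by the word $g^{-1}$ at the basepoint $u_0$ of $\Gamma(K)$, with $\iota(q) = u_0$ and $\tau(q) = u'$; then $Lab(\Gamma', u') = g\,Lab(\Gamma(K), u_0)\,g^{-1} = gKg^{-1}$. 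By Claim~\ref{claim:PrecoverStem->Precover} (applied to $\Gamma(K)$ in place of $Type(\Gamma(H))$, as in the proof of Theorem~\ref{thm:Conjugates=>EqType}), this graph with a stem embeds into a finite precover $(\Gamma'', u'')$ of $G$ obtained by gluing copies of relative Cayley graphs $Cayley(G_{i_j}, S_j)$ of the factors along the vertices of the stem, and $Lab(\Gamma'', u'') = gKg^{-1}$ still holds.

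Next I would form the product graph $\Gamma(H) \times \Gamma''$ and let $C'$ be the connected component containing $\vartheta' = (v_0, u'')$. Since both $\Gamma(H)$ and $\Gamma''$ are finite precovers of $G$, Lemma~\ref{product of precovers=precover} tells us $C'$ is a finite precover, and by Corollary~\ref{cor: intersection of reduced precovers} we get $Lab(C', \vartheta') = H \cap gKg^{-1}$ — unless the relevant component is a single vertex, in which case Lemma~\ref{the intersection of precovers H and K} forces $H \cap gKg^{-1} \leq A$, which is the first alternative in the statement. So assume $E(C') \neq \emptyset$. Now I need to relate $C'$ back to the product $\Gamma(H) \times \Gamma(K)$. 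The point is that $\Gamma'' $ differs from $\Gamma(K)$ only by the stem together with the glued-on factor-covers $D_1, \dots, D_k$; these extra pieces are (up to the base change induced by the stem) exactly the redundant-type components that, in the proof of Theorem~\ref{thm:Conjugates=>EqType}, get erased when passing to $Type$. Tracing a closed normal path at $\vartheta'$ in $C'$ through the stem and into the ``$\Gamma(K)$ part,'' one sees that there is an element $f \in G$ (the label of the path in $C'$ from $\vartheta'$ to the first vertex lying in $\Gamma(H) \times \Gamma(K)$, read off the stem) such that $f^{-1} Lab(C', \vartheta') f = Lab(C, \vartheta)$ for the appropriate component $C$ of $\Gamma(H) \times \Gamma(K)$ with $\vartheta = (v, u) \in V(C)$, $v \in V(\Gamma(H))$, $u \in V(\Gamma(K))$. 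Hence $H \cap gKg^{-1}$ is conjugate (by $f$) to $Lab(C, \vartheta)$, giving the second alternative.

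The main obstacle I anticipate is making the last bookkeeping step precise: carefully identifying, inside the component $C'$ of the enlarged product graph, the sub-precover that coincides with (a translate of) a genuine component $C$ of $\Gamma(H) \times \Gamma(K)$, and checking that the conjugating element $f$ produced from the stem indeed conjugates one $Lab$-subgroup onto the other. This requires keeping track of which vertices of $\Gamma''$ that are ``new'' (lying in the $D_j$'s) can appear as second coordinates of vertices of $C'$ — by Definition~\ref{def: product graph} such a vertex $(v, d)$ with $d \in V(D_j) \setminus V(\Gamma(K))$ forces $v$ to be $X_{i_j}$-monochromatic or bichromatic in a way matching $d$, so one shows the part of $C'$ over the stem/$D_j$'s is itself a ``stem-plus-covers'' appendage that can be stripped off without changing the subgroup up to conjugacy, exactly as redundant components are stripped in the proof of Theorem~\ref{thm:Conjugates=>EqType}. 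Everything else — finiteness, the precover property, the identification of labels with intersections — is handled verbatim by the cited lemmas. I would also note at the end that the finitely many components $C$ of $\Gamma(H) \times \Gamma(K)$, together with the conjugates of subgroups of $A$, thus exhaust all conjugates of subgroups $H \cap gKg^{-1}$, which is what makes the subsequent malnormality criterion effective.
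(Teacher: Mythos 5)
Your overall strategy --- realize $gKg^{-1}$ as the label group of an enlarged finite precover $\Gamma''$ obtained from $\Gamma(K)$ by attaching a stem and gluing on relative Cayley graphs of the factors, then intersect with $\Gamma(H)$ via the product graph --- is in the spirit of the paper's techniques, but it is not how the paper proves this lemma, and as written it has two gaps. The first is technical: you attach a stem labelled by all of $g^{-1}$ at $u_0$. If some prefix of $g^{-1}$ is already readable at $u_0$ in $\Gamma(K)$, the resulting graph is not well-labelled, and after folding the ``stem'' is no longer a hanging path, so Claim~\ref{claim:PrecoverStem->Precover} does not apply verbatim. The paper always first reads the maximal readable prefix inside the graph (writing $g \equiv g_1g_2$ with $g_2^{-1}$ the maximal prefix of $g^{-1}$ readable at $u_0$ in $\Gamma(K)$) and attaches only the unreadable remainder $g_1^{-1}$ as a stem at $u=u_0\cdot g_2^{-1}$; you need the same normalization before invoking the claim.

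The second gap is the substantive one: the ``bookkeeping step'' you flag is where the real content of the lemma lives, and your sketch does not close it. Your reduction needs a vertex of $C'$ whose second coordinate lies in $V(\Gamma(K))$, i.e., a path in $C'$ from $(v_0,u'')$ back into $\Gamma(H)\times\Gamma(K)$; such a path exists only if the word $g_1$ labelling the stem is readable at $v_0$ in $\Gamma(H)$. When it is, your $f$ is essentially the paper's $g_1$, and one does get $Lab(C',(v,u))=Lab(\Gamma(H),v)\cap Lab(\Gamma(K),u)$ --- with the caveat that $(v,u)$ may be a pair of monochromatic vertices of different colors in $\Gamma(H)\times\Gamma(K)$ even though it is not in $\Gamma(H)\times\Gamma''$ (gluing $D_1$ changes colors), which sends you into the ``subgroup of $A$'' alternative via Lemma~\ref{the intersection of  precovers H and K}. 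But when $g_1$ is not readable at $v_0$ in $\Gamma(H)$, the component $C'$ never returns to $\Gamma(H)\times\Gamma(K)$ and your argument produces no conclusion at all. The paper handles exactly this case by a separate normal-form argument: any $1\neq z\in H\cap gKg^{-1}$ would have normal form $g_1wg_1^{-1}$ (normality following from the maximality of $g_2$ and the saturation of the monochromatic components of $\Gamma(K)$), hence by Theorem~\ref{thm: properties of subgroup graphs} (4) would force $g_1$ to be readable at $v_0$ in $\Gamma(H)$, a contradiction, so that $H\cap gKg^{-1}=\{1\}$. Some argument of this kind is indispensable and is missing from your proposal; the rest (the precover and product-graph machinery) is correctly assembled.
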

\begin{proof}
Without loss of generality, assume that $g$ is a normal word. Then
either there exists a path $p$ in $\Gamma(K)$ such that
$\iota(p)=u_0$ and $lab(p) \equiv g^{-1}$ or such a path doesn't
exist in $\Gamma(K)$.

In the first case, let $u=\tau(p)$ then
$Lab(\Gamma(K),u)=gKg^{-1}$. By Corollary \ref{cor: intersection
of reduced precovers}, $Lab(C,\vartheta)=H \cap gKg^{-1}$, where
$\vartheta=(v_0,u)$.

Assume now that $p'$ is the longest path in $\Gamma(K)$ such that
$\iota(p')=u_0$ and $lab(p') \equiv g_2^{-1}$, where $g \equiv g_1g_2$. Let
$u=\tau(p')$. Then either there exists a path $q$ in $\Gamma(H)$ such that
$\iota(q)=v_0$ and $lab(q) \equiv g_1$ or
 such a path doesn't exist in $\Gamma(H)$.

\begin{figure}[!h]
\begin{center}
\psfrag{A }[][]{{\large $\Gamma(H)$}} \psfrag{B }[][]{{\large
$\Gamma(K)$}}
\psfrag{v }[][]{\small $v$} \psfrag{v0 }[][]{\small $v_0$}
\psfrag{u }[][]{\small $u$} \psfrag{u0 }[][]{\small $u_0$}
\psfrag{g1 }[][]{$g_1$} \psfrag{g2 }[][]{$g_2$}
\psfrag{p }[][]{$\overline{p'}$} \psfrag{q }[][]{$q$}
\includegraphics[width=0.7\textwidth]{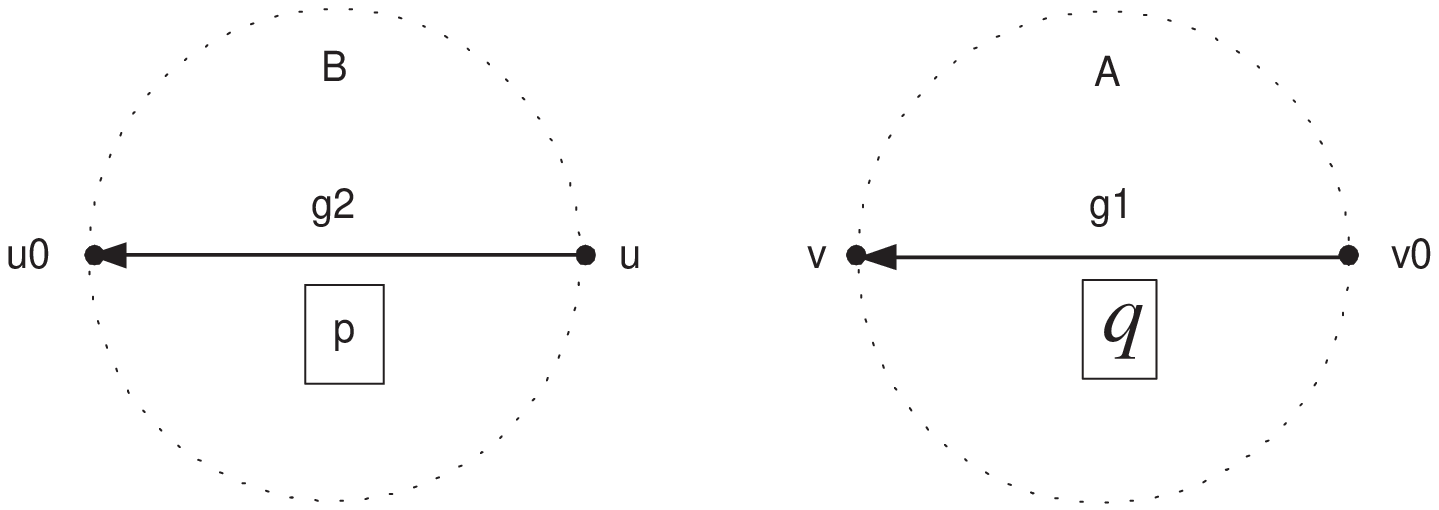}
\caption[The first auxiliary figure for the proof of
Lemma~\ref{for any element g in G there exist a component in the
product graph}]{ \label{fig:IntersectionOfGraphs}}
\end{center}
\end{figure}

First we assume that $q$ exists in $\Gamma(H)$, see Figure
\ref{fig:IntersectionOfGraphs}. Let $v=\tau(q)$. Thus
$Lab(\Gamma(K),u)=g_2Kg_2^{-1}$ and
$Lab(\Gamma(H),v)=g_1^{-1}Hg_1$. By Lemma~\ref{the intersection of
precovers H and K}, if $u$ and $v$ are monochromatic vertices of
different colors then
$$\{1\} \leq g_1^{-1}Hg_1 \cap g_2Kg_2^{-1} \leq A.$$
Otherwise, $Lab(C, (u,v))=g_1^{-1}Hg_1 \cap g_2Kg_2^{-1}$, where
$C$ is a nonempty  connected component of the product graph
$\Gamma(H) \times \Gamma(K)$ containing the vertex
$\vartheta=(v,u)$,  $v \in V(\Gamma(H))$, $u \in V(\Gamma(K))$.

Since $$H \cap gKg^{-1}=g_1(g_1^{-1}Hg_1 \cap
g_2Kg_2^{-1})g_1^{-1},$$ we have  $\{1\} \leq H \cap gKg^{-1} \leq
g_1Ag_1^{-1}$ or  $H \cap gKg^{-1}=g_1Lab(C,\vartheta)g_1^{-1}$,
respectively.

Assume now that there is no  path   in $\Gamma(H)$ starting at
$v_0$ and labelled with $g_1$. Below we prove that in this case $H
\cap gKg^{-1}=\{1\}$.

 Suppose  that $H \cap gKg^{-1} \neq \{1\}$.
Let $(\Gamma',u')$ be the graph obtained from $\Gamma(K)$ by
attaching  a path $t$ at the vertex $u$, such that $\iota(t)=u$
and $lab(t) \equiv g_1^{-1}$. Let $\tau(t)=u'$,  see Figure
\ref{fig:IntersectionOfGraphs1}.
\begin{figure}[!h]
\begin{center}
\psfrag{B }[][]{{\large $\Gamma(K)$}}
\psfrag{u }[][]{$u$} \psfrag{u1 }[][]{$u'$} \psfrag{u0
}[][]{$u_0$}
\psfrag{g1 }[][]{$g_2$} \psfrag{g2 }[][]{$g_1$}
\psfrag{x }[][]{$x$} \psfrag{y }[][]{$y$}
\psfrag{p }[][]{$\overline{p'}$} \psfrag{t }[][]{$\overline{t}$}
\includegraphics[width=0.7\textwidth]{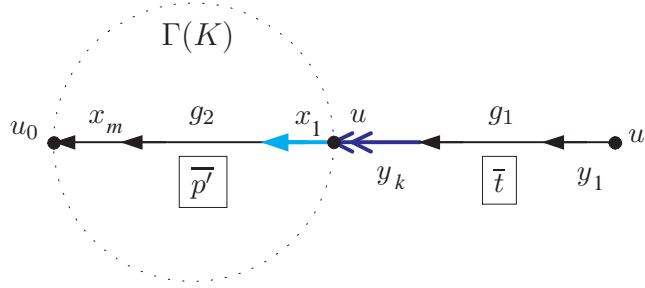}
\caption[The second auxiliary figure for the proof of
Lemma~\ref{for any element g in G there exist a component in the
product graph}]{{\footnotesize The graph $\Gamma'$}
\label{fig:IntersectionOfGraphs1}}
\end{center}
\end{figure}
The graph $(\Gamma',u')$ is finite, because $\Gamma(K)$ is finite
and the ``stem'' $t$ is also finite. It is well-labelled, because
$\Gamma(K)$ is well-labelled and $g_2^{-1}$ is the maximal prefix
of the word $g^{-1}$ that is readable in $\Gamma(K)$ starting at
$u_0$.

Thus $Lab(\Gamma',u')=gKg^{-1}$, and for each nontrivial element
in $gKg^{-1}$ and ,in particular, for each $1 \neq z \in H \cap
gKg^{-1}$ there exists a nonempty path $\gamma$ in $(\Gamma',u')$
such that $lab(\gamma)=_G z$ and $\iota(\gamma)=\tau(\gamma)=u'$.
The above construction of $(\Gamma',u')$ implies that $lab(\gamma)
\equiv g_1wg_1 ^{-1}$, where $w \in Lab(\Gamma(K),u)$. Since
$lab(\gamma)=_G z \neq 1$, the word $w$ is nonempty. Thus, by
Lemma~\ref{lem: normal elements in precovers}, we can assume that
the word $w$ is in normal form, because $(\Gamma(K),u)$ is a
finite precover of $G$ and $w \in Lab(\Gamma(K),u)$.

Since each $X_i$-monochromatic component of $\Gamma(K)$ is a cover
of $G_i$, $i \in \{1,2\}$ (thus, in particular, it is
$X_i^{\pm}$-saturated) and because $g_2^{-1}$ is the maximal
prefix of the word $g^{-1}$ such that there is a path $p'$ in
$\Gamma(K)$ with $\iota(p')=u_0$ and $lab(p') \equiv g_2^{-1}$,
there exists a normal decomposition of the word $g$
$$(y_1, \ldots, y_k,x_1, \ldots, x_m)$$   such that $g_1 \equiv y_1 \cdots y_k$ and
$g_2 \equiv x_1 \cdots x_m$, where $y_k \in G_i$ and $x_1 \in
G_j$, $1 \leq i \neq j \leq 2$.

Note that $u$ is a $X_j$-monochromatic vertex of $\Gamma(K)$.
Otherwise there exists a $G_i$-monochromatic component $D$ in
$\Gamma(K)$, such that $u \in V(D)$. Since it is $X_i$-saturated,
$y_k$ is readable from $u$ in $D$ and therefore in $\Gamma(K)$.
This contradicts  the maximality of the word $g_2$. Thus
  the word $g_1wg_1 ^{-1}$ is in normal form.

On the other hand, since $z=_G g_1wg_1 ^{-1} \in H$,
Theorem~\ref{thm: properties of subgroup graphs} (4) implies that
there exists a normal path $\gamma'$ in $\Gamma(H)$ closed at
$v_0$ with $lab(\gamma') \equiv g_1wg_1 ^{-1}$. Therefore there
exists a path in $\Gamma(H)$ starting at $v_0$ and labelled with
$g_1$.
 This contradicts with our assumption that such a path doesn't exist in $\Gamma(H)$.
 Hence $H \cap gKg^{-1} = \{1\}$.

\end{proof}


\begin{thm} \label{H is malnormal iff (1)-(2)}
Let $H$ be a finitely generated subgroup of $G$. Then $H$ is malnormal in $G$ if
and only if the following holds
\begin{enumerate}
 \item [(1)]
$H \cap gHg^{-1} \cap fAf^{-1}=\{1\}$ for all $g \in G \setminus
H$, $f \in G$;
 \item [(2)] each connected nonempty component $C$ of $\Gamma(H) \times \Gamma(H)$
which doesn't contain the vertex $(v_0,v_0)$ satisfies $Lab(C,\vartheta)=\{1\}$
for all $\vartheta \in V(C)$.
\end{enumerate}
\end{thm}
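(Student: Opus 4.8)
The plan is to prove both implications by carefully translating the group-theoretic condition of malnormality into statements about the product graph $\Gamma(H) \times \Gamma(H)$, using Lemma~\ref{for any element g in G there exist a component in the product graph} as the main bridge. First I would handle the forward direction: assume $H$ is malnormal. Condition (1) is immediate, since $H \cap gHg^{-1} \cap fAf^{-1} \leq H \cap gHg^{-1}$, and the latter is trivial whenever $g \notin H$; it only remains to note that for $g \in H$ the intersection $H \cap gHg^{-1} \cap fAf^{-1} = H \cap fAf^{-1}$ must also be analysed — but here one observes that if this were nontrivial for some $f$, then it would be a nontrivial subgroup of $f A f^{-1}$ contained in $H$, and conjugating appropriately one can produce an element $g' \notin H$ (using that $fAf^{-1}$ is not normalized by all of $G$, since $G$ is an amalgam with $A$ of infinite index in general) with $H \cap g'Hg^{-1}$ nontrivial, contradicting malnormality. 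For condition (2): let $C$ be a connected nonempty component of $\Gamma(H)\times\Gamma(H)$ not containing $(v_0,v_0)$, and let $\vartheta=(v,u)\in V(C)$. By Corollary~\ref{cor: intersection of reduced precovers} applied after choosing approach paths, $Lab(C,\vartheta)$ is conjugate to $g_1^{-1}Hg_1 \cap g_2^{-1}Hg_2$ for suitable normal words $g_1,g_2$ read along paths from $v_0$ to $v$ and from $v_0$ to $u$ respectively; equivalently $Lab(C,\vartheta)$ conjugates into $H \cap gHg^{-1}$ with $g=g_1g_2^{-1}$ (up to the standard rearrangement). The key point is that $v \neq v_0$ or $u \neq v_0$ forces, via the uniqueness of the reduced precover $\Gamma(H)$ (Theorem~\ref{thm: properties of subgroup graphs}(2)) and the fact that $\Gamma(H)$ is a core graph with respect to its basepoint, that $g \notin H$ — because if $g \in H$ then $v_0 \cdot g = v_0$ by Theorem~\ref{thm: properties of subgroup graphs}(4), which would put $(v_0,v_0)$ in the same component as $\vartheta$. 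Malnormality then gives $H \cap gHg^{-1} = \{1\}$, hence $Lab(C,\vartheta) = \{1\}$.

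For the converse, assume (1) and (2) hold and let $g \in G \setminus H$; I must show $H \cap gHg^{-1} = \{1\}$. By Lemma~\ref{for any element g in G there exist a component in the product graph}, $H \cap gHg^{-1}$ is either conjugate to a subgroup of $A$ or conjugate to $Lab(C,\vartheta)$ for some nonempty connected component $C$ of $\Gamma(H)\times\Gamma(H)$ with $\vartheta=(v,u)\in V(C)$. In the second case, I claim $C$ does not contain $(v_0,v_0)$: if it did, then tracing the argument of Lemma~\ref{for any element g in G there exist a component in the product graph} backwards, the conjugating element would satisfy $v_0 \cdot g = v_0$ in $\Gamma(H)$, forcing $g \in H$ by Theorem~\ref{thm: properties of subgroup graphs}(4) — contradiction. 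Hence by hypothesis (2), $Lab(C,\vartheta)=\{1\}$, so $H \cap gHg^{-1}=\{1\}$. In the first case, $H \cap gHg^{-1} = h(g_1^{-1}Hg_1 \cap g_2^{-1}Hg_2)h^{-1}$ sits inside some conjugate $fAf^{-1}$ of $A$; but it is also a subgroup of $H$ (being a conjugate-in-$G$ image of an intersection with $H$), and in fact equals $H \cap gHg^{-1} \cap fAf^{-1}$ once one checks $H \cap gHg^{-1} \subseteq fAf^{-1}$. Since $g \notin H$, condition (1) gives $H \cap gHg^{-1} \cap fAf^{-1} = \{1\}$, so again $H \cap gHg^{-1} = \{1\}$. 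Combining both cases, $H$ is malnormal.

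The main obstacle I anticipate is the bookkeeping in the non-basepoint component: making precise the equivalence ``$C$ contains $(v_0,v_0)$ $\iff$ the conjugating element lies in $H$,'' since Lemma~\ref{for any element g in G there exist a component in the product graph} produces the intersection only up to conjugacy and splits $g \equiv g_1g_2$ into a prefix readable in $\Gamma(H)$ and a remainder, with the vertex $v=\tau(q)$ depending on how much of $g$ is readable. One must argue that when $g \notin H$, the pair of endpoints reached in $\Gamma(H) \times \Gamma(H)$ genuinely lands in a component different from that of $(v_0,v_0)$ — equivalently, that no loop at $(v_0,v_0)$ reads $g$ — which is exactly Theorem~\ref{thm: properties of subgroup graphs}(4) once one has normalized $g$ and reduced to the case where $g$ itself (not just a conjugate) labels the relevant path. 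A secondary subtlety is verifying, in the $A$-conjugate case, that $H \cap gHg^{-1}$ actually lies in a single conjugate $fAf^{-1}$ and that this matches the quantifier in condition (1); this follows from the structure of $g_1Ag_1^{-1}$ produced in the proof of Lemma~\ref{for any element g in G there exist a component in the product graph}, taking $f = g_1$. The rest is routine once these identifications are in place.
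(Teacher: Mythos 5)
Your proposal follows essentially the same route as the paper: the forward direction identifies $Lab(C,\vartheta)$ for a non-basepoint component with $g_1^{-1}Hg_1\cap g_2^{-1}Hg_2$ and deduces $g_1g_2^{-1}\notin H$ from the fact that $C$ misses $(v_0,v_0)$ (the paper phrases this as $v_1\neq v_2$ plus $G$-basedness, which is the cleaner way to say the same thing), and the converse splits via Lemma~\ref{for any element g in G there exist a component in the product graph} into the $fAf^{-1}$ case, killed by condition (1), and the component case, killed by condition (2); your explicit check that the relevant component avoids $(v_0,v_0)$ is a detail the paper glosses over, and your argument for it is right. The one real flaw is in your treatment of condition (1): the statement quantifies only over $g\in G\setminus H$, so for the forward direction the triple intersection is trivially contained in $H\cap gHg^{-1}=\{1\}$ and nothing remains to be analysed; your additional claim that malnormality forces $H\cap fAf^{-1}=\{1\}$ for all $f$ (the ``$g\in H$'' case) is not part of the statement, is not justified by the conjugation argument you sketch, and should simply be deleted.
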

\begin{proof}
Suppose that $H$ is malnormal in $G$. Then $H \cap gHg^{-1}=\{1\}$
for all $g \in G \setminus H$. Hence $H \cap gHg^{-1} \cap
fAf^{-1}=\{1\}$ for all $g,f \in G$.

Let $C$ be a nonempty  connected component  of $\Gamma(H) \times
\Gamma(H)$ such that $(v_0,v_0) \not\in V(C)$. Let
$\vartheta=(v_1,v_2) \in V(C)$. Hence $v_1 \neq v_2 \in
V(\Gamma(H))$. Indeed, if $v_1=v_2$ then $(v_1,v_2) \in V(C_0)$,
where $C_0$ is a connected component of the product graph
$\Gamma(H) \times \Gamma(H)$, containing the vertex $(v_0,v_0)$.

Lemma~\ref{the intersection of precovers H and K} implies that
$$Lab(C,\vartheta)=Lab(\Gamma(H),v_1) \cap
Lab(\Gamma(H),v_2)=g_1^{-1}Hg_1 \cap g_2^{-1}Hg_2,$$ where $g_1$
and $g_2$ label  paths in $\Gamma(H)$ from $v_0$ to $v_1$ and to
$v_2$, respectively.

Since $\Gamma(H)$ is a $G$-based graph and $v_1 \neq v_2$, we have
$g_1g_2^{-1} \not\in H$. Indeed, otherwise
$$v_0=v_0 \cdot (g_1g_2^{-1})=(v_0 \cdot g_1) \cdot g_2^{-1}=v_1 \cdot g_2^{-1}.$$ Thus
$$v_1=v_1 \cdot (g_2^{-1}g_2)=(v_1 \cdot g_2^{-1}) \cdot g_2=v_0 \cdot g_2=v_2.$$

However $g_1Lab(C,\vartheta)g_1^{-1}= H \cap
g_1g_2^{-1}Hg_2g_1^{-1}=\{1\}$, because $H$ is malnormal in $G$.
Therefore $Lab(C,\vartheta) = \{1\}$.

Assume now that the conditions (1)-(2) are satisfied.
By Lemma~\ref{for any element g in G there exist a component in
the product graph}, the  subgroup $H \cap gHg^{-1}$ conjugates to
a subgroup of $A$ or it conjugates to the subgroup
$Lab(C,\vartheta)$, where $C$ is a nonempty (i.e. $E(C) \neq
\emptyset$) connected component of the product graph $\Gamma(H)
\times \Gamma(H)$.

In the first case, $1 \leq f^{-1}(H \cap gHg^{-1})f \leq A$ for
some $f \in G$.  Therefore $H \cap gHg^{-1} \cap fAf^{-1} \neq
\{1\}$. This contradicts  condition (1).

 Condition (2) implies
$Lab(C,\vartheta)=\{1\}$, hence  $H \cap gHg^{-1}=\{1\}$.
Therefore $H$ is malnormal in $G$.

\end{proof}


\begin{cor}[The Malnormality Problem] \label{cor: alg malnormality}
Let $h_1, \ldots h_k  \in G$. Then there exists an algorithm which
decides whether or not the subgroup $H = \langle h_1, \ldots h_k
\rangle$ is malnormal in $G$.

If $H$ is not malnormal, the algorithm  produces a nontrivial
element $g \in G \setminus H$ such that $H \cap gHg^{-1} \neq
\{1\}$.
\end{cor}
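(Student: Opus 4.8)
The plan is to convert the characterization in Theorem~\ref{H is malnormal iff (1)-(2)} into a decision procedure by showing that each of its two conditions is effectively checkable, and that a witness $g\in G\setminus H$ with $H\cap gHg^{-1}\neq\{1\}$ can be read off from the subgroup graphs when either condition fails. First I would run the generalized Stallings' folding algorithm to build $(\Gamma(H),v_0)$, which is finite by Theorem~\ref{thm: properties of subgroup graphs}. Using Definition~\ref{def: product graph} I would then form the product graph $\Gamma(H)\times\Gamma(H)$ (finite, since $\Gamma(H)$ is) and list its connected components; by Lemma~\ref{product of precovers=precover} each nonempty component $(C,\vartheta)$ is a finite precover of $G$.

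To test condition (2), for every nonempty component $C$ with $(v_0,v_0)\notin V(C)$ I would decide whether $Lab(C,\vartheta)=\{1\}$. Fixing a spanning tree $T$ of $C$ and forming the generating set $X_L=\{lab(t(e))\mid e\in E^{+}\setminus E(T)\}$ exactly as in the proof of Corollary~\ref{cor:IntersectionProblem}, one has $Lab(C,\vartheta)=\langle X_L\rangle$; its triviality is then decidable by Corollary~\ref{cor:TrivialityProblem} (equivalently, each element of $X_L$ reduces to the empty word in normal form, a nonempty normal word being nontrivial in $G$ by the Normal Form Theorem). If some such $C$ has $Lab(C,\vartheta)\neq\{1\}$, then $H$ is not malnormal, and the proof of Theorem~\ref{H is malnormal iff (1)-(2)} hands us the witness: pick $\vartheta=(v_1,v_2)\in V(C)$ (necessarily $v_1\neq v_2$), choose paths in $\Gamma(H)$ from $v_0$ to $v_1$ and from $v_0$ to $v_2$ with labels $g_1$ and $g_2$, and output $g=g_1g_2^{-1}$ in normal form; then $g\notin H$ and $H\cap gHg^{-1}=g_1\,Lab(C,\vartheta)\,g_1^{-1}\neq\{1\}$.

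To test condition (1) I must decide whether $H\cap gHg^{-1}\cap fAf^{-1}=\{1\}$ for all $g\in G\setminus H$ and all $f\in G$. By Lemma~\ref{for any element g in G there exist a component in the product graph} every $H\cap gHg^{-1}$ is conjugate either into $A$ or to some $Lab(C,\vartheta)$ with $C$ a nonempty component of $\Gamma(H)\times\Gamma(H)$; the second case is already governed by condition (2), so the residual task is to detect a nontrivial $H\cap gHg^{-1}$ with $g\notin H$ that lies in a conjugate of $A$. Reading the proof of Lemma~\ref{for any element g in G there exist a component in the product graph} together with Remark~\ref{rem: H intersects K on A}, such an intersection arises only from a pair of vertices $v\in VM_i(\Gamma(H))$, $u\in VM_j(\Gamma(H))$ with $i\neq j$, reached from $v_0$ by paths carrying the two ``syllable blocks'' of $g$, and it then equals a conjugate of $Lab(\Gamma(H),v)\cap Lab(\Gamma(H),u)$, a subgroup forced to lie in the amalgamated subgroup. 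Since $\Gamma(H)$ is finite and $G_1,G_2,A$ are finite, each monochromatic component of $\Gamma(H)$ is a finite cover of a finite factor, so there are only finitely many subgroups $Lab(\Gamma(H),v)$ and each relevant intersection with $A$ is computable by brute force (or as a $Lab$ of a component of the product of the two monochromatic components, via Corollary~\ref{cor: intersection of reduced precovers}). The algorithm enumerates these finitely many configurations; for each one producing a nontrivial intersection it checks, using the two path labels $g_1,g_2$ and Theorem~\ref{thm: properties of subgroup graphs}(4), whether the associated element $g$ lies in $H$. Condition (1) holds iff in every such configuration either the intersection is trivial or $g\in H$; when it fails, the same $g$, put in normal form, is a witness with $H\cap gHg^{-1}\supseteq g_1(\,\cdot\,)g_1^{-1}\neq\{1\}$.

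The main obstacle is the verification of condition (1): one must show that the search over configurations of monochromatic components of $\Gamma(H)$ is genuinely finite and that it captures exactly those intersections $H\cap gHg^{-1}$ ($g\notin H$) that slip into a conjugate of $A$ — in particular the bookkeeping separating the $g\in H$ case from the $g\notin H$ case, about which the product-graph analysis is blind, must be handled carefully, and one must check that intersecting the finite subgroups attached to monochromatic components with $A$ is effective, which it is since everything in sight is finite. Condition (2), by contrast, reduces immediately to the triviality test applied to the finitely many, finite components of $\Gamma(H)\times\Gamma(H)$; and in both failure cases the witness $g$ is obtained directly from path labels in $\Gamma(H)$, so the algorithm also produces the required nontrivial element of $H\cap gHg^{-1}$.
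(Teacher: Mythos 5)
Your proposal is correct and follows essentially the same route as the paper: construct $\Gamma(H)$ and $\Gamma(H)\times\Gamma(H)$, test condition (2) by deciding triviality of $Lab(C,\vartheta)$ on the nonempty off-diagonal components, test condition (1) on the empty components (pairs of differently coloured monochromatic vertices) by intersecting the associated finite subgroups with $A$, and read the witness $g=g_1g_2^{-1}$ off the path labels. The only differences are implementation details of sub-steps (spanning-tree generators plus the triviality test instead of the reduced-precover reduction, and an explicit membership check for $g$ that is in fact automatic since $v_1\neq v_2$ forces $g\notin H$), none of which changes the argument.
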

\begin{proof} First we construct the subgroup graph
$\Gamma(H)$ using the generalized Stallings' algorithm. Since, by
Theorem~\ref{thm: properties of subgroup graphs}, it is finite,
the product graph  $\Gamma(H) \times \Gamma(H)$ can be constructed
effectively. Now we check whether each connected nonempty
component $C$ of $\Gamma(H) \times \Gamma(H)$ which doesn't
contain the vertex $(v_0,v_0)$ satisfies $Lab(C,\vartheta)=\{1\}$
for some $\vartheta \in V(C)$.  If there exists a component with
$Lab(C,\vartheta)\neq \{1\}$ ($\vartheta \in V(C)$), then, by
Theorem~\ref{H is malnormal iff (1)-(2)}, $H$ is not malnormal in
$G$.

Moreover, by the proof of Theorem~\ref{H is malnormal iff
(1)-(2)}, a nontrivial element $g \in G \setminus H$ such that $H
\cap gHg^{-1} \neq \{1\}$ is $g=_G g_1g_2^{-1}$, where
$$v_1=v_0 \cdot g_1, \ \ v_2=v_0 \cdot g_2 \ \ {\rm and} \ \
\vartheta=(v_1,v_2).$$

Note that it is sufficient to check whether
$Lab(C,\vartheta)=\{1\}$ only for some $\vartheta \in V(C)$.
Indeed, if $v' \in V(C)$ such that $v' \neq \vartheta$ then
$Lab(C,v')=xLab(\Gamma,\vartheta)x^{-1}$, where $x \in G$ and $v'
\cdot x=\vartheta$. Thus $Lab(\Gamma,v')=\{1\}$.

Since, by Lemma~\ref{product of precovers=precover}, $C$ is a
precover, the above verification can be done as follows.

By Lemma~\ref{lem:FormOfRedPrecover}(i), a reduced precover
$(\Delta,u)$ has $Lab(\Delta,u)=\{1\}$ if and only if
$V(\Delta)=u$ and $E(\Delta)=\emptyset$. Thus
$Lab(C,\vartheta)=\{1\}$ if and only if  the iterative  removal of
the unique sequence of redundant components from $(C,\vartheta)$
yield the empty graph $(\Delta,u)$ with the above properties.

%
Assume now that all connected nonempty components of $\Gamma(H)
\times \Gamma(H)$ satisfy condition (2) from Theorem~\ref{H is
malnormal iff (1)-(2)}. Then  $H$ is malnormal in $G$ if and only
if condition (1) is satisfied. In order to verify this we proceed
as follows.

Let $D$ be an arbitrary single vertex of the product graph
$\Gamma(H) \times \Gamma(H)$, i.e. $D$ is an empty component of
$\Gamma(H) \times \Gamma(H)$ such that $V(D)=\{(v_1,v_2)\}$ and
$E(D)=\emptyset$. Then $v_1 \neq v_2 \in V(\Gamma(H))$ such that
$v_0 \cdot g_i=v_i$, $i \in \{1,2\}$. Since $E(D)=\emptyset$, by
 Lemma~\ref{the intersection of  precovers H and K},
$v_1$ and $v_2$ are monochromatic vertices of $\Gamma(H)$ of
different colors. Without loss of generality, assume that $v_i \in
VM_i(\Gamma(H))$, $i \in \{1,2\}$.
By Lemma~\ref{the intersection of  precovers H and K}, $\{1\} \leq
g_1^{-1}Hg_1 \cap g_2^{-1}Hg_2 \leq A$.

Let $C_i$ be a $X_i$-monochromatic component of
 $\Gamma(H)$ such that $v_i \in V(C_i)$.
By Remark \ref{rem: H intersects K on A},
%
$$g_1^{-1}Hg_1 \cap g_2^{-1}Hg_2=Lab(C_1,v_1) \cap Lab(C_2,v_2) \neq \{1\}.$$
 Thus we have to check if
$Lab(C_1,v_1) \cap Lab(C_2,v_2)$ is a nontrivial subgroup of $A$.
If so then $g_1^{-1}Hg_1 \cap g_2^{-1}Hg_2$ is a nontrivial
subgroup of $A$, otherwise $g_1^{-1}Hg_1 \cap g_2^{-1}Hg_2=\{1\}$.

Let $S=A \cap Lab(C_1,v_1)$. We consider $(Cayley(G_2,S), S \cdot
1)$. Thus  $Lab(Cayley(G_2,S), S \cdot 1)=S $. Let $E$ be a
nonempty connected component of the product graph $Cayley(G_2,S)
\times C_2$ containing the vertex $(S \cdot 1, v_2)$. Then, by
Lemma~\ref{the intersection of precovers H and K},
$$Lab(E,(S \cdot 1, v_2))=Lab(Cayley(G_2,S), S \cdot 1) \cap Lab(C_2,v_2)=$$ $$=S \cap Lab(C_2,v_2)=A \cap Lab(C_1,v_1) \cap
Lab(C_2,v_2)=g_1^{-1}Hg_1 \cap g_2^{-1}Hg_2.$$

Thus $g_1Lab(E,(S \cdot 1, v_2))g_1^{-1}=H \cap gHg^{-1}$, where
$g=_G g_1g_2^{-1}$. Hence  $Lab(E,(S \cdot 1, v_2)) \neq \{1\}$
implies $H$ is not malnormal in $G$.

Otherwise if $Lab(E,(S \cdot 1, v_2)) = \{1\}$ for each  component
$D$ of the product graph $\Gamma(H) \times \Gamma(H)$, where $E$
is constructed as described above, then $H$ is a malnormal
subgroup of $G$.

\end{proof}

\begin{ex}
{\rm

Let $G=gp\langle x,y | x^4, y^6, x^2=y^3 \rangle=G_1 \ast_A G_2$,
where $G_1=gp\langle x | x^4 \rangle$, $G_2=gp\langle y | y^6
\rangle$ and $A=\langle x^2 \rangle=\langle y^3 \rangle$.

Let $H$ be a finitely generated subgroup of $G$ given by its
subgroup graph $\Gamma(H)$ which is presented on
Figure~\ref{fig:Malnormality}.

We compute $\Gamma(H) \times \Gamma(H)$ (see
Figure~\ref{fig:Malnormality}). Using the method described along
with the proof of Corollary~\ref{cor: alg malnormality}, we
conclude that $Lab(C_1, (v_0,v_1))=\{1\}$, $Lab(C_3,
(v_0,v_3))=\{1\}$, but $Lab(C_2, (v_0,v_2)) \neq \{1\}$.
Therefore, by Theorem~\ref{H is malnormal iff (1)-(2)}, $H$ is not
malnormal in $G$.

} \e
\end{ex}

\begin{figure}[!h]
\begin{center}
\psfrag{B }[][]{$\Gamma(H) \times \Gamma(H)$}
\psfrag{A }[][]{$\Gamma(H)$}
\psfrag{c1 }[][]{$C_1$}
\psfrag{c2 }[][]{$C_2$} \psfrag{c3 }[][]{$C_3$}
\psfrag{v0 }[][]{$v_0$} \psfrag{v1 }[][]{$v_1$}
\psfrag{v2 }[][]{$v_2$} \psfrag{v3 }[][]{$v_3$}
\psfrag{v00 }[][]{$(v_0,v_0)$} \psfrag{v11 }[][]{$(v_1,v_1)$}
\psfrag{v22 }[][]{$(v_2,v_2)$} \psfrag{v33 }[][]{$(v_3,v_3)$}
\psfrag{v01 }[][]{$(v_0,v_1)$} \psfrag{v12 }[][]{$(v_1,v_2)$}
\psfrag{v23 }[][]{$(v_2,v_3)$} \psfrag{v30 }[][]{$(v_3,v_0)$}
\psfrag{v02 }[][]{$(v_0,v_2)$} \psfrag{v13 }[][]{$(v_1,v_3)$}
\psfrag{v20 }[][]{$(v_2,v_0)$} \psfrag{v31 }[][]{$(v_3,v_1)$}
\psfrag{v03 }[][]{$(v_0,v_3)$} \psfrag{v10 }[][]{$(v_1,v_0)$}
\psfrag{v21 }[][]{$(v_2,v_1)$} \psfrag{v32 }[][]{$(v_3,v_2)$}
\includegraphics[width=0.8\textwidth]{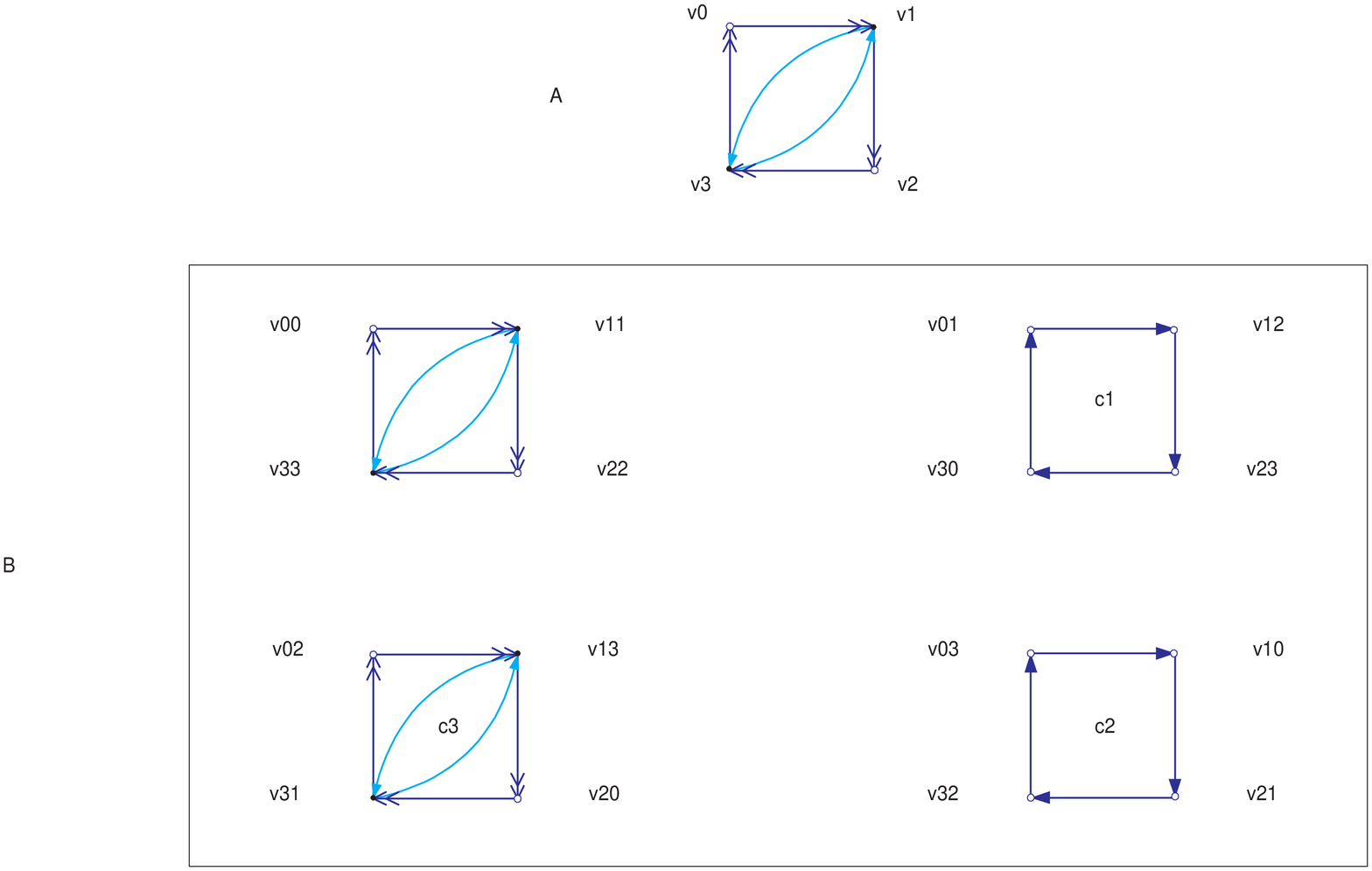}
\caption{ \label{fig:Malnormality}}
\end{center}
\end{figure}
%

\subsection*{Complexity}  By Theorem~\ref{thm: properties of subgroup graphs} (5), the complexity of
the construction of $\Gamma(H)$ for a subgroup $H$ of $G$ given by
a finite set of generators is $O(m^2)$, where $m$ is the sum of
lengths of the input subgroup generators.

The construction of $\Gamma(H) \times \Gamma(H)$ takes
$$O(|V(\Gamma(H))|^2+|V(\Gamma(H))| \cdot |E(\Gamma(H))|).$$

 Let $C$ be a connected component of
$\Gamma(H) \times \Gamma(H)$. To verify whether $Lab(C,v)=1$, $v
\in V(C)$, takes time proportional to $|E(C)|^2$, by  the
complexity analysis of the generalized Stallings algorithm (see
Lemma 8.7 in \cite{m-foldings}). Since
$$\sum_{C \subseteq \Gamma(H) \times
\Gamma(H)} |E(C)|=|E(\Gamma(H) \times \Gamma(H))| \leq
|E(\Gamma(H))|^2,$$ the above verification for all connected
components of $\Gamma(H) \times \Gamma(H)$ takes
$O(|E(\Gamma(H))|^4)$.

Since all the information about the free factors of the amalgams,
as well as the relative Cayley graphs of the free factors are not
a part of the input, the verifications concerning the empty
components of the product graph $\Gamma(H) \times \Gamma(H)$ takes
time $O(|V(\Gamma)|^2)$.

Since, by Theorem~\ref{thm: properties of subgroup graphs} (5),
$|E(\Gamma(H))|$ and $|V(\Gamma(H))|$ are proportional to $m$,
algorithm given by the proof of Corollary \ref{cor: alg
malnormality} takes $O(m^4)$. Thus the  algorithm is polynomial
in the size of the input.


\subsection*{Almost Malnormality}

\begin{thm} \label{thm:AlmostMalnormal}
Let $H$ be a finitely generated subgroup of $G$. Then $H$ is
almost malnormal in $G$ if and only if $Lab(C,\vartheta)$
conjugates to a subgroup of $G_1$ or $G_2$ ($\vartheta \in V(C)$),
for each  nonempty connected component $C$ of $\Gamma(H) \times
\Gamma(H)$, which doesn't contain the vertex $(v_0,v_0)$.
\end{thm}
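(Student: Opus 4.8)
The plan is to bridge the definition of almost malnormality, which is phrased via finiteness, to the statement's condition, which is phrased via conjugacy into a factor, and then to run the argument of Theorem~\ref{H is malnormal iff (1)-(2)} essentially verbatim. The first step I would record is the structural fact that, because $G_1$ and $G_2$ are finite, a subgroup $S \leq G = G_1 \ast_A G_2$ is finite \emph{if and only if} $S$ is conjugate into $G_1$ or into $G_2$: the ``if'' part is immediate since then $|S|\leq|G_i|$, and the ``only if'' part is the classical fact that a finite group acting on the Bass--Serre tree of $G$ fixes a vertex and hence lies in a conjugate of a vertex stabilizer $G_1$ or $G_2$ (\cite{serre}). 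In particular $A$, being a subgroup of $G_1$, is finite. Given this, ``$H\cap gHg^{-1}$ is finite'' becomes interchangeable with ``$H\cap gHg^{-1}$ is conjugate into a factor'', which is exactly the substitute for ``$H\cap gHg^{-1}=\{1\}$'' from Theorem~\ref{H is malnormal iff (1)-(2)}.

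For the forward direction I would take a nonempty connected component $C$ of $\Gamma(H)\times\Gamma(H)$ with $(v_0,v_0)\notin V(C)$ and $\vartheta=(v_1,v_2)\in V(C)$, and first note $v_1\neq v_2$ — otherwise the image under the diagonal map of an approach path from $v_0$ to $v_1$ in $\Gamma(H)$ is, by Definition~\ref{def: product graph}, a path in $\Gamma(H)\times\Gamma(H)$ from $(v_0,v_0)$ to $(v_1,v_1)=\vartheta$, contradicting $(v_0,v_0)\notin V(C)$. Since $C$ is nonempty, Lemma~\ref{the intersection of  precovers H and K} gives $Lab(C,\vartheta)=Lab(\Gamma(H),v_1)\cap Lab(\Gamma(H),v_2)=g_1^{-1}Hg_1\cap g_2^{-1}Hg_2$, where $g_i$ labels a path in $\Gamma(H)$ from $v_0$ to $v_i$; hence $g_1\,Lab(C,\vartheta)\,g_1^{-1}=H\cap gHg^{-1}$ with $g=g_1g_2^{-1}$, and $g\notin H$ because $v_0\cdot g=v_0$ would force $v_1=v_0\cdot g_2=v_2$. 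Almost malnormality then makes $H\cap gHg^{-1}$ finite, so $Lab(C,\vartheta)$ is finite, hence conjugate into $G_1$ or $G_2$ by the structural fact.

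For the converse I would fix $g\in G\setminus H$ and invoke Lemma~\ref{for any element g in G there exist a component in the product graph} (with $K=H$): $H\cap gHg^{-1}$ is either conjugate into $A$ — hence finite, as $A$ is — or conjugate to $Lab(C,\vartheta)$ for some nonempty connected component $C$ of $\Gamma(H)\times\Gamma(H)$. In the latter case I would check, as is implicit in the proof of Theorem~\ref{H is malnormal iff (1)-(2)}, that $(v_0,v_0)\notin V(C)$ when $g\notin H$: in each of the cases of the proof of Lemma~\ref{for any element g in G there exist a component in the product graph}, a path in $\Gamma(H)\times\Gamma(H)$ joining $(v_0,v_0)$ to a vertex of $C$ would, via well-labelledness of $\Gamma(H)$ together with Theorem~\ref{thm: properties of subgroup graphs} (4) and Lemma~\ref{lemma2.12}, force $g\in H$. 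The statement's hypothesis then says $Lab(C,\vartheta)$, and therefore $H\cap gHg^{-1}$, is conjugate into a factor, hence finite. As $g\in G\setminus H$ was arbitrary, $H$ is almost malnormal.

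I expect the only genuine content to be the structural equivalence between finiteness of a subgroup of $G$ and conjugacy into a factor, since this is what makes the stated condition the correct almost-malnormal analogue of Theorem~\ref{H is malnormal iff (1)-(2)}; everything else is a transcription of the malnormality argument, with the one delicate point being the verification that the component produced by Lemma~\ref{for any element g in G there exist a component in the product graph} genuinely avoids $(v_0,v_0)$ whenever $g\notin H$.
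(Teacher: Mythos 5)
Your proposal is correct and follows essentially the same route as the paper: the forward direction identifies $Lab(C,\vartheta)$ with $g_1^{-1}Hg_1\cap g_2^{-1}Hg_2$ exactly as in the proof of Theorem~\ref{H is malnormal iff (1)-(2)}, and the converse runs through Lemma~\ref{for any element g in G there exist a component in the product graph}. The only cosmetic differences are that you justify ``finite $\Leftrightarrow$ conjugate into a factor'' via Bass--Serre theory where the paper cites the Torsion Theorem (IV.2.7 in \cite{l_s}), and that your explicit check that the component produced in the converse avoids $(v_0,v_0)$ when $g\notin H$ is a detail the paper leaves implicit.
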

\begin{proof}
Suppose that $H$ is almost malnormal in $G$. Then $H \cap
gHg^{-1}$ is finite for all $g \in G \setminus H$.

Let $C$ be a nonempty  connected component  of $\Gamma(H) \times
\Gamma(H)$ such that $(v_0,v_0) \not\in V(C)$. Let
$\vartheta=(v_1,v_2) \in V(C)$. By the proof of Theorem~\ref{H is
malnormal iff (1)-(2)}, $v_1 \neq v_2 \in V(\Gamma(H))$ and
$g_1g_2^{-1} \not\in H$, where $v_i=v_0 \cdot g_i$ ($i \in
\{1,2\}$). Moreover,
$$Lab(C,\vartheta)=Lab(\Gamma(H),v_1) \cap
Lab(\Gamma(H),v_2)=g_1^{-1}Hg_1 \cap g_2^{-1}Hg_2.$$

However
 $g_1Lab(C,\vartheta)g_1^{-1}= H \cap
g_1g_2^{-1}Hg_2g_1^{-1}$ is finite, because $H$ is almost
malnormal in $G$. Therefore, by the Torsion Theorem (IV.2.7 in
\cite{l_s}), $g_1Lab(C,\vartheta)g_1^{-1}$ conjugates to a
subgroup of $G_1$ or $G_2$.

Assume now that the condition is satisfied.
By Lemma~\ref{for any element g in G there exist a component in
the product graph}, for all $g \in G \setminus H$ the  subgroup $H
\cap gHg^{-1}$ conjugates to a subgroup of $A$ or it conjugates to
the subgroup $Lab(C,\vartheta)$, where $C$ is a nonempty connected
component of the product graph $\Gamma(H) \times \Gamma(H)$.
Therefore if $Lab(C,\vartheta)$ conjugates to a subgroup of $G_1$
or $G_2$, then, since $G_i$ ($i \in \{1,2\}$) is finite, $H$ is
almost malnormal.

\end{proof}

The \emph{almost malnormality problem} asks to decide whether or
not a subgroup $H$ of the group $G$ is almost malnormal in $G$.

\begin{cor}[The Almost Malnormality Problem] \label{cor:AlmostMalnormal}
Let $h_1, \ldots h_k  \in G$. Then there exists an algorithm which
decides whether or not the subgroup $H = \langle h_1, \ldots h_k
\rangle$ is almost malnormal in $G$.

If $H$ is not almost malnormal, the algorithm  produces a
nontrivial element $g \in G \setminus H$ such that $H \cap
gHg^{-1}$ is not finite.
\end{cor}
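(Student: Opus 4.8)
The plan is to reduce the statement to Theorem~\ref{thm:AlmostMalnormal}, turning its characterization into an effective test, in close analogy with the algorithm behind Corollary~\ref{cor: alg malnormality}. First I would run the generalized Stallings' folding algorithm to build the finite graph $(\Gamma(H),v_0)$, and then construct the finite product graph $\Gamma(H)\times\Gamma(H)$; by Lemma~\ref{product of precovers=precover} each nonempty connected component of $\Gamma(H)\times\Gamma(H)$ is a finite precover of $G$. By Theorem~\ref{thm:AlmostMalnormal} it then suffices to decide, for every nonempty connected component $C$ with $(v_0,v_0)\notin V(C)$, whether $L:=Lab(C,\vartheta)$ (for some, equivalently any, $\vartheta\in V(C)$) is conjugate to a subgroup of $G_1$ or of $G_2$; $H$ is almost malnormal precisely when the answer is affirmative for every such $C$. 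Note that the empty (single-vertex) components require no test, since they contribute only intersections conjugate into the finite group $A$.

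The only nonroutine point is this last decision. Since $(C,\vartheta)$ is a finite well-labelled graph, a finite generating set of $L$ can be read off from a spanning tree of $C$, exactly as in the proof of Corollary~\ref{cor:IntersectionProblem}. As $G_1$ and $G_2$ are (finite, hence) finitely generated subgroups of $G$, I would then feed $L$ together with $G_i$ into the algorithm of Corollary~\ref{cor: alg K is conj subgroup of H}, once for $i=1$ and once for $i=2$: $L$ is conjugate into a factor iff one of the two runs reports the existence of $g\in G$ with $gLg^{-1}\le G_i$. Equivalently, one may compute $Type(\Gamma(L))$ and use Lemma~\ref{lem:FormOfRedPrecover} together with Theorem~\ref{K conjugate to a subgroup of H finite grps} to recognize directly that $L$ is conjugate into a factor iff $Type(\Gamma(L))$ is a single vertex, a single monochromatic component, or the amalgam $Cayley(G_1,L)\ast_{\{La \; | \; a\in A\}}Cayley(G_2,L)$ of two monochromatic components (i.e. $L\le A$).

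To produce the witness when $H$ fails the test, I would proceed as in the proof of Corollary~\ref{cor: alg malnormality}: fixing a component $C$ with $(v_0,v_0)\notin V(C)$ and a vertex $\vartheta=(v_1,v_2)\in V(C)$ for which $L=Lab(C,\vartheta)$ is not conjugate into a factor, choose $g_1,g_2$ to be labels of paths in $\Gamma(H)$ from $v_0$ to $v_1$ and to $v_2$ respectively (say, along a spanning tree), and set $g=_G g_1g_2^{-1}$. Since $\Gamma(H)$ is $G$-based and $v_1\ne v_2$, one gets $g\notin H$, and by the computation carried out in the proof of Theorem~\ref{H is malnormal iff (1)-(2)}, $H\cap gHg^{-1}=g_1\,Lab(C,\vartheta)\,g_1^{-1}$; this is infinite because a subgroup of $G$ that is not conjugate into a factor cannot be finite — every finite subgroup of $G$ is conjugate into $G_1$ or $G_2$ (cf. the Torsion Theorem, IV.2.7 in \cite{l_s}, as already used in the proof of Theorem~\ref{thm:AlmostMalnormal}).

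For complexity, by Theorem~\ref{thm: properties of subgroup graphs} (5) building $\Gamma(H)$ costs $O(m^2)$ where $m=\sum_i|h_i|$, building $\Gamma(H)\times\Gamma(H)$ costs $O(m^2)$, and since $\sum_{C}|E(C)|=|E(\Gamma(H)\times\Gamma(H))|\le|E(\Gamma(H))|^2$ and each per-component conjugacy test is quadratic in $|E(C)|$, the whole procedure runs in time $O(m^4)$, polynomial in the size of the input and matching the malnormality algorithm. I expect the fiddliest part of writing the proof out carefully to be the bookkeeping around the two-component ($L\le A$) cases in the "conjugate into a factor" test, but this is entirely routine given Lemma~\ref{lem:FormOfRedPrecover} and Theorem~\ref{K conjugate to a subgroup of H finite grps}.
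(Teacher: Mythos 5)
Your proposal is correct and follows the same overall route as the paper: build $\Gamma(H)$, form $\Gamma(H)\times\Gamma(H)$, invoke Theorem~\ref{thm:AlmostMalnormal} to reduce everything to testing, for each nonempty component $C$ avoiding $(v_0,v_0)$, whether $Lab(C,\vartheta)$ is conjugate into a factor, and extract the witness $g=_G g_1g_2^{-1}$ exactly as in the malnormality algorithm. The only real divergence is in how that per-component test is implemented. The paper passes from $(C,\vartheta)$ to the reduced precover $(C',\vartheta')$ by stripping redundant components and then reads the answer off the shape of $C'$ via Lemma~\ref{lem:FormOfRedPrecover}; your primary route instead reads off a generating set of $L=Lab(C,\vartheta)$ from a spanning tree and feeds $(L,G_i)$ into Corollary~\ref{cor: alg K is conj subgroup of H}. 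That is a perfectly valid (if slightly heavier) substitute, and it has the virtue of genuinely testing \emph{conjugacy} into a factor rather than containment. Your ``equivalently'' variant via $Type(\Gamma(L))$, however, is stated too narrowly: if $L\le G_i$ with $\{1\}\ne L\cap A$ but $L\nleq A$, then $Type(\Gamma(L))=\Gamma(L)=Cayley(G_i,L)\ast_{\{Sa\}}Cayley(G_j,S)$ with $S=L\cap A$, which has two monochromatic components yet is not of the form $Cayley(G_1,L)\ast Cayley(G_2,L)$; your three-case list would misclassify such an $L$ as not conjugate into a factor. This is a minor slip in a secondary alternative (and, for what it is worth, the paper's own case split in this corollary exhibits the same omission); your main argument, the witness construction via the Torsion Theorem, and the $O(m^4)$ complexity count all stand.
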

\begin{proof} The proof is similar to that of Corollary~\ref{cor: alg malnormality}.
First we construct the subgroup graph $\Gamma(H)$ using the
generalized Stallings' algorithm. Since, by Theorem~\ref{thm:
properties of subgroup graphs}, it is finite, the product graph
$\Gamma(H) \times \Gamma(H)$ can be constructed effectively. Now
for  each nonempty connected  component $C$ of $\Gamma(H) \times
\Gamma(H)$ which doesn't contain the vertex $(v_0,v_0)$, we check
whether $Lab(C,\vartheta)$  ($\vartheta \in V(C)$) conjugates to a
subgroup of $G_1$ or $G_2$. By Theorem~\ref{thm:AlmostMalnormal},
$H$ is almost malnormal in $G$ if and only if each such component
$C$ possesses this property.

We proceed  as follows. If $C$ consists of a unique
$X_i$-monochromatic component ($i \in \{1,2\}$) then  $\{1\} \leq
Lab(C,v) \leq G_i$.
Otherwise, let $\vartheta \in VB(C)$  be a basepoint of $C$.

By Lemma~\ref{product of precovers=precover}, $C$ is a finite
precover of $G$. If $(C,\vartheta)$ is not a reduced precover then
we remove from $C$ all the redundant components w.r.t. the
basepoint $\vartheta$. Let $(C',\vartheta')$ be the resulting
graph, where $\vartheta'$ is the image of $\vartheta$ in $C'$.
Thus $(C',\vartheta')$ is a reduced precover such that
$Lab(C,\vartheta)=Lab(C',\vartheta')$. Let $L=Lab(C',\vartheta')$.

By Lemma~\ref{lem:FormOfRedPrecover}, $L \leq G_i$ such that $L
\cap A=\{1\}$ if and only if $C'$ consists of a unique
$X_i$-monochromatic component, and  $L \leq A$  if and only if
$(C',\vartheta')= Cayley(G_1,L) \ast_{\{La \; | \; a \in A\}}
Cayley(G_2,L)$.
Thus $Lab(C,\vartheta)=L$ conjugates to a subgroup of $G_1$ or
$G_2$ if and only if $C'$ satisfies one of the above properties.

Note that   if there exists a connected component $C$ such that
none of the above properties is satisfied then, by the proof of
Theorem~\ref{thm:AlmostMalnormal}, a nontrivial element $g \in G
\setminus H$ such that  $H \cap gHg^{-1}$ is not finite is $g=_G
g_1g_2^{-1}$, where
$$v_1=v_0 \cdot g_1, \ \ v_2=v_0 \cdot g_2 \ \ {\rm and} \ \
\vartheta=(v_1,v_2).$$

\end{proof}

\subsection*{Complexity}
Similarly to the complexity analysis of the solution of the
malnormality problem, presented along with the proof of
Corollary~\ref{cor: alg malnormality}, the above solution of the
almost malnormality problem takes $O(|E(\Gamma(H))|^4)$, that is
$O(m^4)$, where $m=\sum_{i=1}^k |h_i|$.


\section{The Power Problem}
\label{subsec:PowerProblem}

The \emph{power problem} asks for an algorithm that decides
whether or not some \emph{nontrivial power} of a word $g$ in the
generators of a group $G$ belongs to the subgroup $H$ of $G$.

By a \emph{nontrivial power} of $g$ we mean an element $g^n \in G$
such that $n \geq 1$ and $g^n \neq_G 1$ (otherwise $g^n \in H$ for
each torsion element $g \in G$ and all $o(g) \: | \: n$).

This problem is an extension of the membership problem for $H$ in
$G$. The membership problem  for finitely generated subgroups in
amalgams of finite groups was (successfully) solved in
\cite{m-foldings} using subgroup graphs constructed by the
generalized Stallings' algorithm. Below we employ same technics to
solve the power problem in this class of groups
(Corollaries~\ref{cor: alg PI} and \ref{cor: alg PII}).
Theorem~\ref{order problem finite grp} provides the solution.  The
complexity analysis of the described algorithm is given at the end
of the section.

We split the power problem into two instances. The first one,
(PI), asks for an answer ``Yes'' or ``No'' on the question whether
some nonzero power of a word $g$ in the generators of $G$ belongs
to the subgroup $H$.

The second one, (PII), asks to find the minimal power $n>0$ such
that $g^n \in H$. Evidently, (PII) implies (PI).


\begin{cor}[The Power Problem] \label{cor: alg PI}
Let $G=G_1 \ast_A G_2$ be an amalgam of finite groups. Then there
exists an algorithm which solves (PI).

That is, given finitely many subgroup generators $h_1, \ldots h_k
\in G$ and normal word $g \in G,$ the algorithm decides whether or
not some nonzero power of $g$ is in the subgroup $H = \langle h_1,
\ldots h_k \rangle$.
\end{cor}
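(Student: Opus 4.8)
The plan is to reduce (PI) to finitely many membership tests, exploiting the fact that the subgroup $\langle g \rangle$ generated by a single normal word $g$ has a very small subgroup graph. First I would construct $(\Gamma(H),v_0)$ by the generalized Stallings' folding algorithm (Theorem~\ref{thm: properties of subgroup graphs}), so that by part~(4) a normal word $w$ lies in $H$ iff $v_0\cdot w=v_0$ in $\Gamma(H)$. The key observation is that deciding whether \emph{some} nonzero power of $g$ lies in $H$ amounts to checking whether the orbit $\{v_0\cdot g^n : n\geq 1\}$ ever returns to $v_0$; since $\Gamma(H)$ is finite (part~(5)), this orbit is eventually periodic, so it suffices to follow the path labelled by $g,g^2,g^3,\dots$ for at most $|V(\Gamma(H))|$ steps. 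However, a subtlety must be handled: $g^n$ need not be a normal word even when $g$ is, because consecutive syllables may merge or cancel. So the first real step is to rewrite $g$ in normal form and analyze its cyclic structure.

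Concretely, I would split into cases according to the syllable length $l(g)$. If $l(g)\geq 2$, write $g\equiv g_1g_2\cdots g_n$ in normal form. If $g_n$ and $g_1$ lie in different factors, then every power $g^m$ is already in normal form of syllable length $mn$, and one reads the path labelled $g^m$ directly in $\Gamma(H)$, checking closure at $v_0$; only finitely many $m$ (bounded by $|V(\Gamma(H))|$, since vertices along the path must eventually repeat) need be tried. If $g_n$ and $g_1$ lie in the same factor, one cyclically reduces: conjugating by a suitable prefix replaces $g$ by a cyclically reduced normal word $g'$ with $g=_G u g' u^{-1}$, and $g^m\in H$ iff $g'^m\in u^{-1}Hu$; since $u^{-1}Hu=Lab(\Gamma(H),v_0\cdot u)$ when the path labelled $u$ exists (and otherwise one first extends $\Gamma(H)$ by a stem as in Claim~\ref{claim:PrecoverStem->Precover} / the proof of Theorem~\ref{thm:Conjugates=>EqType}), we are reduced to the cyclically reduced case, where again powers are automatically normal. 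If $l(g)=1$, then $g$ lies in some factor $G_i$, which is finite, so $g$ has finite order $o(g)$ and one need only test $g, g^2,\dots, g^{o(g)-1}$ for membership in $H$ — each of these is a single-syllable (hence normal) word, testable by Theorem~\ref{thm: properties of subgroup graphs}~(4) together with Lemma~\ref{lem: normal elements in precovers} to locate the correct monochromatic component.

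The main obstacle I anticipate is the bookkeeping in the case $l(g)\ge 2$ with $g$ not cyclically reduced: one must argue carefully that the conjugating prefix $u$ can be taken so that the cyclically reduced core $g'$ is again a genuine normal word, and that attaching the stem labelled $u$ to $\Gamma(H)$ (producing a precover by Claim~\ref{claim:PrecoverStem->Precover}) correctly represents $u^{-1}Hu$ via Corollary~\ref{cor:AmalgamGraphEmbedding}, so that reading powers of $g'$ in the extended graph answers the question. Once that is in place, termination is immediate: in each branch we follow a path in a finite graph and detect the first return to the basepoint, or conclude that no return occurs after exhausting all vertices. I expect the complexity analysis to then give a bound polynomial in $|g|$ and in the size $m=\sum|h_i|$ of the subgroup input, since constructing $\Gamma(H)$ is $O(m^2)$ by part~(5), the stem has length $\le |g|$, and tracing powers costs $O(|g|\cdot|V(\Gamma(H))|)$.
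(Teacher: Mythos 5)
Your proposal is correct in substance but takes a genuinely different route from the paper's proof of this corollary. The paper solves (PI) by setting $K=\langle g\rangle$, constructing both $\Gamma(H)$ and $\Gamma(K)$ with the generalized Stallings' algorithm, and invoking Corollary~\ref{cor: intersection of reduced precovers}: the connected component $C$ of the product graph $\Gamma(H)\times\Gamma(K)$ containing $(v_0,u_0)$ satisfies $Lab(C,(v_0,u_0))=H\cap\langle g\rangle$, so (PI) reduces to testing whether this subgroup is trivial (done as in the malnormality algorithm, by reducing the precover $C$ and checking whether it collapses to a single vertex). Your approach --- cyclically reduce $g$, then trace successive powers of the cyclically reduced core through $\Gamma(H)$, stopping after $|V(\Gamma(H))|$ steps by pigeonhole --- is essentially the paper's proof of the stronger instance (PII) (Lemma~\ref{lem: cyclically normal}, Theorem~\ref{order problem finite grp}, Corollary~\ref{cor: alg PII}), which of course implies (PI). What each buys: the product-graph argument is a one-line reduction to machinery already in place and requires no analysis of normal forms of powers, but costs $O(m^2|g|^2)$ and returns only a yes/no answer; your argument needs the cyclic-reduction lemma and the pigeonhole bound, but runs in $O(m^2+m|g|)$ and produces the minimal exponent. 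The paper itself makes this comparison after Corollary~\ref{cor: alg PII}.

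Two points to tighten. First, your case division should be on the syllable length of the cyclically reduced core $g'$, not on $l(g)$: when $l(g)\ge 2$ but cyclic reduction collapses $g$ to a core with $l(g')=1$, the claim that ``powers are automatically normal'' fails --- $(g')^m$ lies in a finite factor and may degenerate or fall into $A$ --- and $g$ is then a torsion element by the Torsion Theorem, so your $l(g)=1$ argument must be applied to $g'$ rather than to $g$. This is exactly how the paper's proof of Corollary~\ref{cor: alg PII} organizes its cases. Second, no stem-attaching is needed in the non-torsion case: since $x(g')^mx^{-1}$ is itself a normal word, Theorem~\ref{thm: properties of subgroup graphs}(4) lets you test membership by reading it as a closed path at $v_0$ directly, and if the prefix $x$ is not even readable at $v_0$ then no power of $g$ lies in $H$.
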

\begin{proof}
Let $K=\langle g \rangle$.
Construct the subgroup graphs $(\Gamma(H),v_0)$ and
$(\Gamma(K),u_0)$ using the generalized Stallings' algorithm.

By Corollary \ref{cor: intersection of reduced precovers}, $Lab(C,
\vartheta)=H \cap K=\langle g^n \rangle$, where $C$ is the
connected component  of $\Gamma(H) \times \Gamma(K)$ such that
$\vartheta=(v_0,u_0) \in V(C)$. Therefore $Lab(C,
\vartheta)=\{1\}$ implies no nonzero power of $g$ is in $H$.

Thus we construct the connected component $C$ of the product graph
$\Gamma(H) \times \Gamma(K)$. The verification whether or not
$Lab(C, \vartheta)=\{1\}$ can be done as is explained in the proof
of Corollary~\ref{cor: alg malnormality}.

\end{proof}


\subsection*{Complexity} By the complexity analysis of the
``malnormality'' algorithm given along with the proof of Corollary
\ref{cor: alg malnormality}, the complexity of the above algorithm
given by Corollary \ref{cor: alg PI} is $O \big( |E(\Gamma(H))|^2
\cdot |E(\Gamma(K))| ^2 \big).$ That is $O \big( m^2 \cdot |g|^2
\big), $ where $m$ is the sum of lengths of $h_1, \ldots h_k$.


\bigskip

Following \cite{l_s}, we say that a word $g \equiv g_1g_2 \cdots
g_k \in G=G_1 \ast_A G_2$ given by the normal decomposition $(g_1,
g_2, \ldots, g_k)$ is \emph{cyclically reduced}  if $k \leq 1$ or
if $g_1$ and $g_k$ are in different factors of $G$. Hence if $g$
is cyclically reduced  then all cyclic permutations of $(g_1, g_2,
\ldots, g_k)$ define normal words.
Obviously, if $g \in G$ is cyclically reduced then $g$ is
\emph{freely cyclically reduced}, that is $g \not\equiv xg'x^{-1}$
($x  \in X^{\pm}$).

\begin{lem} \label{lem: cyclically normal}

Let $g \in G$ be a normal word given by the normal decomposition
$(g_1, g_2, \ldots, g_k)$. Then there exists a  normal word $x \in
G$ and a  cyclically reduced word $g' \in G$ such that $g=_G
xg'x^{-1}$ and the word $xg'x^{-1}$ is in normal form.

\end{lem}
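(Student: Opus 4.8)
The plan is to argue by induction on the syllable length $k = l(g)$, establishing the slightly stronger statement that the conjugating word $x$ may be chosen to be either empty or to begin, as a word, with the single syllable $g_k^{-1}$. The base of the induction covers all cases in which $g$ is already cyclically reduced: namely $k \le 1$, every $k = 2$ (there $g_1$ and $g_2$ automatically lie in different factors), and every $k \ge 3$ with $g_1, g_k$ in different factors. In all of these one takes $x$ to be the empty word and $g' = g$; then $x g' x^{-1}$ is literally $g$, which is in normal form by hypothesis.

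For the inductive step assume $g_1, g_k \in G_i$. Since consecutive syllables alternate between the two factors, this forces $k$ to be odd and $k \ge 3$, and $g_{k-1} \in G_j$ where $\{i,j\} = \{1,2\}$. I would conjugate by the inverse of the last syllable: put $h := g_k g g_k^{-1}$, so that $g =_G g_k^{-1} h g_k$ and $h =_G (g_k g_1) g_2 \cdots g_{k-1}$, and then rewrite the right-hand side in normal form, distinguishing cases according to $g_k g_1 \in G_i$. If $g_k g_1 \ne_G 1$ and $g_k g_1 \notin A$, then $(g_k g_1, g_2, \dots, g_{k-1})$ is a normal decomposition of $h$ whose first syllable lies in $G_i$ and whose last syllable $g_{k-1}$ lies in $G_j$, so $h$ is cyclically reduced; set $x := g_k^{-1}$ and $g' := h$. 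Grouping the word $g_k^{-1} (g_k g_1) g_2 \cdots g_{k-1} g_k$ into maximal syllables, the initial $X_i^{\pm}$-block $g_k^{-1}(g_k g_1)$ multiplies out to $g_1$ and the trailing $g_k$ is a syllable by itself (since $g_{k-1} \in G_j$), so the syllable sequence is exactly the given normal decomposition $(g_1, \dots, g_k)$ of $g$. If instead $g_k g_1 =_G 1$, or $g_k g_1 \in A \setminus \{1\}$, then collapsing (respectively absorbing the $A$-element into $g_2$ and iterating) produces a normal decomposition of $h$ of length strictly less than $k$ whose last syllable is still $g_{k-1} \in G_j$ — here the Normal Form Theorem is invoked to exclude the degenerate configurations ($h =_G 1$, or a proper sub-product $g_1 \cdots g_r$ of $g$ lying in a single factor) — or else $h$ has collapsed to a single syllable. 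A single-syllable $h$ is cyclically reduced and is handled exactly as above; otherwise apply the strengthened inductive hypothesis to $h$, obtaining $h =_G x_h g' x_h^{-1}$ with $x_h g' x_h^{-1}$ in normal form and $x_h$ beginning with $g_{k-1}^{-1} \in G_j$, and set $x := g_k^{-1} x_h$ with the same $g'$.

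What remains is the reassembly check, and this is where I expect the real work to be. Because $g_k^{-1} \in G_i$ while the first syllable $g_{k-1}^{-1}$ of $x_h$ lies in $G_j$, no syllable collapse occurs at that junction, so $x = g_k^{-1} x_h$ is a normal word — no syllable of it lies in $A$, since $g_k \notin A$ and $x_h$ is normal — beginning with $g_k^{-1}$, as the strengthened claim demands. For the same reason, prepending $g_k^{-1}$ and appending $g_k$ to the normal word $x_h g' x_h^{-1}$, whose extreme syllables $g_{k-1}^{-1}$ and $g_{k-1}$ both lie in $G_j$, creates no collapse, so $x g' x^{-1}$ is in normal form; and $x g' x^{-1} =_G g_k^{-1} h g_k =_G g$ with $g'$ cyclically reduced by construction, closing the induction. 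The main obstacle is precisely this bookkeeping: one must carry the auxiliary clause about the first syllable of $x$ through every case, and one must dispatch the degenerate sub-cases — $g_k g_1$ trivial or amalgamated, $h$ collapsing to length $\le 1$, total absorption of $g_2, \dots, g_{k-1}$ — which is exactly where the Normal Form Theorem rules out $h =_G 1$ and the syllable-count contradiction that a proper prefix of $g$ could lie in a single factor.
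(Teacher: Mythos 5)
Your proof is correct and is essentially the paper's argument run in mirror image: the paper cyclically reduces by conjugating with the \emph{first} syllable $g_1$ (so its conjugator is a prefix $g_1\cdots g_l$ of $g$), while you conjugate by the \emph{last} syllable $g_k$, but the induction on syllable length and the case split on whether $g_kg_1$ lies in $G_i\setminus A$, in $A$, or is trivial are the same. Your strengthened inductive clause tracking the first syllable of $x$ just makes explicit the non-collapse bookkeeping that the paper handles implicitly by recording $x'\equiv g_2\cdots g_l$.
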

\begin{proof}
If $k=1$ then the statement is trivial: $x \equiv 1$ and $g'
\equiv g$. If $k$ is an even number then the syllables $g_1$ and
$g_k$ are, evidently, in different free factors. Therefore $g$ is
cyclically reduced. Thus the statement is trivial.

Assume now that  $k$ is odd. The proof is by induction on the
syllable length of $g$, that is on $k$.

If $g_kg_1 \in G_i \setminus A$ then we put $g_k'=_{G_i} g_kg_1$
($i \in \{1,2\}$). Thus $g=_Gg_1(g_2 \cdots g_k')g_1^{-1}$. Since
$g_2 \in G_j \setminus A$ ($1 \leq i \neq j \leq 2$), the word $g'
\equiv g_2 \cdots g_k'$ is normal and cyclically reduced.
Moreover, the words $x \equiv g_1$ and   $xg'x \equiv g_1g_2
\cdots g_{k-1}(g_k'g_1^{-1})$ are normal.

If  $g_kg_1 \in G_i \cap A$ ($i \in \{1,2\}$) we take $b \in G_j
\cap A$ ($1 \leq i \neq j \leq 2$)  such that $b=_G g_kg_1$. Since
$g_{k-1} \in G_j \setminus A$, we have $g_{k-1}b \in G_j \setminus
A$. Let $g'_{k-1}=_{G_j} g_{k-1}b$.
Then
$$g=_G g_1(g_2 \cdots g_{k-2}g'_{k-1})g_1^{-1}.$$
We put  $x \equiv g_1$, $g' \equiv g_2 \cdots g_{k-2}g'_{k-1}$.
Thus $x$, $g'$ and $xg'x^{-1}$ are normal words. Moreover, by the
inductive assumption, there exists a  normal word $x' \in G$ and a
cyclically reduced word $g'' \in G$ such that $g'=_G
x'g''(x')^{-1}$, where $x' \equiv g_2 \cdots g_l$ ($l \leq
(n-1)/2$) and the word $x'g''(x')^{-1}$ is in normal form.

Thus the words $xx'$ and $(xx')g''(xx')^{-1}$ are normal. Since
$g=_G (xx')g''(xx')^{-1}$, we are done.

\end{proof}


\begin{thm} \label{order problem finite grp}
Let $H$ be a finitely generated subgroup of an amalgam of finite
groups $G=G_1 \ast_A G_2$.
Let $g \in G$ be a non torsion element such that $g^n \in H$ for
some $n \geq 1$.

Then there exists $1 \leq z \leq |V(\Gamma(H))|$ such that $g ^ z
\in H$.
\end{thm}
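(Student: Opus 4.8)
The plan is to pass to a cyclically reduced conjugate of $g$ and then run a pigeonhole argument on the vertices of $\Gamma(H)$. First I would invoke Lemma~\ref{lem: cyclically normal} to write $g =_G x g' x^{-1}$ with $g'$ cyclically reduced and $x g' x^{-1}$ in normal form; then $g^n = x (g')^n x^{-1}$, so $g^n \in H$ is equivalent to $(g')^n \in x^{-1} H x$. The cleanest route is to replace $H$ by the conjugate subgroup $H' = x^{-1} H x$ — but since the statement is phrased in terms of $\Gamma(H)$ and $|V(\Gamma(H))|$, it is cleaner to keep $H$ and instead track the path in $\Gamma(H)$ obtained by reading $x$ first. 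Concretely: since $g^n \in H$, by Theorem~\ref{thm: properties of subgroup graphs}(4) the normal word representing $g^n$ (which, because $xg'x^{-1}$ is in normal form and $g'$ is cyclically reduced, is essentially $x (g')^n x^{-1}$ after at most a boundary adjustment at the two seams) labels a closed path at $v_0$ in $\Gamma(H)$. Read the prefix $x$ from $v_0$, arriving at a vertex $v = v_0 \cdot x$, so that $(g')^n$ is readable as a closed path at $v$, i.e. $(g')^n \in Lab(\Gamma(H), v) = x^{-1} H x$.

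Next I would exploit that $g'$ is cyclically reduced: all the partial products $1, g', (g')^2, \dots, (g')^n$ are represented by normal words (this is exactly why cyclic reducedness was arranged — cyclic permutations and concatenations of the syllable sequence of $g'$ stay in normal form), so each prefix $(g')^i$ is readable starting at $v$. Define $v_i = v \cdot (g')^i$ for $0 \le i \le n$; these are well-defined vertices of $\Gamma(H)$ because $\Gamma(H)$ is a precover, hence compatible and (by Lemma~\ref{lemma2.12}) normal words can be read as far as they physically fit, and here they fit because $(g')^n$ labels a closed path at $v$. By pigeonhole, if $n > |V(\Gamma(H))|$ there exist $0 \le i < j \le n$ with $v_i = v_j$ and $j - i \le |V(\Gamma(H))|$. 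Then the subpath from $v_i$ to $v_j$ is a closed path at $v_i$ labelled $(g')^{j-i}$, so $(g')^{j-i} \in Lab(\Gamma(H), v_i) = w^{-1}(x^{-1}Hx)w$ for $w = (g')^i$; but $v_i = v \cdot (g')^i$ and $(g')^i$ commutes with $(g')^{j-i}$, so in fact $(g')^{j-i}$ is a closed loop at $v$ as well, giving $(g')^{j-i} \in x^{-1} H x$, hence $g^{j-i} = x (g')^{j-i} x^{-1} \in H$. Since $g$ is not a torsion element, $g^{j-i} \ne_G 1$, so $z = j - i$ with $1 \le z \le |V(\Gamma(H))|$ is the desired power.

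The main obstacle I anticipate is the bookkeeping at the two "seams" where $x$ meets $g'$ and where $g'$ meets $x^{-1}$: the word $x g' x^{-1}$ is in normal form by Lemma~\ref{lem: cyclically normal}, but the concatenation $x (g')^n x^{-1}$ need not be literally a single normal word if the last syllable of $x$ and the first syllable of $g'$ lie in the same factor — one must check that, because $g'$ is cyclically reduced, the syllable pattern of $(g')^n$ internally is already normal, and only the outermost syllables interact with $x$. I would handle this by noting that reading $x$ in $\Gamma(H)$ lands at $v$, and then applying Theorem~\ref{thm: properties of subgroup graphs}(4) and Lemma~\ref{lem: normal elements in precovers} to $Lab(\Gamma(H),v) = x^{-1}Hx$ directly: since $(g')^n$ is a normal word in that subgroup, it labels a closed path at $v$, and that is all the pigeonhole argument needs. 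A secondary point to verify carefully is that when $g'$ itself has syllable length $1$ (i.e. $g' \in G_i$), the vertices $v_i$ still behave correctly; here one uses that $v$ is then either bichromatic or $X_i$-monochromatic (from the structure of $\Gamma(H)$ as a reduced precover and Lemma~\ref{lem: normal elements in precovers}), so powers of $g'$ are read within a single monochromatic component which is a cover of the finite group $G_i$, and the bound $|V(\Gamma(H))|$ still dominates.
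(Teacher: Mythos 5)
Your proof is correct and follows essentially the same route as the paper's: cyclically reduce $g$ via Lemma~\ref{lem: cyclically normal}, use Theorem~\ref{thm: properties of subgroup graphs}(4) to read the closed normal path labelled $x(g')^nx^{-1}$ in $\Gamma(H)$, and apply pigeonhole to the vertices $v\cdot (g')^i$. The only (harmless) differences are at the ends: the paper disposes of the case $l(g')=1$ outright via the Torsion Theorem (it cannot occur for a non-torsion $g$) rather than treating it separately as you propose, and it concludes by deleting the repeated loop to contradict the minimality of $n$, whereas you keep the loop and use commutativity of powers of $g'$ to get $g^{\,j-i}\in H$ directly.
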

\begin{proof}
Assume that  $g \not\in H$ otherwise the statement is trivial. Let
$n \geq 1$ be the smallest positive integer such that $g^n \in H$.
Since $g \not\in H$, we have $n >1$. Suppose that $n
> |V(\Gamma(H))|>1$ otherwise the statement is trivial.

Without loss of generality we can assume that $g$ is a normal word
given by the normal decomposition $g \equiv g_1 \cdots g_k$, where
$k >1$ since $g$ is non torsion.

By Lemma~\ref{lem: cyclically normal}, there exits a normal word
$x \in G$ and a  cyclically reduced word $ g' \in G$,
such that $g=_G xg'x^{-1}$ and the word $x g' x^{-1}$ is normal.

Note that  the syllable length of  $g'$ is greater than 1.
Otherwise $g' $ is an element of  either $G_1$ or of $G_2$. Thus
$g$ is a conjugate of an element of either $G_1$ or of $G_2$.
Therefore, by the Torsion Theorem (IV.2.7 in \cite{l_s}),
$g$ is a torsion element of $G$, which contradicts our assumption.

Therefore  $g^n=_G x (g')^n x^{-1}$ and the word $x (g')^n x^{-1}$
is normal. Hence, by Theorem~\ref{thm: properties of subgroup
graphs} (4), there exists a normal path $p$ in $\Gamma(H)$ with
$\iota(p)=\tau(p)=v_0$ and $lab(p) \equiv x (g')^n x^{-1}$. Since
the graph $\Gamma(H)$ is well-labelled, there is a decomposition
$p=tq\overline{t}$, where $$\iota(t)=v_0, \ \tau(t)=v, \ lab(t)
\equiv x, \ {\rm and} \ \iota(q)=\tau(q)=v, lab(q)\equiv (g')^n.$$

Since the word $g'$ is freely cyclically reduced, we have $|(g')^n
|=|g'| \cdot n.$ Hence we can set $v_m=u \cdot (g' )^m$, $1 \leq m
\leq n$. Since $n
> |V(\Gamma(H))|$, there exist $1 \leq i < j \leq n$ such that
$v_i=v_j$. Thus $v_i \cdot (g' )^{j-i}=v_j=v_i$. Therefore $v=v
\cdot (g' )^n=v \cdot (g'v)^{n-(j-i)}$. Hence
$$v_0 \cdot \big( x (g')^{n-(j-i)} x^{-1} \big) =v \cdot \big((g' )^{n-(j-i)}  x^{-1} \big)=
 v \cdot x^{-1}=v_0.$$
Thus $x(g')^{n-(j-i)}x^{-1} \in H$. Hence $g^{n-(j-i)} \in H$.
Since $1 \leq i <j $,  we have $1 \leq n-(j-i) < n$. This
contradicts with the choice of $n$.

\end{proof}

\begin{cor}[The Power Problem] \label{cor: alg PII}
Let $G=G_1 \ast_A G_2$ be an amalgam of finite groups. Then there
exists an algorithm which solves (PII).

That is, given finitely many subgroup generators $h_1, \ldots h_k
\in G$ and normal word $g \in G,$ the algorithm finds the minimal
nonzero power $n$ such that $g^n \in H = \langle h_1, \ldots h_k
\rangle$.
\end{cor}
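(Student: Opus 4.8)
The plan is to combine the decidability of (PI) from Corollary~\ref{cor: alg PI} with the bound from Theorem~\ref{order problem finite grp}. First I would construct the subgroup graph $(\Gamma(H),v_0)$ using the generalized Stallings' folding algorithm; by Theorem~\ref{thm: properties of subgroup graphs} it is finite, and we record the integer $N=|V(\Gamma(H))|$. Next, rewrite the given word $g$ as a normal word and check whether $g$ is a torsion element of $G$: by Lemma~\ref{lem: cyclically normal} write $g=_G xg'x^{-1}$ with $g'$ cyclically reduced, and then $g$ is a torsion element precisely when $l(g')\le 1$, in which case $g'$ lies in a finite factor $G_i$ and one can directly compute $o(g)$ and check which powers (if any) of $g$ below $o(g)$ lie in $H$ via Theorem~\ref{thm: properties of subgroup graphs}~(4); so we may assume $g$ is non-torsion.

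In the non-torsion case, Theorem~\ref{order problem finite grp} guarantees that if some nonzero power of $g$ lies in $H$ at all, then already $g^z\in H$ for some $z$ with $1\le z\le N$. Hence the algorithm simply tests, for $z=1,2,\ldots,N$ in increasing order, whether $g^z\in H$: each such test is an instance of the membership problem solved by Theorem~\ref{thm: properties of subgroup graphs}~(4), i.e.\ we rewrite $g^z$ as a normal word and check whether $v_0\cdot g^z=v_0$ in $\Gamma(H)$. The first $z$ for which this holds is returned as the minimal nonzero power; if no such $z\le N$ is found, then by Theorem~\ref{order problem finite grp} no nonzero power of $g$ lies in $H$ and the algorithm reports this (consistently with Corollary~\ref{cor: alg PI}). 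Correctness of minimality is immediate: the search is over consecutive integers starting from $1$, so the first success is the least, and Theorem~\ref{order problem finite grp} rules out any larger minimal power.

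The only mild subtlety — and the step I would be most careful about — is the normalization of powers: to test $g^z\in H$ one must rewrite $g^z$ into normal form, and the naive word $g^z$ has length $z|g|$, so for the complexity estimate one should use the cyclically reduced decomposition $g=_G xg'x^{-1}$ and note that $g^z=_G x(g')^z x^{-1}$ is already essentially in normal form (as in the proof of Theorem~\ref{order problem finite grp}), so reading it in $\Gamma(H)$ costs $O(N|g'|)\le O(N|g|)$ per value of $z$. Summing over $z\le N$ and adding the $O(m^2)$ cost of building $\Gamma(H)$ (where $m=\sum|h_i|$), the whole procedure runs in polynomial time, and in particular the algorithm terminates, which is what the corollary asserts.
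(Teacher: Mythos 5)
Your proposal is correct and follows essentially the same route as the paper's proof: rewrite $g=_G xg'x^{-1}$ via Lemma~\ref{lem: cyclically normal}, split into the torsion case ($l(g')\le 1$, bounded by the order of $g'$ in the finite factor) and the non-torsion case (bounded by $|V(\Gamma(H))|$ via Theorem~\ref{order problem finite grp}), and test consecutive powers by reading $x(g')^m x^{-1}$ in $\Gamma(H)$. The normalization subtlety you flag is exactly the point the paper also addresses.
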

\begin{proof}
We begin by rewriting the word $g$ as a normal word $xg'x^{-1}$,
where  $x \in G$ is a normal word and $ g' \in G$ is a  cyclically
reduced word. This is possible  by Lemma~\ref{lem: cyclically
normal} and can be done according to the process described in the
proof of this lemma. Thus $g=_G xg'x^{-1}$.

If $l(g')=1$ then $g' \in G_i$ ($i \in \{1,2\}$). Thus $g$ is a
torsion element of $G$.
Let $ o(g')$ be the order of $g'$. Since $1 < o(g)=o(g')  \leq |
G_i|$, we have to verify whether $g^m \in H$, for all $1 \leq m
\leq |G_i|-1$, and to stop when the first such power is found or
when $g^m=_G 1$, that is no nontrivial power of $g$ is in $H$.

By Theorem~\ref{thm: properties of subgroup graphs} (4), such a
verification can be done using the subgroup graph
$(\Gamma(H),v_0)$ constructed by the generalized Stallings'
algorithm. That is $g^m \in H$ if and only if its normal form
labels a normal path in $\Gamma(H)$ closed at the basepoint $v_0$.
If   $(g')^m \not\in A$  ($1 \leq m \leq |G_i|-1$), then $x(g')^m
x$ is a normal word. Otherwise we just rewrite it as a normal
word.

If $l(g')>1$, then, by the proof of Theorem~\ref{order problem
finite grp}, $g^m \in H$ if  and only if there exists a path $p$
in $\Gamma(H)$ closed at $v_0$ with $lab(p) \equiv x(g')^m x^{-1}$
such that $1 \leq m \leq |V(\Gamma(H))|$.

Hence we   try to read $x(g')^mx^{-1}$ on $\Gamma(H)$ starting at
$v_0$, for all $1 \leq m \leq |V(\Gamma(H))|$. That is we begin
with $m=1$ and stop when we succeed to read $x(g')^mx^{-1}$   at
the first time. If no such $m$ is found then no nonzero power of
$g$ is in $H$.

\end{proof}

\subsection*{Complexity} By Theorem~\ref{thm: properties of subgroup graphs} (5), the
construction of $\Gamma(H)$ takes $O(m^2)$, where $m$ is the sum
of the lengths of  $h_1, \ldots h_k$. To find the desired normal
form of $g$, which is $xg'x^{-1}$, takes $O(|g|)$. A verification
of whether or not $x(g')^i x^{-1}$ can be read on $\Gamma(H)$
starting at $v_0$ ($1 \leq i \leq |V(\Gamma(H)|$) takes $O(|g|
\cdot |V(\Gamma(H)|)$, when $g$ is non torsion. Otherwise it takes
$O(|g| \cdot |G_i|)$ ($i\in \{1,2\}$). Since the information about
the factors, $G_1$ and $G_2$, is given and it is not a part of the
input, it takes $O(|g|)$.

Since, by Theorem~\ref{thm: properties of subgroup graphs} (5),
$|V(\Gamma(H))|$ is proportional to $m$, we conclude that the
complexity of the algorithm given along with the proof of
Corollary \ref{cor: alg PII} is $O ( m^2 + m \cdot |g| )$. Thus
the algorithm is quadratic in the size of the input. Moreover, it
is faster than the algorithm presented in Corollary~\ref{cor: alg
PI} which solves (PI).


\appendix
\section{}


Below we follow the notation of Grunschlag \cite{grunschlag},
distinguishing between the ``\emph{input}'' and the ``\emph{given
data}'', the information that can be used by the algorithm
\emph{``for free''}, that is it does not affect the complexity
issues.

\begin{center}
\large{\emph{\underline{\textbf{Algorithm}}}}
\end{center}

\begin{description}
\item[Given] Finite groups $G_1$, $G_2$, $A$ and the amalgam
$G=G_1 \ast_{A} G_2$ given via $(1.a)$, $(1.b)$ and $(1.c)$,
respectively.

We assume that the Cayley graphs and all the relative Cayley
graphs of the free factors are given.
\item[Input]  A finite set $\{ g_1, \cdots, g_n \} \subseteq G$.
\item[Output] A finite graph $\Gamma(H)$ with a basepoint $v_0$
which is a reduced precover of $G$ and the following holds
\begin{itemize}
 \item
$Lab(\Gamma(H),v_0)=_{G} H$;
 \item $H=\langle g_1, \cdots, g_n \rangle$;
 \item a normal word $w$ is in $H$ if and only if
  there is a loop (at $v_0$) in $\Gamma(H)$
labelled by the word $w$.
 \end{itemize}
\item[Notation] $\Gamma_i$ is the graph obtained after the
execution of the $i$-th step.

%
%
\medskip

    \item[\underline{Step1}] Construct a based set of $n$ loops around a common distinguished
vertex $v_0$, each labelled by a generator of $H$;
    \item[\underline{Step2}] Iteratively fold edges and cut hairs %
    \footnote{A \emph{hair} is an edge one of whose endpoint has degree 1};
  \item[\underline{Step3}] { \ }\\
\texttt{For} { \ } each $X_i$-monochromatic component $C$ of
$\Gamma_2$ ($i=1,2$) { \ } \texttt{Do} \\
\texttt{Begin}\\
    pick an edge $e \in E(C)$; \\
    glue a copy  of $Cayley(G_i)$   on $e$ via identifying $ 1_{G_i} $  with $\iota(e)$ \\
    and identifying the two copies of $e$ in $Cayley(G_i)$ and in $\Gamma_2$; \\
    \texttt{If}  { \ } necessary  { \ } \texttt{Then} { \ } iteratively fold
    edges; \\
 \texttt{End;}

 \item[\underline{Step4}] { \ } \\
\texttt{ For}  { \ } each $v \in VB(\Gamma_3)$ { \ } \texttt{ Do} \\
 \texttt{If} { \ } there are paths $p_1$ and $p_2$, with $\iota(p_1)=\iota(p_2)=v$
 and $\tau(p_1)~\neq~\tau(p_2)$  such that
 $$lab(p_i) \in G_i \cap A \ (i=1,2) \ {\rm and} \  lab(p_1)=_G
 lab(p_2)$$
\texttt{ Then} { \ } identify $\tau(p_1)$ with $\tau(p_2)$; \\
 \texttt{If}  { \ } necessary  { \ } \texttt{Then} { \ } iteratively fold
    edges; \\

 \item[\underline{Step5}]
%
Reduce  $\Gamma_4$ by an iterative  removal of all
(\emph{redundant})
 $X_i$-monochromatic components $C$ such that
\begin{itemize}
 \item $(C,\vartheta)$ is isomorphic to $Cayley(G_i, K, K \cdot 1)$, where $K \leq A$ and
$\vartheta \in VB(C)$;
 \item  $|VB(C)|=[A:K]$;
 \item one of the following holds
    \begin{itemize}
        \item  $K=\{1\}$ and $v_0 \not\in VM_i(C)$;
        \item $K$ is  a nontrivial subgroup of $A$ and $v_0  \not\in V(C)$.\\
    \end{itemize}
\end{itemize}

Let $\Gamma$ be the resulting graph;\\

\texttt{If}  { \ }
$VB(\Gamma)=\emptyset$ and $(\Gamma,v_0)$ is isomorphic to $Cayley(G_i, 1_{G_i})$ \\
\texttt{Then} { \ } we set $V(\Gamma_5)=\{v_0\}$ and
$E(\Gamma_5)=\emptyset$; \\
\texttt{Else} { \ } we set $\Gamma_5=\Gamma$.

 \item[\underline{Step6}] { \ } \\
 \texttt{If} { \ }
 \begin{itemize}
  \item $v_0 \in VM_i(\Gamma_5)$ ($i \in \{1,2\}$);
  \item $(C,v_0)$ is isomorphic to $Cayley(G_i,K,K \cdot 1)$, where $L=K \cap A$ is a nontrivial
  subgroup of
 $A$ and $C$ is a $X_i$-monochromatic component of $\Gamma_5$ such that $v_0 \in V(C)$;
  \end{itemize}
\texttt{Then} { \ } glue to $\Gamma_5$ a $X_j$-monochromatic
component ($1 \leq i \neq j \leq 2$) $D=Cayley(G_j,L,L \cdot 1)$
via identifying $L \cdot 1$ with $v_0$ and \\
identifying the vertices $L \cdot a$ of  $Cayley(G_j,L,L \cdot 1)$
with the vertices $v_0 \cdot a$ of $C$, for all $a \in A \setminus
L$.

Denote $\Gamma(H)=\Gamma_6$.

\end{description}


\begin{remark} \label{stal-mar-meak-kap-m}
{\rm Note that the first two steps of the above algorithm
correspond precisely to the Stallings' folding algorithm for
finitely generated subgroups of free groups  \cite{stal, mar_meak,
kap-m}.} \e
\end{remark}


\begin{figure}[!h]
\psfrag{x }[][]{$x$} \psfrag{y }[][]{$y$} \psfrag{v }[][]{$v$}
\psfrag{x1 - monochromatic vertex }[][]{{\footnotesize
$\{x\}$-monochromatic vertex}}
\psfrag{y1 - monochromatic vertex }[][]{\footnotesize
{$\{y\}$-monochromatic vertex}}
\psfrag{ bichromatic vertex }[][]{\footnotesize {bichromatic
vertex}}
\includegraphics[width=0.8\textwidth]{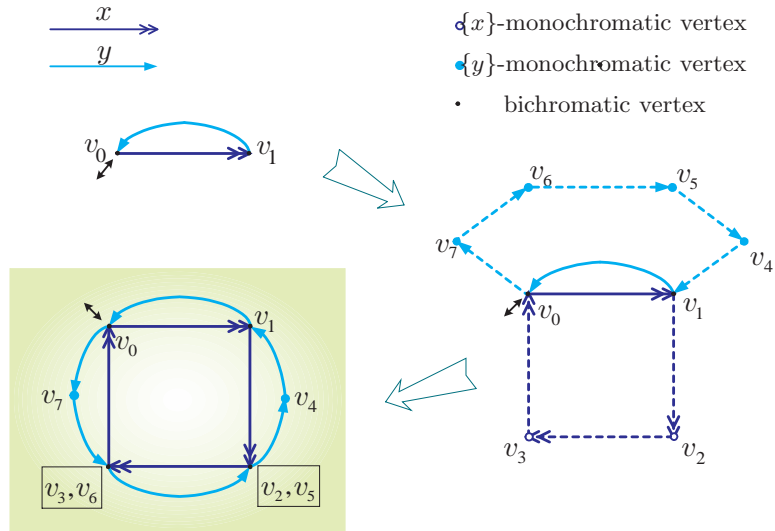}
\caption[The construction of $\Gamma(H_1)$]{ \footnotesize {The
construction of $\Gamma(H_1)$.}
 \label{fig: example of H=xy}}
\end{figure}

\begin{figure}[!hb]
\psfrag{v }[][]{$v$}
\includegraphics[width=0.8\textwidth]{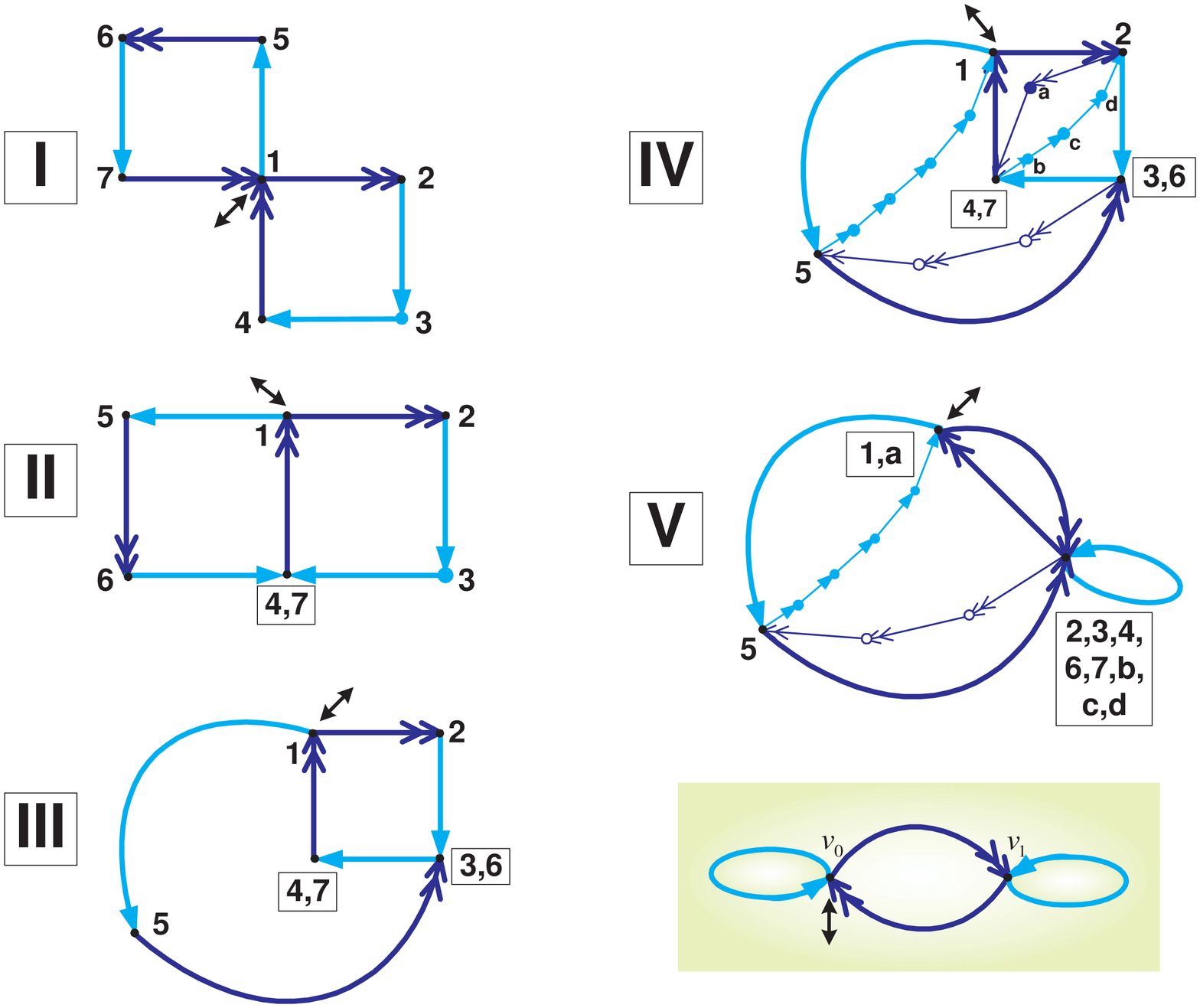}
\caption[The construction of $\Gamma(H_2)$]{ \footnotesize {The
construction of $\Gamma(H_2)$.} \label{fig: example of H=xy^2x,
yxyx}}
\end{figure}

\begin{ex} \label{example: graphconstruction}
{\rm Let $G=gp\langle x,y | x^4, y^6, x^2=y^3 \rangle$.

Let $H_1$ and $H_2$ be  finitely generated subgroups of $G$ such
that
$$H_1=\langle xy \rangle \ {\rm and} \ H_2=\langle xy^2, yxyx \rangle.$$

The construction of $\Gamma(H_1)$ and $\Gamma(H_2)$ by the
algorithm presented above is illustrated on Figures \ref{fig:
example of H=xy}
 and  \ref{fig: example of H=xy^2x, yxyx}.}
\e
\end{ex}



\begin{thebibliography}{99}



\bibitem{b-f}
    M. Bestvina, M. Feighn, A combinatorial theorem for negatively
    curved groups, \textit{J. Differential Geom.} {\bf 35} (1992),
    no.1, 85-101.

\bibitem{b-f-correction}
    M. Bestvina, M. Feighn, Addendum and correction to: ``A combination theorem for negatively
    curved groups'' [J. Differential Geom. \textbf{35} (1992), no. 1,
    85-101], {\it J. Differential Geom.} \textbf{43} (1996), no. 4,
    783-788.



\bibitem{b-m-m-w}
    J.-C.Birget, S.Margolis, J.Meakin, P.Weil, PSPACE-complete
    problems for subgroups of free groups and inverse automata,
    \emph{Theoret. Comput. Sci.} \textbf{242} (2000), no. 1-2,
    247-281.

\bibitem{b1}
O.V.Bogopolski, Finitely generated groups with the M. Hall
property, \emph{Algebra and Logic}   \textbf{31} (1992), no. 3,
141-169.

\bibitem{b2}
O.V.Bogopolski, Almost free groups and the M. Hall property,
\emph{Algebra and Logic}  \textbf{33} (1994), no. 1, 1-13.



\bibitem{b-wise}
    M. Bridson, D.Wise,   Malnormality is undecidable
in hyperbolic groups, \textit{ Israel J. Math.} {\bf 124} (2001),
313-316.





\bibitem{cdhw73}
    J.J.Cannon, L.A.Dimino, G.Havas, J.M.Watson, Implementation
    and analysis of the Todd-Coxeter algorithm, \emph{Math.Comp.}, {\bf
    27} (1973), 463-490.


\bibitem{c-otto}
    R.Cremanns, F.Otto, Constructing cannonical presentations for
    subgroups of context-free groups in polynomial time, Proc.
    \emph{ISSAC'94}.

\bibitem{dehn}
    M.Dehn, $\ddot{U}$ber unendliche diskontinuerliche Gruppen, {\it
    Math. Annn.} {\bf 69} (1911), 116-144.




\bibitem{gi_quas}
    R.Gitik, On quasiconvex subgroups of negatively curved groups,
    {\it J. Pure Appl. Algebra}     {\bf 119} (1997), no.2, 155-169.

\bibitem{gi_sep}
    R.Gitik, Graphs and separability properties of groups, {\it J. of Algebra} {\bf 188} (1997), no.1,  125-143.

\bibitem{gi_doub}
    R.Gitik, Doubles of groups and hyperbolic LERF 3-manifolds, {\it Ann. of Math.(2)} {\bf 150} (1999), no.3,  775-806.

\bibitem{gi_comb}
    R.Gitik, On the combination theorem for negatively curved groups.
    Corrected reprint of ``On the combination theorem for negatively
    curved groups'' [\emph{Internat. J. Algebra Comput.} \textbf{6}
    (1996), no. 6, 751-760]. \emph{Internat. J. Algebra Comput.}
    \textbf{7} (1997), no. 2, 267-276.

\bibitem{g-m-s}
     R.Gitik, S.W.Margolis, B.Steinberg, On the Kurosh theorem
     and separability properties, \textit{J. Pure Appl. Algebra} {\bf 179} (2003), no. 1-2, 87-97.




\bibitem{gro}
    M.Gromov, Hyperbolic groups. \emph{Essays in group theory},
    75-263, Math. Sci. Res. Inst. Publ., 8, \emph{Springer, New York}, 1987.


\bibitem{grunschlag}
    Z. Grunschlag, Algorithms in geometric group theory, PhD
    thesis, University of California at Berkeley, 1999.


\bibitem{holt-decision}
    D.F.Holt, Decision problems in finitely presented groups.
    \emph{Computational methods for representations of groups and
    algebras (Essen, 1997)}, 259-265, Progr. Math., 173,
    \emph{Birkhauser, Basel}, 1999.


\bibitem{holt-hurt}
    D.F.Holt, D.Hurt, Computing automatic coset systems and subgroup
    presentations,
    {\it J. Symbolic Computation} {\bf 27} (1999), no.1, 1-19.



\bibitem{kap-m}
    I.Kapovich, A.Myasnikov, Stallings foldings and subgroups of free groups, {\it J.  Algebra}, {\bf 248} (2002), no.2, 608--668



\bibitem{generic-case}
     I.Kapovich, A.Myasnikov, P.E. Schupp, V.Shpilrain,
     Generic-case complexity, decision problems in group
theory, and random walks, \emph{J. Algebra} \textbf{264} (2003),
no. 2, 665-694.

\bibitem{average-case}
     I.Kapovich, A.Myasnikov, P.E. Schupp, V.Shpilrain,
     Average-case complexity and decision problems in group
     theory, \emph{Adv. Math} \textbf{190} (2005), no.2, 343-359.

\bibitem{kap-w-m}
    I.Kapovich, R.Weidman, A.Miasnikov, Foldings,
graphs of groups and the membership problem. \emph{Internat. J.
Algebra Comput.} \textbf{15} (2005), no. 1, 95-128.


\bibitem{kharlamovich-m}
    O.Kharlamovich, A.Myasnikov, Hyperbolic groups and free
    constructions, {\it Trans. Amer. Math. Soc. } {\bf 350}
    (1998), no.2, 571-613.

\bibitem{kmrs}
    O.Kharlamovich, A.Myasnikov, V.Remeslennikov, D.Serbin, Subgroups of fully
    residually free groups: algorithmic problems, {\it Contemporary Math}.


\bibitem{k-m-otto}
N.Kuhn, K.Madlener, F.Otto,
 Computing presentations for subgroups of polycyclic groups and of context-free groups.
\textit{Appl. Algebra in Engrg, Comm. and Comput.}, {\bf 5}
(1994), no.5, 287-316.




\bibitem{l-s}
    M.Lohrey, G. Senizergues, Rational subsets in HNN-extentions
    and amalgamated products, in preparation.




\bibitem{l_s}
    R.C.Lyndon and P.E.Schupp, Combinatorial group theory. \emph{Springer-Verlag, Berlin-New York}, 1977.



\bibitem{m-k-s}
    W.Magnus, A.Karas, D.Solitar, Combinatorial group theory.
    Presentations of groups in terms of generators and
    relations. Second revised edition. \emph{Dover Publications, Inc.,
    New York}, 1976.

\bibitem{mar_meak}
    S.W.Margolis and J.C.Meakin, Free inverse monoids and graph immersions, {\it Internat. J. Algebra Comput}.
    {\bf 3} (1993), 79-99.

\bibitem{m-s-w}
    S.W.Margolis, M.Sapir, P.Weil, Closed subgroups in
    pro-\textbf{V} topologies and the extension problem for
    inverse automata, \textit{Int. J. Algebra Comput.} {\bf 11} (2001), no.4, 405-445.


\bibitem{m-thesis}
    L.Markus-Epstein, Algorithmic Problems in Subgroups of Some
    Finitely Presented Groups, Phd thesis, Bar Ilan University,
    2005.

\bibitem{m-foldings}
    L.Markus-Epstein, Stallings Foldings and Subgroups of Amalgams of Finite Groups, arXiv.org: math.GR/0705.0754, to appear in {\it Internat. J. Algebra Comput} (2007).


\bibitem{m-algI}
    L.Markus-Epstein, Algorithmic Problems in Amalgams of Finite Groups, arXiv.org: math.GR/0705.0759.

\bibitem{m-kurosh}
    L.Markus-Epstein, Reading Off Kurosh Decomposition, arXiv.org: math.GR/0706.0101 (2007).


\bibitem{m_w}
    J.McCammond, D.Wise, Coherence, local quasiconvexity and the perimeter of 2-complexes,  to appear in \emph{Geom. funct. anal.}.

\bibitem{mvw}
    A.Miasnikov, E.Ventura, P.Weil, Algebraic extensions in free
    groups, arXiv.org: math.GR/0610880 (2006).

\bibitem{miller71}
    C.F. Miller III, On group-theoretic decision problems  and their
    classification. Annals of Mathematics Studies, No. 68.
    \emph{Princeton University Press, Princeton, N.J.; University of
    Tokyo Press, Tokyo}, 1971.

\bibitem{miller92}
    C. F. Miller III, Decision problems for groups---survey and
    reflections. \emph{Algorithms and classification in combinatorial
    group theory (Berkeley, CA, 1989)}, 1-59, Math. Sci. Res. Inst.
    Publ., 23, \emph{Springer, New York}, 1992.


\bibitem{ashot}
    A.Minasyan, Separable subsets of GFERF negatively curved groups, \emph{J. of Algebra}
    \textbf{304} (2006), no. 2,  1090-1100.

\bibitem{p_s}
    O.Payne, S.Rees, Computing Subgroup Presentation, Using
    Arguments of McCammond and Wise,  \emph{J. of Algebra},  \textbf{300} (2006) (Leedham-Green birthday volume), 109-133.

\bibitem{rvw}
    A.Roig, E.Ventura, P.Weil, On the complexity of the Whitehead
    minimization problem, arXiv.org: math.GR/0608779 (2006).

\bibitem{schupp}
    P.E.Schupp, Coxeter groups, 2-completion, perimeter reduction
    and subgroup separability, \textit{Geom. Dedicata} {\bf 96}
    (2003), 179-198.


\bibitem{serre}
    J.-P.Serre, Trees. Translated from the French by John
Stillwell. \emph{Springer-Verlag, Berlin-New York}, 1980.


\bibitem{sho}
    H.Short, Quasiconvexity and a Theorem of Howson, \emph{Group theory
    from a geometric point of view}, Proc. ICTP. Trieste, World
    Scientific, Singapore, 1991.

\bibitem{sims}
    C.C.Sim,  Computation with finitely presented groups.
    Encyclopedia of Mathematics and its Applications, 48.
    \emph{Cambridge University Press, Cambridge}, 1994.



\bibitem{stal}
    J.Stallings, Topology of graphs, {\it Invent. Math.} {\bf 71} (1983), no.3, 551-565.

\bibitem{stil}
    J Stillwell, Classical topology and combinatorial group
    theory. \emph{Springer-Verlag, Berlin-New York}, 1980.


\bibitem{tuikan}
    N.Tuikan, A fast algorithm for Stallings' folding process, \emph{Internat. J. Algebra Comput.}
    \textbf{16} (2006), no. 6, 1031-1045.

\bibitem{ventura}
    E.Ventura, On fixed subgroups of maximal rank, {\it Comm.
    Algebra}, {\bf 25} (1997), 3361-3375.


\bibitem{wise-RF}
    D.Wise, The residual finiteness of negatively curved polygons of finite
groups, \textit{Inventiones Mathematicae}, \textbf{149}, no.3,
453-685.


\end{thebibliography}
\end{document}